%% file: main.tex
\documentclass{amsart}
\input{matroidpreamble.tex}
\input{leanpreamble.tex}
\newcommand{\gutsproj}[3]{\mathsf{g}_{#2}^{#3}(#1)}
\newcommand{\gutscut}[2]{{\mathcal{G}}_{#2}(#1)}
\newcommand{\relrank}[3]{\rho_{#1}({#2};{#3})}

\usepackage[utf8]{inputenc}

\title{Modularity, Extensions, and Connectivity in Infinite Matroids}
\author[Ehatamm]{Mattias Ehatamm}
\author[Nelson]{Peter Nelson}
\author[Rivera Oma\~na]{Fernanda Rivera Oma\~na}

\begin{document}
\thanks{This research was partially supported by an NSERC Discovery Grant}
\sloppy
\begin{abstract}
We generalize the well-studied notion of a \emph{modular pair} of a finite matroid to arbitrary families of sets in 
infinite matroids, and use it to develop the theory of infinite matroids in several 
as-yet-unexplored areas. Our results include a complete theory of single-element extensions, 
a description of the relationship between quotients and projections, 
a proof that matroids for which every flat is modular must be finitary, 
and two new perspectives on the infinite matroid connectivity parameter $\lambda$. 
In most cases, existing theory for finite matroids either fails completely
or does not extend in obvious ways, and as a result we develop multiple new techniques for 
reasoning about infinite matroids, including establishing well-behaved
infinite analogues of nullity, local connectivity and skewness. 

We also point to an online repository containing formalized proofs of 
all our results using the \texttt{lean4} proof assistant. 
\end{abstract}
\maketitle

\section{Introduction}

A defining property of finite matroids is submodularity: every pair of sets $X, Y$ in a finite matroid $M$
satisfies the inequality $r_M(X) + r_M(Y) \ge r_M(X \cup Y) + r_M(X \cap Y)$. 
If this holds with equality, then $(X,Y)$ is a \emph{modular pair} in $M$. 
A flat of $M$ is \emph{modular} if it forms a modular pair with every other flat of $M$, 
and a matroid itself is \emph{modular} if all of its flats are modular. 

Modularity is important in many parts of matroid theory; 
for example, a single modular flat can force large amounts of structure (such as representability)
in an entire matroid with only mild connectivity assumptions [\ref{gk13},\ref{k14}]. 
More generally, modularity links matroid theory to lattice theory: 
a matroid is modular if and only if its lattice of flats is a modular lattice, 
which is in turn essentially equivalent to $M$ being a finite projective geometry 
(see Theorem~\ref{modularpg}). 
Modularity is also closely related to the embedding of matroids in larger matroids:
specifically, modular pairs are fundamental to the `modular cuts' underlying 
Crapo's theory of single-element matroid extensions [\ref{crapo65}].

Our goal is to study the interplay between modularity, 
extensions and connectivity in the setting of infinite matroids. 
Most of the existing literature on these subjects for finite matroids 
makes heavy use of the rank function, 
and (more or less) the techniques and definitions fail to generalize correctly to the infinite setting. 
Because of this,  
we need to do nontrivial work to recover definitions and prove basic 
properties that are almost immediate for finite-rank matroids. 

Despite these difficulties, we develop versions of modularity (in all senses of the word), 
the theory of matroid extensions, skewness, nullity, local connectivity,
quotients and projections 
that both behave well for infinite matroids and specialize correctly to finite matroids. 
Roughly the first half of this paper is devoted to defining these notions
and proving the basic properties needed to reason with them.

The later sections then apply this theory to derive new results on matroid extensions, 
quotients, modularity and connectivity, including some theorems 
that are new even for finite matroids. 
Going forward, we use the term `matroid' to mean a possibly infinite matroid 
in the sense of [\ref{bdkpw}], explicitly qualifying matroids as `finite' where 
this is occasionally needed. 

\subsection*{Formalization}

Though this is not our focus, 
the material in this article all arose from an effort to produce 
a body of formalized matroid theory in the \smalltt{lean4} theorem prover [\ref{lean4}]: 
that is, proofs written in a programming language whose correctness can be checked 
by computer.
In the name of generality, we allowed matroids in these proofs to be infinite,
so a lot of `theory-building' code was required to go from first principles
to a working coverage of matroid theory.  
Often this amounted to producing formal versions of existing theories, 
but not always; this paper showcases some of the the places where the formalization
process yielded new mathematics.

At the time of writing, formalized proofs of the results in this paper 
are a small fraction of the second author's \href{https://github.com/leanprover-community/mathlib4}{\smalltt{Matroid}}\footnote{\url{https://github.com/apnelson1/Matroid}} repository [\ref{matroidrepo}].
In turn, this repository implicitly relies on material
in the much larger community-maintained \href{https://github.com/leanprover-community/mathlib4}{\smalltt {mathlib4}} 
~\footnote{\url{https://github.com/leanprover-community/mathlib4}} 
library of formalized mathematics [\ref{mathlibpaper},\ref{mathlib}], 
including both work in matroid theory that originally came from [\ref{matroidrepo}], 
and general mathematical infrastructure written by the larger community. 

As a consequence, an interested reader can verify all our theorems' correctness by 
simply downloading the relevant software, checking that the formal definitions
and theorem statements correspond to their counterparts as described in the writing, 
and expending minimal computing power to ensure that all formal proofs
are accepted by the proof assistant as valid. 

We discuss all this is much more detail in the appendices,
but until then, we do not mention it further; 
all our mathematical prose, including the proofs, 
is written with the usual style and rigour of a mathematics article.

\subsection*{Modular Pairs}

The definition of a modular pair in a finite matroid $M$ 
(that is, a pair of sets $X,Y$ for which $r_M(X) + r_M(Y) = r_M(X \cup Y) + r_M(X \cap Y)$)
is not fit for purpose in matroids of infinite rank.
Although the rank function of an infinite matroid, 
which takes values in $\enn = \bN \cup \{\infty\}$, is well-defined and even submodular, 
it is too coarse to capture all the properties it does for finite matroids;
for instance, the above definition would give the undesirable statement that $(X,Y)$ is modular whenever $X$ or $Y$ has infinite rank. 

A definition that works better for infinite matroids comes from the following routine observation. (See Lemma~\ref{modpairiffbasis} for a proof.)

\begin{lemma}\label{modpairiffbasisintro}
  A pair $(X,Y)$ is modular in a finite matroid $M$ if 
  and only if $M$ has an independent set that contains bases for both $X$ and $Y$. 
\end{lemma}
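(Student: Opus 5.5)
The plan is to route both directions through a basis $I$ of $X \cap Y$, extended in stages. Take a basis $I$ of $X \cap Y$ and extend it to a basis $B_X$ of $X$, so $B_X \supseteq I$ and $B_X \cap Y = I$; the last equality holds because $B_X \cap Y \subseteq X \cap Y$ is independent and contains the basis $I$ of $X\cap Y$. Next extend $B_X$ to a basis $B$ of $X \cup Y$. Then set $J = B \cap Y$ and $B_Y' = B \setminus B_X \subseteq Y \setminus X$. Throughout we use only finite rank arithmetic, namely that any independent subset of a set extends to a basis of that set and that all bases of a set have the same size.

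For the direction ($\Leftarrow$), suppose $K$ is independent and contains bases $K_X$ of $X$ and $K_Y$ of $Y$. Then $K_X \cup K_Y$ is independent and spans $X \cup Y$, so it is a basis of $X\cup Y$. Also $K_X \cap K_Y \subseteq X \cap Y$ is independent, which gives
\[ r(X\cup Y) = |K_X \cup K_Y| = |K_X| + |K_Y| - |K_X\cap K_Y| \ge r(X)+r(Y)-r(X\cap Y). \]
Combining this with submodularity gives equality, so $(X,Y)$ is modular.

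For the direction ($\Rightarrow$), suppose $r(X)+r(Y) = r(X\cup Y)+r(X\cap Y)$. With $B_X$ and $B$ built as above, $B \setminus B_X \subseteq Y \setminus X$, so $B \cap Y = I \cup (B\setminus B_X)$, a disjoint union. Its size is
\[ r(X\cap Y) + r(X\cup Y) - r(X) = r(Y). \]
So $B\cap Y$ is an independent subset of $Y$ of size $r(Y)$, hence a basis of $Y$. Therefore $B$ is an independent set containing the bases $B_X$ and $B\cap Y$ of $X$ and $Y$.

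The only step that needs care is the claim $B\setminus B_X \subseteq Y$. It holds because $B \subseteq X\cup Y$ and $B$ contains no element of $X \setminus B_X$: such an element lies in $\mathrm{cl}(B_X)$, which would make $B$ dependent. Everything else is counting, and no real obstacle is expected.
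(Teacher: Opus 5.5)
Your proof is correct and follows essentially the same route as the paper: the converse direction is the same inclusion--exclusion count combined with submodularity, and the forward direction likewise starts from a basis $I$ of $X\cap Y$ and uses the modular equality as a counting identity. The only difference is in how you extend $I$ in the forward direction. The paper extends $I$ separately to bases $I_X, I_Y$ of $X$ and $Y$ and checks $r(I_X\cup I_Y)=|I_X\cup I_Y|$, whereas you extend along the chain $I\subseteq B_X\subseteq B$ and check $|B\cap Y|=r(Y)$. Your variant needs the extra observation $B\setminus B_X\subseteq Y\setminus X$, which you justify correctly.
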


The existence of such an independent set is a well-behaved property for infinite matroids, 
and will be important enough that we give it a name. 

\begin{definition}
  If $\cX$ is a family of sets in a matroid $M$, then an independent set $B$ of $M$ is a \emph{mutual basis}
  for $\cX$ in $M$ if $B$ contains bases for every $X \in \cX$. 
\end{definition}

We can use this notion to define a number of senses of the word `modular', 
all of which will behave well for infinite matroids, 
and specialize to the existing well-established notions of the same names for finite matroids. 

\begin{definition}\label{modpairdef}
  Let $M$ be a matroid. 
  \begin{itemize}
    \item A family $\cX$ of sets in $M$ is \emph{modular} if $M$ has a mutual basis for $\cX$;
    \item a pair $(X,Y)$ of sets is \emph{modular} if $\{X,Y\}$ is a modular family;
    \item a flat $F$ of $M$ is \emph{modular} if the pair $(F,G)$ is modular for every flat $G$;
    \item the matroid $M$ is \emph{modular} if every flat of $M$ is modular. 
  \end{itemize} 
\end{definition}

With these definitions stated, we can summarize our main results. 

\subsection*{Extensions}
Our first application of modularity is in the description of single-element extensions. 
If $M$ is a matroid, and $e \notin E(M)$, then an \emph{extension of $M$ by $e$}
is a matroid $M'$ such that $M = M' \del e$. 
Crapo's description of single-element extensions of matroids [\ref{crapo65}] relies on modular pairs 
via the following seminal definition.

\begin{definition}\label{finmodcut}
A \emph{modular cut} in a finite matroid $M$ is a collection $\cF$ of flats of $M$
such that
\begin{enumerate}[(i)]
  \item\label{finsuperflat} 
    For each $F \in \cF$, every flat $F'$ of $M$ with $F \subseteq F'$ satisfies $F' \in \cF$. 
  \item\label{forallfinmodpair} 
    For very modular pair $(F,F')$ of $M$ with $F, F' \in \cF$, we have $F \cap F' \in \cF$. 
\end{enumerate}
\end{definition}

Given an extension $M'$ of $M$ by $e$, 
write $\cF_M^{M'}$ for the collection of flats of $M$ whose closure in $M'$ contains $e$. 
The following two results from [\ref{crapo65}] 
show that there is a bijective correspondence between modular cuts and 
single-element extensions. 

\begin{theorem}\label{finmodcutdeletion}
  If $M'$ is an extension of a finite matroid $M$ by $e$, then $\cF_M^{M'}$ is a modular cut of $M$. 
\end{theorem}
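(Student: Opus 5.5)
We verify the two conditions of Definition~\ref{finmodcut} for $\cF = \cF_M^{M'}$ directly. Throughout, for $X \subseteq E(M)$ we use that $r_{M'}(X) = r_M(X)$ and that $\cl_M(X) = \cl_{M'}(X) \cap E(M)$, since $M = M' \del e$. Note that $e \in \cl_{M'}(X)$ if and only if $r_{M'}(X \cup \{e\}) = r_M(X)$.

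For condition~(\ref{finsuperflat}), closure in $M'$ is monotone. So if $F \in \cF$ and $F'$ is a flat of $M$ with $F \subseteq F'$, then $e \in \cl_{M'}(F) \subseteq \cl_{M'}(F')$, and hence $F' \in \cF$.

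For condition~(\ref{forallfinmodpair}), let $F, F' \in \cF$ be flats of $M$ forming a modular pair, so that
\[ r_M(F) + r_M(F') = r_M(F \cup F') + r_M(F \cap F'). \]
The intersection $F \cap F'$ is again a flat of $M$. We apply submodularity of $r_{M'}$ to the sets $F \cup \{e\}$ and $F' \cup \{e\}$. Their union is $F \cup F' \cup \{e\}$ and their intersection is $(F \cap F') \cup \{e\}$, which gives
\[ r_{M'}(F \cup e) + r_{M'}(F' \cup e) \ge r_{M'}(F \cup F' \cup e) + r_{M'}((F\cap F') \cup e). \]
We simplify each term:
\begin{itemize}
  \item Since $F, F' \in \cF$, the left side equals $r_M(F) + r_M(F')$.
  \item Since $e \in \cl_{M'}(F) \subseteq \cl_{M'}(F \cup F')$, we have $r_{M'}(F \cup F' \cup e) = r_M(F \cup F')$.
\end{itemize}
Substituting these and using modularity of the pair yields
\[ r_M(F \cap F') \ge r_{M'}((F \cap F') \cup e). \]
The reverse inequality holds trivially, so equality holds. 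Therefore $e \in \cl_{M'}(F \cap F')$, that is, $F \cap F' \in \cF$.

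The only step needing care is the second: choosing to apply submodularity to the sets augmented by $e$, and noting that the union term collapses because $e$ already lies in the closure. Everything else is routine. Finiteness is used essentially, because we cancel finite rank terms. This is consistent with the paper's remark that an infinite version will need the mutual-basis formulation instead. If we preferred the paper's Lemma~\ref{modpairiffbasisintro}, we could alternatively take an independent set $B$ containing bases $B_F, B_{F'}$ and argue with circuits through $e$. However, the rank computation above is shorter, and we adopt it.
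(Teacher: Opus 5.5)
Your proof is correct, but it takes the classical finite route rather than the paper's. The paper obtains this statement as a special case of Theorem~\ref{finmodcutdeletioninf}.

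\textbf{The paper's argument.} Take an arbitrary nonempty modular family $\cF_0 \subseteq \cF_M^{M'}$ with mutual basis $B$. Then $B$ is independent in $M'$, and $e \in \cl_{M'}(B \cap F)$ for every $F \in \cF_0$. Applying Lemma~\ref{closureinter} in $M'$ gives $e \in \cl_{M'}(\bigcap_{F \in \cF_0} (B \cap F)) \subseteq \cl_{M'}(\bigcap \cF_0)$.

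\textbf{Your argument.} You apply submodularity of $r_{M'}$ to $F \cup \{e\}$ and $F' \cup \{e\}$ and then cancel $r_M(F \cup F')$. This uses the rank form of a modular pair, which agrees with the mutual-basis form for finite matroids by Lemma~\ref{modpairiffbasis}.

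\textbf{Comparison.} Your route is shorter and needs nothing beyond submodularity. In fact it only uses that $r_M(F \cup F')$ is finite, not that $M$ is finite. However, it treats one pair at a time and depends on that additive cancellation, so it gives nothing for infinite-rank pairs or for infinite modular chains. The paper's basis argument avoids rank arithmetic entirely. It shows in one step that $\cF_M^{M'}$ is closed under intersections of arbitrary nonempty modular families. That is exactly what is needed for condition (iii) of Definition~\ref{infmodcut}, and hence for the infinite version of the theorem.
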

 
\begin{theorem}\label{finmodcutextension}
  Let $\cF$ be a modular cut in a finite matroid $M$, and suppose that $e \notin E(M)$. 
  Then $M$ has a unique extension $M'$ by $e$ such that $\cF = \cF_M^{M'}$. 
\end{theorem}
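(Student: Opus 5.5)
Write $r = r_M$ throughout. We prove uniqueness and existence separately. Uniqueness comes from observing that $M'$ is determined by its rank function on sets containing $e$. For $X \subseteq E(M)$ we have $r_{M'}(X)=r(X)$, and $r_{M'}(X\cup e)$ equals either $r(X)$ or $r(X)+1$. The first case occurs precisely when $e \in \cl_{M'}(X)$. Since $\cl_{M'}(X)\cap E(M)=\cl_M(X)$, this is equivalent to $e\in\cl_{M'}(\cl_M(X))$, i.e.\ to $\cl_M(X)\in\cF_M^{M'}$. So if $\cF=\cF_M^{M'}$, the rank function of $M'$ is forced, and $M'$ is unique.

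For existence, we define $r'$ on subsets of $E(M)\cup\{e\}$ by $r'(X)=r(X)$ and
\[ r'(X\cup e)=\begin{cases} r(X) & \text{if } \cl_M(X)\in\cF,\\ r(X)+1 & \text{otherwise,}\end{cases} \]
for $X\subseteq E(M)$. We then check that $r'$ is a matroid rank function; the resulting $M'$ satisfies $M'\del e=M$ and $\cF_M^{M'}=\cF$ by construction. Normalization, the unit-increase property and monotonicity are routine, with monotonicity using the up-closure condition~(\ref{finsuperflat}). Submodularity is the main step. It suffices to verify the local form
\[ r'(Z\cup a)+r'(Z\cup b)\ge r'(Z\cup\{a,b\})+r'(Z) \]
for all $Z$ and elements $a,b$, which together with unit increase implies full submodularity in the finite setting. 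Most cases reduce directly to submodularity of $r$, so the work is in the cases involving $e$.

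We split the local inequality as follows. When $e\in Z$, write $Z=Y\cup e$. Failure would force $\cl(Y\cup a),\cl(Y\cup b)\in\cF$ and $\cl(Y)\notin\cF$, with ranks $r(Y\cup a)=r(Y\cup b)=r(Y)+1=r(Y\cup\{a,b\})$. The last equality means both rank-increasing elements land in the same flat, so $\cl(Y\cup a)=\cl(Y\cup b)=\cl(Y\cup ab)$ and there is no contradiction there. Instead the contradictory case is $r(Y\cup ab)=r(Y)+2$ with $\cl(Y\cup a),\cl(Y\cup b)\in\cF$. These two flats form a modular pair whose intersection is $\cl(Y)$: their ranks are $k+1,k+1$, their union spans a flat of rank $k+2$, and their intersection contains $\cl(Y)$ of rank $k$. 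Condition~(\ref{forallfinmodpair}) then forces $\cl(Y)\in\cF$. When $a=e$ and $e\notin Z$, the inequality reduces to monotonicity together with the unit-increase property of $r'$.

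The main obstacle is this careful case analysis of local submodularity, specifically identifying precisely where modularity of the pair $(\cl(Y\cup a),\cl(Y\cup b))$ is needed, and checking that the intersection equals $\cl(Y)$ via a rank count.
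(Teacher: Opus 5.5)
Your argument is correct in substance, but it follows the classical rank-function route (essentially Crapo's) rather than the paper's. The paper does not prove this finite statement separately. It obtains it as the finite case of Theorem~\ref{modcutextension}, where the chain condition (iii) is vacuous because there are no infinite chains of flats.

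\textbf{How the paper argues.} The extension is built from independent sets, $\cI=\cI_0\cup\cI_1\cup\cI_e$: the $M$-independent sets whose closure lies in $\cF$, those whose closure does not, and the latter with $e$ adjoined. The authors characterize the maximal members of $\cI\cap 2^X$ and check the infinite independence axioms directly. The modular-cut conditions enter through Lemma~\ref{modcutinter}, applied to families such as $\{\cl_M((I-\{e\})\cup\{f\}) : f\in I'-I\}$, which have the mutual basis $I'$. Uniqueness is argued as you do, but via independent sets containing $e$ rather than via ranks.

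\textbf{What each route buys.} Yours is shorter, and it pinpoints exactly which instances of (ii) matter: only ``diamond'' pairs $\cl_M(Y\cup a),\cl_M(Y\cup b)$ of rank $r(Y)+1$ whose union has rank $r(Y)+2$. But it is intrinsically finite. It subtracts ranks, passes from local to global submodularity, and relies on the rank axiomatization, none of which is available for $\enn$-valued ranks. It also never touches (iii), which the paper shows is indispensable in infinite rank. The paper's argument pays with more case analysis and a Zorn's-lemma step inside Lemma~\ref{modcutinter}, but it works for all matroids.

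\textbf{Slips to repair in the write-up.}
\begin{itemize}
\item In the case $Z=Y\cup e$, failure does not force $r(Y\cup a)=r(Y\cup b)=r(Y)+1=r(Y\cup\{a,b\})$; in that configuration both sides equal $2r(Y)+2$. Write $r'(X\cup e)=r(X)+\varepsilon(X)$, where $\varepsilon(X)=0$ exactly when $\cl_M(X)\in\cF$ and $\varepsilon(X)=1$ otherwise. Condition (i) makes $\varepsilon$ antitone. So the only violation is zero submodular slack for $r$ together with $\varepsilon(Y)=1$ and $\varepsilon(Y\cup a)=\varepsilon(Y\cup b)=0$. Then $\cl_M(Y\cup a)\neq\cl_M(Y)\neq\cl_M(Y\cup b)$, which forces $r(Y\cup\{a,b\})=r(Y)+2$. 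This is exactly the case you then correctly rule out with (ii).
\item For $a=e\notin Z$, the local inequality is literally $\varepsilon(Z)\ge\varepsilon(Z\cup b)$, i.e.\ condition (i). Monotonicity and unit increase of $r'$ give the wrong direction when $b\in\cl_M(Z)$.
\item Similarly, (i) is what the unit-increase property needs. Monotonicity of $r'$ holds without it.
\end{itemize}
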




We show that in the infinite setting, Definition~\ref{finmodcut} is not sufficient for 
this theory to work, even with our amended definition of `modular pair'. 
Specifically, every infinite-rank matroid has a collection of flats
that is a modular cut in the finite sense, but does not correspond to any extension. 

\begin{theorem}
  Every matroid $M$ of infinite rank has a collection $\cF$ of flats
  satisfying (i) and (ii) of Definition~\ref{finmodcut}, 
  for which the statement of Theorem~\ref{finmodcutextension} fails. 
\end{theorem}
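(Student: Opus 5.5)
We take $\cF$ to be the collection of flats of finite corank, that is, flats $F$ with $r(M \con F) < \infty$. We show that $\cF$ satisfies (i) and (ii) of Definition~\ref{finmodcut}, and then that no extension $M'$ of $M$ by $e$ has $\cF_M^{M'} = \cF$.

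\emph{Condition (i).} If $F \subseteq F'$ are flats, then $r(M \con F') \le r(M \con F)$. So $\cF$ is closed under taking larger flats.

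\emph{Condition (ii).} Let $(F,F')$ be a modular pair with $F, F' \in \cF$, and let $I$ be a mutual basis for $\{F,F'\}$. Extend $I$ to a basis $B$ of $M$.

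We first check that $B \cap F \cap F'$ spans $F \cap F'$. Suppose some $x \in (F \cap F') \setminus \cl(B \cap F \cap F')$. Since $B \cap F$ spans $F$, there is a circuit $C \subseteq (B \cap F) \cup \{x\}$ containing $x$. Likewise there is a circuit $C' \subseteq (B \cap F') \cup \{x\}$ containing $x$. If $C \ne C'$, circuit elimination (valid in infinite matroids) gives a circuit inside $(C \cup C') \setminus \{x\} \subseteq B$, which is impossible. Hence $C = C' \subseteq (B \cap F \cap F') \cup \{x\}$, contradicting the choice of $x$.

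So $B \cap F \cap F'$ is a basis of $F \cap F'$. Therefore $B \setminus (F \cap F')$ is a basis of $M \con (F \cap F')$. Similarly $B \setminus F$ and $B \setminus F'$ are bases of $M \con F$ and $M \con F'$, so both are finite. Since
\[
B \setminus (F \cap F') = (B \setminus F) \cup (B \setminus F'),
\]
this set is finite, and so $F \cap F' \in \cF$.

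\emph{No extension realises $\cF$.} Suppose $M'$ is an extension of $M$ by $e$ with $\cF_M^{M'} = \cF$. Since $M$ has infinite rank, $\cl_M(\emptyset)$ has infinite corank and is not in $\cF$. Hence $e \notin \cl_{M'}(\emptyset)$, so $e$ is not a loop of $M'$.

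Fix a basis $B$ of $M$; it is infinite. For each $b \in B$, the flat $\cl_M(B - b)$ has corank $1$, so it lies in $\cF$, giving $e \in \cl_{M'}(B - b)$. Since $B$ is independent in $M'$, this means there is a circuit $C_b \subseteq (B - b) \cup \{e\}$ of $M'$ containing $e$.

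The set $B \cup \{e\}$ contains a unique circuit of $M'$ (the fundamental circuit of $e$ with respect to $B$). Uniqueness also holds in infinite matroids, by the same elimination argument as above. So every $C_b$ equals this single circuit $C$. Then $C$ avoids every $b \in B$, which forces $C = \{e\}$. This makes $e$ a loop, a contradiction.

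The only step needing real care is the mutual-basis calculation in condition (ii). The finite-rank argument via the rank function is unavailable there, so it is replaced by circuit elimination, which remains valid in infinite matroids.
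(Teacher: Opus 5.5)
Your proof is correct, and it follows essentially the route the paper has in mind. The paper never writes this proof out: the statement is meant to follow from Theorem~\ref{finmodcutdeletioninf}, whose proof shows that $\cF_M^{M'}$ is closed under intersections of arbitrary nonempty modular families. Your argument makes that route self-contained. Your verification of (ii) is the two-set case of Lemma~\ref{closureinter} (equivalently Lemma~\ref{blattice}). Your fundamental-circuit step is Lemma~\ref{closureinter} applied in $M'$ to the independent set $B$ and the family $\{B - \{b\} : b \in B\}$, which gives $e \in \bigcap_{b \in B} \cl_{M'}(B - \{b\}) = \cl_{M'}(\es)$.

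One point is worth adding. Your witnessing family $\{\cl_M(B - \{b\}) : b \in B\}$ is not a chain. So it exhibits a failure of closure under general modular families, not a failure of axiom (iii) of Definition~\ref{infmodcut}, which is the axiom this theorem is meant to motivate. The same $\cF$ also violates (iii) directly. Take distinct $b_1, b_2, \dotsc \in B$. The flats $\cl_M(B - \{b_1, \dotsc, b_n\})$ for $n \in \bN$ form an infinite chain in $\cF$ with mutual basis $B$. By Lemma~\ref{closureinter}, its intersection is $\cl_M(B - \{b_1, b_2, \dotsc\})$, which has infinite corank because $\{b_1, b_2, \dotsc\}$ is a basis of the corresponding contraction. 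This version shows precisely which axiom your $\cF$ lacks.
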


Happily, we can rectify this with a modified definition of modular cuts for infinite matroids,
where the objects satisfying the new definition do precisely correspond to single-element extensions. 
The modification we need is to insist that $\cF$ is closed under taking intersections of `modular chains'
as well as modular pairs. 
(A \emph{chain} is a collection $\cC$ of sets such that all $X,Y \in \cC$ satisfy $X \ss Y$ or $Y \ss X$.)

\begin{definition}\label{infmodcut}
  A collection $\cF$ of flats in a matroid $M$ is a \emph{modular cut} if
  \begin{enumerate}[(i)]
  \item\label{superflat} 
    For each $F \in \cF$, every flat $F'$ of $M$ with $F \subseteq F'$ satisfies $F' \in \cF$. 
  \item\label{forallmodpair}
    For every modular pair $(F,F')$ of $M$, if $F, F' \in \cF$, then $F \cap F' \in \cF$.
  \item\label{forallmodchain} 
    Each infinite chain $\cC$ of flats in $M$ that is a modular family satisfies
    $\bigcap \cC \in \cF$. 
  \end{enumerate}
\end{definition}

Since finite matroids (and indeed, finite-rank matroids) have no infinite chains of flats,
this definition readily specializes to Definition~\ref{finmodcut}
in the finite case. We show that these are the right objects to characterize extensions. 
\begin{theorem}
  If `modular cut' is defined as in Definition~\ref{infmodcut}, 
  then Theorems~\ref{finmodcutdeletion} and Theorems~\ref{finmodcutextension} 
  hold for infinite matroids. 
\end{theorem}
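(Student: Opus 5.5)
The proof has two directions, and I would handle them separately.

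\emph{Direction 1 (Theorem~\ref{finmodcutdeletion}).} Let $M'$ be an extension of $M$ by $e$ and $\cF=\cF_M^{M'}$. Property (i) is immediate, since closure is monotone. For (ii) and (iii) I would prove one common statement, since a modular pair is a special case of a modular family.

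\begin{quote}
If $\cX$ is a modular family of flats of $M$ and $e\in\cl_{M'}(F)$ for every $F\in\cX$, then $e\in\cl_{M'}(\bigcap\cX)$.
\end{quote}

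Here is the argument.
\begin{itemize}
\item Take a mutual basis $B$ for $\cX$. The set $B\cap\bigcap\cX$ is then a basis of $\bigcap\cX$; the finite analogue is standard, and here it follows because $B\cap F$ is a basis of $F$ for each $F\in\cX$.
\item Suppose $e\notin\cl_{M'}(\bigcap\cX)$. Then $(B\cap\bigcap\cX)\cup\{e\}$ is independent in $M'$.
\item Augment it inside $B\cup\{e\}$ to a maximal independent set $I$ of $M'|(B\cup\{e\})$ containing $e$. Then $I=(B\setminus\{b\})\cup\{e\}$ or $I=B\cup\{e\}$; the second case gives $e\notin\cl_{M'}(F)$ for every $F$, contradicting the hypothesis.
\item So $e$ lies in the fundamental circuit $C\subseteq B\cup\{e\}$. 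For each $F$, since $B\cap F$ spans $F\ni\ldots$ and $e\in\cl_{M'}(B\cap F)$, uniqueness of the fundamental circuit of $e$ with respect to $B$ forces $C-e\subseteq B\cap F$.
\item Hence $C-e\subseteq B\cap\bigcap\cX$, so $e\in\cl_{M'}(\bigcap\cX)$, which is the required contradiction.
\end{itemize}

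This gives (ii) and (iii) directly, provided $B$ is independent in $M'$. That holds because $M'\setminus e=M$. The only case needing separate treatment is when $e$ is a coloop of $M'$: then $\cF=\emptyset$, and the hypotheses of (ii) and (iii) are vacuous. Wait, (iii) is unconditional in Definition~\ref{infmodcut}, so I will need to read (iii) as also requiring $\cC\subseteq\cF$ (this is presumably intended). With that reading the argument above covers it.

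\emph{Direction 2 (Theorem~\ref{finmodcutextension}).} Given $\cF$, I would define $M'$ on $E\cup\{e\}$ by its independent sets. A set $I$ is independent in $M'$ if either
\begin{itemize}
\item $I\subseteq E$ is independent in $M$, or
\item $I=J\cup\{e\}$ with $J$ independent in $M$ and $\cl_M(J)\notin\cF$.
\end{itemize}
Uniqueness is then immediate. In any extension realising $\cF$, the set $J\cup\{e\}$ (with $J$ independent) is independent exactly when $e\notin\cl_{M'}(J)$, which happens exactly when $\cl_M(J)\notin\cF$. Also $\cF_M^{M'}=\cF$ follows once $M'$ is known to be a matroid, because for a flat $F$ with basis $J$ we have $e\in\cl_{M'}(F)$ iff $\cl_M(J)=F\in\cF$.

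The main obstacle is verifying the infinite-matroid axioms for $M'$, especially the maximality axiom (I4). The augmentation axiom (I3) is the finite-style part and uses (i) and (ii): the key case involves two flats in $\cF$ that arise as closures of subsets of a common independent set, and such flats form a modular pair. For (I4), fix $I\subseteq X$ and seek a maximal independent set of $M'$ inside $X$ containing $I$.
\begin{itemize}
\item If $e\notin X$, or $\cl_M(X\setminus e)\in\cF$ with $e\notin I$, take an $M$-basis of $X\setminus e$ containing $I\setminus e$.
\item The hard case is when we need an $M$-independent $J$ with $I\setminus e\subseteq J\subseteq X\setminus e$, $\cl_M(J)\notin\cF$, and $J$ maximal subject to this. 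Here I would apply Zorn's lemma to such sets $J$ ordered by inclusion.
\item A chain $(J_\alpha)$ has union $J$, which is independent. If $\cl_M(J)\in\cF$, consider the flats $F_\alpha=\cl_M(J\setminus J_\alpha)\vee\ldots$; more usefully, take the dual chain of flats $\cl_M(J_\alpha\cup(J\setminus J_\beta))$. The cleanest choice is the decreasing chain $G_\beta=\cl_M(I\setminus e\cup(J\setminus J_\beta))$.
\item This chain is modular, with mutual basis $J$, and its intersection is $\cl_M(I\setminus e)\notin\cF$. Then (iii) together with upward closure (i) should force a contradiction, or else let us replace the chain argument with a transfinite greedy construction.
\end{itemize}
I expect this Zorn/chain step to need the most care, and it is where (iii) is used essentially.
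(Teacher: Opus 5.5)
Your argument for Theorem~\ref{finmodcutdeletion} is sound. The uniqueness of the circuit through $e$ in $B\cup\{e\}$ is exactly the content of Lemma~\ref{closureinter} that the paper uses. You are right that (iii) must be read with $\cC\ss\cF$. Your definition of $M'$, the uniqueness argument and the identity $\cF_M^{M'}=\cF$ coincide with the paper's. Your claim that the augmentation axiom needs only (i) and (ii) is also correct: each case reduces to two flats in $\cF$ spanned by subsets of one independent set. The genuine gap is the maximality axiom. That is precisely where the theorem depends on (iii), and the mechanism you propose for it does not work.

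Applying Zorn's lemma upward fails. Your family is the sets $J$ with $I-\{e\}\ss J\ss X-\{e\}$, $J$ independent and $\cl_M(J)\notin\cF$. Unions of chains can leave this family, and no contradiction is available when they do.

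For instance, let $M$ be free on $\bN$ and $\cF=\{\bN\}$. This is a modular cut, whose extension is a circuit on $\bN\cup\{e\}$. Take $I=\{e\}$ and $X=\bN\cup\{e\}$. The chain $J_n=\{0,\dots,n\}$ has no upper bound in your family at all. Your rescue chain $G_n=\cl_M(\bN-J_n)$ is modular with intersection $\es\notin\cF$, but no $G_n$ lies in $\cF$, so (iii) gives nothing. In general nothing forces $G_\beta\in\cF$.

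The missing idea is to work downward from a basis. Suppose $\cl_M(X-\{e\})\in\cF$, and let $J$ be an $M$-basis of $X-\{e\}$ containing $I-\{e\}$.
\begin{itemize}
\item If some $f\in J-I$ has $\cl_M(J-\{f\})\notin\cF$, then $(J-\{f\})\cup\{e\}$ is maximal in $X$. This holds because $J-\{f\}$ spans a hyperplane of $\cl_M(X-\{e\})\in\cF$.
\item Otherwise every flat $\cl_M(J-\{f\})$, $f\in J-I$, lies in $\cF$. These flats form a modular family with mutual basis $J$, and their intersection is $\cl_M(I-\{e\})\notin\cF$.
\end{itemize}
Ruling out the second case requires that $\cF$ be closed under intersections of arbitrary, possibly infinite, modular families. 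This is Lemma~\ref{modcutinter}, and it is where (iii) is used. The proof applies Zorn's lemma to \emph{decreasing} chains in the collection of flats of the form $\cl_M(K)$ with $K\ss J$ that belong to $\cF$:
\begin{itemize}
\item by Lemma~\ref{closureinter}, the intersection of such a chain is again of this form;
\item when the chain is infinite, (iii) puts that intersection in $\cF$;
\item a minimal element $F_1$ then satisfies $F_1\ss F$ for every member $F$ of the given family, by (ii) and minimality;
\item finally, (i) places the intersection of the family in $\cF$.
\end{itemize}
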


See Theorems~\ref{finmodcutdeletioninf} and~\ref{modcutextension} for the proof of the above. 

\subsection*{Quotients and Projections}

We now give two definitions that are well-known to be equivalent for finite matroids. 

\begin{definition}
A matroid $N$ is a \emph{projection} of a matroid $M$
if there is some matroid $P$ and a set $X$ for which $P \con X = N$ and $P \del X = M$. 
\end{definition}

\begin{definition}
  A matroid $N$ is a \emph{quotient} of a matroid $M$, and we write $N \preceq M$,
  if $E(M) = E(N)$, and $\cl_M(X) \ss \cl_N(X)$ for all $X \ss E(M)$. 
\end{definition}

Since these definitions are equivalent,
there is some disagreement on terminology in the literature on finite matroids,
where authors pick one term or the other, and one definition or the other.
For instance, [\ref{oxley}] uses `quotient' to mean what have defined as a `projection'. 
As well as being equivalent to each other, quotients and projections of finite matroids have
have nice characterizations in terms of circuits, flats and relative rank. 
The following appears in ([\ref{oxley}], Propositions 7.3.1 and 7.3.6).
The statement uses the \emph{relative rank} of two sets $X \ss Y$ in $M$, 
which is defined by $\relrank{M}{Y}{X} = r_{M \con X}(Y-X)$; this is equal to $r_M(Y) - r_M(X)$ in the finite case.

\begin{theorem}\label{finquotequiv}
  Let $M$ and $N$ be finite matroids. The following are equivalent: 
  \begin{enumerate}[(1)]
    \item\label{finquot} $N$ is a quotient of $M$,
    \item\label{finprojdual} $M^*$ is a quotient of $N^*$,
    \item\label{finflat} every flat of $N$ is a flat of $M$,
    \item\label{fincircuit} every circuit of $M$ is a union of circuits of $N$,
    \item\label{finrank} $\relrank{N}{Y}{X} \le \relrank{M}{Y}{X}$ for all $X \ss Y \ss E(M)$. 
    \item\label{finquotproj} $N$ is a projection of $M$,
  \end{enumerate}
\end{theorem}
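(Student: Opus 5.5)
A natural route for these finite matroids is a cycle of implications organized around the closure characterization~\ref{finquot}.

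\emph{\ref{finquot}$\Rightarrow$\ref{finflat}.} If $F$ is a flat of $N$, then $\cl_M(F) \ss \cl_N(F) = F$, so $F$ is a flat of $M$.

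\emph{\ref{finflat}$\Rightarrow$\ref{finquot}.} $\cl_N(X)$ is a flat of $N$, hence a flat of $M$, and it contains $X$. Therefore it contains $\cl_M(X)$.

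\emph{\ref{finflat}$\Leftrightarrow$\ref{fincircuit}.} Use that a set is a union of circuits exactly when its complement is a flat of the dual, i.e.\ circuits of $M$ are complements of hyperplanes of $M^*$. Cleaner is to prove \ref{fincircuit}$\Leftrightarrow$\ref{finquot} directly:
\begin{itemize}
\item For \ref{finquot}$\Rightarrow$\ref{fincircuit}: given a circuit $C$ of $M$ and $e \in C$, we have $e \in \cl_M(C-e) \ss \cl_N(C-e)$. So some circuit of $N$ contains $e$ and lies in $C$.
\item Conversely, if $e \in \cl_M(X)-X$, take a circuit $C \ss X \cup e$ of $M$ containing $e$. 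By \ref{fincircuit} some circuit of $N$ inside $C$ contains $e$, so $e \in \cl_N(X)$.
\end{itemize}

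\emph{\ref{finquot}$\Leftrightarrow$\ref{finprojdual}.} Use the duality relation $e \in \cl_{M^*}(X)$ iff $e \notin \cl_M(E-X-e)$ for $e \notin X$, together with the hyperplane/cocircuit complementation. Flats of $N$ being flats of $M$ should translate into unions of cocircuits of $M^*$ being unions of cocircuits of $N^*$, which combines with \ref{fincircuit}. I would check this carefully but expect it to be routine.

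\emph{\ref{finquot}$\Leftrightarrow$\ref{finrank}.} Since $\relrank{N}{Y}{X} = r_N(Y)-r_N(X)$, it suffices to prove $r_N(X\cup e)-r_N(X) \le r_M(X\cup e)-r_M(X)$ for single elements and then telescope along a chain $X=X_0 \ss \dots \ss X_k = Y$. The single-element inequality is exactly \ref{finquot}: if the right side is $0$ then $e \in \cl_M(X) \ss \cl_N(X)$, so the left side is $0$. Conversely, taking $Y = X\cup e$ in \ref{finrank} recovers \ref{finquot}.

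\emph{\ref{finquotproj}$\Rightarrow$\ref{finquot}.} If $P\con X = N$ and $P\del X = M$, then for $Z\ss E(M)$ we have $\cl_M(Z) = \cl_P(Z)-X \ss \cl_P(Z\cup X)-X = \cl_N(Z)$.

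\emph{\ref{finquot}$\Rightarrow$\ref{finquotproj}.} This is the main obstacle. I would induct on $k = r(M)-r(N)$, which is $\ge 0$ by \ref{finrank}. If $k=0$ then \ref{finrank} forces $r_N = r_M$, so $N=M$. Otherwise I would construct an \emph{elementary quotient} $M_1$ with $N \preceq M_1 \preceq M$ and $r(M_1) = r(M)-1$, realized as $M_1 = M'\con e$ for a single-element extension $M'$ of $M$. Take the collection $\cF$ of flats $F$ of $M$ with $r_N(F) - r_M(F) = r(N)-r(M)$ (``$F$ already absorbs the full rank drop''); it contains $E$. Verifying that $\cF$ is a modular cut uses submodularity of $r_M - r_N$, which follows from \ref{finrank}: for a modular pair in $M$ the defect function is supermodular relative to $M$. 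Theorem~\ref{finmodcutextension} then gives $M'$.

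One must check that $N \preceq M'\con e$, and that $M'\con e$ has rank $r(M)-1$ (since $E\in\cF$ the element $e$ is not a coloop). Iterating gives $P$ with $P\del X = M$ and $P\con X = N$. Here $X$ is the set of $k$ added elements, and $P$ is assembled by successive extensions, using the fact that contraction and deletion commute across the different new elements. The delicate verification is $N\preceq M'\con e$ via the rank formula for $M'\con e$ in terms of $\cF$.
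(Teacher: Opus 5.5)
Your arguments for (1)--(5) and for (6)$\Rightarrow$(1) are fine. Your plan for (1)$\Rightarrow$(6) follows the paper's route through Theorems~\ref{quotequiv} and~\ref{quotientisproject}. In the finite case your cut $\cF$ (flats on which $d = r_M - r_N$ reaches its maximum $k$) is exactly the cut $\{F : N \dcon F = M \dcon F\}$ of Lemma~\ref{tightquot}, and your induction on $k$ is the induction on discrepancy.

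\textbf{The gap: assembling $P$.} After one step you have $M'$ on $E \cup \{e_1\}$ with $M' \del e_1 = M$. But the next cut $\cF_2$ is a modular cut of $M_1 = M' \con e_1$, so Theorem~\ref{finmodcutextension} produces an extension of $M' \con e_1$ by $e_2$, not of $M'$. Commuting deletions and contractions only helps once you have a single matroid $P$ on $E \cup \{e_1,e_2\}$ with $P \del e_2 = M'$ and $P \con e_1$ equal to that extension, and nothing in your argument constructs such a $P$. What you need is that projections compose, and this requires a separate idea. The paper gets it from Theorem~\ref{contractmodularcut}: the family $\{F \cup \{e_1\} : F \in \cF_2\}$ is a modular cut of $M'$, and by uniqueness in Theorem~\ref{finmodcutextension} the corresponding extension of $M'$ contracts by $e_1$ to the extension of $M' \con e_1$ given by $\cF_2$. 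This is then iterated in Theorems~\ref{majorofminor} and~\ref{projectseq}. For finite matroids the lifting is a rank check: $r_{M' \con e_1}(S - \{e_1\}) = r_{M'}(S) - r_{M'}(\{e_1\})$ whenever $e_1 \in S$, so $(F_1 \cup \{e_1\}, F_2 \cup \{e_1\})$ is modular in $M'$ exactly when $(F_1,F_2)$ is modular in $M' \con e_1$. It still has to be stated and iterated. Alternatively, in the finite case you can skip the induction. Take a $k$-set $Z$ disjoint from $E$ and set $r_P(Y \cup Z') = \min\{r_M(Y) + |Z'|,\ r_N(Y) + k\}$ for $Y \ss E$, $Z' \ss Z$. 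This is a matroid rank function, because a minimum of two submodular functions whose difference is monotone (here by (5)) is submodular. Then $0 \le r_M - r_N \le k$ gives $P \del Z = M$ and $P \con Z = N$.

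\textbf{Smaller points.}
\begin{itemize}
\item The check you called delicate is short. For $X \ss E$ we have $e \in \cl_{M'}(X)$ iff $\cl_M(X) \in \cF$, and $d(\cl_M(X)) = d(X)$. Hence $r_{M' \con e}(X)$ equals $r_M(X) - 1$ if $d(X) = k$ and $r_M(X)$ otherwise. Now suppose $f \in \cl_{M' \con e}(X) - \cl_N(X)$. Then $f \notin \cl_M(X)$, so $r_M$ and $r_N$ both increase by one from $X$ to $X \cup \{f\}$, giving $d(X \cup \{f\}) = d(X)$. But $r_{M' \con e}(X \cup \{f\}) = r_{M' \con e}(X)$ forces $d(X \cup \{f\}) = k > d(X)$, a contradiction.
\item The count $r(M' \con e) = r(M) - 1$ also needs $e$ not to be a loop, i.e.\ $\cl_M(\varnothing) \notin \cF$. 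This holds because $d(\cl_M(\varnothing)) = 0 < k$.
\item What closes $\cF$ under modular pairs is supermodularity of $d$ on modular pairs of $M$, coming from modularity of $r_M$ there and submodularity of $r_N$. Condition (5) is used only for $d \le k$; $d$ is not submodular in general.
\item Your dual translation of (3) swaps the roles of $M$ and $N$. However, the duality relation you quote already gives (1)$\Rightarrow$(2) directly. If $e \in \cl_{N^*}(X) - X$, then $e \notin \cl_N(E - X - \{e\}) \supseteq \cl_M(E - X - \{e\})$, so $e \in \cl_{M^*}(X)$. Apply this to the duals for the converse.
\end{itemize}
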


For infinite matroids, we show that the notions of a quotient and a projection diverge, 
and that the other properties on the above list are all equivalent to the former. 

\begin{theorem}\label{quotprojectequiv}
  For general infinite matroids, conditions (\ref{finquot}) - (\ref{finrank}) of Theorem~\ref{finquotequiv} are equivalent
  to each other, and are implied by (\ref{finquotproj}), but do not imply~(\ref{finquotproj}). 
\end{theorem}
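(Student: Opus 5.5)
The proof splits into three parts: the equivalence of (\ref{finquot})--(\ref{finrank}) for infinite matroids, the implication (\ref{finquotproj})$\Rightarrow$(\ref{finquot}), and an example where (\ref{finquot}) holds but (\ref{finquotproj}) fails.

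\textbf{Equivalence of (\ref{finquot})--(\ref{finrank}).} I will prove the cycle (\ref{finquot})$\Leftrightarrow$(\ref{finflat})$\Leftrightarrow$(\ref{fincircuit}) with closure arguments that never use rank.

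For (\ref{finquot})$\Rightarrow$(\ref{finflat}): if $F$ is a flat of $N$, then $\cl_M(F)\ss\cl_N(F)=F$. Conversely, given (\ref{finflat}), $\cl_N(X)$ is a flat of $M$ containing $X$, so it contains $\cl_M(X)$.

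For (\ref{finflat})$\Rightarrow$(\ref{fincircuit}): let $C$ be a circuit of $M$ and $e\in C$. Then $e\in\cl_M(C-e)\ss\cl_N(C-e)$. Hence some circuit of $N$ contains $e$ and lies inside $C$, and taking the union over $e$ gives $C$. This step is valid in infinite matroids because $e\in\cl(X)-X$ still yields a circuit in $X\cup e$ through $e$.

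For (\ref{fincircuit})$\Rightarrow$(\ref{finquot}): if $e\in\cl_M(X)-X$, choose a circuit $C\ss X\cup e$ of $M$ containing $e$. Some circuit of $N$ inside $C$ contains $e$, so $e\in\cl_N(X)$.

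For (\ref{finquot})$\Leftrightarrow$(\ref{finprojdual}): I will use $e\in\cl_{M^*}(X)$ iff $e$ is not a coloop of $M\del X$, or equivalently the cocircuit/hyperplane formulation. Concretely, a flat of $N$ is a flat of $M$, so every hyperplane-complement (cocircuit) of $N$ is a union of cocircuits of $M$. Dualising gives condition (\ref{fincircuit}) for the pair $(N^*,M^*)$. Since (\ref{fincircuit})$\Rightarrow$(\ref{finquot}) is already proved, this yields $M^*\preceq N^*$; the converse follows by symmetry. To justify ``every cocircuit of $N$ is a union of cocircuits of $M$'', I note that a cocircuit is $E-H$ for a hyperplane $H$. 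Each element $e\in E-H$ lies in a cocircuit of $M$ disjoint from $H$, because $H$ is a flat of $M$ not containing $e$.

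For (\ref{finrank}): condition (\ref{finquot}) is equivalent to $N\con X\preceq M\con X$ for every $X$. This holds because closure in a contraction is $\cl(\cdot\cup X)-X$. So I need the lemma that $N\preceq M$ implies $r(N)\le r(M)$, where $r$ takes values in $\enn$. Take a basis $B_N$ of $N$ and extend $B_N$ to a basis $B$ of $M$ using the flat-inclusion property; then $|B_N|\le|B|$ needs a cardinality comparison. The infinite case is trivial in $\enn$ unless $r(M)$ is finite. If $r(M)$ is finite, a finite basis $B$ of $M$ satisfies $E=\cl_M(B)\ss\cl_N(B)$, so $B$ spans $N$ and $r(N)\le|B|$. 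Applying this to the contractions $M\con X$ and $N\con X$, restricted to $Y$, gives (\ref{finrank}).

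Conversely, (\ref{finrank}) with $Y=X\cup e$ says that $e\in\cl_M(X)$ forces $\relrank{N}{X\cup e}{X}=0$, that is, $e\in\cl_N(X)$. This gives (\ref{finquot}).

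\textbf{(\ref{finquotproj})$\Rightarrow$(\ref{finquot}).} Suppose $P\con X=N$ and $P\del X=M$, and let $Z\ss E$. Then $\cl_M(Z)=\cl_P(Z)-X\ss\cl_P(Z\cup X)-X=\cl_N(Z)$.

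\textbf{Counterexample to the converse.} This is the main obstacle. The natural candidate has an infinite-rank obstruction: take $M$ free on a countable set $E$, and $N=U_{0}$ on $E$, or more subtly a quotient of ``infinite corank''. In the finite case a projection needs $|X|\ge r(M)-r(N)$ elements, which is harmless. So I would look for an invariant preserved by projection but not by quotient.

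My first attempt would use the nullity/local-connectivity machinery the paper promises. The idea is that in $P$ the set $X$ is finitary-like, or that a quotient $N$ of a finitary $M$ arising as a projection must satisfy some compactness condition. Candidates for the pair are $M$ a free matroid on an infinite set and $N$ a non-finitary quotient, for instance the cofinitary matroid whose circuits are the infinite sets of a suitable family, or $N$ a uniform-like matroid $U_{\infty}$ whose bases are the cofinite-complement sets of size one less.

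I would then try to show that no $P$ exists, by picking a basis $I$ of $P$ that contains a basis of $P\del X=M=$ free. Such a basis can be taken to contain all of $E$, so $E$ is independent in $P$. Then $N=P\con X$ is determined by how $X$ interacts with $E$. One must show the resulting circuit structure of $N$ cannot be the chosen one, for example by showing that every circuit of $N$ must meet $E$ in a set that is a ``fundamental'' trace of a $P$-circuit through elements of $X$. Here the infinite-matroid axioms (IM) constrain which families are realizable. Choosing the right $N$ so that this argument closes is where I expect the real difficulty.
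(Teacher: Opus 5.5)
Your arguments for the equivalence of (\ref{finquot})--(\ref{finrank}) and for (\ref{finquotproj})$\Rightarrow$(\ref{finquot}) are sound and close to the paper's. Your dual step works: for a flat $H$ of $M$ and $e\notin H$, extend a basis of $H$ together with $e$ to a basis $B$ of $M$, and $E-\cl_M(B-\{e\})$ is then an $M$-cocircuit through $e$ that avoids $H$. There is a small slip: for $e\notin X$ one has $e\in\cl_{M^*}(X)$ iff $e$ \emph{is} a coloop of $M\del X$, though you never use this.

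The genuine gap is the third assertion. You give no example of a quotient that is not a projection, and the family you propose to search cannot contain one. If $M$ is free on $E$, then \emph{every} matroid $N$ on $E$ is a projection of $M$. To see this, add a parallel copy $e'$ of each $e\in E$ to $N^*$ to get a matroid $Q$. Then $Q\del E'=N^*$, while $Q\con E'$ is the rank-zero matroid $M^*$ on $E$. So $P=Q^*$ satisfies $P\del E'=M$ and $P\con E'=N$. Consequently no non-finitary quotient $N$ of a free $M$ is a counterexample, and your planned argument that ``no $P$ exists'' cannot close.

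The missing idea is a concrete invariant of projections. If $N$ is a projection of $M$ and some $N$-independent set $B$ spans $M$ (for example, a common basis), then $N=M$. First normalize $(P,X)$ so that $X$ is independent and coindependent in $P$: delete $X-I$ for a basis $I$ of $X$, then contract the part of $I$ outside a cobasis of $I$. Now $B\cup X$ is $P$-independent, while $B$ spans $P$ because $E(P)-X$ does. This forces $X=\es$.

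For a free $M$ this invariant gives nothing, since its only basis is $E$. What you need is a pair $N\preceq M$ with $N\ne M$ and a common basis, which genuinely requires infinite behaviour. The paper uses the grid on $\bZ\times\bZ$:
\begin{itemize}
\item Take $M$ to be the finite-cycle matroid and $N$ the dual of the finite-bond matroid. By Bruhn--Diestel the circuits of $M^*$ are all bonds, so every circuit of $N^*$ is a circuit of $M^*$. Hence $M^*\preceq N^*$, and so $N\preceq M$.
\item The edge set of a one-way infinite spiral Hamilton path is a basis of $M$, so it spans $N$. It is also a basis of $N$: deleting any edge leaves a finite component, whose finite coboundary contains an $N$-cocircuit avoiding the remaining path edges.
\item Finally $N\ne M$. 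The union of all horizontal lines meets every finite bond, so it spans $N$, yet it remains $M$-independent after adding one vertical line.
\end{itemize}
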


In fact, we show that without additional assumptions, quotients and projections can be quite different. 
It is not difficult to show that, if $N$ is a projection of $M$, and there is a set $B$ that is a basis
for both $N$ and $M$, then $N = M$. We give examples where this fact dramatically fails if $N$ is instead
known to be a quotient of $M$. These examples are well-known constructions coming from infinite graphs; 
see Theorem~\ref{badgraphic}. More abundant and exotic examples are constructed in [\ref{gj25}]. 

On the other hand, we can recover a range of cases where being a quotient does imply being a projection. 
We require an assumption that either $N$ or $M^*$ is finitary (has no infinite circuits), 
as well as an assumption that $N$ and $M$ have `finite rank difference' in a certain sense. 
The proof of this theorem requires the theory of infinite modular cuts just discussed; 
see Theorem~\ref{quotientisproject}.

\begin{theorem}\label{quotientisprojectintro}
  Let $N$ be a quotient of a matroid $M$. If $N$ or $M^*$ is finitary, 
  and $N$ has a basis $B$ for which $M \con B$ has finite rank, then $N$ is a projection of $M$. 
\end{theorem}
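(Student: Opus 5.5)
Since $M \con B$ has finite rank, I will argue by induction on $k = r(M \con B)$, building $P$ one element at a time using single-element extensions, i.e. the modular cuts of Definition~\ref{infmodcut} and Theorems~\ref{finmodcutdeletioninf} and~\ref{modcutextension}. If $k = 0$, then $B$ spans $M$. Because $N \preceq M$, every set independent in $N$ is independent in $M$, so $B$ is also independent in $M$ and hence a basis of $M$. I then need the base case that a quotient sharing a basis with $M$ equals $M$. I do \emph{not} expect this to hold in general, since Theorem~\ref{badgraphic} gives counterexamples. So this is the point where the finitary hypothesis on $N$ or on $M^*$ must enter. Suppose $N$ is finitary. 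Each $e \notin B$ has a fundamental circuit $C_N(e,B)$, which is finite and is a union of $M$-circuits by condition~(\ref{fincircuit}). The unique $M$-circuit in $B \cup e$ must therefore equal it, and the fundamental circuits then determine both matroids. The case where $M^*$ is finitary is handled by applying the same argument to duals via condition~(\ref{finprojdual}).

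For the inductive step with $k>0$, I will construct a matroid $M'$ on $E \cup \{e\}$ with a new element $e$ such that $M' \del e = M$. The aim is that $M'\con e$ is still a quotient of $M$ that has $N$ as a quotient, and that $r(M'\con e \con B) = k-1$. Composing projections then finishes the argument: if $N$ is a projection of $M'\con e$ via $P'$ and a set $X'$, then I can extend $P'$ by $e$ along the modular cut used below. A cleaner version is to add all $k$ elements to a set $X$ simultaneously in one matroid $P$. To keep that clean, I will amalgamate stepwise, at each step choosing the modular cut inside the current $P$ rather than inside $M$.

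The natural modular cut in $M$ is
\[
\cF = \{\, F \text{ a flat of } M : F \text{ is a flat of } N \text{ and } r(M\con F) \text{ is finite with } \relrank{M}{E}{F} > \relrank{N}{E}{F} \,\}^{\uparrow}.
\]
More directly, I will take $\cF$ to be the set of flats $F$ of $M$ with $\cl_N(F) \ne \cl_M(F)$, together with its up-closure, and then check conditions (i)–(iii) of Definition~\ref{infmodcut}. Condition (i) is easy. Condition (ii) should follow from mutual bases and the relative-rank inequality~(\ref{finrank}), via a nullity/skewness computation. For condition (iii), on an infinite modular chain, the finite rank difference forces the relative ranks $\relrank{M}{E}{F}$ to stabilize along the chain. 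I then need to show that membership persists to the intersection. This is where the finitary hypothesis is used a second time: a circuit of $N$ witnessing non-closure is finite, so it survives passing to $\bigcap\cC$.

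I expect verifying condition (iii), and making the decrement $r(M'\con e\con B)=k-1$ precise, to be the main obstacle. My first attempt will be to choose $e$ so that it is freely placed on $\cl_M$ of a finite set spanning the $N$-dependency of some element outside $B$. This makes $\cF$ the principal filter of that finite-rank flat, and condition (iii) then becomes trivial. I will then verify directly that $N \preceq M' \con e$ using the closure definition of quotients.
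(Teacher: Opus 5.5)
\textbf{Base case.} The base case fails as written, for two reasons.

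First, condition~(\ref{fincircuit}) says that circuits of $M$ are unions of circuits of $N$, not conversely. So it only yields $C_N(e,B) \ss C_M(e,B)$.

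Second, and more seriously, the fundamental circuits of a single basis do not determine an infinite matroid, even when $N \preceq M$. Your argument never actually uses that $C_N(e,B)$ is finite, so it would equally show $M=N$ in Lemma~\ref{badgraphic}. There the Hamilton ray $B$ is a common basis, and for each $e \notin B$ the fundamental circuit of $e$ in both $M$ and $N$ is the unique finite cycle of $B \cup \{e\}$. Yet $M \ne N$.

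What is needed is the exchange-and-count argument behind Theorem~\ref{finitarycommonbasis} (Lemma~\ref{basispairtech}). Suppose some $M$-independent set is $N$-dependent. Finitarity of $N$ gives a finite $N$-circuit $C$ that is $M$-independent. Extend $C$ to an $M$-basis $B' \ss B \cup C$, and choose an $N$-basis $B'' \subsetneq B'$. Since $B'-B \ss C$ is finite, we get $|B-B'| = |B'-B| \ge |B''-B| = |B-B''| \ge |B-B'|$. Equality throughout forces $B''=B'$, a contradiction.

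\textbf{Inductive step.} The modular cut is wrong, and this is the main missing idea. Take $M = U_{3,4}$ on $\{1,2,3,4\}$, and let $N$ be the rank-$2$ matroid with parallel classes $\{1,2\}$ and $\{3,4\}$. Every flat of $N$ is a flat of $M$, so $N \preceq M$, and $r(M \con B) = 1$ for $B=\{1,3\}$. Each of your candidate cuts fails on this example:
\begin{itemize}
\item Your displayed cut contains the $N$-flat $\es$, since $\relrank{M}{E}{\es} = 3 > 2 = \relrank{N}{E}{\es}$. Its up-closure is therefore every flat, so $e$ is a loop and you make no progress.
\item Your second cut contains $\{1\}$ and $\{3\}$, a modular pair whose intersection $\es$ it omits. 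So it is not a modular cut.
\item The principal-filter fallback also fails. Placing $e$ freely on $\cl_M(\{1,2\}) = \{1,2\}$ gives an $M' \con e$ in which $3$ and $4$ are not parallel. Then $\{3,4\}$ is a flat of $N$ but not of $M'\con e$, so $N \not\preceq M' \con e$. Placing $e$ on $\{3,4\}$ fails symmetrically.
\end{itemize}
Any rank-$2$ projection of $M$ having $N$ as a quotient must equal $N$. So the only cut that works here is the non-principal $\{\{1,2\},\{3,4\},E\}$.

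The right choice in general is $\cF = \{F \text{ a flat of } M : N \dcon F = M \dcon F\}$, as in Lemma~\ref{tightquot}. For a modular pair, or an infinite modular chain, with mutual basis $B_1$ and intersection $F_0$, one shows that $B_1 - F_0$ is a common basis of $M \dcon F_0$ and $N \dcon F_0$. One then invokes the common-basis theorem from the base case. For chains, finitarity of $N$ makes an offending circuit finite, so it misses some member of the chain. Then $N \preceq M \dcon \cF$ follows from closure exchange and the same theorem.

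With this cut your decrement $r((M'\con e)\con B) = k-1$ is automatic. Indeed $E \in \cF$, while $\cl_M(B) \notin \cF$ because $N \dcon B$ has rank $0$ but $M \dcon B$ has rank $k>0$.

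Finally, composing projections requires a matroid $R$ with $R \del X' = M'$ and $R \con e = P'$, not an extension of $P'$ by $e$. This is the splicing result, Theorem~\ref{majorofminor}, which rests on lifting modular cuts via Theorem~\ref{contractmodularcut}.
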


We do not know any counterexamples if the finite-rank condition is dropped, 
but the above statement is false without the finitary hypotheses, 
even if it is known that $r(M \con B) = 0$ and that $N^*$ and $M$ are both finitary; 
again, the bad examples come from Theorem~\ref{badgraphic}. 

A \emph{single-element projection} of a matroid $M$ is a matroid $N$ such that
$M = P \del e$ and $N = P \con e$ for some matroid $P$. Our next theorem shows 
that a sequence of single-element projections can be realized as a single $k$-element 
projection. See Theorem~\ref{projectseq} for the proof. 

\begin{theorem}\label{bigprojofseq}
  If $M_0, \dotsc, M_k$ are matroids such that $M_{i+1}$ is a single-element 
  projection of $M_i$ for each $0 \le i < k$, then there is a matroid $P$
  and a $k$-element set $X$ such that $P \del X = M_0$ and $P \con X = M_k$. 
\end{theorem}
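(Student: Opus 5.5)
We argue by induction on $k$. The case $k=0$ is trivial, with $P = M_0$ and $X = \emptyset$. The case $k = 1$ is the definition of a single-element projection.

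For the inductive step, suppose $P_{k-1}$ is a matroid on $E \cup X'$ with $|X'| = k-1$, $P_{k-1} \del X' = M_0$ and $P_{k-1} \con X' = M_{k-1}$. Suppose also that $Q$ is a matroid on $E \cup \{e\}$ with $Q \del e = M_{k-1}$ and $Q \con e = M_k$, where $e$ is a new element. We want a matroid $P$ on $E \cup X' \cup \{e\}$ with $P \del e = P_{k-1}$ and $P \con (X' \cup e) = M_k$. Then $P \del (X' \cup e) = M_0$ as well, so $X = X' \cup \{e\}$ works.

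So $P$ should be a single-element extension of $P_{k-1}$ by $e$. We build it from a modular cut using the infinite version of Crapo's theory (Theorems~\ref{finmodcutdeletioninf} and~\ref{modcutextension}). The extension $Q$ of $M_{k-1} = P_{k-1} \con X'$ gives, by Theorem~\ref{finmodcutdeletioninf}, a modular cut $\cF_Q$ of $P_{k-1} \con X'$. We pull it back to
\[
\cF = \{ F \text{ a flat of } P_{k-1} : X' \ss F,\ F - X' \in \cF_Q \}.
\]
The flats of $P_{k-1}$ containing $X'$ correspond bijectively to flats of $P_{k-1} \con X'$ via $F \mapsto F - X'$. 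The key check is that $\cF$ is a modular cut of $P_{k-1}$ in the sense of Definition~\ref{infmodcut}.

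\begin{enumerate}[(i)]
\item Upward closure is immediate from upward closure of $\cF_Q$.
\item For modular pairs and modular chains in $\cF$, every member contains $X'$. We must show that modularity of such a family in $P_{k-1}$ implies modularity of the contracted family $\{F - X'\}$ in $P_{k-1} \con X'$. Take a mutual basis $B$ for the family in $P_{k-1}$. Since each $F \supseteq X'$, the set $B \cap \cl(X')$ spans $X'$; extend a basis $I$ of $X'$ inside $B \cap \cl(X')$. Then $B - I$ is independent in $P_{k-1} \con I$ and contains bases of each $F - X'$ there, and passing from $I$ to $X'$ is harmless. So we get a mutual basis in the contraction, and intersections behave since $\bigcap(F - X') = (\bigcap F) - X'$.
\end{enumerate}

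This transfer of modularity under contraction of a set contained in every member is the main technical step. I expect it to need a lemma from the paper's development of mutual bases, roughly that a family of supersets of $X$ is modular in $M$ iff the family of differences is modular in $M \con X$. This direction is the one we need.

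Theorem~\ref{modcutextension} then gives an extension $P$ of $P_{k-1}$ by $e$ whose modular cut is $\cF$. It remains to check that $P \con X' = Q$. Both are extensions of $P_{k-1} \con X' = M_{k-1}$ by $e$, since $(P \con X') \del e = P_{k-1} \con X'$. By the uniqueness part of Theorem~\ref{modcutextension}, it suffices to show their modular cuts agree. A flat $G$ of $P_{k-1} \con X'$ has $e \in \cl_{P \con X'}(G)$ iff $e \in \cl_P(G \cup X')$ iff $G \cup X' \in \cF$ iff $G \in \cF_Q$. This uses that $G \cup X'$ is a flat of $P_{k-1}$ and that $e \notin X'$.

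Hence $P \con (X' \cup e) = Q \con e = M_k$, completing the induction.
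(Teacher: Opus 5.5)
Your route is the paper's. There, the inductive step applies Theorem~\ref{majorofminor} with $D=\{e\}$, which reduces to Theorem~\ref{contractmodularcut}: pull the modular cut of $Q$ back to $\{F\cup X' : F\in\cF_Q\}$ (your $\cF$), then verify $P\con X'=Q$ by the same closure computation you give.

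The gap is in the step you flag as the main technical one. You take an arbitrary mutual basis $B$ for a modular pair or chain $\cF_0\ss\cF$ of flats containing $X'$, and assert that $B\cap\cl(X')$ spans $X'$. This is false. In $U_{2,3}$ on $\{a,b,c\}$ with $X'=\{a\}$, the set $B=\{b,c\}$ is a mutual basis for the modular pair $(\{a,b,c\},\{a,b,c\})$, whose members contain $X'$. Yet $B\cap\cl(\{a\})=\varnothing$. Knowing that each member contains $X'$ says nothing about how a given mutual basis meets $X'$. So you have no basis $I$ of $X'$ inside $B$, and the transfer of modularity to $P_{k-1}\con X'$ never gets started.

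The missing step is to re-choose the mutual basis so that it also contains a basis of $X'$; equivalently, show that adding $X'$ to the family preserves modularity. This is Lemma~\ref{modularinsert}, which the paper invokes at exactly this point in the proof of Theorem~\ref{contractmodularcut}. Here it amounts to a single swap:
\begin{itemize}
\item By Lemma~\ref{blattice}, $B_1=B\cap\bigcap\cF_0$ is a basis of $\bigcap\cF_0\supseteq X'$.
\item Take a basis $I_1$ of $\bigcap\cF_0$ containing a basis $I$ of $X'$, and set $B'=(B-B_1)\cup I_1$.
\item Since $\cl(I_1)=\cl(B_1)$, the set $B'$ is independent, and $\cl(B'\cap F)\supseteq\cl(B\cap F)\supseteq F$ for each $F\in\cF_0$.
\item Since $I\ss B'$ spans $X'$ and $B'$ is independent, $B'\cap X'=I$.
\end{itemize}
With $B'$ in place of $B$, your argument that $B'-I$ is a mutual basis of $\{F-X' : F\in\cF_0\}$ in $P_{k-1}\con X'$ is correct. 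The rest of your proof then goes through as written: upward closure, the infinite-chain case, and the uniqueness argument for $P\con X'=Q$.
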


A corollary is the following. The matroid $P$ in the following lemma is a \emph{splice}
of $M$ and $N$, and in fact the proof constructs a `free splice'. 
These were defined and studied in the finite case by Bonin and Schmitt [\ref{bs09}]. 

\begin{theorem}\label{splice}
  Let $M$ and $N$ be matroids, and $C$ and $D$ be disjoint sets, not both infinite. If $M \con C = N \del D$,
  then there is a matroid $P$ with $P \del D = M$ and $P \con C = N$. 
\end{theorem}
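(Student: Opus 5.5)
The plan is to reduce Theorem~\ref{splice} to Theorem~\ref{bigprojofseq} (or its dual) by a chain of single-element operations, treating separately the cases $C$ finite and $D$ finite. By duality it is enough to handle one of them. Indeed, if $P$ satisfies $P \del D = M$ and $P \con C = N$, then $P^* \con D = M^*$ and $P^* \del C = N^*$. Also $M \con C = N \del D$ dualizes to $M^* \del C = N^* \con D$. So the roles of $(M,C)$ and $(N,D)$ are exchanged with $C$ and $D$ swapped. It therefore suffices to prove the statement when $D$ is finite, say $|D| = k$; we argue by induction on $k$, or directly with a $k$-step sequence.

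Assume $D$ is finite, and write $E(M) = E_0 \cup C$ and $E(N) = E_0 \cup D$, where $E_0 = E(M \con C)$. The key idea is to view $N$ as obtained from $N \del D = M \con C$ by adding back $D$, one element at a time. We will build a matroid on $E_0 \cup C \cup D$ in which $C$ and $D$ play the roles of contracted and deleted sets respectively.

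First I would construct $P$ in the following steps.
\begin{enumerate}
\item Enumerate $D = \{d_1,\dotsc,d_k\}$ and let $N_i = N \del \{d_{i+1},\dotsc,d_k\}$, so that $N_0 = M \con C$ and $N_k = N$. Each $N_{i+1}$ is a single-element extension of $N_i$ by $d_{i+1}$, so by Theorem~\ref{finmodcutdeletioninf} it corresponds to a modular cut $\cF_i$ of $N_i$.
\item Lift each of these extensions to an extension of the corresponding matroid with $C$ still present. Inductively, suppose $M_i$ is a matroid on $E_0 \cup C \cup \{d_1,\dots,d_i\}$ with $M_i \del \{d_1,\dots,d_i\} = M$ and $M_i \con C = N_i$. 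Pull $\cF_i$ back to the collection $\cF_i' = \{F : F \text{ a flat of } M_i \text{ with } C \subseteq F,\ F - C \in \cF_i\}$. Let $\cF_i''$ be the upward closure of $\cF_i'$, which is again $\{F \supseteq C : F-C \in \cF_i\}$ since flats of $M_i \con C$ correspond to flats of $M_i$ containing $\cl(C)$.
\item Check that $\cF_i'$ is a modular cut of $M_i$ in the sense of Definition~\ref{infmodcut}. Then apply Theorem~\ref{modcutextension} to obtain the extension $M_{i+1}$ by $d_{i+1}$. Finally, verify that $M_{i+1} \con C = N_{i+1}$ by comparing modular cuts: the flats of $M_{i+1}\con C$ whose closure contains $d_{i+1}$ are exactly $\cF_i$, and uniqueness in Theorem~\ref{modcutextension} applies.
\end{enumerate}
Taking $P = M_k$ then gives $P \del D = M$ and $P \con C = N$.

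The main obstacle is step 3, the verification that the pulled-back family is a modular cut, in particular conditions (ii) and (iii). We need to show that if $F, F' \supseteq C$ form a modular pair, or a modular chain, in $M_i$, then $F - C$ and $F' - C$ form a modular family in $M_i \con C$. To do this, take a mutual basis $B$ for the family in $M_i$ and extend a basis $B_C$ of $C$. Everything here contains $C$, so we can choose $B \supseteq B_C$: start from $B_C$, extend to a basis of the intersection (in the chain case, of $\bigcap \cC$, which is fine since the family is modular), and then extend further. This last step needs some care in infinite matroids, so I would instead use the general fact that a family whose members all contain $X$ is modular in $M$ if and only if the contracted family is modular in $M \con X$; this should be among the basic lemmas on mutual bases. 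The chain condition transfers in the same way, because $\bigcap(\cC) - C = \bigcap(\cC - C)$.

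If the direct approach resists, the fallback is to derive the result from Theorem~\ref{bigprojofseq}. This uses the fact that each step $M_i \to M_{i+1}$, after contracting $C$, is a single-element projection structure.
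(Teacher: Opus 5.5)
Your argument is correct and is essentially the paper's. The paper also reduces by duality to $D$ finite and adds the elements of $D$ one at a time. Each modular cut $\cF$ of $M_i \con C$ is lifted to $\{F \cup C : F \in \cF\}$ in $M_i$, and uniqueness of extensions identifies the contraction; the paper isolates this step as Theorem~\ref{contractmodularcut}. It proves your ``general fact'' (only the direction from $M$ to $M \con C$ is needed) by applying Lemma~\ref{modularinsert} to the family together with $C$, which gives a mutual basis containing a basis of $C$.

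Drop the framing and the fallback via Theorem~\ref{bigprojofseq}: the paper deduces that theorem from this one, so relying on it here would be circular.
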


Our techniques for this result rely on induction, and fail if $C$ and $D$
are both infinite, as do the techniques in [\ref{bs09}] for constructing splices
directly. It seems that constructing infinite splices is related to the problem
of constructing uniform matroids of infinite rank and corank. 
As with [\ref{bg}], it is possible that they require additional axioms
to construct. 

\subsection*{Skew Families}

For finite matroids, a set family $\cX$ is called \emph{skew} (or sometimes \emph{mutually skew}) if 
$r_M\br{\cup \cX} = \sum_{X \in \cX}r_M(X)$. 
Much like modularity, this definition does not work well for infinite matroids, 
but we can use the notion of a modular family to find a definition that does.

\begin{definition}\label{skewdef}
  A collection $\ab{X_a : a \in A}$ of sets \emph{skew} in a matroid $M$ if $\cX$ is modular in $M$ and the intersection of any two sets in $\cX$ has rank zero.  
\end{definition}

We show in Section~\ref{skewsec} that
skewness defined this way agrees with the usual definition in the finite case,
and more generally has the familiar properties we expect. We summarize a few such properties here, 
with a simplified statement of Theorem~\ref{skewequiv}.

\begin{theorem}\label{skewequivintro}
  Let $\cX$ be a pairwise disjoint set family in a matroid $M$. 
  The following are equivalent: 
  \begin{enumerate}[(1)]
    \item $\cX$ is skew in $M$, 
    \item $M | \cup \cX = \oplus_{X \in \cX} (M | X)$, 
    \item $(M \con \cup \cY) | \cup \cZ = M | \cup \cZ$ for each partition $(\cY, \cZ)$ of $\cX$, 
    \item each circuit of $M | \cup \cX$ is contained in some $X \in \cX$. 
  \end{enumerate}
\end{theorem}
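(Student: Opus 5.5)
The plan is to go around a cycle of implications (1)$\Rightarrow$(2)$\Rightarrow$(4)$\Rightarrow$(2)$\Rightarrow$(1), and then attach (3) to (2). Throughout, write $U = \cup \cX$; since $\cX$ is pairwise disjoint, $\{X : X \in \cX\}$ partitions $U$. The basic tool is the infinite-matroid fact that the independent sets of a direct sum $\oplus_{X}(M|X)$ are exactly the sets $I \subseteq U$ with $I \cap X$ independent for each $X$, and that its bases are exactly the unions of bases of the $M|X$.

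\textbf{(1)$\Rightarrow$(2).} Take a mutual basis $B$ for $\cX$, and put $B_X = B \cap X$, which is a basis of $X$. Then $B_U = \cup_X B_X$ is independent in $M$, and it spans $U$ because it spans each $X$; so $B_U$ is a basis of $M|U$. It is also a basis of $\oplus_X (M|X)$. To show the two matroids coincide, I would compare circuits, which is exactly (4).

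\textbf{(2)$\Leftrightarrow$(4).} If (2) holds, the circuits of a direct sum lie in single summands, which gives (4). Conversely, assume (4). A set $I \subseteq U$ with each $I \cap X$ independent contains no circuit of $M|U$, because any such circuit would lie inside some $X$ and hence inside $I \cap X$. So the independent sets of $M|U$ and of the direct sum agree, and the matroids are equal.

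\textbf{(1)$\Rightarrow$(4) directly.} This is the main obstacle, and I would try it via fundamental circuits. Let $C$ be a circuit of $M|U$ that meets at least two sets $X, X'$. Choose bases $B_X$ of each $X$ with $C \cap X$ extending to $B_X$ wherever $C \cap X$ is independent, and note that $C \cap X$ is independent for every $X$ because $C$ is not contained in any single $X$. Now use skewness in the form supplied by Lemma~\ref{modpairiffbasis}-style reasoning: any union of bases of the parts is independent. The key lemma I need is: if $\cX$ is modular with pairwise rank-zero intersections, then for \emph{every} choice of bases $B_X$ of the $X$'s, the union $\cup_X B_X$ is independent.

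I would prove this lemma by an exchange argument, one part at a time. First reduce to the case of two sets $Y = X$ and $Z = \cup(\cX - \{X\})$, since modularity of $\cX$ gives modularity of that pair. For one fixed part, $B_X$ and $B \cap X$ both span $X$, so swapping $B \cap X$ for $B_X$ keeps a spanning set of $U$. It stays independent because $\cl(B_X) = \cl(B \cap X)$ and the rest of $B$ is skew to $X$, which we see by contracting. Transfinite replacement across all parts is not available directly. So instead I would argue by finitarity of the circuit: $C$ is a circuit, so it is finite or at least fixed, and only the parts met by $C$ need replacing. Do the replacement one part at a time; if $C$ meets infinitely many parts, use contraction of $B \cap X$ for the others together with the independence augmentation axiom.

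Given the lemma, $C \subseteq \cup_X B_X$ is independent, which is a contradiction. The reverse direction, (2)$\Rightarrow$(1), is immediate: a basis of the direct sum is a union of bases of the parts, hence independent in $M$ and a mutual basis. The rank-zero intersection condition holds vacuously for disjoint sets.

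\textbf{(2)$\Leftrightarrow$(3).} For a partition $(\cY, \cZ)$, (2) says $M|U = (M|\cup\cY) \oplus (M|\cup\cZ)$, and contracting one summand of a direct sum returns the other summand, which gives (3). Conversely, assume (3). To get (4), take a circuit $C$ of $M|U$ meeting $X$, and set $\cZ = \{X\}$ and $\cY = \cX - \{X\}$. Then $C \cap X$ is a union of circuits of $M / (C - X)$ restricted to $X$. Because that restriction equals $M|X$, the set $C \cap X$ must contain a circuit of $M$, so by minimality $C \subseteq X$.
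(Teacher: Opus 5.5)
You handle (2)$\Leftrightarrow$(4), (2)$\Rightarrow$(1) and (2)$\Leftrightarrow$(3) correctly. One small fix in (3)$\Rightarrow$(4): contract $\bigcup\cY = U - X$, which is what (3) actually gives you. Then $C \cap X = C - (U - X)$ is a union of circuits of $M \con (U-X)$, and the rest of your argument goes through.

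The gap is in the only step with real content, (1)$\Rightarrow$(4). This is your key lemma that every union $\bigcup_X B_X$ of bases of the parts is independent. Replacing $B \cap X$ by $B_X$ in one part, or in finitely many parts one at a time, is fine. But circuits of an infinite matroid need not be finite, so a circuit $C$ of $M|U$ may meet infinitely many parts. You cannot then pass to a limit, because independence is not preserved under increasing unions in a non-finitary matroid: if $\bN$ is a single circuit, every proper subset is independent. Your remedy for this case (``contraction of $B\cap X$ for the others together with the independence augmentation axiom'') is not an argument. The reduction to the pair $(X, \bigcup(\cX - \{X\}))$ is circular, because using it requires already knowing that $\bigcup_{X'\neq X} B_{X'}$ is a basis of the second set.

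The missing idea is that you never need to swap more than one part, because closure absorbs all the other parts at once. Let $B$ be a mutual basis, let $C$ be a circuit of $M|U$ not contained in a single part, and pick $e \in C \cap X_0$. For every $X \ne X_0$ we have $C \cap X \ss X \ss \cl_M(B \cap X) \ss \cl_M(B - X_0)$. Hence $e \in \cl_M(C - \{e\}) \ss \cl_M\br{((C \cap X_0) - \{e\}) \cup (B - X_0)}$. Now $C \cap X_0$ is a proper subset of $C$, hence independent, and it lies in $\cl_M(B \cap X_0)$. A single swap therefore shows that $(B - X_0) \cup (C \cap X_0)$ is independent. Yet this set contains $e$ together with a subset not containing $e$ that spans $e$, a contradiction.

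This is essentially how the paper proves (1)$\Rightarrow$(2). Write $B_a = B \cap X_a$. If a union $I$ of bases $I_a$ of the parts were dependent, some $e \in I_s$ would satisfy $e \in \cl_M(I - \{e\}) = \cl_M\br{B_{A-\{s\}} \cup (I_s - \{e\})}$. A one-part exchange then contradicts the independence of $B$. The paper closes the cycle with (2)$\Rightarrow$(3)$\Rightarrow$(4)$\Rightarrow$(1), proving the last step by fundamental circuits of a basis of $M|U$.
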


\subsection*{Nullity}

For each set $X$ in a finite matroid $M$, the `nullity' of $M$ in $M$, 
namely the difference $|X| - r_M(X) = r^*(M | X)$, is always 
nonnegative. Since it is so easily expressible as a function of $|X|$ and $r_M(X)$,
there is generally no need to give a specific name to this quantity, 
so the term is used quite rarely. 

For infinite matroids, the quantity $r^*(M|X)$ will be both meaningful 
and useful to consider, though it is no longer definable as a subtraction. 

\begin{definition}
  For each set $X$ in a matroid $M$, the \emph{nullity} of $X$ in $M$ is defined by $n_M(X) = r^*(M|X)$. 
\end{definition}

For each basis $I$ of $X$, we have $n_M(X) = |X - I|$, 
and so $r_M(X) + n_M(X) = |X|$ for all $X$;
therefore $n_M(X) = |X| - r_M(X)$ in the finite case, while 
in the infinite case, if $r_M(X) = |X| = \infty$, the nullity can take any value, 
and hence the nullity of a set is not determined by its rank and cardinality. 
(This is seen explicitly by considering the nullity of the ground set of an infinite, 
finite-corank uniform matroid.) 

While the nullity function $n_M$ may appear
to play a symmetrical role to that of the rank function $r_M$, 
it contains strictly more information in the sense that it determines the matroid;
the independent sets are the sets of nullity zero.
The current version of the rank axioms for infinite matroids uses the relative 
rank, which requires assigning a number to every \emph{pair} of sets. 
Could it be that there is a natural characterization of the functions $n : 2^E \to \enn$
that are the nullity function of a matroid on $E$? 

\begin{problem}
  Find a natural axiomatic description of infinite matroids in terms of the nullity function. 
\end{problem}

In any case,
nullity will be quite useful when reasoning about connectivity in infinite matroids, 
and Section~\ref{nullitysec} establishes a number of such properties. 
Nearly all of these are properties are easy for finite matroids via 
rank calculations, but they require separate proofs in the infinite case. 
One such property is supermodularity: we have $n_M(X) + n_M(Y) \le n_M(X \cup Y) + n_M(X \cap Y)$, which is shown in Theorem~\ref{nullityprop}.

\subsection*{Connectivity}

The \emph{connectivity} of a set $X$ in a finite matroid $M$ is defined by $\lambda_M(X) = r_M(X) + r(M \del X) - r(M)$. 
This definition is meaningless for infinite matroids, but Bruhn and Wollan [\ref{bw12}] provide 
a good alternative, as follows.

\begin{definition}\label{bwdef}
  The \emph{connectivity} of a set $X$ in a matroid $M$ is the
  number in $\enn$ given by 
  $\lambda_M(X) = r^*(M | (B \cup B'))$, 
  where $B$ and $B'$ are arbitrarily chosen bases for $X$ and $E(M) - X$ respectively. 
\end{definition}
We have stated this slightly differently from their definition, which phrases the co-rank evaluation instead 
as a certain minimum that is readily seen to be equivalent. 
(Less importantly, they use the  symbol $\kappa$ rather than $\lambda$, 
but we will stick with $\lambda$ in our discussion to align with the usual convention in the finite case.)

It is not clear that this value does not depend on the choice of $B$ and $B'$, 
but they show that it does not ([\ref{bw12}], Lemma 14(iii)), and hence that the definition makes sense. 
Although their definition is a little technical to work with, they show that it behaves well; 
it agrees with the finite version of $\lambda$ in the finite case, is symmetric, self-dual and submodular for all matroids, 
and satisfies a natural version of Tutte's Linking theorem in the finitary and cofinitary cases. 
They also show that $\lambda_M(X) = 0$ if and only if $(X, E(M)-X)$ is a separation of $M$. 

We show that the definition of $\lambda$ can be simplified, only requiring us to choose a basis for $X$, 
but not its complement. Again, we use relative rank notation. 

\begin{theorem}\label{shortconndef}
  If $X$ is a set in a matroid $M$, then $\lambda_M(X) = \relrank{M^*}{X}{X-I}$ 
  for every basis $I$ of $X$.
\end{theorem}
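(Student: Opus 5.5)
The plan is to take the Bruhn--Wollan formula of Definition~\ref{bwdef}, which is independent of the choice of bases by ([\ref{bw12}], Lemma 14(iii)), and rewrite both sides as the nullity of one and the same set. Fix a basis $I$ of $X$ and choose any basis $J$ of $E(M)-X$. Then $\lambda_M(X) = r^*(M|(I\cup J)) = n_M(I \cup J)$. The goal is to show that $\relrank{M^*}{X}{X-I}$ is also the nullity of $I$ in a suitable minor, and that this minor agrees with $M$ on $I\cup J$ up to contracting $J$.

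First I will unwind the right-hand side using only standard minor–duality identities, which hold for infinite matroids: $(M/Z)^* = M^*\del Z$, $(M\del Z)^* = M^*/Z$, and $r_{N^*}(Y) = r^*(N/(E(N)-Y))$. By definition,
\[
\relrank{M^*}{X}{X-I} = r_{M^*/(X-I)}(I) = r_{(M\del (X-I))^*}(I).
\]
Write $N = M \del (X-I)$. Its ground set is $I \cup (E(M)-X)$. Applying the last identity, this quantity equals $r^*\bigl(N/(E(M)-X)\bigr)$. Since $N/(E(M)-X) = (M/(E(M)-X))|I$, we get $\relrank{M^*}{X}{X-I} = n_{M/(E(M)-X)}(I)$.

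Next I will replace the contraction of $E(M)-X$ by contraction of $J$. Since $J$ is a basis of $E(M)-X$, every element of $(E(M)-X)-J$ is a loop of $M/J$. Hence $M/(E(M)-X) = (M/J)\del\bigl((E(M)-X)-J\bigr)$, and in particular $(M/(E(M)-X))|I = (M/J)|I$. So the right-hand side is $n_{M/J}(I)$.

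Finally I will compare $n_{M/J}(I)$ with $n_M(I\cup J)$, using the fact noted before the statement that $n_M(Y) = |Y - K|$ for any basis $K$ of $Y$. Let $K$ be a basis of $I$ in $M/J$. Because $J$ is independent in $M$, the standard fact that bases of $M/J$ are the sets whose union with a basis of $J$ is a basis of $M$ (applied to the restriction to $I \cup J$) gives that $K \cup J$ is a basis of $I\cup J$ in $M$. Since $K\cap J = \emptyset$, this yields $n_M(I\cup J) = |(I\cup J) - (K\cup J)| = |I - K| = n_{M/J}(I)$, which completes the argument.

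The only step needing care is making sure each of these identities is valid for infinite matroids in the framework of [\ref{bdkpw}], most notably the contraction/basis correspondence for an independent set $J$; I expect those to be routine. The real nontrivial input, independence of $\lambda$ from the choice of bases, is imported from [\ref{bw12}].
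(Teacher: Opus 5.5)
Your proof is correct and follows the paper's argument essentially step for step. Both proofs rewrite $\relrank{M^*}{X}{X-I}$ via duality as $n_{M \con (E(M)-X)}(I)$, replace the contraction of $E(M)-X$ by contraction of a basis $J$ of $E(M)-X$, and identify the result with $n_M(I \cup J) = \lambda_M(X)$; the only difference is that you check $n_{M \con J}(I) = n_M(I \cup J)$ directly from the contraction/basis correspondence, where the paper cites Lemma~\ref{nullityproject} for $n_{M \dcon J}(I) = n_M(I \cup J)$.
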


We use this fact to give a new, short proof that the parameter $\lambda$ is self-dual; 
see Theorem~\ref{connselfdual}. 

Instead of writing $\lambda_M(X)$ for some $X \ss E(M)$, it is not uncommon to instead write $\lambda_M(X,Y)$
for a partition $X,Y$ of $E(M)$ to emphasise the symmetry of $\lambda$. 
We define a generalization of $\lambda$ to arbitrary partitions of the ground set. 
\begin{definition}\label{gencon}
  For an indexed partition $\cX = \ab{X_a : a \in A}$ of $E(M)$, 
  define the \emph{connectivity} $\lambda_M(\cX) = r^*(M | \cup_a I_a)$, where $I_a$ is a basis for $X_a$ for each $a \in A$. 
\end{definition}

We defer the proof that this value is independent of the choice of bases 
until later (see Corollary~\ref{lcwelldef}). 
For now, it is clear that this generalizes Definition~\ref{bwdef}, 
and one can prove a number of reasonable properties. 
For instance, it will be easy to prove that $\lambda_{M}(\cX) = 0$ 
if and only if $M$ is the direct sum of
its restrictions to the sets in $\cX$, 
and it is immediate from the above definition that 
$\lambda_M(\cX) = \sum_{X \in \cX} r_M(X) - r(M)$ for finite matroids.
The parameter is not self-dual for partitions of size more than two
(for instance, consider the partition of the matroid $M(K_{2,3})$ into three two-edge induced paths), 
but its dual has a geometric interpretation, which we discuss soon.

\subsection*{Local Connectivity}

A closely related parameter to $\lambda$ is the \emph{local connectivity} between two sets $X$ and $Y$,
defined for finite matroids by 
\[\sqcap_M(X,Y) = r_M(X) + r_M(Y) - r_M(X \cup Y).\] 
This parameter interacts nicely with skewness and modularity; 
indeed, a pair $(X,Y)$ is skew if and only if $\sqcap_M(X,Y) = 0$, 
and is modular if and only if $\sqcap_M(X,Y) = r_M(X \cap Y)$. 
Note that if $X$ and $Y$ are disjoint, then $\sqcap_M(X,Y) = \lambda_{M | (X \cup Y)}(X)$, 
and if $X$ and $Y$ are not disjoint, then $\sqcap_M(X,Y) = \sqcap_{\hat{M}}(X,\hat{Y})$, 
where $\hat{M}$ is obtained from $M$ by adding a parallel disjoint copy $\hat{Y}$ of $Y$. 
Thus, in principle, $\sqcap$ can be defined in terms of $\lambda$. 
We do something like this later (see Definition~\ref{mlcdef}), 
but for now we define and discuss the two-set version using an alternative method. 

\begin{definition}\label{lcdef}
  The \emph{local connectivity} between sets $X,Y$ in a matroid $M$ is the number in $\enn$ given by 
  $\sqcap_M(X,Y) = |I \cap J| + r^* (M | (I \cup J))$, 
  where $I$ and $J$ are arbitrary bases for $X$ and $Y$ respectively.
\end{definition}

To show that this is well-defined, 
we prove that this value is independent of the choices of $I$ and $J$; 
see Theorem~\ref{lcwelldef}.
It is immediate from the definitions that $\lambda_M(X) = \sqcap_M(X, E(M) - X)$, 
so this interacts correctly with Definition~\ref{bwdef}. We prove in Theorem~\ref{lcprop}
that this definition has most of the good properties one would expect, and 
that there is a version of the definition that requires picking a basis for only one of the 
sets. We also show that local connectivity interacts correctly with modularity and skewness;
see Theorem~\ref{lcmod}. 

\subsection*{Guts Extensions}

Despite our simplified expression for $\lambda_M(X)$ in Lemma~\ref{shortconndef}, 
it is still perhaps a little unsatisfying 
that it still requires a choice of basis and an invariance proof, and that the generalized 
Definition~\ref{gencon} requires choosing a basis for every set in the family. 
Using our machinery of extensions of infinite matroids, 
we provide an alternative geometric interpretation of $\lambda$ that needs no choice of basis, 
and lends rigour to the intuition that $\lambda$ is measuring the `interaction' between the cells 
of a partition of the matroid. 

Geelen, Gerards and Whittle showed ([\ref{ggw06}], Theorem 6.1) that, 
given a partition $(X,Y)$ of the ground set of a finite matroid $M$, 
there is a unique extension $M'$ of $M$ by an element $e$ in which $e$ is spanned by each set $Z \ss E(M)$ 
if and only if $\lambda_{M \con Z}(X-Z) = 0$. 
Here, $e$ can be thought of as being added 
into the common closure of $X$ and $Y$ as freely as possible; 
in $M'$, the element $e$ lies in the `guts' of the separation $(X \cup \{e\}, Y)$. 
It stands to reason (and is easy to prove), 
that $e$ is a loop of $M'$ if and only if $\lambda_M(X) = 0$, 
and that if $\lambda_M(X) \ne 0$ then $\lambda_{M' \con e}(X) = \lambda_M(X) - 1$. 
It follows that $\lambda_M(X)$ is equal to the minimum $k$ for which iterating such an extension-contraction
operation $k$ times gives a matroid $N$ for which $\lambda_N(X) = 0$. 

We prove that this theory carries through to infinite matroids and arbitrary partitions. 
As far as we know, the generalization to arbitrary partitions is new even for finite matroids. 

\begin{theorem}\label{gutscutintro}
  Let $\ab{X_a : a \in A}$ be a partition of the ground set of a matroid $M$, and let $e \notin E(M)$. 
  There is a unique extension $M'$ of $M$ by $e$ such that each flat $F$ of $M$
  satisfies $e \in \cl_{M'}(F)$ if and only if $\ab{X_a - F : a \in A}$ is skew in $M \con F$. 
\end{theorem}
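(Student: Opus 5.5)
The plan is to apply the infinite version of Crapo's correspondence (Theorems~\ref{finmodcutdeletioninf} and~\ref{modcutextension}, i.e.\ the theorem above for modular cuts in the sense of Definition~\ref{infmodcut}). Let $\cF$ be the collection of flats $F$ of $M$ for which $\ab{X_a - F : a \in A}$ is skew in $M \con F$. If $\cF$ is a modular cut, there is a unique extension $M'$ with $\cF = \cF_M^{M'}$. This gives existence. Uniqueness follows because any extension with the stated property has $\cF_M^{M'} = \cF$, and by Theorem~\ref{modcutextension} an extension is determined by its modular cut. So everything reduces to verifying (i), (ii) and (iii) of Definition~\ref{infmodcut} for $\cF$.

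For (i), take $F \in \cF$ and a flat $F' \supseteq F$. Contracting $F'-F$ in $M \con F$ preserves skewness of the family $\ab{X_a - F' : a \in A}$. To see this, use Theorem~\ref{skewequivintro}. Skewness of a pairwise disjoint family covering $E(M\con F)$ means $M \con F$ is the direct sum of the restrictions $(M\con F)|(X_a - F)$. Contracting any set in a direct sum gives the direct sum of the contractions of the pieces. Hence $M \con F'$ is the direct sum of its restrictions to the sets $X_a - F'$, and the family is skew.

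For (ii), take a modular pair $(F_1,F_2)$ in $\cF$ and set $F = F_1 \cap F_2$. We must show that $M \con F$ is the direct sum of the pieces $X_a - F$. Use criterion (4) of Theorem~\ref{skewequivintro}: it suffices to show that every circuit $C$ of $M \con F$ lies in a single $X_a$. Choose a mutual basis $B$ for $\{F_1,F_2\}$. Then $B \cap F$ is a basis for $F$, so $M \con F = (M \con (B\cap F)) \del (F - B)$. Since $B - F$ is the disjoint union of $B \cap (F_1 - F)$ and $B \cap (F_2 - F)$, the modular pair becomes a skew pair $(F_1 - F, F_2 - F)$ in $M \con F$. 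Equivalently, we may assume $F = \emptyset$ and that $F_1$, $F_2$ are skew.

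Now suppose a circuit $C$ meets two cells $X_a$ and $X_b$. The plan is to derive a contradiction using the direct-sum decompositions of $M \con F_1$ and $M \con F_2$:
\begin{itemize}
\item contracting $C \cap F_1$ leaves a circuit union inside $M\con F_1$ that must respect the partition;
\item an argument with circuit elimination, together with the skewness of $F_1$ and $F_2$, then shows that $C$ cannot cross between cells.
\end{itemize}
This is the step I expect to be the main obstacle. I would first try to phrase it through local connectivity, using Theorem~\ref{lcmod} and the properties of $\sqcap$ and nullity, for example supermodularity in Theorem~\ref{nullityprop}. The idea is to show that the quantity ``$\lambda$ of the partition'' in $M\con F$ is bounded by a sum of its values in $M\con F_1$ and $M \con F_2$, each of which is zero, plus a local-connectivity term $\sqcap(F_1,F_2)$ that vanishes by skewness.

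For (iii), let $\cC$ be an infinite modular chain in $\cF$ with mutual basis $B$, and let $F = \bigcap \cC$. Then $B\cap F$ is a basis of $F$. For each $G \in \cC$, $B \cap G$ is a basis of $G$, and the sets $B\cap G$ decrease along the chain to $B \cap F$. Again use the circuit criterion. A circuit $C$ of $M\con F$ corresponds to a circuit of $M\con(B\cap F)$ inside $C \cup (B \cap F)$. I would show that for some $G \in \cC$, $C - G$ is still dependent (indeed a circuit union) in $M\con G$, up to the finitely many relevant basis elements. Since $M\con G$ splits over the partition, this forces $C$ into one cell.

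The delicate point is that $C$ may be infinite and $B - F$ may be infinite. I would handle it by choosing $G$ so that $C - G$ is nonempty and meets each cell that $C$ meets. This is possible only when $C$ meets finitely many cells non-trivially, so I would first reduce to two cells by merging the other cells, since merging cells preserves skewness. Then I would use the fact that $(B\cap G) - F$ shrinks to empty along the chain to exhibit a circuit of $M \con G$ meeting both chosen cells.
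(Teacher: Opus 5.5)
You have the reduction right: show that $\cF$ is a modular cut, then get existence and uniqueness from Theorem~\ref{modcutextension}. Your proof of upward closure via direct sums is also fine. But the real content of the theorem is never proved, namely that $\cF$ is closed under intersections of modular pairs and of infinite modular chains.

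For (ii) you give only a description (``an argument with circuit elimination'') and a hoped-for inequality. It would bound the connectivity of the cells in $M\con F$ by their connectivities in $M\con F_1$ and $M\con F_2$ plus $\sqcap_M(F_1,F_2)$. A submodularity computation may give this for finite matroids, but the infinite tools available here do not. For instance, Lemma~\ref{mcprojsymm} with $C=F_i$ involves $r_M(F_i)$ and $\sum_a\sqcap_M(X_a,F_i)$, which may be infinite, so nothing can be cancelled, and you offer no substitute.

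For (iii) the plan fails as stated. Let $C$ be a circuit of $M\con F$ meeting two cells, and let $G\in\cC$ avoid a chosen element of $C$ in each. All you learn is that $C-G$ is a union of circuits of $M\con G$. Each of these may lie inside a single cell, so there is no contradiction, and your sketch never says how the mutual basis $B$ is actually used.

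The missing idea is to work with fundamental circuits (equivalently, closures) relative to one fixed mutual basis, rather than with arbitrary circuits. This handles pairs, chains, and in fact any nonempty modular family $\cF_0\ss\cF$ at once.

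Let $B$ be a basis of $M$ that is a mutual basis for $\cF_0$, and let $F_0=\bigcap\cF_0$; by Lemma~\ref{blattice}, $B\cap F_0$ is a basis of $F_0$. Take $z\in X_a-B$ with fundamental circuit $C_z\ss B\cup\{z\}$, and let $G\in\cF_0$.
\begin{itemize}
\item If $z\in G$, then $C_z\ss G$, because $B\cap G$ spans $G$.
\item If $z\notin G$, then $C_z-G$ is the fundamental circuit of $z$ with respect to the basis $B-G$ of $M\con G$. Since the cells are skew in $M\con G$, this circuit lies in $X_a$.
\end{itemize}
Either way $C_z-X_a\ss G$, so $C_z-X_a\ss F_0$. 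Hence $X_a\ss\cl_M((B\cap X_a)\cup(B\cap F_0))$ for every $a$. So $B-F_0$ is a mutual basis for the cells in $M\dcon F_0$, meeting them in pairwise disjoint sets, and Lemma~\ref{skewofdjbase} gives $F_0\in\cF$.

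This is the paper's proof of Theorem~\ref{gutscut}. There the intersection step is phrased as Lemma~\ref{closureinter}, applied to the independent sets $B\cap(X_a\cup G)$. With this argument you need no reduction to two cells, to $F_1\cap F_2=\varnothing$, or to finitely many basis elements.
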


Call the matroid $M'$ the \emph{guts extension} of $M$ by $e$, and 
write $\gutsproj{M}{\cX}{}$ for the matroid $M' \con e$, 
which we refer to as the \emph{guts projection} of $\cX$ in $M$.
Note that this is also a `projection' in the sense discussed earlier. 
For $k \in \bN$, write $\gutsproj{M}{\cX}{k}$ for the matroid $M_0$ obtained from $M$ by iterating 
the operation $\gutsproj{\cdot}{\cX}{}$ a total of $k$ times. 

Our next main result, which can be seen as either a geometric interpretation or 
an alternative definition of $\lambda$, 
shows that $\lambda_M^*(\cX)$ (defined as $\lambda_{M^*}(\cX)$) 
is equal to the minimum number of guts projections required
to reduce the connectivity of $\cX$ to zero. 

\begin{theorem}\label{lambdadualeq}
  If $\cX$ is a partition of the ground set of a matroid $M$, then $\lambda_M^*(\cX)$ is equal to the minimum $k \in \bN$ 
  such that the matroid $M_0 = \gutsproj{M}{\cX}{k}$ satisfies $\lambda_{M_0}(\cX) = 0$, 
  or is $\infty$ if no such $k$ exists. 
\end{theorem}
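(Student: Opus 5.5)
The plan is to show that one guts projection lowers $\lambda^*$ by exactly one whenever it is positive, and to iterate. Throughout, write $M'$ for the guts extension of $M$ by $e$ from Theorem~\ref{gutscutintro}, so $\gutsproj{M}{\cX}{} = M' \con e$.

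\textbf{Step 1: the zero case is self-dual.} By the remark after Definition~\ref{gencon}, $\lambda_M(\cX) = 0$ holds if and only if $M = \bigoplus_a M|X_a$. Since the dual of a direct sum is the direct sum of the duals, and $M^*|X_a = (M|X_a)^*$ in that situation, this condition is invariant under duality. Hence $\lambda_{M_0}(\cX) = 0$ if and only if $\lambda^*_{M_0}(\cX) = 0$. It therefore suffices to prove that $\lambda^*_{\gutsproj{M}{\cX}{}}(\cX) = \lambda^*_M(\cX) - 1$ whenever $\lambda^*_M(\cX) \ne 0$. The convention $\infty - 1 = \infty$ covers the case $\lambda^* = \infty$, in which no finite $k$ works. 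Given this, induction on $k$ gives $\lambda^*_{\gutsproj{M}{\cX}{k}}(\cX) = \max(\lambda^*_M(\cX) - k, 0)$, and the theorem follows.

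\textbf{Step 2: a contraction formula for $\lambda^*$.} For each $a$, choose a basis $J_a$ of $M^*|X_a$ and set $K_a = X_a - J_a$. By Definition~\ref{gencon} and the identity $(M^*|S)^* = M \con (E - S)$, we get
\[ \lambda^*_M(\cX) = r^*(M^* | \textstyle\bigcup_a J_a) = r\bigl(M \con \textstyle\bigcup_a K_a\bigr). \]
Write $K = \bigcup_a K_a$, and use the independence of the choice of bases (Corollary~\ref{lcwelldef}). To compute $\lambda^*$ in $\gutsproj{M}{\cX}{}$, I would check that each $J_a$ is still a basis of $(M' \con e)^*|X_a = (M'^* \del e)|X_a$. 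This should follow because $M'^* \con e = M^*$ (since $M' \del e = M$), because $e$ is not a coloop of $M'$ in the nontrivial case, and because the defining property of $M'$ applied to the flats $\cl_M(E - X_a)$ gives enough control. Then
\[ \lambda^*_{\gutsproj{M}{\cX}{}}(\cX) = r(M' \con e \con K) = r(M' \con K \con e). \]
Since $e \in \cl_{M'}(E(M))$ when $M'$ is nontrivial, we also have $r(M' \con K) = r(M \con K)$. So the required drop by one holds exactly when $e$ is not a loop of $M' \con K$, that is, when $e \notin \cl_{M'}(K)$.

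\textbf{Step 3 (the main obstacle).} Let $F = \cl_M(K)$. By Theorem~\ref{gutscutintro}, $e \notin \cl_{M'}(F)$ if and only if $\ab{X_a - F : a \in A}$ is not skew in $M \con F$. Suppose instead that it is skew. Each $K_a$ spans $X_a$ in $M \con (E - X_a)$, and the family is skew in $M \con F$, so Theorem~\ref{skewequivintro} gives $M \con F = \bigoplus_a (M \con F)|(X_a - F)$. I would then argue that in each summand $X_a - F$ is spanned by $K_a - F = \emptyset$ relative to the other parts, forcing $r(M \con F) = 0$. This contradicts $\lambda^*_M(\cX) = r(M \con K) = r(M \con F) > 0$.

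Making this final deduction rigorous for infinite matroids is where I expect the real work. One has to translate "$K_a$ spans $X_a$ after contracting the complement" into a statement about $M \con F$ using the skewness and nullity machinery of Sections~\ref{skewsec} and~\ref{nullitysec}. Conversely, the loop case $\lambda^* = 0$ should correspond to $e$ being a loop of $M'$ (take $F = \cl_M(\emptyset)$), which is consistent with Step 1.
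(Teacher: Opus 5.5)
Your proof is correct, but it follows a different route from the paper's.

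The paper deduces the theorem from Lemma~\ref{gpsub}, which is a special case of Lemma~\ref{dualmodcutcon}. That lemma is a general formula for how $\sqcap^*$ of an arbitrary family changes under projection by any nonempty modular cut. It is proved by applying Corollary~\ref{mcprojauxindep} in $P^*$ with $C=\{e\}$, so it rests on the preimage and nullity machinery of the connectivity sections. For the guts cut, one then only checks two things: $\cl_M(E-X_a)\in\gutscut{M}{\cX}$ for every $a$, and the cut is proper exactly when $\cX$ is not skew.

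You instead use the partition-specific identity $\lambda^*_M(\cX)=r(M\con K)$, where $K=\bigcup_a K_a$ and each $K_a$ is a basis of $M\con(E-X_a)$. You then reduce the drop by one to showing $e\notin\cl_{M'}(\cl_M(K))$. This uses the same fact about $\cl_M(E-X_a)$ plus one extra guts-specific check. Beyond that, it needs only duality of restriction and contraction and Theorem~\ref{skewequiv}. The paper's route gives a reusable formula, which also yields Lemma~\ref{modcutlambda}; yours gives a short, self-contained argument.

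Both of your deferred steps go through quickly.

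In Step~2, every $X_b$ with $b\neq a$ consists of loops of $M\dcon\cl_M(E-X_a)$, so $\cl_M(E-X_a)$ lies in the guts cut. Hence $e$ is a loop of $M'\con(E-X_a)$, so $M'\con(E-X_a)\con e=M\con(E-X_a)$. This shows $J_a$ is still a basis of $(M'\con e)^*|X_a$.

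Step~3 is not where the real work lies. Apply condition (3) of Theorem~\ref{skewequiv} in $M\con F$ with $A_1=A-\{a\}$. This gives $(M\con F)|(X_a-F)=(M\con(F\cup(E-X_a)))|(X_a-F)$. Every element of $X_a-F$ is a loop there, because $X_a\ss\cl_M(K_a\cup(E-X_a))$ and $K_a\ss F$. Hence $r(M\con F)=0$.

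Finally, your formula $\max(\lambda^*_M-k,0)$ for $k$ beyond $\lambda^*_M$ also uses that a guts projection fixes $M$ once $\cX$ is skew, since then $e$ is a loop of $M'$. The minimum in the theorem does not depend on this.
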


If $|\cX| = 2$, then $\lambda_M(\cX) = \lambda_M^*(\cX)$, and the above gives the same interpretation of $\lambda_M$
discussed earlier. If $|\cX| > 2$, then the above is a description of $\lambda_M^*$ but not $\lambda_M$. 
One can dualize the statement to see that $\lambda_M(\cX)$ is the minimum number of `coguts lifts' needed 
to reduce the connectivity of $\cX$ to zero, where a coguts lift is the dual operation of a guts projection. 

In fact, we can view the minimum in Theorem~\ref{lambdadualeq} differently 
with the following statement, which is as simple in the dual version as in the primal. 
Roughly, it states that $\lambda^*_M(\cX)$ is the minimum size of a set whose
projection renders $\cX$ skew. 

\begin{theorem}\label{lambdaminproj}
  Let $\cX$ be a partition of the ground set of a matroid $M$. Then 
  \begin{enumerate}[(i)]
    \item $\lambda_M(\cX)$ is the minimum size of a set $K$ such that 
      there is a matroid $P$ with $P \con K = M$ and $\cX$ skew in $P \del K$; 
    \item $\lambda^*_M(\cX)$ is the minimum size of a set $K$ such that 
      there is a matroid $P$ with $P \del K = M$ and $\cX$ skew in $P \con K$.  
  \end{enumerate}
\end{theorem}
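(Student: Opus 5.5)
We prove (ii) and then obtain (i) by duality. If $P \con K = M$ and $\cX$ is skew in $P \del K$, then $P^* \del K = M^*$, and skewness of a partition transfers between $P\del K$ and its dual: by Theorem~\ref{skewequivintro}, $\cX$ is skew in $N$ exactly when $N = \oplus_a N|X_a$, and direct sums dualize. So (i) for $M$ is (ii) for $M^*$. For (ii) we prove two inequalities, writing $\cX = \ab{X_a : a \in A}$.

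\emph{Upper bound.} If $\lambda^*_M(\cX)=\infty$ there is nothing to show. Otherwise put $k = \lambda^*_M(\cX)$. By Theorem~\ref{lambdadualeq}, $k$ guts projections produce $M_0 = \gutsproj{M}{\cX}{k}$ with $\lambda_{M_0}(\cX)=0$. Since $\cX$ partitions $E(M_0)$, this means $M_0 = \oplus_a M_0|X_a$, so $\cX$ is skew in $M_0$. Each guts projection is a single-element projection, so Theorem~\ref{bigprojofseq} gives a matroid $P$ and a $k$-element set $K$ with $P \del K = M$ and $P \con K = M_0$.

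\emph{Lower bound.} Suppose $P \del K = M$, $P\con K = N$, and $\cX$ is skew in $N$. We must show $\lambda^*_M(\cX) \le |K|$, and may assume $K$ is finite. Induct on $|K|$; the key single-element inequality is
\[
\lambda_{(P\con e)^*}(\cX) \ge \lambda_{(P\del e)^*}(\cX) - 1
\]
for every partition $\cX$ of $E(P)-e$. Applied to $P\con(K-e)$ and chained along $K$, it yields
\[
\lambda^*_M(\cX) \le \lambda^*_{N}(\cX) + |K| = |K|,
\]
because $\lambda_{N^*}(\cX)=0$ when $N$ is a direct sum over $\cX$.

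To prove the single-element inequality, dualize: $(P\con e)^* = P^*\del e$ and $(P\del e)^* = P^*\con e$. With $Q = P^*$, the claim becomes
\[
\lambda_{Q\con e}(\cX) \le \lambda_{Q \del e}(\cX) + 1.
\]
If $e$ is a loop or coloop of $Q$, then $Q\con e = Q\del e$ and the claim is trivial. Otherwise, choose bases $I_a$ of $X_a$ in $Q\con e$. Each $I_a$ is independent in $Q\del e$; extend it to a basis $J_a \supseteq I_a$ of $X_a$ in $Q\del e$. Since $\cl_{Q\del e}(I_a)$ and $\cl_{Q\con e}(I_a) - e$ differ only in how $e$ is used, we get $|J_a - I_a| \le 1$, and in fact $|\bigcup_a (J_a - I_a)| \le 1$, because all such extra elements lie in a common class determined by $e$. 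The nullity
\[
r^*\bigl((Q\con e)|\textstyle\bigcup I_a\bigr) = n_{Q\con e}\bigl(\textstyle\bigcup I_a\bigr)
\]
is then compared with $n_{Q}(\bigcup J_a)$ and $n_{Q\del e}(\bigcup J_a)$. This uses the properties of nullity from Section~\ref{nullitysec}: adding $e$ increases nullity by at most one, contracting a non-loop spanned element behaves accordingly, and adding or removing a single element changes nullity by at most one.

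I expect the main obstacle to be this inequality and its bookkeeping in the infinite setting, in particular making sure the bases $J_a$ exceed the $I_a$ by at most one element in total. If that direct route fails, I would instead establish monotonicity through the interpretation of $\lambda$ in Theorem~\ref{shortconndef} via relative rank, which is better behaved under a single contraction.
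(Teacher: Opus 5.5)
Your duality reduction and your upper bound are fine. The upper bound is exactly the paper's argument: iterate guts projections, use Theorem~\ref{lambdadualeq} and Lemma~\ref{lambdazero}, then Theorem~\ref{bigprojofseq}.

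The problem is the lower bound. Its only real content is the inequality $\lambda_{Q\con e}(\cX)\le\lambda_{Q\del e}(\cX)+1$, and the argument you sketch for it rests on a false claim: it is not true that $\abs{\bigcup_a(J_a-I_a)}\le 1$. For each $a$ one has $J_a\ne I_a$ exactly when $e\in\cl_Q(X_a)$, and the extra elements lie in the pairwise disjoint parts $X_a$. So $\abs{\bigcup_a(J_a-I_a)}$ equals the number of parts spanning $e$ in $Q$, which can be arbitrarily large. It already fails for two parts. Over $\mathbb{R}^3$ take $e=(1,0,0)$, $a=(1,1,0)$, $b=(0,1,0)$, $c=(1,0,1)$, $d=(0,0,1)$ and $\cX=(\{a,b\},\{c,d\})$. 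Then $I_1=\{a\}$, $I_2=\{c\}$, $J_1=\{a,b\}$, $J_2=\{c,d\}$. Your fallback through Theorem~\ref{shortconndef} does not rescue this, because that formula only concerns two-part partitions.

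The claim is also unnecessary: you only need an upper bound on the contracted side, and nullity is monotone. Write $I=\bigcup_a I_a\subseteq J=\bigcup_a J_a$ with $e$ a nonloop of $Q$. Using $Q\con e=(Q\dcon e)\del e$, Lemma~\ref{nullityproject}(\ref{npconind}), and Lemma~\ref{nullityprop}(\ref{nullitymono}) and (\ref{nullityleaddcard}), you get
\[\lambda_{Q\con e}(\cX)=n_{Q\dcon e}(I)=n_Q(I\cup\{e\})\le n_Q(J\cup\{e\})\le n_Q(J)+1=\lambda_{Q\del e}(\cX)+1.\]
The same chain works with a basis $C$ of $K$ in $P^*$ in place of $\{e\}$. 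It gives $\lambda_{M^*}(\cX)\le\lambda_{P^*\del K}(\cX)+\abs{C}\le\abs{K}$ directly, so no induction is needed.

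This is in substance the paper's proof. The paper applies Lemma~\ref{mccon}(\ref{mcconub}) (projecting a set $C$ raises local connectivity by at most $r(C)$) once, to $P^*$ and $K$. It uses that $\cX$ is skew in $(P\con K)^*=P^*\del K$, so that connectivity term is zero. Your single-element inequality is the case $\abs{C}=1$ of that lemma.
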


\subsection*{Modular Flats}

Returning to modularity, 
we prove that the modularity of a flat $F$ in a matroid has many equivalent characterizations. 
The first are in terms of skew pairs, the next is stated in terms of 
contracting points onto $F$ in a minor of $M$, and the last two have a lattice-theoretic flavour. 
(Two flats are `complementary' if they have trivial intersection and spanning union.)

\begin{theorem}
  Let $F$ be a flat of a matroid $M$. The following are equivalent: 
  \begin{enumerate}[(1)]
    \item $F$ is a modular flat of $M$;
    \item $F-G$ and $G-F$ are skew in $M \con (F \cap G)$ for every flat $G$ of $M$;
    \item every complementary flat of $F$ in $M$ is skew to $F$;
    \item
    for every $C \subseteq E(M)$ and every nonloop $e$ of $M \con C$ with $e \in \cl_{M \con C}(F-C)$;
    there exists $f \in F$ that is parallel to $e$ in $M \con C$;
    \item 
    for every pair of flats $F_0, G$ of $M$ with $F_0 \ss F$, we have $F \cap \cl_M(G \cup F_0) = \cl_M((F \cap G) \cup F_0)$;
    \item 
    for every pair $G_1,G_2$
    of flats of $M$ with $G_1 \ss G_2$, we have $\cl_M(G_1 \cup F) \cap G_2 = \cl_M(G_1 \cup (F \cap G_2))$. 
  \end{enumerate}
\end{theorem}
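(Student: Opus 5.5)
We prove the cycle of implications $(1)\Rightarrow(2)\Rightarrow(3)\Rightarrow(1)$, then attach $(4)$, $(5)$ and $(6)$ to it. Throughout we use the basic toolkit of mutual bases and skewness from Section~\ref{skewsec}. Its two key facts are that a pair $(X,Y)$ is modular exactly when some basis $I$ of $X\cap Y$ extends to independent sets spanning $X$ and $Y$ whose union is independent, and the characterization in Theorem~\ref{skewequivintro}.

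For $(1)\Rightarrow(2)$, take a mutual basis $B$ for $\{F,G\}$. We may choose $B$ so that $B\cap F\cap G$ is a basis $I_0$ of $F\cap G$; this exchange argument is standard, since $B\cap F$ and $B\cap G$ both contain bases of $F\cap G$ only after an augmentation step, which we do first. Contracting $F\cap G$, which amounts to contracting $I_0$, the set $B-I_0$ is independent in $M\con(F\cap G)$. It also contains bases of $F-G$ and $G-F$ there. This exhibits a mutual basis of the disjoint pair, whose intersection is empty, so the pair is skew.

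For $(2)\Rightarrow(3)$, a complementary flat $G$ has $F\cap G=\cl(\emptyset)$, so contracting loops changes nothing and skewness follows. The implication $(3)\Rightarrow(1)$ is the main obstacle. Given an arbitrary flat $G$, let $I_0$ be a basis of $F\cap G$, extend it to a basis $J$ of $G$, and then pick a flat $G'$ complementary to $F$ containing $J-I_0$. To build $G'$, extend $J-I_0$, which is independent in $M\con F$ because $J$ contains no further elements of $\cl(F)$, to a basis $K$ of $M\con F$, and set $G'=\cl_M(K)$. In infinite matroids, however, $\cl_M(K)\cap F$ need not be just the loops. The fix I would try is to first contract $F\cap G$, or to use the minor $M\con I_0$, and argue that $(3)$ passes to the relevant contraction. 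Alternatively, I would choose $K$ independent in $M$ with $K\cup I_F$ a basis, for a basis $I_F$ of $F$, and then check directly that $\cl(K)\cap F$ consists of loops: any nonloop $f\in F\cap\cl(K)$ would give a circuit in $K\cup f$, contradicting skewness derived from an earlier complementary choice. Once $G'$ is complementary, $(3)$ gives that $F$ and $G'$ are skew, so $I_F\cup K$ is independent. Choosing $I_F\supseteq I_0$ then makes $I_F\cup(J-I_0)$ a mutual basis for $\{F,G\}$.

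For $(4)$, we use $(2)$ applied in minors. Modularity of $F$ should transfer to $F-C$ in $M\con C$; this is itself a lemma I would prove via mutual bases. Given a nonloop $e\in\cl_{M\con C}(F-C)$, let $G=\cl(\{e\})$. Modularity of the pair forces $r(F\cap G)=1$, which yields the required parallel $f$. Conversely, assuming $(4)$, we verify $(3)$: if a complementary flat $G$ were not skew to $F$, then some circuit meets both $F$ and $G$. Contracting all of that circuit except one element $e\in G$ puts $e$ in the closure of the contracted copy of $F$, so $e$ is parallel to some $f\in F$. This puts $f$ in $\cl(G)\cap F$, contradicting complementarity, and it needs care to ensure $f$ is a nonloop of $M$.

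Conditions $(5)$ and $(6)$ are lattice modular-law statements. From $(1)$, both follow by a mutual-basis computation: take a basis of $F\cap G$, extend it by $F_0$ and by $G$, and use the fact that a mutual basis witnesses the closure identities elementwise via fundamental circuits. Conversely, $(6)$ with $G_1=\cl(\emptyset)$ recovers nothing, but with $G_2=G$ and $G_1$ taken as the closure of the part of a basis of $G$ outside $F$ it yields the complementary-skew property $(3)$. Condition $(5)$ is handled by the symmetric specialization.
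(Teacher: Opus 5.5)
\textbf{Main gap: (3)$\Rightarrow$(1).} All of your converse directions eventually pass through (3)$\Rightarrow$(1), and that implication does not work as written. Your complementary flat $G'\supseteq J-I_0$ rests on the claim that $J-I_0$ is independent in $M\con F$ ``because $J$ contains no further elements of $\cl(F)$''. Knowing $J\cap F=I_0$ gives no such thing. If $I_F\supseteq I_0$ is a basis of $F$, then independence of $J-I_0$ in $M\con F$ says exactly that $I_F\cup J$ is independent, i.e.\ that $(F,G)$ is a modular pair, which is the conclusion. For instance, for the lines $F=\{1,2\}$, $G=\{3,4\}$ of $U_{3,4}$ we have $I_0=\es$, and $J=\{3,4\}$ is dependent in the rank-one matroid $M\con F$.

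You also place the difficulty in the wrong spot. Whenever $K$ is independent in $M\con F$, Lemma~\ref{closureinter} already gives $\cl_M(K)\cap F\ss\cl_M(\es)$, and $\cl_M(K)$ is automatically skew to $F$. So in your construction hypothesis (3) is never actually used.

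The missing idea is to enlarge the flat $G_1=\cl_M(J-I_0)$ relative to $\cl_M(F\cup G_1)$, not relative to $F$. First, $F\cap G_1\ss\cl_M(I_0)\cap\cl_M(J-I_0)=\cl_M(\es)$ by Lemma~\ref{closureinter}, since $J$ is independent. Next, Lemma~\ref{modularcompl} applied to the chain $G_1\ss\cl_M(F\cup G_1)\ss E(M)$ gives a flat $H\supseteq G_1$ with $H\cap\cl_M(F\cup G_1)=G_1$ and $\cl_M(H\cup\cl_M(F\cup G_1))=E(M)$. Hence $H\cap F\ss\cl_M(\es)$ and $\cl_M(H\cup F)=E(M)$, so $H$ is complementary to $F$. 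Now (3) makes $F$ and $H$ skew, hence $F$ and $G_1$ skew, and $I_F\cup(J-I_0)=I_F\cup J$ is a mutual basis for $\{F,G\}$. Repaired this way, your route is shorter than the paper's, which goes from complementary flats to all flats meeting $F$ in loops and then reaches (1) via condition (4) and Lemma~\ref{maxskew}.

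\textbf{Other incomplete links.}

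\emph{(1)$\Rightarrow$(4).} You defer everything to an unproved ``transfer to $M\con C$'' lemma. If the transferred object is the flat $\cl_{M\con C}(F-C)$ (the paper's Corollary~\ref{modularcontract}, which is derived \emph{from} (4)), then choosing $G=\cl_{M\con C}(\{e\})$ is vacuous, because $e$ already lies in that flat; $f$ must come from $F$ itself. The paper instead uses the modular pair $(F,\cl_M(C\cup\{e\}))$ in $M$. If $F\cap\cl_M(C\cup\{e\})\ss\cl_M(C)$, then Lemmas~\ref{modulartoskew} and~\ref{skewmono} make $F-C$ and $\cl_M(C\cup\{e\})-C$ skew in $M\con C$, impossible since both span the nonloop $e$. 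Otherwise any $f\in F\cap\cl_M(C\cup\{e\})-\cl_M(C)$ is the required parallel element.

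\emph{(4)$\Rightarrow$(3).} This is fine provided you contract only $(D\cap G)-\{e\}$ for the crossing circuit $D$; contracting all of $D-\{e\}$ makes $e$ a loop. The resulting $f$ is then a nonloop of $M$ lying in $G$.

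\emph{(6)$\Rightarrow$(3).} Your specialization is vacuous. When $G$ is complementary to $F$, the part of a basis of $G$ outside $F$ is a whole basis of $G$, so $G_1=G_2=G$ and the identity reads $G=G$. The useful choice is $G_1=\cl_M(C)$, $G_2=\cl_M(C\cup\{e\})$, which yields (4) at once.

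\emph{(5)$\Rightarrow$(1).} You give no argument. One that works: take a mutual basis $B$ of the chain $\{F\cap G,G,F\cup G\}$ and apply (5) with $F_0=\cl_M(B\cap F)$ to get $F\ss\cl_M(B\cap F)$.

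\emph{(1)$\Rightarrow$(5),(6).} These are only asserted. The paper obtains (6) from (4), and (5) from the skewness form (2) together with Lemma~\ref{skewmono}.

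\emph{(1)$\Rightarrow$(2).} No augmentation is needed here: for any mutual basis $B$ of $\{F,G\}$, Lemma~\ref{blattice} already makes $B\cap F\cap G$ a basis of $F\cap G$.
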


\subsection*{Modular Matroids}

Finite matroids where every flat is modular are very rare. 
Aside from trivialities coming from direct sums and simplification, 
and non-Desarguesian projective planes in rank three, all examples are projective geometries over finite fields. 
The following statement combines ([\ref{oxley}], Proposition 6.9.1 and Theorem 6.1.2). 

\begin{theorem}\label{modularpg}
  Every simple, connected, finite modular matroid of rank at least $4$ is isomorphic to $\PG(n-1,\GF(q))$ for some prime power $q$. 
\end{theorem}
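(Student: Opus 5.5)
The plan is to reduce Theorem~\ref{modularpg} to the classical coordinatization of projective geometries. Let $M$ be simple, connected, finite and modular, of rank $r \ge 4$. We view $M$ as an incidence structure: the points are the elements of $M$ (rank-one flats, by simplicity) and the lines are the rank-two flats. In the finite case our definition of a modular pair agrees with the rank-equality one, by Lemma~\ref{modpairiffbasisintro}. So for any two flats $F,G$ we have $r(F)+r(G)=r(F\cup G)+r(F\cap G)$.

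The first step is to verify the Veblen--Young axioms.
\begin{itemize}
  \item Two distinct points span a line, and this line is unique, since it is their closure.
  \item Take two lines $L,L'$ contained in a common rank-three flat. Then modularity gives $r(L\cap L') = 2+2-3 = 1$, so the two lines meet in a point. This is the Veblen--Young (Pasch) axiom: any two coplanar lines meet.
\end{itemize}

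Next I would show that every line has at least three points. Suppose some line $L=\{a,b\}$ has only two points. Connectivity gives a circuit through $a$ and some element outside $L$. I would use modularity to produce a third point on $L$, as follows. Take a point $c \notin L$ and a rank-three flat $P \supseteq L\cup\{c\}$. The lines in $P$ through $c$ must each meet $L$, but it is not yet clear that this yields a new point. The cleaner route is the standard argument: if $L$ has exactly two points, then $\{a\}$ and $\{b\}$ are separators of the closure structure near $L$. More precisely, I would show that every line has either exactly two points or at least three, and that the relation ``$x\sim y$ iff $x=y$ or $\cl(\{x,y\})$ has at least three points'' is an equivalence relation. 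This uses the meeting property in planes, and it gives a direct-sum decomposition of $M$ along the classes. Connectivity then forces a single class, so every line has at least three points. This is the step I expect to be the main obstacle, namely the transitivity of $\sim$. Given $x\sim y\sim z$ with $x,y,z$ not collinear, take $w$ a third point of $\cl(x,y)$ and $v$ a third point of $\cl(y,z)$. The lines $\cl(w,v)$ and $\cl(x,z)$ are coplanar, so they meet in a point. Checking that this point is distinct from $x$ and $z$ gives three points on $\cl(x,z)$.

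Once every line has at least three points, $M$ is the matroid of a finite projective space of dimension $r-1 \ge 3$. The Veblen--Young theorem then says that it is Desarguesian, and hence coordinatized as $\PG(r-1,D)$ over a division ring $D$. Since $M$ is finite, $D$ is finite, and by Wedderburn's theorem $D=\GF(q)$. This is the content cited from [\ref{oxley}], Proposition 6.9.1 and Theorem 6.1.2, and here I would simply invoke it.
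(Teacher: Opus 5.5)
The paper gives no proof of this theorem; it simply cites Oxley's book. Your transitivity argument for $\sim$ is correct, and your endgame (Veblen--Young, then Wedderburn) is what that citation amounts to. The gap is that the only consequence of modularity you actually use is that coplanar lines meet. That property is too weak at two steps you pass over quickly.

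\textbf{The direct sum.} Coplanar lines meeting does not give a direct-sum decomposition along the $\sim$-classes. $U_{4,5}$ is simple, connected and of rank $4$, and any two distinct lines ($2$-sets) in a common plane (a $3$-set) meet. Yet every line has two points, so every $\sim$-class is a singleton.

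\textbf{The identification with a projective space.} Three-point lines together with meeting coplanar lines only make the point--line geometry of $M$ a projective space. They do not show that $M$ is the matroid of that space, with $r(M)$ equal to its dimension plus one. The rank-$4$ truncation of $\PG(4,\GF(2))$ has exactly the lines and planes of $\PG(4,\GF(2))$. So it is simple, connected, has three-point lines and meeting coplanar lines, but its $31$ points cannot be the point set of any $\PG(3,\GF(q))$.

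The missing ingredient is modularity for pairs of larger flats, in the form of Lemma~\ref{modcircuit}. If $C$ is a circuit with $|C|\ge 4$ and $(I,J)$ partitions $C$ with $|I|,|J|\ge 2$, then there is $e\notin C$ such that $I\cup\{e\}$ and $J\cup\{e\}$ are circuits, both shorter than $C$. This repairs both steps.

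\textbf{Repairing the first step.} Induct on $|C|$, with triangles as the base case, and combine with your transitivity. This shows that any two elements of a circuit are $\sim$-equivalent. Any two elements of a connected matroid lie in a common circuit, so there is a single class, and hence every line has at least three points. No direct-sum detour is needed.

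\textbf{Repairing the second step.} The same lemma shows that every set $S$ closed under lines is a flat. Choose $e\in\cl(S)-S$ and a circuit $C\subseteq S\cup\{e\}$ through $e$ with $|C|$ minimal over all such choices. If $|C|=3$, then $e$ lies on a line through two points of $S$. Otherwise split off $\{e,c\}$ for some $c\in C-\{e\}$. This gives $f\notin C$ with $\{e,c,f\}$ a triangle and $(C-\{e,c\})\cup\{f\}$ a shorter circuit. Minimality forces $f\in S$, and then $e\in\cl(\{c,f\})\subseteq S$, a contradiction. Thus the flats of $M$ are exactly the subspaces of its geometry. Only then does Veblen--Young give $M\cong\PG(r-1,D)$, and Wedderburn gives $D=\GF(q)$.
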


Modularity is therefore the property that characterizes the `complete' objects 
for the class of matroids representable over finite fields.

For infinite matroids, more modular matroids exist. 
As well as replacing the field $\GF(q)$ with an infinite field in the above example,
one can define a matroid $PG(V,\bK)$ for an infinite-dimensional module $V$ over a skew field $\bK$ by taking 
$E(M)$ to comprise an arbitrary transversal (not containing zero) of the one-dimensional subspaces of $V$, 
and taking a set $I$ to be independent in $M$ if it is linearly independent over $\bK$, 
by which we mean that no nontrivial finite linear combination of the elements of $I$ is zero, 
with respect to right-multiplication in $\bK$.  

(In considering both infinite modules and skew fields, 
this construction combines two slightly exotic notions of matroid representations, 
but their combination is straightforward: 
See [\ref{hk96}] for a treatment of matroid representations over skew fields 
and [\ref{bc18}] for a discussion of finitary representations of infinite matroids.)

The matroid $\PG(V,\bK)$ just described has only finite circuits, 
as will any matroid using the conventional notion of linear independence for infinite sets. 
However, there are more permissive notions of infinite matroid representations 
that allow for infinite circuits, notably `thin sums representability' [\ref{bc18}].
It is natural to ask whether there might be a class $\cM$ of modular matroids, 
playing a role similar to projective geometries, such that 
every simple thin-sums-representable matroid is a restriction of a matroid in $\cM$. 
We prove that the answer is no, since modular matroids are all finitary. 

\begin{theorem}\label{modularfinitary}
Every modular matroid is finitary. 
\end{theorem}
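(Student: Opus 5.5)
The plan is to argue by contradiction. Suppose $M$ is modular but has an infinite circuit $C$. I will use modularity, in the form of the equivalent characterizations of modular flats stated just before Theorem~\ref{modularfinitary}, to produce a \emph{finite} dependent subset of $C$. That is impossible, since every proper subset of a circuit is independent.

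First I split $C$ as $C = A \cup B$, where $B = \{b_1, b_2\}$ has two elements and $A = C - B$ is infinite. Both $A$ and $B$ are independent. Neither $b_i$ lies in $\cl_M(A)$, since $A \cup b_i$ is a proper subset of $C$, while $B \subseteq \cl_M(A \cup b_1)$. Let $F = \cl_M(A)$, $G = \cl_M(B)$ and $K = F \cap G$. Since $F$ is modular, the pair $(F,G)$ is modular, so there is a mutual basis $I$ for $\{F,G\}$. I will use the basic lemma, presumably proved in the section on modular pairs, that $I$ may be chosen so that $I \cap K$ is a basis of $K$. Because $G$ has rank $2$, this is a purely finite-rank computation inside $\cl_M(I \cap G)$. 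It gives $r_M(K) = 1$: the rank is $\le 1$ because $b_i \notin F$, and $\ge 1$ because otherwise $I\cap F$ and $I \cap G$ would be disjoint with independent union spanning $C$, which contradicts $A \cup B$ being dependent with every element of $B$ essential. So $K$ contains a nonloop $p \in \cl_M(\{b_1,b_2\}) \cap \cl_M(A)$ that is parallel to neither $b_i$. Then $\{p,b_1,b_2\}$ contains a finite circuit through $p$, and $A \cup p$ contains a circuit $C'$ through $p$.

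Next I eliminate $p$. If $C'$ were finite, circuit elimination with the finite circuit $\{p,b_1,b_2\}$ on $p$ would give a circuit inside $(C'-p) \cup \{b_1,b_2\}$, a finite subset of $C$. That is impossible, so $C'$ is infinite. To get a genuine contradiction I instead run the construction with varying finite pieces of $C$. For each finite $Y \subseteq A$, apply characterization (4) of modular flats in the minor $M \con (A-Y)$ to the flat $\cl_M(B)$. Here $C - (A-Y) = B \cup Y$ is a \emph{finite} circuit of $M \con (A-Y)$. So each $a\in Y$ is spanned by $B$ in this minor, and (4) provides an element $f_a \in \cl_M(B)$ that is parallel to $a$ in $M \con (A - Y)$. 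Taking $Y=\{a\}$, this gives elements $f_a \in \cl_M(B)\cap\cl_M(A)$ with $f_a \notin \cl_M(A-a)$. Since $\cl_M(B)\cap\cl_M(A) = K$ has rank one, all the $f_a$ are parallel to $p$. Hence $p \notin \cl_M(A - a)$ for every $a \in A$, i.e.\ $p$ together with all of $A$ is needed, and this is consistent with the above so far.

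The main obstacle is turning this into a contradiction, i.e.\ showing that an element spanned by an infinite independent set is already spanned by a finite part of it. My plan is to exploit condition~(iii) of Definition~\ref{infmodcut}, or more directly the intersection of a modular chain. Choose a countable sequence $a_1,a_2,\dotsc$ in $A$ and consider the decreasing chain of flats $F_n = \cl_M(A - \{a_1,\dotsc,a_n\})$. In a modular matroid each pair is modular, and I expect one can build a mutual basis for the whole chain together with $\cl_M(B \cup \{a_1,\dotsc,a_n\})$. By the rank-one computation above, the flat $\cl_M(B\cup\{a_1,\dots,a_n\})\cap F_n$ is a point $p_n$. Choosing the mutual bases compatibly, the points $p_n$ should be forced into $\bigcap_n F_n$ while remaining spanned by $B\cup\{a_1,\dots,a_n\}$. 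Comparing with $p \notin \cl_M(A-a_1)$, and more generally $p_n \notin F_{n+1}$, then yields the contradiction. If the chain argument proves too delicate, the fallback is to phrase everything via characterization~(2): the sets $F_n - G$ and $G - F_n$ are skew in $M \con (F_n\cap G)$. Then I would use Theorem~\ref{skewequivintro}(4), which says every circuit of the union lies in one side, applied to $C$ in a suitable contraction, to force a finite piece of $C$ to be dependent.
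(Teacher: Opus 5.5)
There is a genuine gap: the step that is supposed to produce the contradiction is only asserted, and the correct parts of your argument make no progress toward it.

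The point $p\in\cl_M(B)\cap\cl_M(A)$ is fine; this is Lemma~\ref{modcircuit}. But it only replaces the infinite circuit $C$ by the infinite circuit $A\cup\{p\}$. The elements $f_a$ obtained from characterization~(4) only re-derive that $A\cup\{p\}$ is a circuit. (Also, for $|Y|\ge 2$ one has $a\notin\cl_{M\con(A-Y)}(B)$, because $B\cup\{a\}$ is a proper subset of the circuit $B\cup Y$; only $Y=\{a\}$ works.)

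In the chain paragraph, $A$ is already a mutual basis for the chain $(F_n)$, so $\bigcap_n F_n=\cl_M(A-\{a_1,a_2,\dotsc\})$ by Lemma~\ref{closureinter}. No bookkeeping with mutual bases says anything about $p_n$. That point is spanned by the finite set $B\cup\{a_1,\dotsc,a_n\}$ and, as you note yourself, satisfies $p_n\notin F_{n+1}$. So the claim that $p_n$ is ``forced into $\bigcap_n F_n$'' has no derivation. Modularity of $M$ does not supply a mutual basis for infinitely many of these flats at once. Condition~(iii) of Definition~\ref{infmodcut} concerns modular cuts, which play no role here. The skewness fallback is too vague to assess. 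Most importantly, your plan never uses a genuinely infinitary axiom, whereas the paper's contradiction comes from infinite circuit elimination.

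Your points can be repaired into the paper's argument (Lemmas~\ref{modcircuit} and~\ref{modularaux} and Theorem~\ref{modularlinefinitary}, which in fact need only modular lines).
\begin{itemize}
\item Set up the tail sets. After contracting (modularity survives this by Corollary~\ref{modularcontract}), assume $C=B\cup A$ with $A=\{x,a_1,a_2,\dotsc\}$. Let $J_n=A-\{a_1,\dotsc,a_n\}$, so $x\in J_n$ for every $n$.
\item Define the $p_n$ recursively rather than independently. Put $p_0=p$. Since $J_{n+1}\cup\{p_n,a_{n+1}\}=J_n\cup\{p_n\}$ is a circuit, Lemma~\ref{modcircuit} applied to the split $(\{p_n,a_{n+1}\},J_{n+1})$ gives $p_{n+1}$ with $\{p_n,a_{n+1},p_{n+1}\}$ and $J_{n+1}\cup\{p_{n+1}\}$ both circuits. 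By your rank-one computation, this is your $p_{n+1}$ up to parallel elements.
\item Show the points span $x$. $P=\{p_0,p_1,\dotsc\}$ spans $p_0$ and every $a_i$, hence spans $x$. So some circuit $D$ satisfies $x\in D\subseteq P\cup\{x\}$.
\item Apply infinite circuit elimination. Let $s$ be least with $p_s\in D$. Keep $p_s$ and eliminate every other $p_i\in D$ via the circuits $J_i\cup\{p_i\}$. This yields a circuit inside $J_{s+1}\cup\{p_s\}$, a proper subset of the circuit $J_s\cup\{p_s\}$, which is the contradiction.
\end{itemize}
Your proposal is missing three things: the triangles linking consecutive points, the fixed element $x$ kept on the infinite side, and the appeal to infinite circuit elimination.
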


This possibly puts within reach the problem of characterizing all modular matroids, finite or infinite. 
We remark that that there are interesting finite-rank examples that do not occur
in the finite case, such as the projective plane over the octonions. 

\begin{problem}
  Characterize all modular matroids. 
\end{problem}

Finally, we prove a necessary and sufficient condition for modularity
that only considers lines and hyperplanes. This is well-known for finite matroids
(see [\ref{oxley}], Proposition 6.9.1), but seems to be new for infinite 
matroids.

\begin{theorem}\label{modularlinehyperplane}
  Let $M$ be a loopless matroid. Then $M$ is modular if and only if every line of $M$
  intersects every hyperplane of $M$. 
\end{theorem}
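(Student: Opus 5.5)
We prove the two directions separately. In both, a \emph{line} is a flat of rank $2$ and a \emph{hyperplane} is a flat $H$ with $r(M \con H) = 1$, i.e.\ a maximal proper flat. Since $M$ is loopless, two flats intersect nontrivially exactly when their intersection contains a nonloop.

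\emph{Forward direction.} Suppose $M$ is modular, $L$ is a line and $H$ is a hyperplane. If $L \subseteq H$ there is nothing to show, so assume $L \not\subseteq H$. Then $\cl(L \cup H) = E(M)$. Modularity of the pair $(L,H)$ gives a mutual basis $B$ containing a basis $B_L$ of $L$ and a basis $B_H$ of $H$. We have $B_L \not\subseteq H$, since otherwise $L = \cl(B_L) \subseteq H$. Hence $B$ spans $E(M)$, so $B$ is a basis of $M$. Because $r(M \con H) = 1$, exactly one element of $B$ lies outside $\cl(B_H) = H$, so $|B - B_H| = 1$. As $|B_L| = 2$, at least one element of $B_L$ lies in $B_H \subseteq H$, so $L \cap H$ contains a nonloop. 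No finiteness is needed here.

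\emph{Converse.} Assume every line meets every hyperplane, and let $F, G$ be flats. We must produce a mutual basis for $(F,G)$. Let $I$ be a basis of $F \cap G$, extend it to a basis $I \cup J$ of $F$, and extend $I$ to a basis $I \cup K$ of $G$. It suffices to show that $I \cup J \cup K$ is independent, equivalently that $J$ and $K$ are skew in $M \con (F \cap G)$. Suppose not. Then $M \con I$ has a circuit $C \subseteq J \cup K$ meeting both $J$ and $K$.

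\emph{Reduction to the circuit.} We first treat the case where $C$ is finite. Contracting everything in $C$ except three carefully chosen elements, together with suitable flats, should yield a minor in which $C$ becomes a triangle. This triangle contains a point $j \in J$ and a point $k \in K$, so it produces a line that meets a hyperplane trivially. Concretely, pick $j \in C \cap J$ and $k \in C \cap K$, and set $Z = I \cup (C - \{j,k\})$. Then $\cl(Z \cup \{j,k\})$ contains the line $\cl(\{j,k\}) \cup Z$ in $M \con Z$. The two flats $\cl(F \cap \dots)$ in $M \con Z$ force a nonloop of $\cl_{M\con Z}\{j,k\}$ to lie in neither $F$ nor $G$, which contradicts the intersection property. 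The catch is that the hypothesis concerns lines and hyperplanes of $M$ itself, not of its contractions. To handle this, I would show that the hypothesis is inherited by contractions and restrictions to flats. For contractions this holds because lines and hyperplanes of $M \con Z$ correspond to flats of $M$ containing $\cl(Z)$. I would prove an intermediate lemma: if every line meets every hyperplane, then every line meets every flat of corank $1$ in any flat containing it. This is done by extending hyperplanes of a flat $D$ to hyperplanes of $M$ using a basis extension, which works in infinite matroids. With the lemma in hand, the finite-rank computation within the rank-$(|C|+|I|)$-local structure goes through as in Oxley, Proposition~6.9.1.

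\emph{Main obstacle: infinite circuits.} The case where $C$ is infinite is the key difficulty, and here I would use Theorem~\ref{modularfinitary} as a guide. I would first prove directly that the line–hyperplane hypothesis forces $M$ to be finitary. Suppose $C$ is an infinite circuit and $e \in C$. Partition $C - \{e\}$ into two infinite sets $A_1, A_2$. Let $H$ be a hyperplane containing $A_1 \cup A_2'$, where $A_2'$ is $A_2$ minus one element $f$. The line $\cl\{e,f\}$ should meet $H$ only in loops, because any nonloop in the intersection would let us shorten $C$. This needs care, and I expect it to be the main obstacle: choosing $H$ via a basis extension so that the line $\cl\{e,f\}$ avoids it. Once $M$ is finitary, every circuit is finite, and the finite-circuit argument above completes the proof.
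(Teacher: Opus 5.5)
Your forward direction is correct; it is essentially the forward half of Lemma~\ref{modularfinite} done by hand. The converse has a genuine gap, and it is in the step you flagged: proving that $M$ is finitary.

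\textbf{The finitary step.} The choice of $H$ (and of $A_1,A_2$) is irrelevant. Let $W=\cl_M(C-\{e,f\})$. Any hyperplane $H\supseteq C-\{e,f\}$ avoiding $e,f$ contains $W$. Since $\cl_M(C)$ has rank one over $W$ and $e\notin H$, we get $H\cap\cl_M(C)=W$. Hence $\cl_M(\{e,f\})\cap H=\cl_M(\{e,f\})\cap W$ for every such $H$.

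A nonloop $g$ in this intersection does not shorten $C$. The set $\{e,f,g\}$ is a triangle, and eliminating $g$ against a circuit in $(C-\{e,f\})\cup\{g\}$ only shows that $(C-\{e,f\})\cup\{g\}$ is another infinite circuit. This genuinely occurs, for instance in the matroid of finite and infinite cycles of a one-ended locally finite graph, when $e=uv$ and $f=vw$ are consecutive edges of an infinite cycle and $g=uw$. So no single line--hyperplane pair yields a contradiction; the argument must be iterated infinitely often.

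The paper does this as follows:
\begin{itemize}
\item It first converts the hypothesis into modularity of every line (Lemma~\ref{modularfinite}).
\item It shows that modularity of lines survives contraction (Corollary~\ref{linemodularcontract}, via Lemma~\ref{contractonto}). The line--hyperplane condition itself does not visibly pass to contractions.
\item It applies Lemma~\ref{modcircuit} repeatedly to build elements $b_i$ with triangles $\{b_{i-1},a_{i-1},b_i\}$ and circuits $\{x,b_k\}\cup\{a_i: i\ge k\}$ (Lemma~\ref{modularaux}).
\item Infinite circuit elimination, applied to a circuit inside $\{x\}\cup\{b_i : i\in\bN\}$, then produces a circuit properly contained in one of these circuits.
\end{itemize}
This iterative construction is the missing idea.

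\textbf{The finite-circuit step.} This does not work as written either. Contracting $Z\supseteq I$ needs the hypothesis in $M\con Z$, but a line of $M\con Z$ is a flat of $M$ of rank two over $\cl_M(Z)$, not a line of $M$. Your lemma covers restrictions to flats, not contractions. Moreover, when $I$ is infinite there is no local structure of finite rank $|C|+|I|$.

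This part can be repaired once $M$ is known to be finitary:
\begin{itemize}
\item Lift $C$ to a finite circuit $C'\subseteq C\cup I$ of $M$, and restrict to the finite-rank flat $\cl_M(C')$, where your restriction lemma supplies the hypothesis.
\item Apply the finite-rank case to the flats $\cl_M(C'\cap(I\cup J))$ and $\cl_M(C'\cap(I\cup K))$. Modularity forces their intersection to have rank $|C'\cap I|+1$.
\item Lemma~\ref{closureinter} places that intersection inside $\cl_M(C'\cap I)$, a contradiction.
\end{itemize}
The paper is shorter at this point. By Lemma~\ref{modularfinite} every hyperplane is modular, every flat is an intersection of hyperplanes, and in a finitary matroid arbitrary intersections of modular flats are modular (Lemma~\ref{modularinter}).
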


\section{Preliminaries}

\subsection*{Set Families}
So far, we have mostly stated our result about set families in $M$ in terms of 
subsets of the power set of $E(M)$. 
When working with disjointness and skewness, 
this perspective can give rise to bothersome and distracting technicalities; 
for example, the collection $\cX = \{\{1,2\},\{2,3\}\}$ is not pairwise disjoint,
but the (one-element) collection $\{X \cup \{1,3\} : X \in \cX\}$ is pairwise disjoint. 

For this reason, we prefer working with indexed collections rather than 
sets of sets. We write $\ab{X_a : a \in A}$ for an indexed collection of 
subsets of some ambient set $S$, usually the ground set of a matroid
(formally, this is just a function from $A$ to 
the power set of $S$). We seldom vary our indexing set; usually after 
mentioning it once, we write $\ab{X_a}$ instead of $\ab{X_a : a \in A}$ 
to be concise. 

In certain proofs, to limit unwieldy notation, we allow sets as subscripts 
with indexed collections to denote unions. That is, for an indexed 
collection $\ab{X_a : a \in A}$ and a set $B \ss A$, we write $X_B$ for $\cup_{b \in B}X_b$. 

\subsection*{Infinite Matroids}

The definition we use for infinite matroids was given by Bruhn et al. [\ref{bdkpw}],
and earlier (in the slightly different form of a `$B$-matroid') by Higgs [\ref{higgs}].

\begin{definition}
  A \emph{matroid} is a pair $M = (E, \cI)$, where $E$ is a set and $\cI$ is a
  family of subsets of $E$ for which
  \begin{enumerate}[(1)]
    \item\label{I1} $\varnothing \in \cI$,
    \item\label{I2} for all $I \subseteq J$, if $J \in \cI$ then $I \in \cI$, 
    \item\label{I3} for all $I, B \in \cI$ such that $B$ is a maximal member of
      $\cI$ and $I$ is not, there exists $e \in B - I$ for which $I \cup \{e\} \in \cI$, and
    \item\label{IM} for all $I \subseteq X \subseteq E$ with $I \in \cI$, the collection
      $\setof{J \in \cI}{I \subseteq J \subseteq X}$ has a maximal element.
  \end{enumerate}
\end{definition}

It was shown in [\ref{bdkpw}] that infinite matroids are well-behaved: 
they specialize to finite matroids in the case where $E$ is finite, 
have good notions of minors and duality, and admit equivalent axiomatizations 
in terms of bases, circuits, the closure function and the relative rank function. 
We mostly use the basic material and terminology from [\ref{bdkpw}] without explicit reference
to named lemmas. 

We also use some basic notions related to connectivity of infinite matroids from [\ref{bw12}], 
such as the parameter $\lambda$ of Definition~\ref{bw12}, and the notion of the 
direct sum $\oplus_{i}M_i$ of a family $(M_i : i \in I)$ of matroids. 
We now review some concepts of particular relevance to what lies ahead.  

\subsection*{Rank and cardinality}

We write $\bN = \{0, 1, \dotsc\}$ for the set of nonnegative integers, 
and $\enn$ for the set $\bN \cup \{\infty\}$. This set has a well-defined linear 
order and notion of addition (with $\infty + x = \infty$ for all $x$), 
but the subtraction $\infty - \infty$ is undefined, 
and consequently we cannot conclude from an equality $a + x = a + y$ that $x = y$, 
unless we know that $a < \infty$. 
This issue will be particularly relevant for us, 
since it is often what stops finite definitions from generalizing correctly, 
such as with the parameter $\lambda$. 

If $X$ is any set, we will write $|X|$ for the number of elements of $X$, 
viewed as an element of $\enn$, and for each infinite matroid we will view the rank 
$r(M)$, or the number of elements of a basis of $M$, as an element of $\enn$. 
(It is routine to show that any two bases of a matroid 
have the same number of elements, when the number is viewed as an element of $\enn$.)
In the context of set theory, it may seem rather coarse not to distinguish 
infinite sets of different cardinalities. But this seems to be the largest codomain 
in which the `rank' of an infinite matroid is well-defined without additional set-theoretic assumptions. 
 While Higgs [\ref{higgs69}] showed that any two bases of a matroid
have the same cardinality subject to the generalized continuum hypothesis, 
Bowler and Geschke [\ref{bg}] showed that it is consistent with ZFC that 
there is a matroid having both a countable and an uncountable basis. 
So (without additional exotic axioms) the rank of a matroid as a cardinal is not 
even well-defined. 

The fact that the $\enn$-rank $r(M)$ of a matroid $M$ is well-defined allows us to 
meaningfully measure the rank $r_M(X) = r(M | X)$ of each set $X$ in a matroid $M$, 
the rank $r^*(M)$ of the dual matroid $M^*$, and the rank $r_{M \con X}(Y)$ 
of a set $Y$ in the contraction $M \con X$. This last fact is useful, since 
it allows for a more fine-grained version of the rank function. 
For sets $X,Y \ss E(M)$, the \emph{relative rank} of $Y$ to $X$ in $M$ 
is defined by $\relrank{M}{Y}{X} = r_{M \con X}(Y-X)$. 
(In [\ref{bdkpw}], the relative rank is only defined when $X \ss Y$;
this makes sense from the perspective of a minimal axiomatization, 
but otherwise there is no particular reason not to define it more generally.)
If $M$ is a finite-rank matroid,
this expression can be defined using subtraction as $\relrank{M}{Y}{X} = r_M(X \cup Y) - r_M(X)$. 
A matroid is completely determined by its ground set and relative 
rank function [\ref{bdkpw}], but not just its rank function. 
For instance, if $M_0$ is a free matroid on $\bN$ and $M_1$ is 
a circuit on $\bN$, then $M_0 \ne M_1$, but $r_{M_0}(X) = r_{M_1}(X)$ for all $X$. 

\subsection*{Projecting elements}
When contracting a set $X$ in a matroid $M$, the elements of $X$ are removed from $M$, 
but sometimes, to avoid set-theoretic bookkeeping, it is more convenient to keep them.
The following definition appears in [\ref{gn}].
\begin{definition}
  For a set $X$ of elements of a matroid $M$, the \emph{projection of $M$ by $X$} is the matroid $M \dcon X = M \con X \oplus O_X$,
where $O_X$ is the rank-zero matroid with ground set $X$.   
\end{definition}

Equivalently, the projection is the matroid with ground set $E(M)$ 
and the same independent sets as $M \con X$, and is also 
the matroid obtained from $M$ by extending each 
element of $X$ in parallel, and contracting all the new elements. 
Hence, the projection of $M$ by $X$ is also a `projection' of $M$ in the sense defined earlier. 
The following facts, which we use freely, are all straightforward to prove. 
\begin{lemma}
  If $X$ and $Y$ are sets in a matroid $M$, then 
  \begin{enumerate}[(i)]
    \item $E(M \dcon X) = E(M)$;
    \item $\cI(M \dcon X) = \cI(M \con X)$;
    \item $M \dcon X \del X = M \con X$;
    \item $(M \dcon X) \dcon Y = M \dcon (X \cup Y)$;
    \item $\cl_{M \dcon X}(Y) = \cl_M(X \cup Y)$;
    \item if $\cl_M(X) = \cl_M(Y)$, then $M \dcon X = M \dcon Y$;
    \item if $Y$ is independent in $M$, then $X$ is independent 
      in $M \dcon Y$ if and only if $X$ and $Y$ are disjoint and $X \cup Y$ is independent in $M$. 
  \end{enumerate}

\end{lemma}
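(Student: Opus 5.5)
The plan is to prove each item straight from the equivalent description $M \dcon X = M \con X \oplus O_X$. Concretely, $M \dcon X$ is the matroid on $E(M)$ whose independent sets are exactly the sets $I \ss E(M) - X$ with $I \cup J$ independent in $M$, where $J$ is some (equivalently, every) fixed basis of $X$ in $M$. With this in hand, (i) is immediate, since $E(M \con X) \cup X = E(M)$. For (ii), the independent sets of a direct sum with a rank-zero matroid are the independent sets of the other summand, so $\cI(M \dcon X) = \cI(M \con X)$ as families of sets. For (iii), deleting the ground set of the $O_X$ summand from the direct sum leaves $M \con X$.

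For (iv), I would compare independent sets. Let $J$ be a basis of $X$ in $M$, and extend it to a basis $J \cup K$ of $X \cup Y$ in $M$. Then $K$ is a basis of $Y - X$ in $M \con X$, and hence also a basis of $Y$ in $M \dcon X$, because elements of $X$ are loops there. A set $I$ is independent in $(M \dcon X) \dcon Y$ if and only if $I$ avoids $Y$, $I \cup K$ is independent in $M \dcon X$, and hence $I \cup K$ avoids $X$ and $I \cup K \cup J$ is independent in $M$. This is exactly the condition that $I$ avoids $X \cup Y$ and is independent in $M \con (X \cup Y)$. Both matroids have ground set $E(M)$, so they are equal.

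Item (v) follows because $e \in \cl_{M \dcon X}(Y)$ holds precisely when $e$ is a loop of $(M \dcon X) \dcon Y$, or $e \in Y$. By (iv), the first condition says $e$ is a loop of $M \dcon (X \cup Y)$, that is, $e \in \cl_M(X \cup Y)$. Elements of $Y$ already lie in $\cl_M(X \cup Y)$, which gives the equality. For (vi), contraction by sets with the same closure gives the same independent sets on the common remaining elements. Indeed, a basis $J$ of $X$ is also a basis of $\cl_M(X) = \cl_M(Y)$, so independence in $M \dcon X$ of a set disjoint from $\cl_M(X)$ is tested against $J$. Meanwhile any element of $\cl_M(X)$ is a loop of $M \dcon X$, by (v) with $Y = \varnothing$. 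The same holds for $M \dcon Y$, so the two matroids have the same independent sets. For (vii), $Y$ is its own basis, so $X$ is independent in $M \dcon Y$ if and only if $X \cap Y = \varnothing$ (elements of $Y$ are loops) and $X \cup Y$ is independent in $M$.

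None of these steps is a real obstacle. The only care needed is in (iv), where one must choose nested bases so that the contraction calculus for infinite matroids from [\ref{bdkpw}] applies.
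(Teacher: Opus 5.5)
Your proposal is correct and is essentially the routine verification the paper has in mind. The paper omits the proof as straightforward and records the same description you start from: $M \dcon X$ has ground set $E(M)$ and the same independent sets as $M \con X$. Your nested-basis argument for (iv) and your derivation of (v) from the loops of $(M \dcon X) \dcon Y$ are sound. As a small shortcut, (vi) follows directly from (v): for every $Z$ we have $\cl_{M \dcon X}(Z) = \cl_M(\cl_M(X) \cup Z) = \cl_M(\cl_M(Y) \cup Z) = \cl_{M \dcon Y}(Z)$, and a matroid is determined by its ground set and closure function.
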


\section{Modularity}

In this section, we prove some facts about modularity and mutual bases. 
We start with a useful lemma about closure. 
 
\begin{lemma}\label{closureinter}
  Let $\cI$ be a collection of sets in a matroid $M$. If $\bigcup \cI$ is independent in $M$,
  then $\cl_M(\bigcap \cI) = \bigcap_{I \in \cI} \cl_M(I)$. 
\end{lemma}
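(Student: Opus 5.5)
The inclusion $\cl_M(\bigcap \cI) \ss \bigcap_{I \in \cI}\cl_M(I)$ follows from monotonicity of closure: for each $I \in \cI$ we have $\bigcap \cI \ss I$, so $\cl_M(\bigcap \cI) \ss \cl_M(I)$. (If $\cI$ is empty, we adopt the convention that $\bigcap \cI = E(M)$; then both sides equal $E(M)$ and there is nothing to prove.) The substance is the reverse inclusion. Write $J = \bigcup \cI$, which is independent by hypothesis, and $K = \bigcap \cI$.

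Let $e \in \bigcap_{I \in \cI}\cl_M(I)$; we will show $e \in \cl_M(K)$. There are two cases, according to whether $e \in J$.

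\emph{Case $e \in J$.} For each $I \in \cI$, the set $I \cup \{e\} \ss J$ is independent. Since $e \in \cl_M(I)$, this forces $e \in I$: otherwise $e$ would lie in the closure of an independent set without creating a circuit with it. As this holds for every $I$, we get $e \in K \ss \cl_M(K)$.

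\emph{Case $e \notin J$.} Suppose first that $J \cup \{e\}$ is independent. Then for any $I \in \cI$, the set $I \cup \{e\}$ is independent with $e \notin I$, contradicting $e \in \cl_M(I)$. (We may take $\cI$ nonempty, by the convention above.) So $J \cup \{e\}$ is dependent, and since $J$ is independent it contains a unique circuit $C$, the fundamental circuit of $e$ with respect to $J$, and $e \in C \ss J \cup \{e\}$.

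Now fix $I \in \cI$. Because $e \in \cl_M(I)$ and $I$ is independent, $I \cup \{e\}$ contains a circuit through $e$. That circuit lies in $J \cup \{e\}$, so by uniqueness it is $C$. Hence $C - \{e\} \ss I$ for every $I \in \cI$, so $C - \{e\} \ss K$. Since $C$ is a circuit containing $e$, we have $e \in \cl_M(C - \{e\}) \ss \cl_M(K)$, as required.

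The only step needing care is the uniqueness of the circuit in $J \cup \{e\}$ for a possibly infinite independent set $J$. This is standard in the axiomatics of [\ref{bdkpw}]: if $C_1 \ne C_2$ were two circuits in $J \cup \{e\}$, both would contain $e$, and circuit elimination would produce a circuit inside $(C_1 \cup C_2) - \{e\} \ss J$, contradicting the independence of $J$. Everything else in the argument is elementary.
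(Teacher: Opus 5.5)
Your proof is correct, but it takes a different route from the paper's.

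\textbf{The paper's route.} It argues by contradiction using only bases. Assuming $e \notin \cl_M(\bigcap\cI)$, it does the following.
\begin{enumerate}
\item It extends $(\bigcap\cI) \cup \{e\}$ to a basis $J_I$ of each $I \cup \{e\}$.
\item It chooses a basis $J_1$ of $(\bigcup\cI) \cup \{e\}$ with $(\bigcap\cI) \cup \{e\} \ss J_1 \ss \bigcup_I J_I$.
\item Since $J_1$ and $\bigcup\cI$ are both bases of a set of nullity one, it writes $J_1 = (\bigcup\cI) \cup \{e\} - \{f\}$.
\item Since $f \notin \bigcap\cI$, some $I \in \cI$ misses $f$. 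Then $I \cup \{e\} \ss J_1$ is independent, contradicting $e \in \cl_M(I)$.
\end{enumerate}

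\textbf{Your route.} You argue directly via the fundamental circuit $C$ of $e$ with respect to $J = \bigcup\cI$. Its uniqueness forces the circuit through $e$ inside each $I \cup \{e\}$ to be $C$ itself. This exhibits an explicit witness $C - \{e\} \ss \bigcap\cI$ whose closure contains $e$.

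\textbf{Comparison.} Your version is shorter and more transparent. The only infinite-matroid inputs it needs are ordinary circuit elimination and the fact that dependent sets contain circuits. Both are standard in [\ref{bdkpw}] and are already used elsewhere in the paper, for example in Lemma~\ref{skewofdjbase}. The paper's version avoids circuits altogether and stays in the basis-and-exchange language used throughout its treatment of mutual bases.

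\textbf{Your case split is worth keeping.} The paper's exchange step tacitly assumes $e \notin \bigcup\cI$. If $e \in \bigcup\cI$, then $J_1 = \bigcup\cI$ and no such $f$ exists. That case is easy, and your argument for it (independence of $I \cup \{e\}$ forces $e \in I$ for every $I$) handles it cleanly.
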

\begin{proof}
  Since $\cl_M(\bigcap \cI) \ss \cl_M(I)$ for each $I$, we have $\cl_M(\bigcap \cI) \ss \bigcap_{I \in \cI} \cl_M(I)$. 
  Conversely, suppose that $e \in \cl_M(I)$ for all $I \in \cI$. Let $I_0 = \bigcap \cI$ and $I_1 = \bigcup \cI$, 
  and suppose that $e \notin \cl_M(I_0)$, so $I_0 \cup \{e\}$ is independent. 
  
  For each $I \in \cI$, let $J_I$ be a basis for $I \cup \{e\}$ containing $I_0 \cup \{e\}$. Let $J = \bigcup_I J_I$. 
  Since $\cl_M(J_I) = \cl_M(I \cup \{e\}) = \cl_M(I)$ for each $I \in \cI$, we have $\cl_M\br{I_1 \cup \{e\}} = \cl_M(J)$.
  Let $J_1$ be a basis for $I_1 \cup \{e\}$ for which $I_0 \cup \{e\} \ss J_1 \ss J$. 
  
  By construction, both $J_1$ and $I_1$ are bases for $I_1 \cup \{e\}$, so there exists $f \in I_1$ 
  such that $J_1 = I_1 \cup \{e\} - \{f\}$.
  Since $I_0 \ss J_1$, we have $f \notin I_0$, so 
  there is some $I \in \cI$ for which $f \notin I$. Therefore $I \ss I_1 \cup \{e\} - \{f\} = J_1$, 
  so the independent set $J_0$ contains the set $I$, as well as the element $e \in \cl_M(I)$, a contradiction. 
\end{proof}

We now restate and prove Lemma~\ref{modpairiffbasisintro}, which shows that our definition of a modular pair 
aligns with the standard definition for finite matroids. 

\begin{lemma}\label{modpairiffbasis}
  If $X$ and $Y$ are sets in a finite matroid $M$, then $r_M(X) + r_M(Y) = r_M(X \cup Y) + r_M(X \cap Y)$
  if and only if $\{X,Y\}$ has a mutual basis. 
\end{lemma}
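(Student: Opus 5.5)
We prove both directions by the standard basis-extension argument, comparing the sizes of bases of $X \cap Y$, $X$, $Y$ and $X \cup Y$.

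\emph{Forward direction.} Suppose $r_M(X) + r_M(Y) = r_M(X \cup Y) + r_M(X \cap Y)$.
\begin{enumerate}[(1)]
  \item Let $I_0$ be a basis for $X \cap Y$.
  \item Extend $I_0$ to a basis $I_X$ of $X$ and to a basis $I_Y$ of $Y$. Since $I_0$ is a basis for $X \cap Y$, the set $I_X \cap (X \cap Y)$ is an independent superset of $I_0$ inside $X \cap Y$, hence equals $I_0$. The same holds for $I_Y$. Therefore $I_X \cap I_Y = I_0$ and $|I_X \cup I_Y| = r_M(X) + r_M(Y) - r_M(X \cap Y) = r_M(X \cup Y)$.
  \item The set $I_X \cup I_Y$ spans $X \cup Y$, so it contains a basis of $X \cup Y$. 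That basis has size $r_M(X \cup Y) = |I_X \cup I_Y|$, so it is all of $I_X \cup I_Y$.
  \item Hence $I_X \cup I_Y$ is independent and contains bases $I_X$ and $I_Y$ of $X$ and $Y$, so it is a mutual basis for $\{X,Y\}$.
\end{enumerate}

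\emph{Reverse direction.} Suppose $B$ is independent and contains bases $I_X \subseteq X$ and $I_Y \subseteq Y$. We may shrink $B$ to $I_X \cup I_Y$, which is still independent.
\begin{enumerate}[(1)]
  \item The set $I_X \cup I_Y$ spans $X \cup Y$ and is independent, so $r_M(X \cup Y) = |I_X \cup I_Y| = |I_X| + |I_Y| - |I_X \cap I_Y|$.
  \item It remains to show $|I_X \cap I_Y| = r_M(X \cap Y)$, which gives the rank equality.
  \item The set $I_X \cap I_Y$ is independent and lies in $X \cap Y$, so $|I_X \cap I_Y| \le r_M(X \cap Y)$. One could also get the reverse inequality from submodularity, since $r(X)+r(Y) \ge r(X\cup Y) + r(X\cap Y)$.
  \item More directly, apply Lemma~\ref{closureinter} to the family $\{I_X, I_Y\}$, whose union is independent. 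It gives $\cl_M(I_X \cap I_Y) = \cl_M(I_X) \cap \cl_M(I_Y) \supseteq X \cap Y$. So $I_X \cap I_Y$ is an independent subset of $X \cap Y$ that spans $X \cap Y$, i.e.\ a basis of $X \cap Y$, and $|I_X \cap I_Y| = r_M(X \cap Y)$.
\end{enumerate}

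The only nontrivial step is showing that $I_X \cap I_Y$ spans $X \cap Y$, and Lemma~\ref{closureinter} handles exactly this. Finiteness of $M$ is used only to justify the subtractions.
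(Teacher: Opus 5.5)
Your proof is correct and follows the paper's argument; the direction from the rank equality to a mutual basis is essentially identical to the paper's. In the converse direction you show directly, via Lemma~\ref{closureinter}, that $I_X \cap I_Y$ is a basis of $X \cap Y$. The paper instead gets only $r_M(X \cap Y) \ge |B \cap X \cap Y|$ by counting and closes the gap with submodularity, so your version is a minor variation that makes that direction self-contained.
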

\begin{proof}
  If a mutual basis $B$ exists for $\{X,Y\}$, then
  \begin{align*}
    r_M(X \cap Y) &\ge |B \cap X \cap Y| \\ 
    &=  |B \cap X| + |B \cap Y| - |(B \cap X) \cup (B \cap Y)| \\ 
    &= r_M(X) + r_M(Y) - r_M(X \cup Y),
  \end{align*}
  and equality holds by submodularity. 

  Conversely, suppose that $r_M(X) + r_M(Y) = r_M(X \cup Y) + r_M(X \cap Y)$, and let $I$ be a basis for $X \cap Y$. 
  and let $I_X$ and $I_Y$ be bases for $X$ and $Y$ respectively containing $I$, noting that $I_X \cap I_Y = I$
  by the maximality of $I$. Now 
  \begin{align*}
    r_M(I_X \cup I_Y) &= r_M(X \cup Y) \\ 
    &= r_M(X) + r_M(Y) - r_M(X \cap Y) \\
    &= |I_X| + |I_Y| - |I| \\ 
    &= |I_X \cup I_Y|,
  \end{align*}
  so $I_X \cup I_Y$ is independent. Let $B$ be a basis for $M$ containing $I_X \cup I_Y$. 
  Since $I_X \ss B \cap X$ and $I_Y \ss B \cap Y$, this is the required mutual basis $B$. 
\end{proof}

To handle the concept of a mutual basis more smoothly, we introduce some notation.  
For each set $I$ in a matroid $M$, let $\cL_M(I) = \set{X \ss E(M) : X \ss \cl_M(X \cap I)}$. 
The following shows that $\cL_M(I)$ is a complete sublattice of the boolean lattice on $E(M)$.
(In the statement, the empty intersection is taken to have value $E(M)$.)

\begin{lemma}\label{blattice}
  Let $I$ be an independent set of $M$. Then $\cL_M(I)$ is closed under arbitrary unions and intersections, 
  and every $X \in \cL_M(I)$ satisfies $\cl_M(X) \in \cL_M(I)$. 
\end{lemma}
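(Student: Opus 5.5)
Throughout, $X \in \cL_M(I)$ means $X \ss \cl_M(X \cap I)$. Equivalently, since $X \cap I$ is independent, $X \cap I$ is a basis for $X$. I will verify the three closure properties directly from this description, in the order intersections, closure, unions.

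\emph{Intersections.} Let $\ab{X_a : a \in A}$ be a family in $\cL_M(I)$ and put $X = \bigcap_a X_a$. If $A$ is empty, then $X = E(M)$, which lies in $\cL_M(I)$ only when $I$ spans $M$. I will assume the convention in the statement is meant to handle this case, or that $I$ is taken to be a basis; if not, I will treat it separately. For nonempty $A$ the key step is Lemma~\ref{closureinter}, applied to the collection $\setof{X_a \cap I}{a \in A}$. Its union lies inside $I$ and so is independent, hence
\[\cl_M\Big(\bigcap_a (X_a \cap I)\Big) = \bigcap_a \cl_M(X_a \cap I).\]
The left-hand side equals $\cl_M(X \cap I)$, and the right-hand side contains $\bigcap_a X_a = X$. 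Therefore $X \ss \cl_M(X \cap I)$, as required. This is the only nontrivial step, and Lemma~\ref{closureinter} is exactly the tool that handles it; without it, intersections of closures need not be closures of intersections.

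\emph{Closure.} Let $X \in \cL_M(I)$. Then $\cl_M(X) \ss \cl_M(\cl_M(X \cap I)) = \cl_M(X \cap I)$. Since $X \cap I \ss \cl_M(X) \cap I$, monotonicity gives $\cl_M(X) \ss \cl_M(\cl_M(X) \cap I)$, so $\cl_M(X) \in \cL_M(I)$.

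\emph{Unions.} Let $\ab{X_a : a \in A}$ be a family in $\cL_M(I)$ and put $X = \bigcup_a X_a$. For each $a$, monotonicity gives $X_a \ss \cl_M(X_a \cap I) \ss \cl_M(X \cap I)$. Taking the union over $a$ yields $X \ss \cl_M(X \cap I)$. The empty union is $\varnothing$, which trivially lies in $\cL_M(I)$.
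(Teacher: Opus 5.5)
Your argument is correct and is essentially the paper's: unions by monotonicity of closure, nonempty intersections by Lemma~\ref{closureinter} applied to $\setof{X_a \cap I}{a \in A}$ (whose union lies in $I$), and closures by idempotence plus $X \cap I \ss \cl_M(X) \cap I$.

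Your caveat about the empty family is also right, and it catches a slip in the paper. The paper claims its intersection argument still works for $\cX = \es$ under the convention $\bigcap \es = E(M)$. But the step $\bigcap_{X \in \cX}(X \cap I) = (\bigcap \cX) \cap I$ fails there: the left side is $E(M)$ and the right side is $I$. Indeed $E(M) \in \cL_M(I)$ if and only if $I$ spans $M$. So instead of hedging, you should state plainly that the lemma holds for all nonempty families, and for the empty family exactly when $I$ is a basis.
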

\begin{proof}
  Let $\cX \ss \cL_M(I)$. Then 
  \[ \cup \cX \ss \bigcup_{X \in \cX} \cl_M(X \cap I) \ss \cl_M\br{\bigcup_{X \in \cX} (X \cap I)} = \cl_M\br{\br{\cup\cX} \cap I },\]
  so $\bigcup \cX \in \cL_M(I)$. 
  Similarly, applying Lemma~\ref{closureinter} with $\cI = \{X \cap I : X \in \cX\}$ gives
  \[ \cap \cX \ss \bigcap_{X \in \cX} \cl_M(X \cap I) = \cl_M\br{\bigcap_{X \in \cX} (X \cap I)} = \cl_M\br{(\cap \cX) \cap I},\]
  so $\bigcap \cX \in \cL_M(I)$. (Note that this is still valid for $\cX$ empty, if empty intersections are taken to have value $E(M)$.)

  Finally, it is clear that if $X \in \cL_M(I)$, then $\cl_M(X) \ss \cl_M(\cl_M(X \cap I)) \ss \cl_M(\cl_M(X) \cap I)$, 
  so $\cl_M(X) \in \cL_M(I)$. 
\end{proof}

Our next lemma shows that $\cL_M(I)$ is the largest collection of sets for which $I$ is a mutual basis.
The proof is trivial, since a basis for a set $X$ is an independent set that spans $X$. 

\begin{lemma}\label{mutbasisiffsubset}
  If $I$ is an independent set of a matroid $M$, then $X \in \cL_M(I)$ if and only if $X \cap I$ is a basis for $X$ in $M$. 
  In particular, $I$ is a mutual basis for a set family $\cX$ if and only if $\cX \ss \cL_M(I)$. 
\end{lemma}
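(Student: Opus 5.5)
The plan is to unwind the definitions directly; nothing beyond the definition of a basis of a set and of $\cL_M(I)$ is needed. Recall that a basis for $X$ in $M$ is a subset of $X$ that is independent in $M$ and spans $X$, i.e.\ a set $J \ss X$ with $J \in \cI(M)$ and $X \ss \cl_M(J)$. (Equivalently, $J$ is a maximal independent subset of $X$; the two formulations agree by the standard closure facts from [\ref{bdkpw}].)

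For the first equivalence, I will observe that since $I$ is independent, so is its subset $X \cap I$, and this set is trivially contained in $X$. Hence $X \cap I$ is a basis for $X$ exactly when $X \ss \cl_M(X \cap I)$, which is precisely the defining condition for $X \in \cL_M(I)$. Both directions are immediate.

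For the second statement, I will recall that $I$ is a mutual basis for $\cX$ precisely when $I$ is independent and contains a basis for every $X \in \cX$. If $\cX \ss \cL_M(I)$, then by the first part each $X \cap I \ss I$ is a basis for $X$, so $I$ is a mutual basis. Conversely, suppose $I$ contains a basis $J_X$ for each $X \in \cX$. Then $J_X \ss X \cap I$, so $X \ss \cl_M(J_X) \ss \cl_M(X \cap I)$ by monotonicity of closure, giving $X \in \cL_M(I)$.

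The only step needing any care is this converse. There, the given basis $J_X$ need not equal $X \cap I$ a priori, and monotonicity of $\cl_M$ bridges the gap (in fact it shows $J_X = X \cap I$ by maximality). I do not expect any genuine obstacle.
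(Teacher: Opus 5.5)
Your proposal is correct and takes the same approach as the paper. The paper just remarks that the lemma is immediate because a basis for $X$ is an independent set that spans $X$, and your extra step in the converse of the second statement, passing from an arbitrary basis $J_X \ss I$ to $X \cap I$ by monotonicity of closure, is the right way to reconcile the ``contains a basis'' definition of a mutual basis with the requirement that $X \cap I$ itself be a basis.
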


Although we do not need this, 
we remark that [\ref{bdkpw}] essentially uses the set $\cL_M(B)$ in the process of proving the dual matroid is a matroid, 
in fact proving that for a basis $B$ of $M$, the sets in $\cL_{M^*}(E-B)$ are the complements of the sets in $\cL_M(B)$. 
The following is a direct consequence of Lemma~\ref{mutbasisiffsubset} and ([\ref{bdkpw}], Lemma 3.2).

\begin{lemma}\label{blatticedual}
  Let $M$ be a matroid on $E$. If $B$ is a basis of $M$ and $X \ss E$, 
  then $X \in \cL_M(B)$ if and only if $E - X \in \cL_{M^*}(E-B)$. 
\end{lemma}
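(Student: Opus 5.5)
The plan is to reduce both sides to the same combinatorial condition on fundamental circuits and cocircuits with respect to $B$. By Lemma~\ref{mutbasisiffsubset}, $X \in \cL_M(B)$ holds if and only if $X \cap B$ is a basis for $X$ in $M$. Since $X \cap B$ is automatically independent, this means exactly that every $e \in X - B$ lies in $\cl_M(X \cap B)$. Similarly, $E - B$ is a basis of $M^*$, so $E - X \in \cL_{M^*}(E-B)$ if and only if every $f \in (E-X) \cap B = B - X$ lies in $\cl_{M^*}((E-X) - B)$.

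Next I would invoke the standard fundamental-circuit facts for infinite matroids (all in [\ref{bdkpw}]). For $e \notin B$ there is a unique circuit $C_M(e,B) \ss B \cup \{e\}$. For $S \ss B$, we have $e \in \cl_M(S)$ if and only if $C_M(e,B) \ss S \cup \{e\}$: if the circuit fits, $e$ is spanned; conversely, a circuit in $S \cup \{e\}$ through $e$ lies in $B \cup \{e\}$ and is therefore the unique one. Dually, for $f \in B$ there is a fundamental cocircuit $C^*_M(f,B) \ss (E - B) \cup \{f\}$, and for $T \ss E - B$ we have $f \in \cl_{M^*}(T)$ if and only if $C^*_M(f,B) \ss T \cup \{f\}$. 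Finally, I would use the symmetry $f \in C_M(e,B) \iff e \in C^*_M(f,B)$ for $e \notin B$, $f \in B$. This holds because both statements are equivalent to $B - \{f\} \cup \{e\}$ being a basis, equivalently to $(E-B) - \{e\} \cup \{f\}$ being a cobasis.

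With these facts the two conditions match. Membership of $X$ in $\cL_M(B)$ says $C_M(e,B) \ss X$ for each $e \in X - B$. Equivalently, there is no pair $e \in X - B$, $f \in B - X$ with $f \in C_M(e,B)$. Membership of $E - X$ in $\cL_{M^*}(E-B)$ says $C^*_M(f,B) \ss (E - X)$ for each $f \in B - X$. Equivalently, there is no pair $f \in B - X$, $e \in X - B$ with $e \in C^*_M(f,B)$. By the symmetry above these conditions are identical.

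The only step needing care is checking that the fundamental (co)circuit facts hold in the infinite setting, namely that closures relative to subsets of a basis are controlled by fundamental circuits and that the exchange symmetry holds. These are standard consequences of the circuit and basis axioms in [\ref{bdkpw}] (essentially their Lemma 3.2), so I expect no genuine obstacle.
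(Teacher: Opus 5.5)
Your proof is correct, but it takes a different route from the paper's. The paper gives no argument beyond saying the statement is a direct consequence of Lemma~\ref{mutbasisiffsubset} together with Lemma~3.2 of [\ref{bdkpw}]. Once $X \in \cL_M(B)$ is translated into ``$X \cap B$ is a basis for $X$ in $M$'' (and likewise for $E-X$ in $M^*$), that cited lemma is essentially the statement itself, phrased in basis language: $B \cap X$ is a basis of $X$ in $M$ exactly when $(E-B)-X$ is a basis of $E-X$ in $M^*$. The paper's remark indicates that [\ref{bdkpw}] establishes this while proving that $M^*$ is a matroid.

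You instead reprove that core equivalence. You reduce both sides to the absence of a pair $e \in X-B$, $f \in B-X$ with $f \in C_M(e,B)$, respectively $e \in C^*_M(f,B)$. You then close the gap with the exchange symmetry $f \in C_M(e,B) \iff e \in C^*_M(f,B)$, justified via $B-\{f\}\cup\{e\}$ being a basis. Every fundamental-circuit fact you use does hold for infinite matroids, and your sketched justifications are sound.

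What your route buys is a self-contained, transparent explanation of why $\cL_M(B)$ dualizes to $\cL_{M^*}(E-B)$: it is just the classical symmetry of fundamental circuits and cocircuits. What it costs is that it presupposes duality, since you need $M^*$ to be a matroid to speak of fundamental cocircuits and $\cl_{M^*}$. So, unlike the lemma in [\ref{bdkpw}], it could not serve as an ingredient in proving that $M^*$ is a matroid. That is harmless here, because the statement already refers to $\cL_{M^*}$.

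One small correction: your parenthetical attributing the fundamental-circuit facts to Lemma~3.2 of [\ref{bdkpw}] is misplaced. By the paper's description, that lemma is the basis-complement statement itself, so citing it would make your argument unnecessary rather than support its individual steps.
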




The next lemma allows us to enlarge a modular family by adding a set that is either above or below
all the existing elements of the family. 

\begin{lemma}\label{modularinsert}
  Let $\cX$ be a modular family in a matroid $M$. If $Y$ is a set such that each $X \in \cX$ satisfies $X \ss Y$ or $Y \ss X$, 
  then $\cX \cup \{Y\}$ is a modular family. 
\end{lemma}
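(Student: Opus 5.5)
Let $B$ be a mutual basis for $\cX$, so $\cX \ss \cL_M(B)$ by Lemma~\ref{mutbasisiffsubset}. Split $\cX$ into $\cX_- = \{X \in \cX : X \ss Y\}$ and $\cX_+ = \{X \in \cX : Y \ss X\}$ (a set in both is equal to $Y$, and then nothing needs proving). By Lemma~\ref{blattice}, $L = \bigcup \cX_-$ and $U = \bigcap \cX_+$ both lie in $\cL_M(B)$, with $L \ss Y \ss U$; the empty union is $\varnothing$ and the empty intersection is $E(M)$. The plan is to modify $B$ into a new independent set $B'$ with $\cX \cup \{L, U, Y\} \ss \cL_M(B')$. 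Since each $X \in \cX$ is sandwiched relative to $L \ss Y \ss U$, it is enough to get a $B'$ that agrees with $B$ on $L$ and outside $U$, and spans $Y$ from within $Y$.

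For the construction, I would take $I_L = B \cap L$, which is a basis for $L$. Extend it to a basis $I_Y$ of $Y$ with $I_L \ss I_Y$. Then extend $I_Y$ to a basis $I_U$ of $U$ using only elements of $B \cap U$. This is possible because $B \cap U$ spans $U$, so one can take a maximal independent set in $I_Y \cup (B \cap U)$ containing $I_Y$ (axiom (IM)), and it spans $U$. Finally set
\[ B' = I_U \cup (B - U). \]

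There are three things to check. First, $B'$ is independent. Both $B \cap U$ and $I_U$ are bases of $U$, and $B \cap U$ together with $B - U$ is independent. So $B - U$ is independent in $M \con U$ in the sense that its union with any basis of $U$ is independent. I would verify this via the projection $M \dcon U$: its independent sets are independent in $M \con U$, which does not depend on the chosen basis of $U$.

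Second, membership in $\cL_M(B')$. For $X \in \cX_-$ we have $X \ss L$ and $X \cap B' \supseteq X \cap B$, so $X \ss \cl_M(X \cap B')$. For $Y$, we have $Y \cap B' \supseteq I_Y$, which spans $Y$. For $X \in \cX_+$ we have $U \ss X$, and $X \cap B' = I_U \cup ((X - U) \cap B)$. This spans $U \cup ((X-U) \cap B)$, which contains $X \cap B$, which spans $X$. Hence $X \in \cL_M(B')$.

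Third, $B' \cap L \supseteq I_L$ needs care, since $I_Y$ might add elements beyond $L$; this is harmless, because only spanning is required. To finish, extend $B'$ to a basis of $M$ if desired, although an independent set suffices for the definition. The main obstacle I expect is the independence of $B'$ in the infinite setting, where counting is unavailable. I would handle it with the exchange/contraction fact that independence of $J \cup (B-U)$ depends only on $\cl_M(J)$ for independent $J \ss U$; this is exactly the projection lemma (vi)/(vii) applied with $\cl_M(I_U) = \cl_M(B \cap U)$.
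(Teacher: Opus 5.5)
Your proof is correct and uses the same construction as the paper's: take $B\cap\bigcup\cX_-$, extend it to a basis of $Y$, then to a basis $I_U$ of $U=\bigcap\cX_+$, and substitute $I_U$ for $B\cap U$. The only difference is the independence step, where you argue directly from projection facts (vi)/(vii) via $M\dcon(B\cap U)=M\dcon I_U$ while the paper cites Lemma~\ref{indepclosureswitch}, a forward reference to the nullity machinery that is more than a single-set swap needs.

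One small point, which the paper's argument shares: if $\cX_+=\varnothing$ then $U=E(M)$, and $E(M)\in\cL_M(B)$ holds only if $B$ spans $M$. So first enlarge $B$ to a basis of $M$; this is harmless because any independent superset of a mutual basis is again a mutual basis.
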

\begin{proof}
  Let $B$ be a mutual basis for $\cX$. 
  Let $\cX_0 = \setof{X \in \cX}{X \ss Y}$, and let $\cX_1 = \cX - \cX_0$, noting that $Y \ss X$ for all $Y \in \cX_1$.
  We have $\cup \cX_0 \ss Y \ss \cap \cX_1$. 
  Let $I$ be a basis for $Y$ containing $B \cap (\cup \cX_0)$, and let $I_1$ be a basis for $\cap \cX_1$ containing $I$. 

  Let $B_1 = B \cap (\cap \cX_1)$; we argue that $B' = B - B_1 \cup I_1$ is a mutual basis for $\cX \cup \{Y\}$. 
  By Lemma~\ref{blattice}, we have $\cl_M(B_1) = \cl_M(\cap X_1) = \cl_M(I_1)$, so Lemma~\ref{indepclosureswitch}
  applied with the collections $\{B_1\}$ and $\{I_1\}$ gives that $B'$ is independent in $M$. 

  By construction, the set $B'$ contains the basis $I'$ for $Y$, and for each $X \in \cX_0$, 
  we have 
  \[X \ss \cl_M(B \cap X) \ss \cl_M((B \cap (\cup \cX_0)) \cap X) \ss \cl_M(I \cap X) \ss \cl_M(B' \cap X),\]
  so $B'$ contains bases for each set in $\cX_0$. For each $X \in \cX_1$, we have
  \begin{align*}
      X &\ss \cl_M(B \cap X) \\
      &= \cl_M(\br{B \cap (\cap \cX_1)} \cup \br{B \cap (X - \cap \cX_1)}) \\ 
      &= \cl_M\br{I_1 \cup (B' \cap (X - \cap \cX_1))} \\ 
      &\ss \cl_M(B' \cap X),
  \end{align*}
  so $B'$ contains a basis for $X$, and thus $B'$ is a mutual basis for $\cX \cup \{Y\}$.
\end{proof}

Combining Lemma~\ref{modularinsert} with a standard inductive argument gives the following, which 
we invoke freely to find modular bases for chains of small numbers of sets. 

\begin{corollary}
  Every finite chain of sets in a matroid $M$ is modular. 
\end{corollary}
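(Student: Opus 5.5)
We argue by induction on the number of sets in the chain, using Lemma~\ref{modularinsert} for the inductive step. Let $\cC$ be a finite chain of sets in $M$; we show by induction on $|\cC|$ that $\cC$ is a modular family.

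For the base case, suppose $\cC = \varnothing$. Then any basis of $M$, or simply the empty set, is vacuously a mutual basis. If one prefers to start at $|\cC| = 1$, so that $\cC = \{X\}$, then any basis of $X$ is an independent set containing a basis for $X$. Either way the base case is trivial.

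For the inductive step, suppose $|\cC| = n \ge 1$. Since $\cC$ is a finite nonempty chain, it is linearly ordered by inclusion and has a maximum element $Y$. In fact any element would do, since every member of a chain is comparable to every other. Set $\cC' = \cC - \{Y\}$. This is again a chain, of size $n-1$, so by induction it is a modular family. Every $X \in \cC'$ satisfies $X \ss Y$ or $Y \ss X$, because $\cC$ is a chain. Lemma~\ref{modularinsert} therefore gives that $\cC' \cup \{Y\} = \cC$ is modular.

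There is essentially no obstacle: the only content is the comparability hypothesis of Lemma~\ref{modularinsert}, which holds for chains by definition. Finiteness is used only so that the induction terminates. For infinite chains, this argument would only give modularity of each finite subchain, not a single mutual basis for the whole chain; that is why infinite chains appear as a separate condition in Definition~\ref{infmodcut}.
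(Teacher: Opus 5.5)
Your proposal is correct and is the argument the paper has in mind: induction on the size of the chain, with Lemma~\ref{modularinsert} supplying the inductive step because every member of a chain is comparable to the inserted set. Your closing remark on infinite chains also matches the paper's own example of the circuit on $\bN$, where the chain of initial segments has no mutual basis.
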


We remark that the above fails for infinite chains; for instance, if $M$ is 
a circuit with ground set $\bN$, then the chain consisting of all initial subsegments of $\bN$ 
can easily be seen to have no mutual basis in $M$.

\begin{lemma}\label{modularcompl}
  Let $F_0, F, F_1$ be flats of a matroid $M$ with $F_0 \ss F \ss F_1$.
  Then there is a flat $F'$ of $M$ for which $(F,F')$ is a modular pair 
  with $F \cap F' = F_0$ and $\cl_M(F \cup F') = F_1$.
\end{lemma}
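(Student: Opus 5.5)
We build $F'$ explicitly from a nested chain of independent sets. First take a basis $I_0$ for $F_0$ and extend it to a basis $I$ for $F$, and then extend $I$ to a basis $I_1$ for $F_1$. This is possible by the maximality axiom (IM), using $F_0 \ss F \ss F_1$. Now put
\[ J = I_0 \cup (I_1 - I), \qquad F' = \cl_M(J). \]
The union $I \cup J = I_1$ is independent, so $I_1$ is the candidate mutual basis for the pair $(F,F')$.

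The steps, in order, are as follows.

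\begin{enumerate}[(1)]
\item Modularity. We check that $I_1 \cap F$ is a basis for $F$ and $I_1 \cap F'$ is a basis for $F'$. For $F$, we have $I \ss I_1 \cap F$, and $I$ spans $F$. For $F'$, we have $J \ss F'$ and $J \ss I_1$, and $J$ spans $F'$. By Lemma~\ref{mutbasisiffsubset}, $I_1$ is then a mutual basis, so $(F,F')$ is a modular pair.

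\item Closure of the union. We have
\[ \cl_M(F \cup F') = \cl_M(I \cup J) = \cl_M(I_1) = F_1, \]
where the last equality holds because $F_1$ is a flat spanned by $I_1$.

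\item Intersection. We must show $F \cap F' = F_0$. First, $I \cap J = I_0$, because $I_1 - I$ is disjoint from $I$ and $I_0 \ss I$. Since $I \cup J = I_1$ is independent, Lemma~\ref{closureinter} applied to the family $\{I, J\}$ gives
\[ \cl_M(I) \cap \cl_M(J) = \cl_M(I \cap J) = \cl_M(I_0). \]
The left side is $F \cap F'$, because $F$ is a flat with basis $I$. The right side is $F_0$, because $F_0$ is a flat with basis $I_0$.
\end{enumerate}

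Every step reduces to the closure identity in Lemma~\ref{closureinter}, which is exactly what lets us avoid rank arithmetic in the infinite setting. So I do not expect a real obstacle. The only point needing care is the bookkeeping for $I \cap J = I_0$ and $I \cup J = I_1$, together with the fact that flats equal the closures of their bases, so that each closure is identified with the correct flat.
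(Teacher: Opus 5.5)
Your proof is correct and is essentially the paper's argument: the paper takes a mutual basis $B$ for the chain $\{F_0,F,F_1\}$ and sets $F' = \cl_M(B \cap (F_0 \cup (F_1 - F)))$. This is exactly your $\cl_M(I_0 \cup (I_1 - I))$, with $B\cap F_0 \subseteq B \cap F \subseteq B \cap F_1$ in the role of your $I_0 \subseteq I \subseteq I_1$, and it too obtains $F \cap F' = F_0$ from Lemma~\ref{closureinter}. The only difference is that you build the nested bases directly from the maximality axiom, where the paper cites the corollary that finite chains are modular.
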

\begin{proof}
  Let $B$ be a mutual basis for the chain $\{F_0,F,F_1\}$, and let $F' = \cl_M(B \cap (F_0 \cup (F_1 - F)))$. 
  Clearly $(F,F')$ is a modular pair, and $F_1 = \cl_M(B \cap F_1) \supseteq \cl_M((B \cap F') \cup (B \cap F)) \supseteq \cl_M(F \cup F')$. By Lemma~\ref{closureinter} and the independence of $B$, we have 
  \begin{align*}
      F \cap F' &= \cl_M(B \cap F) \cap \cl_M(B \cap (F_0 \cup (F_1 - F))) \\
      & = \cl_M(B \cap (F \cap (F_0 \cup (F_1 - F)))) \\ 
      &= \cl_M(B \cap F_0) = F_0, 
  \end{align*}
  as required. 
\end{proof}

\section{Skewness}\label{skewsec}

Recall that a collection $\ab{X_a : a \in A}$ is \emph{skew} in a matroid $M$ if 
$\ab{X_a}$ is modular in $M$, and $X_a \cap X_b \ss \cl_M(\es)$ for all 
distinct $a,b \in A$. This section, which proves some basic properties of skew collections, 
contains no surprises; 
the properties are familiar from the world of finite matroids, 
and the proofs are fairly straightforward. 
However, we include them all to make sure the theory 
carries through in the infinite case. 

We first prove a slightly weaker sufficient condition for a set family to be skew. 

\begin{lemma}\label{skewofdjbase}
  If $B$ is a mutual basis in $M$ for a collection $\ab{X_a : a \in A}$ and the sets 
  $\ab{B \cap X_a}$ are pairwise disjoint, then $\ab{X_a}$ is skew in $M$. 
\end{lemma}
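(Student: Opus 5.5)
Modularity of $\ab{X_a}$ is given for free, since $B$ is a mutual basis. So the only thing left to check is the second clause of the definition of skewness: for distinct $a,b \in A$ we need $X_a \cap X_b \ss \cl_M(\es)$. The plan is to deduce this from Lemma~\ref{blattice} and Lemma~\ref{closureinter}, using the disjointness of the traces of $B$.

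Fix distinct $a,b \in A$. By Lemma~\ref{mutbasisiffsubset}, both $X_a$ and $X_b$ lie in $\cL_M(B)$. Lemma~\ref{blattice} then puts $X_a \cap X_b$ in $\cL_M(B)$ as well, which gives
\[X_a \cap X_b \ss \cl_M((X_a \cap X_b) \cap B) = \cl_M((B \cap X_a) \cap (B \cap X_b)).\]
By hypothesis $(B \cap X_a) \cap (B \cap X_b) = \es$, so the right-hand side is $\cl_M(\es)$. This is exactly the required condition, and we conclude that $\ab{X_a}$ is skew in $M$.

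The argument has no real obstacle. The only substantive input is the closure-of-intersection statement, Lemma~\ref{closureinter}, applied to the independent set $B$ through Lemma~\ref{blattice}.
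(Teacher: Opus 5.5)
Your proof is correct, but it uses a different key lemma from the paper.

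The paper argues directly by circuit elimination. A nonloop $e \in X_a \cap X_b$ would lie in $\cl_M(B \cap X_a) \cap \cl_M(B \cap X_b)$. This gives two distinct circuits through $e$ whose remaining elements lie in the disjoint sets $B \cap X_a$ and $B \cap X_b$, and eliminating $e$ produces a circuit inside $B$.

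You instead note that $X_a, X_b \in \cL_M(B)$ and use closure of $\cL_M(B)$ under intersections (Lemma~\ref{blattice}). So $X_a \cap X_b$ is spanned by its trace on $B$, which is empty. You could even skip Lemma~\ref{blattice} and apply Lemma~\ref{closureinter} directly to $\{B \cap X_a, B \cap X_b\}$.

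What each route buys:
\begin{itemize}
\item The paper in effect reproves by hand the two-set case of Lemma~\ref{closureinter}: closures of disjoint subsets of an independent set meet only in loops. This keeps the lemma self-contained, relying only on finite circuit elimination.
\item Your version is shorter. It avoids the bookkeeping that $e \notin B$ and that the two circuits are distinct, and it shows the statement is really a lattice fact about $\cL_M(B)$.
\item The cost is reliance on the more delicate exchange argument in the proof of Lemma~\ref{closureinter}. That lemma is proved earlier without any skewness results, so there is no circularity.
\end{itemize}
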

\begin{proof}
  If there is a nonloop $e \in X_a \cap X_b$ for some $a,b \in A$ with $a \ne b$, then $e \in \cl_M(X_a \cap B) \cap \cl_M(X_b \cap B)$, 
  and so there are circuits $C_a \ss ((X_a \cap B) \cup \{e\})$ and $C_b \ss ((X_b \cap B) \cup \{e\})$ both containing $e$. 
  Since $e$ is a nonloop and $X_a \cap X_b \cap B = \es$, 
  these circuits are distinct, so by circuit elimination, there is a circuit contained in 
  $(C_a \cup C_b) - \{e\} \ss B$, a contradiction. Therefore $X_a \cap X_b \ss \cl_M(\es)$ for all $a \ne b$, so $\ab{X_a}$ is skew. 
\end{proof}

The proof of the next lemma, which shows that we can ignore loops when reasoning about skewness,
is straightforward from the relevant definitions. 

\begin{lemma}\label{skewdj}
  A collection $\ab{X_a : a \in A}$ is skew in a matroid $M$ if and only the collection
  $\ab{X_a - \cl_M(\es)}$ is pairwise disjoint and modular in $M$. 
\end{lemma}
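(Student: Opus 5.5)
We prove the two directions separately. The main tool is the observation that adding or removing loops changes neither the span of a set nor the existence of a mutual basis. Write $L = \cl_M(\es)$ for the set of loops. Since loops belong to no independent set, every mutual basis $B$ satisfies $B \cap L = \es$, and so $B \cap X_a = B \cap (X_a - L)$ for every $a$. Moreover, for any set $Z$ we have $\cl_M(Z) = \cl_M(Z - L)$. It follows that a subset of $Z$ is a basis for $Z$ if and only if it is a basis for $Z - L$.

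\textbf{Modularity transfers.} Using the observations above, an independent set $B$ is a mutual basis for $\ab{X_a}$ if and only if it is a mutual basis for $\ab{X_a - L}$. To check this, apply Lemma~\ref{mutbasisiffsubset}: the condition $X_a \ss \cl_M(X_a \cap B)$ is equivalent to $X_a - L \ss \cl_M((X_a - L) \cap B)$. This holds because $L$ lies in every closure and $X_a \cap B = (X_a - L) \cap B$. Hence $\ab{X_a}$ is modular exactly when $\ab{X_a - L}$ is modular.

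\textbf{Disjointness transfers.} The remaining condition in the definition of skewness is $X_a \cap X_b \ss L$ for all $a \ne b$. This is literally equivalent to $(X_a - L) \cap (X_b - L) = \es$. Similarly, $(X_a - L)\cap(X_b - L)$ has rank zero exactly when it is empty. Combining this with the previous paragraph, both defining conditions of skewness for $\ab{X_a}$ match the conditions "pairwise disjoint and modular" for $\ab{X_a - L}$. This gives both directions at once.

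The main subtlety is not technical but definitional. Definition~\ref{skewdef} phrases the intersection condition as "has rank zero", while the section restates it as $X_a \cap X_b \ss \cl_M(\es)$, and these two formulations must be identified. They agree because a set has rank zero precisely when all of its elements are loops. Beyond that, the only thing to be careful about is that the argument uses indexed families, so that the pairwise conditions are checked for distinct indices $a \ne b$ rather than for distinct sets.
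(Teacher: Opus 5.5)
Your proposal is correct and is the definitional unpacking the paper intends; the paper omits the proof as ``straightforward from the relevant definitions.'' Independent sets avoid loops and loops lie in every closure, so the mutual bases for $\ab{X_a}$ and for $\ab{X_a - \cl_M(\es)}$ are the same sets. Separately, $X_a \cap X_b \ss \cl_M(\es)$ is literally equivalent to $(X_a - \cl_M(\es)) \cap (X_b - \cl_M(\es)) = \es$.
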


We now prove that skewness has the expected geometric properties:
skew families correspond to direct sums; contracting some members of a skew family does not affect the other members;
and circuits contained in a disjoint skew family do not cross different sets in the family.
The extra hypothesis is needed to avoid technicalities where loops stop the sets in 
the collection being disjoint; in particular, it holds if the $X_a$ are
known to be disjoint, or if no $X_a$ contains a loop of $M$. 

\begin{theorem}\label{skewequiv}
  Let $\ab{X_a : a \in A}$ be collection of sets in a matroid $M$ so that no loop of $M$ is contained in more than one $X_a$.
  The following are equivalent: 
  \begin{enumerate}[(1)]
    \item\label{seskew} $\ab{X_a}$ is skew in $M$;
    \item\label{sesum} The sets $\ab{X_a}$ are pairwise disjoint, and $M | \cup_{a \in A} X_a = \oplus_{a \in A} (M | X_a)$;
    \item\label{secon} 
      The sets $\ab{X_a}$ are pairwise disjoint, and each pair $(A_1, A_2)$ of 
      disjoint subsets of $A$ satisfies 
      $(M \con \cup_{a \in A_1} X_a) | \cup_{a \in A_2} X_a = M | \cup_{a \in A_2} X_a$;
    \item\label{secct} The sets $\ab{X_a}$ are pairwise disjoint, and each circuit of $M | \cup_{a \in A} X_a$ is contained in some $X_a$;
    \item\label{sesingle} for each $s \in A$, the sets $X_s$ and $(\cup_{a \in A}X_a) - X_s$ are skew in $M$;
    \item\label{sesinglebutone} for all but at most one $s \in A$, 
      the sets $X_s$ and $(\cup_{a \in A}X_a) - X_s$ are skew in $M$;
  \end{enumerate}
\end{theorem}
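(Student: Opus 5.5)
We first reduce to the case where the sets are pairwise disjoint. If (\ref{seskew}) holds, then by Lemma~\ref{skewdj} any element of $X_a \cap X_b$ with $a \ne b$ is a loop, and by hypothesis no loop lies in two of the sets, so the $X_a$ are pairwise disjoint. Each of (\ref{sesum})--(\ref{secct}) assumes disjointness outright. For (\ref{sesingle}) and (\ref{sesinglebutone}), skewness of $X_s$ and $X_{A-\{s\}}$ forces $X_s \cap X_b$ to consist of loops for every $b \neq s$, hence to be empty; under (\ref{sesinglebutone}) this covers every pair except possibly one pair of indices $b,b'$ both equal to the exceptional index, which is impossible. So from now on the $X_a$ are pairwise disjoint, and the implications (\ref{seskew}) $\Rightarrow$ (\ref{sesingle}) $\Rightarrow$ (\ref{sesinglebutone}) are the only ones needing care about loops. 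We prove the cycle (\ref{seskew}) $\Rightarrow$ (\ref{secct}) $\Rightarrow$ (\ref{sesum}) $\Rightarrow$ (\ref{secon}) $\Rightarrow$ (\ref{seskew}), and then attach (\ref{sesingle}) and (\ref{sesinglebutone}).

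For (\ref{seskew}) $\Rightarrow$ (\ref{secct}), let $B$ be a mutual basis and set $I_a = B \cap X_a$. The $I_a$ are disjoint, and $I = \bigcup_a I_a \subseteq B$ is independent. Suppose a circuit $C$ of $M|X_A$ meets two sets, and choose one meeting as few sets as possible. Each $e \in C \cap X_a$ lies in $\cl_M(I_a)$, giving a circuit inside $I_a \cup \{e\}$ through $e$. Repeated strong circuit elimination then replaces elements of $C - I$ by elements of $I$ and should produce a circuit inside $I$, a contradiction. This is exactly the argument of Lemma~\ref{skewofdjbase}. However, $C$ may be infinite, so the eliminations cannot be done one at a time. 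Instead we argue via closure: for a fixed $a$ meeting $C$, we have $C - X_a \subseteq \cl_M(I - I_a)$, while $C \cap X_a \subseteq \cl_M(I_a)$. Since $C$ is a circuit meeting $X_a$, every element of $C \cap X_a$ lies in $\cl_M(C - \{e\})$, and so in $\cl_M(I_a \cup (I - I_a))$. We therefore want $\cl_M(I_a) \cap \cl_M(I - I_a) = \cl_M(\es)$, which is Lemma~\ref{closureinter}. From this we extract a contradiction, either through the infinite circuit elimination axiom or by contracting $C \cap X_a$. \textbf{This infinite-circuit step is the main obstacle.}

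(\ref{secct}) $\Rightarrow$ (\ref{sesum}) is immediate, since the circuits of a direct sum are exactly the circuits of its summands, and matroids are determined by their circuits. For (\ref{sesum}) $\Rightarrow$ (\ref{secon}), contraction commutes with direct sums: contracting $X_{A_1}$ in $\oplus_a M|X_a$ and restricting to $X_{A_2}$ leaves $\oplus_{a\in A_2} M|X_a$. For (\ref{secon}) $\Rightarrow$ (\ref{seskew}), pick bases $I_a$ of the $X_a$ and show that $\bigcup_a I_a$ is independent; it is then a mutual basis, and disjointness together with Lemma~\ref{skewofdjbase} (or Lemma~\ref{skewdj}) gives skewness. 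If $\bigcup I_a$ contained a circuit $C$, it would be finite or infinite. Apply (\ref{secon}) with $A_2 = \{a\}$ for some $a$ meeting $C$ and $A_1 = A - \{a\}$. Then $I_a$ is independent in $M \con X_{A_1}$, hence also in $M \con (\bigcup_{b\ne a} I_b)$, since both contractions have the same closure. So $I_a \cup \bigcup_{b\neq a}I_b$ is independent, a contradiction. Note this argument does not need $C$ at all.

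Finally, (\ref{seskew}) $\Rightarrow$ (\ref{sesingle}) follows by applying (\ref{sesum}) to the two-set family $\{X_s, X_{A-\{s\}}\}$, grouping the summands. (\ref{sesingle}) $\Rightarrow$ (\ref{sesinglebutone}) is trivial. For (\ref{sesinglebutone}) $\Rightarrow$ (\ref{secon}), we use (\ref{secon}) in its two-set form (already known equivalent to skewness for pairs) for each non-exceptional $s$. We then show $\bigcup_a I_a$ is independent as follows. Let $s_0$ be the exceptional index. The two-set skewness of $X_s$ and $X_{A-\{s\}}$ for each $s \ne s_0$ shows that no circuit of $M|X_A$ meets $X_s$ together with anything else, because by (\ref{secct}) for pairs such a circuit would lie inside $X_s$ or inside its complement. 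Hence every circuit lies inside a single $X_s$ with $s \ne s_0$, or inside $X_{s_0}$, which gives (\ref{secct}).
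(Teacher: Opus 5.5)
There is a genuine gap at (1)$\Rightarrow$(4), which you leave open. The fact you isolate, $\cl_M(I_a)\cap\cl_M(I-I_a)=\cl_M(\es)$, is not enough on its own. Flats that meet trivially can still be crossed by a circuit: in $U_{3,4}$ the flats $\{1,2\}$ and $\{3,4\}$ are disjoint, yet $\{1,2,3,4\}$ is a circuit.

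The missing idea is a basis-swapping argument that uses only closure. The paper runs it for (1)$\Rightarrow$(2):
\begin{itemize}
\item Let $B$ be a mutual basis and $B_a=B\cap X_a$, and let $I=\bigcup_a I_a$ for arbitrary bases $I_a$ of the $X_a$.
\item If $I$ were dependent, pick $e\in I_s$ with $e\in\cl_M(I-\{e\})$.
\item Since $\cl_M(I_a)=\cl_M(B_a)$, this gives $e\in\cl_M(B_{A-\{s\}}\cup(I_s-\{e\}))$. Hence there is a circuit $C$ with $e\in C\subseteq B_{A-\{s\}}\cup I_s$.
\item Because $I_s$ is independent, $C$ contains some $f\in B_{A-\{s\}}$. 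Then $f\in\cl_M((B_{A-\{s\}}-\{f\})\cup I_s)\subseteq\cl_M(B-\{f\})$, contradicting the independence of $B$.
\end{itemize}
Only one circuit is used and no elimination is performed, so infinite circuits are harmless. Condition (4) then follows from (2), because circuits of a direct sum lie in its summands. Equivalently, you can run the same swap on bases of $X_a$ and $X_{A-\{a\}}$ that extend $C\cap X_a$ and $C-X_a$; this shows your crossing circuit $C$ would be independent.

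Your (3)$\Rightarrow$(1) also has a gap. From $I_a$ being independent in $M\con\bigcup_{b\ne a}I_b$, you may conclude that $I_a\cup\bigcup_{b\ne a}I_b$ is independent only if $\bigcup_{b\ne a}I_b$ already is. That is the same claim for the family indexed by $A-\{a\}$. With $A$ infinite there is no induction to fall back on, so your remark that $C$ is not needed is wrong.

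Using $C$ fixes it. Take $a$ with $C\cap I_a\ne\es$. Then $C-I_a$ is a proper, hence independent, subset of $C$ lying in $X_{A-\{a\}}$. So independence of $I_a$ in $M\con X_{A-\{a\}}$ makes $I_a\cup(C-I_a)\supseteq C$ independent, a contradiction. This is essentially the paper's (3)$\Rightarrow$(4), which contracts $X_a$ and shows $C-X_a$ would be dependent.

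With both repairs, the rest of your plan works: (4)$\Rightarrow$(2)$\Rightarrow$(3), the disjointness reduction, and (5), (6) via the pair case. Your treatment of (5) and (6) reads $(\bigcup_a X_a)-X_s$ as $\bigcup_{a\ne s}X_a$, and that reading is necessary. With the literal set difference, $X_1=X_2=\{e\}$ for a nonloop $e$ satisfies (5) but not (1).
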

\begin{proof}
  It is easy to see from the definition of direct sum that (\ref{sesum}) implies (\ref{secon}). 
  \begin{claim} (\ref{seskew}) implies (\ref{sesum}).
  \end{claim}
  \begin{subproof}
    Suppose that (\ref{seskew}) holds, let $B$ be a mutual basis for $\ab{X_a}$, 
    and let $B_a = B \cap X_a$ for each $a \in A$. 
    Since no loop is contained in more than one $X_a$, the $X_a$ are pairwise disjoint. 
  
  It is clear that each basis of $M | \cup_a X_a$ is independent in 
  $\oplus_{a \in A}(M | X_a)$. Conversely, let $I$ be a basis of 
  $\oplus_{a \in A}(M | X_a)$, so each set $I_a = I \cap X_a$ is a basis for $X_a$. 
  If $I$ is dependent in $M$, then there is some $s \in A$ and some $e \in I_s$ such that 
  $e \in \cl_M(I - e)$. Since $\cl_M(I_a) = \cl_M(B_a)$ for each $a$, this gives
  \[e \in \cl_M(I-e) = \cl_M\br{I_{A-\{s\}} \cup (I_s - \{e\})}
  = \cl_M\br{B_{A-s} \cup (I_s - \{e\})},\]
  so $B_{A-s} \cup I_s$ contains a circuit $C$. Since $I_s$ is independent, there
  exists $f \in C \cap B_{A-s}$, 
  and now 
  \[f \in \cl_M(C-\{f\}) \ss \cl_M((B_{A-s} - \{f\}) \cup I_s) \ss 
  \cl_M\br{(B_{A-s} - \{f\}) \cup B_s},\]
  and so $f \in \cl_M(B-\{f\})$, contradicting the independence of $B$. 
  \end{subproof}
  \begin{claim}(\ref{secon}) implies (\ref{secct}). \end{claim}
  \begin{subproof}
    Suppose that (\ref{secon}) holds, and let $C$ be a circuit of $M | \bigcup_a X_a$
    that intersects $X_a$ and $X_b$ for some $a \ne b$. Let $I = C - X_a$ and $e \in I$, 
    noting that $I$ is independent in $M$. 
    Then (\ref{secon}) with $(A_1,A_2) = (\{a\},A - \{a\})$ gives that 
    $(M \con X_a) | I = M | I$, so
    \[e \in \cl_M(C-e) = \cl_{M \con X_a}(C- X_a - \{e\}) = \cl_{M \con X_a}(I-\{e\}) = \cl_M(I-\{e\}).\]
    Therefore the proper subset $I$ of $C$ is dependent in $M$, a contradiction.
  \end{subproof}
  \begin{claim}(\ref{secct}) implies (\ref{seskew}).
  \end{claim}
  \begin{subproof}
    Suppose that (\ref{secct}) holds, and let $B$ be a basis for $M | \cup_a X_a$.
    Let $s \in A$ and $e \in X_s - B$. Let $C$ be a circuit of $M$ with $e \in C \ss B \cup \{e\}$.  Since $e \in C \cap X_s$, we must have $C \ss X_s$ by assumption, 
    so $e \in \cl_M(C - \{e\}) \ss \cl_M(B \cap X_s)$. 
    It follows that $X_s - B \ss \cl_M(B \cap X_s)$ for all $s \in A$, 
    whence $B$ is a mutual basis for $\ab{X_a}$ in $M$. 
    Since the $X_a$ are pairwise disjoint, we see that (\ref{seskew}) holds. 
  \end{subproof}
  
  The above claims imply that the first four conditions are equivalent.
  Using the equivalence of (\ref{seskew}) and (\ref{secct}), 
  it is easy to see that each of (\ref{sesingle}) and (\ref{sesinglebutone})
  is equivalent to (\ref{secct}), and the theorem follows. 
\end{proof}

The equivalence of (\ref{seskew}) and (\ref{secct}) above gives a simpler characterization
of skewness in collections of independent sets. 

\begin{corollary}\label{skewindep}
  If $\ab{I_a : a \in A}$ is a family of independent sets in a matroid $M$, 
  then $\ab{I_a}$ is skew if and only if the $I_a$ are disjoint with independent union. 
\end{corollary}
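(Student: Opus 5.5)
We prove Corollary~\ref{skewindep} by applying the equivalence of (\ref{seskew}) and (\ref{secct}) in Theorem~\ref{skewequiv} to the family $\ab{I_a}$. Before doing so we must check that the theorem's hypothesis holds: no loop of $M$ lies in more than one $I_a$. This is immediate, since each $I_a$ is independent and therefore contains no loops at all.

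For the forward direction, suppose $\ab{I_a}$ is skew. Condition (\ref{secct}) then tells us that the $I_a$ are pairwise disjoint. It also tells us that every circuit of $M | \cup_a I_a$ lies inside some single $I_a$. But each $I_a$ is independent, so it contains no circuit. Hence $M | \cup_a I_a$ has no circuits at all, which means $\cup_a I_a$ is independent.

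For the converse, suppose the $I_a$ are pairwise disjoint and $\cup_a I_a$ is independent. Then $M | \cup_a I_a$ has no circuits. So the condition in (\ref{secct}) that every circuit lies in some $I_a$ holds vacuously, and disjointness holds by assumption. Theorem~\ref{skewequiv} therefore gives that $\ab{I_a}$ is skew.

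Alternatively, the converse can be checked directly. The set $B = \cup_a I_a$ is independent and contains each $I_a$, so it is a mutual basis for the family. Since the sets $B \cap I_a = I_a$ are pairwise disjoint, Lemma~\ref{skewofdjbase} applies and shows the family is skew.

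No step here is a real obstacle. The only point needing attention is the loop hypothesis of Theorem~\ref{skewequiv}, and independence of the $I_a$ settles it trivially.
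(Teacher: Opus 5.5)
Your proposal is correct and follows the paper, which obtains the corollary directly from the equivalence of (\ref{seskew}) and (\ref{secct}) in Theorem~\ref{skewequiv}. Your check of the loop hypothesis and your alternative converse via Lemma~\ref{skewofdjbase} are both sound.
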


The next lemma gives some properties of arbitrary skew families without assumption about loops. 

\begin{lemma}\label{skewgoodbase}
  Let $\ab{X_a : a \in A}$ be a skew collection of sets in a matroid $M$. Then 
  \begin{enumerate}[(i)]
    \item every basis $B$ for $M | \cup_a X_a$ is a mutual basis for $\ab{X_a}$ in $M$, 
    \item for every collection $\ab{I_a}$ of sets such that each $I_a$ is a basis for $X_a$, 
      the sets $\ab{I_a}$ are pairwise disjoint, and $\cup_{a \in A} I_a$ 
      is a mutual basis for $\ab{X_a}$.
    \item if the sets $\ab{X_a}$ are pairwise disjoint, then $M | \cup_a X_a = \oplus_a (M | X_a)$. 
    \item $(M \dcon \cup_{a \in A_1} X_a) | \cup_{a \in A_2} X_a = M | \cup_{a \in A_2} X_a$ 
      for every pair $(A_1, A_2)$ of disjoint subsets of $A$.
  \end{enumerate}
\end{lemma}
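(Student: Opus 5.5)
Throughout, let $L = \cl_M(\es)$ and set $Y_a = X_a - L$ for each $a \in A$. The plan is to reduce everything to the loopless, disjoint situation, where Theorem~\ref{skewequiv} applies, and then put the loops back in by hand. By Lemma~\ref{skewdj}, the collection $\ab{Y_a}$ is pairwise disjoint and modular. Since the $Y_a$ contain no loops, the loop hypothesis of Theorem~\ref{skewequiv} holds trivially, so all six conditions there hold for $\ab{Y_a}$. We will also use two easy facts repeatedly: $\cl_M(Y_a) = \cl_M(X_a)$, and $\cl_M(Y_{A'}) = \cl_M(X_{A'})$ for every $A' \ss A$, since $X_a$ and $Y_a$ differ only by loops.

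For (i), let $B$ be a basis for $M | \cup_a X_a$. Then $B$ contains no loops, so $B \ss \cup_a Y_a$, and $B$ is also a basis for $M | \cup_a Y_a$. The argument in the claim `(\ref{secct}) implies (\ref{seskew})' of Theorem~\ref{skewequiv} applies verbatim to $\ab{Y_a}$, because circuits of $M|\cup Y_a$ lie in a single $Y_a$. It gives $Y_a \ss \cl_M(B \cap Y_a)$, and hence $X_a \ss Y_a \cup L \ss \cl_M(B \cap X_a)$ for each $a$. So $B$ is a mutual basis for $\ab{X_a}$.

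For (ii), each basis $I_a$ of $X_a$ avoids $L$, so $I_a \ss Y_a$ and $I_a$ is a basis of $Y_a$. The $Y_a$ are disjoint, so the $I_a$ are pairwise disjoint. By condition (\ref{sesum}) for $\ab{Y_a}$, the set $\cup_a I_a$ is a basis of $\oplus_a (M|Y_a) = M | \cup_a Y_a$, and is therefore independent in $M$. It contains the basis $I_a$ of each $X_a$, so it is a mutual basis. For (iii), if the $X_a$ are pairwise disjoint, then no loop lies in two of them, and Theorem~\ref{skewequiv} (\ref{seskew})$\Rightarrow$(\ref{sesum}) applies directly to $\ab{X_a}$.

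Part (iv) needs the most care, because projection keeps the elements of $X_{A_1}$ and the loops. Since $\cl_M(X_{A_1}) = \cl_M(Y_{A_1})$, we have $M \dcon X_{A_1} = M \dcon Y_{A_1}$. Condition (\ref{secon}) for $\ab{Y_a}$ gives $(M \con Y_{A_1}) | Y_{A_2} = M | Y_{A_2}$. The set $X_{A_2}$ is disjoint from $Y_{A_1}$: an element of both would be a nonloop in $X_a \cap X_b$ with $a \ne b$, which contradicts skewness. Hence $(M \dcon Y_{A_1}) | Y_{A_2} = (M \con Y_{A_1})|Y_{A_2} = M|Y_{A_2}$. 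Finally, $X_{A_2} - Y_{A_2} \ss L$, and loops of $M$ remain loops in $M \dcon Y_{A_1}$. Adjoining these loops to both sides therefore gives $(M \dcon X_{A_1}) | X_{A_2} = M | X_{A_2}$.

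The main obstacle is this bookkeeping in (iv): checking that $M \dcon X_{A_1}$ and $M \dcon Y_{A_1}$ agree, and that no element of $X_{A_2}$ is swallowed by $Y_{A_1}$. Once those two points are verified, (iv) reduces to condition (\ref{secon}) of Theorem~\ref{skewequiv}.
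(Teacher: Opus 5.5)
Your proof is correct and takes the same route as the paper. You strip the loops via Lemma~\ref{skewdj}, apply Theorem~\ref{skewequiv} to the loopless, pairwise disjoint family $\ab{X_a - \cl_M(\es)}$, and then restore the loops using the fact that independent sets contain no loops. The paper reduces all four parts to the single identity $M | \cup_a (X_a - \cl_M(\es)) = \oplus_a (M | (X_a - \cl_M(\es)))$ and leaves the rest as `easy'; you supply that bookkeeping, including the check that $M \dcon X_{A_1} = M \dcon Y_{A_1}$ and that $X_{A_2}$ avoids $Y_{A_1}$ in (iv).
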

\begin{proof}
  Let $X_a' = X_a - \cl_M(\es)$ for each $a \in A$. 
  By Lemma~\ref{skewdj} and Theorem~\ref{skewequiv}, we have 
  $M | (\cup_a X'_a) = \oplus_{a} (M | X'_a)$, and now all parts follow easily
  from the fact that independent sets contain no loops.
\end{proof}

We now show that skewness is preserved under taking closures. 

\begin{lemma}\label{clskew}
  If $\ab{X_a : a \in A}$ is a skew collection of sets in a matroid $M$, 
  then $\ab{\cl_M(X_a)}$ is skew in $M$. 
\end{lemma}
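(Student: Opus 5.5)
Given a skew collection $\ab{X_a}$, we must check two things for $\ab{\cl_M(X_a)}$: that it has a mutual basis, and that any two distinct members meet only in loops. The natural candidate is a mutual basis $B$ for $\ab{X_a}$ itself. By Lemma~\ref{mutbasisiffsubset}, each $X_a$ lies in $\cL_M(B)$. By Lemma~\ref{blattice}, $\cL_M(B)$ is closed under closure, so each $\cl_M(X_a)$ also lies in $\cL_M(B)$. Applying Lemma~\ref{mutbasisiffsubset} again, $B$ is a mutual basis for $\ab{\cl_M(X_a)}$, so this family is modular.

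For the intersection condition, I would pass to a better-behaved basis. By Lemma~\ref{skewgoodbase}(ii), choose bases $I_a$ of $X_a$. These are pairwise disjoint, and $I = \cup_a I_a$ is a mutual basis for $\ab{X_a}$. Since $\cl_M(I_a) = \cl_M(X_a)$, each $\cl_M(X_a)$ satisfies $\cl_M(X_a) \ss \cl_M(I \cap \cl_M(X_a))$. Hence $I$ is also a mutual basis for $\ab{\cl_M(X_a)}$, again by Lemmas~\ref{blattice} and~\ref{mutbasisiffsubset}.

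I would like to apply Lemma~\ref{skewofdjbase}, which requires the sets $I \cap \cl_M(X_a)$ to be pairwise disjoint. The main obstacle is that $I \cap \cl_M(X_a)$ might be larger than $I_a$. This cannot happen: if $f \in I_b \cap \cl_M(X_a)$ with $b \ne a$, then $f \in \cl_M(I_a)$. Since $f \notin I_a$ by disjointness, $I_a \cup \{f\} \ss I$ would be dependent, contradicting the independence of $I$. Thus $I \cap \cl_M(X_a) = I_a$. These sets are pairwise disjoint, and Lemma~\ref{skewofdjbase} shows that $\ab{\cl_M(X_a)}$ is skew. This argument also yields the modularity shown in the first paragraph, so that step can be absorbed.
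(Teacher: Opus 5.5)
Your proof is correct and follows essentially the same route as the paper. In both, a mutual basis for the $X_a$ is also a mutual basis for the $\cl_M(X_a)$, and it meets each $\cl_M(X_a)$ exactly where it meets $X_a$, by maximality of a basis of $X_a$ inside $\cl_M(X_a)$. These traces are pairwise disjoint, so Lemma~\ref{skewofdjbase} applies.

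The only difference is where disjointness comes from. You get it from Lemma~\ref{skewgoodbase}(ii). The paper takes an arbitrary mutual basis $B$ and reads off disjointness of the sets $B \cap X_a$ directly from $X_a \cap X_b \ss \cl_M(\es)$ and the fact that $B$ contains no loops.
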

\begin{proof}
  Let $B$ be a mutual basis for the $X_a$. 
  We have $\cl_M(X_a) = \cl_M(B \cap X_a) \ss \cl_M(B \cap \cl_M(X_a))$, 
  so $B$ is a mutual basis for $(\cl_M(X_a) : a \in A)$. 
  Since $B \cap X_a$ is maximally independent in $\cl_M(X_a)$ for each $a$,
  we have $B \cap \cl_M(X_a) = B \cap X_a$ for each $a$, 
  so the sets $\ab{B \cap \cl_M(X_a)}$ are pairwise disjoint, 
  which implies the result by Lemma~\ref{skewofdjbase}. 
\end{proof}

The next lemma shows that skewness is monotone in several senses: 
taking subsets, coarsening, deletion, and contracting/projecting subsets of the union. 

\begin{lemma}\label{skewmono}
  Let $\ab{X_a : a \in A}$ be a skew collection of sets in a matroid $M$. 
  Then 
  \begin{enumerate}[(i)]
    \item\label{skewsubsets} for each collection $\ab{Y_a}$ with $Y_a \ss X_a$ for all 
    $a$, the sets $\ab{Y_a}$ are skew in $M$,
    \item\label{skewrefine} for every partition $\cB$ of $A$, the sets 
      $\ab{\cup_{b \in B} X_b : B \in \cB}$ are skew in $M$. 
    \item\label{skewdelete} for each set $D \ss E(M)$, 
      the sets $\ab{X_a - D}$ are skew in $M \del D$, 
    \item\label{skewproject} for each $C \ss \cup_a X_a$, 
      the sets $\ab{X_a}$ are skew in $M \dcon C$, and
    \item\label{skewcontract} for each $C \ss \cup_a X_a$,
    the sets $\ab{X_a - C}$ are skew in $M \con C$.
  \end{enumerate}
\end{lemma}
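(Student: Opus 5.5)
The plan is to reduce everything to the direct-sum description of skewness: replace each $X_a$ by $X'_a = X_a - \cl_M(\es)$. By Lemma~\ref{skewdj}, $\ab{X'_a}$ is pairwise disjoint and modular. Skewness of any family depends only on its loopless parts, since loops lie in every closure and in no basis. So in each part I will verify the pairwise-disjoint-and-modular criterion of Lemma~\ref{skewdj} for the loop-deleted family, or else use Theorem~\ref{skewequiv} on disjoint loopless sets. Throughout, fix a mutual basis $B$ for $\ab{X'_a}$. The sets $B_a = B \cap X'_a$ are pairwise disjoint, and by Lemma~\ref{skewgoodbase}, $M | \cup_a X'_a = \oplus_a (M|X'_a)$.

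For (\ref{skewsubsets}), the sets $Y'_a = Y_a - \cl_M(\es) \ss X'_a$ are disjoint. Choose a basis $J_a$ of $Y'_a$ for each $a$. Each $J_a$ is independent in $M | X'_a$, so $\cup_a J_a$ is independent in the direct sum, hence in $M$. Then $\cup_a J_a$ is a mutual basis, and it has disjoint traces on the $Y'_a$ since $J_a \ss Y'_a$. Lemma~\ref{skewofdjbase} then gives skewness. For (\ref{skewrefine}), $B$ itself is a mutual basis for the unions $\cup_{b\in \beta} X'_b$, because $\cl_M$ of a union of the $B_b$ contains the union of the $X'_b$. The traces are disjoint because $\cB$ is a partition, so again Lemma~\ref{skewofdjbase} applies; loops are harmless by Lemma~\ref{skewdj}. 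Part (\ref{skewdelete}) follows from (\ref{skewsubsets}) applied to $Y_a = X_a - D$, since restriction commutes: $(M\del D)|\cup_a(X_a - D) = M|\cup_a (X_a-D)$, and skewness is visibly a property of $M|\cup_a X_a$.

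For (\ref{skewproject}) and (\ref{skewcontract}), write $C_a = C \cap X'_a$. The loops of $C$ are irrelevant, since projecting or contracting loops changes nothing. Because $M|\cup X'_a$ is the direct sum $\oplus_a M|X'_a$, contracting $C$ inside it gives $\oplus_a (M|X'_a)\con C_a$. The subtle point is that $\con C$ in $M$ agrees with $\con C$ in the restriction only on $\cup X'_a$. This is true because contraction commutes with restriction to supersets of $C$: $(M \con C)|(Z - C) = (M|Z)\con C$ for $C \ss Z$. Hence $(M\con C)|\cup_a(X'_a - C) = \oplus_a (M\con C)|(X'_a - C)$. Theorem~\ref{skewequiv}((\ref{sesum})$\Rightarrow$(\ref{seskew})) then gives that $\ab{X'_a - C}$ is skew in $M\con C$. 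New loops of $M \con C$ cause no harm, since the sets are already disjoint. Adding back the old loops of $M$, which are loops of $M \con C$, preserves skewness by Lemma~\ref{skewdj}, giving (\ref{skewcontract}). For (\ref{skewproject}), $M\dcon C = M\con C \oplus O_C$. Here $X_a \cap C$ consists of loops lying in only one set (as $C_a \ss X'_a$, and the $X'_a$ are disjoint), so Lemma~\ref{skewdj} reduces (\ref{skewproject}) to (\ref{skewcontract}).

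The main obstacle is (\ref{skewcontract}): justifying that contraction commutes with the direct-sum decomposition when $C$ is an arbitrary, possibly dependent, subset of the union. I would handle this by replacing $C_a$ with a basis $I_a$ of $C_a$. The basis $I_a$ spans $C_a$, so $M\con C$ restricted to $X'_a - C$ equals $(M \con I_a \con (C_a - I_a)) $ with the remaining elements becoming loops. The union $\cup_a I_a$ is independent by part (ii) of Lemma~\ref{skewgoodbase}. One then checks, via the circuit characterization (\ref{secct}), that no circuit of $M\con C$ inside $\cup_a(X'_a-C)$ meets two sets. Such a circuit would lift to a circuit of $M$ inside $\cup X'_a$ meeting two sets, contradicting (\ref{secct}) for the original family.
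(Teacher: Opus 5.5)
Your proof is correct, but it takes a different route from the paper's, mainly in parts (iv) and (v).

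\textbf{Parts (i)--(iii).} The paper discards loops and then reads each of these off the circuit characterization, Theorem~\ref{skewequiv}(\ref{secct}), in one line. You instead build explicit mutual bases and apply Lemma~\ref{skewofdjbase}. This is valid, just longer.

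\textbf{Parts (iv) and (v): the paper.} The paper proves (iv) directly. It takes bases $I_a \ss J_a$ of $X_a \cap C$ and of $X_a$, and notes that $\cl_M(C) = \cl_M(\cup_a I_a)$. By Lemma~\ref{skewgoodbase}, $\cup_a J_a$ is a mutual basis, so $\cup_a (J_a - I_a)$ is a mutual basis for $\ab{X_a}$ in $M \dcon C$. It then obtains (v) from (iv) and (iii), using $M \con C = (M \dcon C) \del C$.

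\textbf{Parts (iv) and (v): your route.} You go the other way and prove (v) first. You use the direct-sum decomposition together with $(\oplus_a N_a) \con (\cup_a C_a) = \oplus_a (N_a \con C_a)$. Alternatively, you lift a circuit $D$ of $M \con C$ to a circuit $K$ of $M$ with $D \ss K \ss D \cup I$, where $I$ is a basis of $C$. You then recover (iv) from (v) via Lemma~\ref{skewdj}, since $M \dcon C = M \con C \oplus O_C$ and $X_a - \cl_M(C)$ is exactly the loop-free part of $X_a - C$ in $M \con C$.

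\textbf{What each approach buys.} The paper's approach never leaves the mutual-basis definition, so it needs no facts about minors of infinite direct sums or about circuits of contractions. Yours is more structural, but it leans on two things. First, the infinite direct-sum identity from your third paragraph needs its one-line check: a basis of $\cup_a C_a$ in $\oplus_a N_a$ is a union of bases of the $C_a$. Second, the circuit-lifting argument in your last paragraph is the cleaner way to close that step, and it matches the style the paper uses for (i)--(iii).

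\textbf{Two small slips.}
\begin{itemize}
\item Lemma~\ref{skewgoodbase}(ii) is about bases of the $X_a$, not bases $I_a$ of $C_a$. To get $\cup_a I_a$ independent, either extend each $I_a$ to a basis of $X_a - \cl_M(\es)$, or apply the lemma to $\ab{C_a}$, which is skew by (i).
\item Your description of $(M \con C)|(X'_a - C)$ as ``$M \con I_a \con (C_a - I_a)$ with the remaining elements becoming loops'' is garbled. It is harmless, since the circuit argument never uses it.
\end{itemize}
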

\begin{proof}
  For the first three parts, we may assume using Lemma~\ref{skewdj} that the $X_a$ 
  contain no loops of $M$. Hence (\ref{skewsubsets}), (\ref{skewrefine}) and (\ref{skewdelete}) 
  follow immediately from the circuit characterization of
  skewness in Theorem~\ref{skewequiv}. 

  For (\ref{skewproject}), for each $a \in A$ let $J_a$ be a basis for $X_a$ 
  containing a basis $I_a$ for $X_a \cap C$. Note that $\cl_M(C) = \cl_M(\cup_a I_a)$. 
  By Lemma~\ref{skewgoodbase}, the set $\cup_a J_a$ is a mutual basis for $X_a$, 
  so the set $\cup_a (J_a - I_a)$ is independent in $M \dcon \cup_a I_a = M \dcon C$. 
  For each $a \in A$, we have 
  $X_a \ss \cl_M(J_a) \ss \cl_{M \dcon I_a}(J_a - I_a) \ss \cl_{M \dcon C}(J_a - I_a)$, 
  so $\cup_a(J_a - I_a)$ is a mutual basis for $(X_a : a \in A)$ in $M \dcon C$. 
  Since $X_a \cap X_b \ss \cl_M(\es) \ss \cl_{M \dcon C} (\es)$ for all $a \ne b$, 
  it follows that $(X_a : a \in A)$ are skew in $M \dcon C$. 
  Now, since $M \con C = M \dcon C \del C$, we see that (\ref{skewcontract}) follows from (\ref{skewproject}) and (\ref{skewdelete}).
\end{proof}

\begin{corollary}\label{skewpairrestrict}
  If $X$ and $Y$ are sets in a matroid $M$, then the following are equivalent: 
  \begin{enumerate}[(1)]
    \item\label{sprskew} $X$ and $Y$ are skew in $M$;
    \item\label{sprrestr} $(M \dcon X) | Y = M | Y$; 
    \item\label{sprbasis} there is a set $I$ that is a basis for $X$ in both $M$ and $M \dcon Y$. 
  \end{enumerate}
\end{corollary}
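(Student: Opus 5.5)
We prove (\ref{sprskew})$\Rightarrow$(\ref{sprrestr}), then (\ref{sprrestr})$\Rightarrow$(\ref{sprbasis}), then (\ref{sprbasis})$\Rightarrow$(\ref{sprskew}). The first two steps are quick consequences of Lemma~\ref{skewgoodbase} and basic properties of $\dcon$. The last step needs a short independence argument.

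For (\ref{sprskew})$\Rightarrow$(\ref{sprrestr}), we apply Lemma~\ref{skewgoodbase}(iv) to the two-member collection $(X,Y)$ with $A_1 = \{X\}$ and $A_2 = \{Y\}$. This gives $(M \dcon X)|Y = M|Y$ directly.

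For (\ref{sprrestr})$\Rightarrow$(\ref{sprbasis}), we work with bases of $Y$ rather than $X$. Choose a basis $J$ of $Y$ in $M$. By (\ref{sprrestr}), $J$ is independent in $M \dcon X$. By property (vii) of projections, applied with a basis $I$ of $X$ (note that $M \dcon X = M \dcon I$), it follows that $I \cap J = \es$ and that $I \cup J$ is independent in $M$.

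It remains to produce a set that is a basis for $X$ in both $M$ and $M \dcon Y$. We claim $I$ itself works. It is independent in $M \dcon Y = M \dcon J$, again by (vii). It spans $X$ there, since $\cl_{M \dcon Y}(I) = \cl_M(I \cup Y) \supseteq X$. Hence $I$ is a basis for $X$ in $M \dcon Y$, and it is a basis for $X$ in $M$ by choice, so (\ref{sprbasis}) holds.

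For (\ref{sprbasis})$\Rightarrow$(\ref{sprskew}), let $I$ be a basis for $X$ in both $M$ and $M \dcon Y$, and let $J$ be a basis for $Y$ in $M$. Since $I$ is independent in $M \dcon Y = M \dcon J$, property (vii) shows that $I$ and $J$ are disjoint and $I \cup J$ is independent in $M$. Then $I \cup J$ contains the basis $I$ of $X$ and the basis $J$ of $Y$, so it is a mutual basis for $\{X,Y\}$, and its intersections with $X$ and with $Y$ are $I$ and $J$ respectively.

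We still need $(I\cup J)\cap X$ and $(I\cup J)\cap Y$ to be disjoint in order to apply Lemma~\ref{skewofdjbase}. Since $I \cup J$ is independent and $I$ is a basis for $X$, no element of $J - I$ lies in $X$; hence $(I\cup J)\cap X = I$. Symmetrically, $I \cup J$ independent and $J$ a basis for $Y$ give $(I\cup J)\cap Y = J$. These are disjoint, so Lemma~\ref{skewofdjbase} gives that $X$ and $Y$ are skew. The main bookkeeping point, and the only real subtlety, is this use of maximality: it confirms that the intersections are exactly $I$ and $J$ and hence disjoint, even when $X$ and $Y$ share loops.
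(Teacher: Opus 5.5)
Your proof is correct and follows the paper's cycle $(1)\Rightarrow(2)\Rightarrow(3)\Rightarrow(1)$, including the same key use of projection property (vii) to make $I$ and $J$ disjoint with independent union. The differences are minor: for $(1)\Rightarrow(2)$ you cite Lemma~\ref{skewgoodbase}(iv), which (unlike Theorem~\ref{skewequiv}, cited in the paper) is stated for $\dcon$ and needs no hypothesis on shared loops, and for $(3)\Rightarrow(1)$ you apply Lemma~\ref{skewofdjbase} directly to $I \cup J$ instead of first showing $I,J$ skew via Corollary~\ref{skewindep} and then enlarging to $X \ss \cl_M(I)$, $Y \ss \cl_M(J)$ with Lemmas~\ref{clskew} and~\ref{skewmono}; your explicit argument for $(2)\Rightarrow(3)$, which the paper calls immediate, is a useful addition, since (2) concerns $(M \dcon X)|Y$ while (3) asks for a basis of $X$ in $M \dcon Y$, so some exchange of the roles of $X$ and $Y$ is genuinely needed there.
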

\begin{proof}
  The fact that (\ref{sprskew}) implies (\ref{sprrestr}) follows immediately from 
  Lemma~\ref{skewequiv}(\ref{secon}), 
  and the fact that (\ref{sprrestr}) implies (\ref{sprbasis}) is immediate. 
  Suppose that (\ref{sprbasis}) holds for some set $I$, and let $J$ be a basis for $Y$ in $M$. 
  Since $I$ is independent in $M \dcon Y = M \dcon J$, the sets $I$ and $J$ are disjoint
  with independent union in $M$, and so by Lemma (\ref{skewindep}), they are skew. 
  Since $X \ss \cl_M(I)$ and $Y \ss \cl_M(J)$, it follows from Lemma~\ref{clskew} and~\ref{skewmono}
  that (\ref{sprskew}) holds. 
\end{proof}

The next lemma shows that the closures of collections of disjoint independent sets
are, in a sense, maximally skew. 

\begin{lemma}\label{maxskew}
  Let $\ab{I_a : a \in A}$ be a partition of an independent set $I$ of a matroid $M$. 
  If $\ab{X_a}$ is a skew collection of subsets of $\cl_M(I)$ such that $I_a \ss X_a$ for each $a$, 
  then $X_a \ss \cl_M(I_a)$ for each $a \in A$. 
\end{lemma}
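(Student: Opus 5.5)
Fix $s \in A$ and $e \in X_s$. The plan is to show $e \in \cl_M(I_s)$ by contradiction. The tool is the circuit characterization of skewness from Theorem~\ref{skewequiv}, together with the freedom in Lemma~\ref{skewgoodbase} to choose bases.

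First dispose of loops: if $e$ is a loop of $M$, then $e \in \cl_M(\es) \ss \cl_M(I_s)$. So assume $e$ is a nonloop, and suppose for a contradiction that $e \notin \cl_M(I_s)$. Then $I_s \cup \{e\}$ is independent.

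By Lemma~\ref{skewmono}(\ref{skewsubsets}), the collection $\ab{Y_a}$ with $Y_s = I_s \cup \{e\}$ and $Y_a = I_a$ for $a \ne s$ is skew, since $Y_a \ss X_a$ for every $a$. Each $Y_a$ is independent, so Corollary~\ref{skewindep} says the $Y_a$ are pairwise disjoint with independent union. That union is $I \cup \{e\}$, so $I \cup \{e\}$ is independent in $M$. But $e \in X_s \ss \cl_M(I)$, and $e \notin I$ because $e \notin \cl_M(I_s)$ while $e \notin I_a$ for $a \ne s$ by disjointness. This is the contradiction.

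The one point to check is that Corollary~\ref{skewindep} really gives independence of the union even though $e$ might a priori lie in some other $I_a$. Disjointness of the $Y_a$ rules this out, and it is part of the corollary's conclusion. So the argument is essentially immediate, and I expect no real obstacle beyond the loop case.
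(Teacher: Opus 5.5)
Your proof is correct, but it takes a different route from the paper's.

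The paper argues globally. It strips loops with Lemma~\ref{skewdj}, so that the sets $X'_a = X_a - \cl_M(\es)$ are pairwise disjoint. It then uses the direct-sum form $M | \bigcup_a X'_a = \oplus_a (M | X'_a)$ from Theorem~\ref{skewequiv}. Since $I \ss \bigcup_a X'_a \ss \cl_M(I)$, the set $I$ is a basis of this restriction, so $I \cap X'_a$ is a basis of $X'_a$, and disjointness forces $I \cap X'_a = I_a$.

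You argue elementwise instead. Given $e \in X_s - \cl_M(I_s)$, you shrink the skew family to the independent family $\ab{Y_a}$, with $Y_s = I_s \cup \{e\}$ and $Y_a = I_a$ otherwise, via Lemma~\ref{skewmono}(\ref{skewsubsets}). Corollary~\ref{skewindep} then gives that the $Y_a$ are pairwise disjoint with independent union $I \cup \{e\}$, where $e \notin I$. This contradicts $e \in \cl_M(I)$. Both lemmas precede Lemma~\ref{maxskew}, so nothing is circular.

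What each route buys:
\begin{itemize}
  \item Your route needs no loop bookkeeping and no identification of $I \cap X'_a$. Every set in the reduced family is independent, and disjointness comes directly out of the corollary.
  \item The paper's route handles all elements of $X_a$ in one step through the direct-sum structure, with no contradiction argument.
\end{itemize}

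Two small tidy-ups. Your separate loop case is redundant, because assuming $e \notin \cl_M(I_s)$ already makes $e$ a nonloop. You also never use Lemma~\ref{skewgoodbase}, despite mentioning it in your plan.
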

\begin{proof}
  By Lemma~\ref{skewdj}, the sets $\ab{X'_a = X_a - \cl_M(\es)}$ are skew and pairwise disjoint. Clearly $I_a \ss X_a'$ for each $a$. 
  Then $M | \bigcup_a X'_a = \oplus_a (M | X_a')$, and since $\bigcup_a \cX'_a \ss \cl_M(I)$, the set $I$ 
  is a basis for $M | \bigcup_a \cX'_a$, which implies that $I \cap X_a'$ is a basis for $M | X_a'$ for each $a$. 
  Since the $X_a'$ are pairwise disjoint, we must have $I \cap X_a' = I_a$ for each $a$, giving the lemma. 
\end{proof}

We now characterize modularity of pairs in terms of skewness. 

\begin{lemma}\label{modulartoskew}
  If $X$ and $Y$ are sets in a matroid $M$, then the following are equivalent:
  \begin{enumerate}[(1)]
    \item\label{msmodpair} $(X,Y)$ is a modular pair in $M$;
    \item\label{msproj} $X$ and $Y$ are skew in $M \dcon (X \cap Y)$;
    \item\label{mscon} $X - Y$ and $Y-X$ are skew in $M \con (X \cap Y)$. 
  \end{enumerate}
\end{lemma}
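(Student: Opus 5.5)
I will prove the cycle (\ref{msmodpair}) $\Rightarrow$ (\ref{msproj}) $\Rightarrow$ (\ref{mscon}) $\Rightarrow$ (\ref{msmodpair}). Throughout, fix a basis $I$ for $X \cap Y$; recall that $M \dcon (X \cap Y) = M \dcon I$, and that $M \con (X\cap Y) = (M \dcon I) \del (X \cap Y)$.

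For (\ref{msmodpair}) $\Rightarrow$ (\ref{msproj}), I first upgrade the mutual basis. Since $\{X \cap Y, X\}$ and $\{X\cap Y, Y\}$ are comparable to $X \cap Y$, Lemma~\ref{modularinsert} shows that $\{X, Y, X \cap Y\}$ is modular. So there is a mutual basis $B$ of this family, and I may take $I = B \cap X \cap Y$. Put $I_X = B \cap X$ and $I_Y = B \cap Y$; these are bases of $X$ and $Y$ with $I_X \cap I_Y = I$. Then $I_X - I$ and $I_Y - I$ are disjoint, and their union is independent in $M \dcon I$, since its union with $I$ is contained in $B$. We have $X \ss \cl_M(I_X) = \cl_{M \dcon I}(I_X - I)$, and similarly for $Y$. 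Hence $B - I$ is a mutual basis for $(X,Y)$ in $M \dcon I$, and its intersections with $X$ and $Y$ are disjoint. Lemma~\ref{skewofdjbase} then gives skewness. (Any intersections with elements of $I$ are loops of $M \dcon I$, so they cause no trouble.)

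For (\ref{msproj}) $\Rightarrow$ (\ref{mscon}), apply Lemma~\ref{skewmono}(\ref{skewsubsets}) to shrink the pair to $X - Y$ and $Y - X$. Then Lemma~\ref{skewmono}(\ref{skewdelete}) deletes $X \cap Y$, which is disjoint from both sets, and the result is skewness in $(M \dcon I)\del (X\cap Y) = M \con (X \cap Y)$.

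For (\ref{mscon}) $\Rightarrow$ (\ref{msmodpair}), let $J_X$ be a basis of $X - Y$ and $J_Y$ a basis of $Y - X$, both taken in $M \con (X \cap Y)$. By Lemma~\ref{skewgoodbase}(ii), $J_X \cup J_Y$ is independent in $M \con (X\cap Y)$, so $B = I \cup J_X \cup J_Y$ is independent in $M$. Moreover, $X \ss \cl_M(I \cup J_X)$, because $X - Y \ss \cl_{M \con (X\cap Y)}(J_X)$ and $X \cap Y \ss \cl_M(I)$; similarly $Y \ss \cl_M(I \cup J_Y)$. Since $I \cup J_X \ss B \cap X$ and $I \cup J_Y \ss B \cap Y$, the set $B$ is a mutual basis for $\{X, Y\}$.

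The main subtlety is the first implication, where the mutual basis must be chosen so that it contains a basis of $X \cap Y$; Lemma~\ref{modularinsert} is exactly what provides this. The remaining steps are routine bookkeeping with $\dcon$ versus $\con$.
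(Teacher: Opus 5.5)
Your proof is correct and follows essentially the paper's route. Both build an explicit mutual basis that meets $X$ and $Y$ in disjoint sets and apply Lemma~\ref{skewofdjbase}, and both prove the converse by adding a basis of $X \cap Y$ to bases taken in the contraction; the only structural difference is that the paper gets the equivalence of (2) and (3) directly from Lemma~\ref{skewdj} rather than through Lemma~\ref{skewmono}. Also, Lemma~\ref{modularinsert} is heavier than needed, since Lemma~\ref{blattice} already shows that every mutual basis $B$ of $\{X,Y\}$ meets $X \cap Y$ in a basis of $X \cap Y$, so no special choice of $B$ is actually required.
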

\begin{proof}
  First; since the elements of $X \cap Y$ are loops in $M \dcon (X \cap Y)$, 
  it is immediate from Lemma~\ref{skewdj} that (\ref{msproj}) and (\ref{mscon}) are equivalent. 
  It therefore suffices to show that (\ref{msmodpair}) is equivalent to (\ref{mscon}). 

  Suppose that (\ref{msmodpair}) holds, and let $B$ be a mutual basis for $X$ and $Y$ in $M$. 
  By Lemma~\ref{blattice} we know that $B \cap X \cap Y$ is a basis for $X \cap Y$ in $M$. 
  Then $B - (X \cap Y)$ is independent in $M \con (X \cap Y)$, and contains the bases $B \cap (X - Y)$
  and $B \cap (Y-X)$ for $X-Y$ and $Y-X$ respectively in $B \con (X \cap Y)$. 
  These sets are disjoint, so Lemma~\ref{skewofdjbase} gives that $X-Y$ and $Y-X$ are skew in $M \con (X \cap Y)$. 

  Conversely, suppose that $X - Y$ and $Y-X$ are skew in $M \con (X \cap Y)$. 
  Let $B$ be a modular basis in $M \con (X \cap Y)$ for these two sets, and let $I$ be a basis for $X \cap Y$ in $M$. 
  Then $B \cup I$ is independent in $M$ and contains the bases $(B \cap (X - Y)) \cup I$ 
  and $(B \cap (Y-X)) \cup I$ for $X$ and $Y$ respectively, so $(X,Y)$ is modular in $M$. 
\end{proof}

Finally, we prove that skewness coincides with the familiar definition in terms of rank for finite-rank matroids. 

\begin{lemma}\label{skewrank}
  Let $M$ be a matroid. A collection $\ab{X_a : a \in A}$ of sets in $M$ with
  $r_M(\cup_{a \in A}X_a) < \infty$
  is skew in $M$ if and only if 
  $r_M(\cup_{a \in A} X_a) = \sum_{a \in A} r_M(X_a)$. 
\end{lemma}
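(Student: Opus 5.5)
Since $\sum_a r_M(X_a)$ may be infinite while the union has finite rank, first note that if the sum equals $r_M(\cup_a X_a)<\infty$, then only finitely many $X_a$ have positive rank. Likewise, if $\ab{X_a}$ is skew, then by Lemma~\ref{skewgoodbase}(ii) any choice of bases $I_a$ are pairwise disjoint with independent union. Hence only finitely many $I_a$ are nonempty, because $r_M(\cup_a X_a)<\infty$. In either direction, therefore, only finitely many $X_a$ are non-loop sets, and all relevant quantities are finite. I will use the characterization of Lemma~\ref{skewdj}: $\ab{X_a}$ is skew if and only if $\ab{X_a-\cl_M(\es)}$ is pairwise disjoint and modular. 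Removing loops changes none of the ranks involved, so I may replace each $X_a$ by $X_a - \cl_M(\es)$.

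\emph{Skew implies the rank equation.} For each $a$, pick a basis $I_a$ of $X_a$. By Lemma~\ref{skewgoodbase}(ii), the $I_a$ are pairwise disjoint and $I=\cup_a I_a$ is independent. Since $I$ spans each $X_a$, it spans the union, so it is a basis of $\cup_a X_a$. This gives
\[r_M(\cup_a X_a)=|I|=\sum_a |I_a| = \sum_a r_M(X_a),\]
where the middle equality uses disjointness.

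\emph{The rank equation implies skew.} Again choose bases $I_a$ of $X_a$ and set $I=\cup_a I_a$, which spans $\cup_a X_a$. Then
\[r_M(\cup_a X_a)\le r_M(I)\le |I|\le \sum_a |I_a| = \sum_a r_M(X_a) = r_M(\cup_a X_a).\]
All these quantities are finite, so every inequality is an equality. From $|I|=\sum_a|I_a|$ with finite values, the $I_a$ are pairwise disjoint. From $r_M(I)=|I|<\infty$, the set $I$ is independent, since a finite set whose rank equals its size is independent.

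Now $I$ is independent, contains the basis $I_a$ of each $X_a$, and the sets $I\cap X_a$ should be pairwise disjoint; Lemma~\ref{skewofdjbase} then gives skewness. The one subtlety is that Lemma~\ref{skewofdjbase} requires $I\cap X_a$ itself, not merely $I_a$, to be pairwise disjoint. This holds because $I\cap X_a$ is independent and lies in $\cl_M(I_a)$, so by maximality of $I_a$ we have $I\cap X_a = I_a$.

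I expect the main obstacle to be keeping the $\enn$ arithmetic honest, namely justifying that all the sums are finite before cancelling. The finite-rank hypothesis handles this as described above.
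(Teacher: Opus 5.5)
Your proof is correct and follows essentially the paper's argument. The forward direction goes through disjoint bases with independent union (the paper phrases this via the direct-sum decomposition of Theorem~\ref{skewequiv}), and the converse uses the same chain of $\enn$-inequalities, forced to be equalities by finiteness, followed by Lemma~\ref{skewofdjbase}. Working with $I=\cup_a I_a$ directly and checking $I\cap X_a=I_a$ is, if anything, slightly more careful than the paper, whose inequality $|B|\le\sum_a|B\cap B_a|$ tacitly needs the basis $B$ of the union to be chosen inside $\cup_a B_a$.
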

\begin{proof}
  We may assume by Lemma~\ref{skewdj} that no set $X_a$ contains a loop of $M$. 
  If the $X_a$ are skew in $M$, then Lemma~\ref{skewequiv} gives that 
  $r_M(\cup_a X_a) = r(M | \cup_a X_a) = r\br{\oplus_a (M | X_a)} = \sum_a r_M(X_a)$, 
  as required. 

  Conversely, suppose that $\sum_a r_M(X_a) = r_M\br{\cup_a X_a}$. 
  For each $a \in A$, let $B_a$ be a basis for $X_a$ in $M$. Let $B$ be a basis 
  for $\cup_{a}X_a$ in $M$. We have 
  \[\sum_{a \in A} |B_a| = \sum_{a \in A} r_M(X_a) = r_M\br{\cup_a X_a} = |B| \le \sum_{a \in A} |B \cap B_a| \le \sum_{a \in A} |B_a|.\]
  Equality holds throughout, and the second term is finite, so in fact we have 
  that $B_a \ss B$ for each $a$, and that the $B_a$ are pairwise disjoint. 
  Therefore $B = \cup_a B_a$ is a mutual basis for $\cX$, and the result follows from Lemma~\ref{skewofdjbase}.
\end{proof}

\section{Nullity}\label{nullitysec}

Recall that the nullity of $X \ss E(M)$ is the value $n_M(X) = r^*(M|X)$, 
and therefore that $n_M(X) = |X - I|$ for each basis $I$ of $X$. 
This concept will be very useful for reasoning about connectivity; 
our first lemma establishes a slew of basic properties.
Most are immediate for finite matroids via rank calculations, 
but these often fail in the infinite case. 

\begin{lemma}\label{nullityprop}
  Let $X$ and $Y$ be sets in a matroid $M$. Then
  \begin{enumerate}[(i)]
    \item\label{nullityle} $n_M(X) \le |X|$;
    \item\label{nullitymono} if $X \ss Y$, then $n_M(X) \le n_M(Y)$;
    \item\label{nullityssclosure} if $X \ss Y \ss \cl_M(X)$, then $n_M(Y) = n_M(X) + |Y-X|$;
    \item\label{nullityleaddcard} if $X \ss Y$, then $n_M(Y) \le n_M(X) + |Y-X|$;
    \item\label{nullityrestr} if $X \ss Y$, then $n_{M | Y}(X) = n_M(X)$;
    \item\label{nullitycontract} $n_M(X \cup Y) = n_{M \con X}(Y-X) + n_M(X)$;
    \item\label{nullitysupermod} $n_M(X) + n_M(Y) \le n_M(X \cup Y) + n_M(X \cap Y)$. 
  \end{enumerate}
\end{lemma}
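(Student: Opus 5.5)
The plan is to work throughout with the formula $n_M(X) = |X - I|$ for an arbitrary basis $I$ of $X$, and to prove each part by choosing bases that are nested compatibly. All arithmetic stays in $\enn$ and we only ever add, never subtract.

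Parts (\ref{nullityle}) to (\ref{nullityrestr}) are routine.
\begin{itemize}
  \item For (\ref{nullityle}), note that $X - I \ss X$.
  \item For (\ref{nullitymono}), take a basis $I$ of $X$ and extend it to a basis $J$ of $Y$. Then $J \cap X = I$ by maximality, so $X - I \ss Y - J$.
  \item For (\ref{nullityssclosure}), a basis $I$ of $X$ is also a basis of $Y$, since $Y \ss \cl_M(X)$. Hence $Y - I$ is the disjoint union of $X - I$ and $Y - X$, and cardinalities add in $\enn$.
  \item For (\ref{nullityleaddcard}), again extend a basis $I$ of $X$ to a basis $J$ of $Y$. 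Then $Y - J \ss (X - I) \cup (Y - X)$.
  \item Part (\ref{nullityrestr}) holds because bases of $X$ in $M|Y$ and in $M$ coincide.
\end{itemize}

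For (\ref{nullitycontract}), let $I$ be a basis of $X$ and $J$ a basis of $X \cup Y$ containing $I$. Then $J - I$ is a basis of $(X \cup Y) - X = Y - X$ in $M \con X$, since contracting $X$ is the same as contracting its basis $I$ on the relevant sets. The set $(X \cup Y) - J$ is the disjoint union of $X - I$ (using $J \cap X = I$) and $(Y - X) - (J - I)$. Taking cardinalities gives the identity, with no subtraction needed.

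For supermodularity (\ref{nullitysupermod}), which I expect to be the only real obstacle, I will pick nested bases. Let $K$ be a basis of $X \cap Y$ and extend it to a basis $I_X$ of $X$. Extend $I_X$ to a basis $J$ of $X \cup Y$. Then $J \cap Y$ is independent and contains $K$, but it need not be a basis of $Y$, so the naive count must be arranged carefully.

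My approach is to use (\ref{nullitycontract}) twice:
\[ n_M(X \cup Y) = n_M(X) + n_{M \con X}(Y - X) \quad\text{and}\quad n_M(Y) = n_M(X \cap Y) + n_{M \con (X\cap Y)}(Y - X). \]
It then suffices to show
\[ n_{M \con (X\cap Y)}(Y - X) \le n_{M \con X}(Y - X), \]
since adding $n_M(X \cap Y)$ to both sides then yields the claim, and addition preserves order in $\enn$.

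This reduces everything to a single monotonicity statement: contracting more (here $X \supseteq X \cap Y$) does not decrease the nullity of a fixed set $Z$ disjoint from the contracted set. To prove it, set $N = M \con (X \cap Y)$ and $C = X - Y$, so that $M \con X = N \con C$. Choose a basis $L$ of $Z$ in $N \con C$, which is independent in $N$, and extend it to a basis $L'$ of $Z$ in $N$. Then $Z - L' \ss Z - L$. This gives exactly $n_N(Z) \le n_{N \con C}(Z)$.
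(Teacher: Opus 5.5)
Your proof is correct and is essentially the paper's argument. The paper dispatches (i), (ii), (iv) and (v) via $n_M(X) = r^*(M|X)$ and minor-monotonicity of rank rather than by counting bases, which makes no real difference. For (vii), your reduction via (vi) to $n_{M \con (X \cap Y)}(Y-X) \le n_{M \con X}(Y-X)$ is the same key step as the paper's observation that, for a basis $J$ of $Y$ containing a basis $I$ of $X \cap Y$, the set $J - I$ contains a basis for $Y - X$ in $M \con X$; you extend a basis upward where the paper shrinks one, and the abandoned nested-basis paragraph at the start of (vii) can simply be deleted.
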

\begin{proof}
  (\ref{nullityle}), (\ref{nullitymono}), (\ref{nullityleaddcard}) and (\ref{nullityrestr}) 
  are immediate from the fact that $n_M(X) = r^*(M | X)$ and the minor-monotonicity of rank. 
  For (\ref{nullityssclosure}), let $I$ be a basis for $X$ in $M$, 
  noting that $I$ is also a basis for $Y \ss \cl_M(X) = \cl_M(I)$. 
  Now $n_M(Y) = |Y - I| = |X - I| + |Y-X| = n_M(X) + |Y-X|$.

  For (\ref{nullitycontract}), let $I$ be a basis for $X$ in $M$, and $J$ be a 
  basis for $X \cup Y$ in $M$ containing $I$. Since $J - I$ is a basis for 
  $Y-X$ in $M \con X$, we have
  \begin{align*}
    n_{M \con X}(Y-X) + n_M(X) &= \abs{(Y-X)-(J-I)} + \abs{X-I} \\
    &= |(X \cup Y) - J| \\ 
    &= n_M(X \cup Y).
  \end{align*}

  To see (\ref{nullitysupermod}), let $I$ be a basis for $X \cap Y$ in $M$, 
  and $J$ be a basis for $Y$ in $M$ that contains $I$.  
  Since $J - I$ contains a basis for $Y-X$ in $M \con X$, we have
  $n_{M \con X}(Y-X) \ge |(Y-X) - (J - I)|$. Now (\ref{nullitycontract}), 
  together with the fact that $J \cap X = I$, gives
  \begin{align*}
    n_M(X \cup Y) + n_M(X \cap Y) &= n_M(X) + n_{M \con X}(Y-X) + n_M(X \cap Y) \\
    &\ge n_M(X) + |(Y-X) - (J-I)| + |(X \cap Y) - I|\\
    &= n_M(X) + |Y-J| \\
    &= n_M(X) + n_M(Y),
  \end{align*}
  as required. 
\end{proof}

The next lemma handles the interaction between nullity and projection.

\begin{lemma}\label{nullityproject}
  Let $X, Y$ and $C$ be sets in a matroid $M$. Then
  \begin{enumerate}[(i)]
    \item\label{npge} $n_{M \dcon C}(X) \ge n_M(X)$;
    \item\label{npcomm} 
      $n_{M \dcon X}(Y) + n_M(X) = n_M(X \cup Y) + |X \cap Y| = n_{M \dcon Y}(X) + n_M(Y)$, 
    \item\label{npconind}
      If $C$ is independent in $M$, then $n_{M \dcon C}(X) = n_M(X \cup C) + |X \cap C|$. 
    \item\label{npconindind}
      If $X$ and $Y$ are independent in $M$, 
      then $n_{M \dcon X}(Y) = n_{M \dcon Y}(X) = n_M(X \cup Y)$;
    \item\label{nple} if $n_M(X) \le n_M(Y)$ and $\cl_M(X) \ss \cl_M(Y)$, then $n_{M \dcon C}(X) \le n_{M \dcon C}(Y)$. 
    \item\label{npeq} if $n_M(X) = n_M(Y)$ and $\cl_M(X) = \cl_M(Y)$, then $n_{M \dcon C}(X) = n_{M \dcon C}(Y)$. 
  \end{enumerate}
\end{lemma}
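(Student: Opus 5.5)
The plan is to prove (\ref{npcomm}) first from the basis description of nullity, derive (\ref{npconind}) and (\ref{npconindind}) as special cases, and then obtain (\ref{npge}), (\ref{nple}) and (\ref{npeq}) from these. The key fact used throughout is that the independent sets of $M \dcon X$ are those of $M \con X$, so for a basis $I$ of $X$ in $M$, a set $J \ss Y$ is a basis of $Y$ in $M \dcon X$ exactly when $J \cap X = \es$ and $I \cup J$ is a basis of $X \cup Y$ in $M$. Indeed, independence of $J$ in $M \dcon X = M \dcon I$ means $J$ is disjoint from $I$, lies outside $X$ since elements of $X$ are loops of $M \dcon X$, and has $I \cup J$ independent in $M$; spanning $Y$ in $M \dcon X$ means $Y \ss \cl_M(I \cup J)$.

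For (\ref{npcomm}), fix a basis $I$ of $X$ and extend it to a basis $I \cup J$ of $X \cup Y$ with $J \cap X = \es$. Such a $J \ss Y - X$ exists: take a basis of $X \cup Y$ containing $I$; its elements outside $I$ lie in $Y - X$ because $X \ss \cl_M(I)$. Then $J$ is a basis of $Y$ in $M \dcon X$, so $n_{M \dcon X}(Y) = |Y - J|$. Since $Y - J = (Y \cap X) \sqcup ((Y - X) - J)$, we get
\[ n_{M \dcon X}(Y) + n_M(X) = |X \cap Y| + |(Y-X) - J| + |X - I| = |X \cap Y| + |(X \cup Y) - (I \cup J)|, \]
which equals $|X \cap Y| + n_M(X \cup Y)$. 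This argument uses only cardinality additivity in $\enn$ and no subtraction. The second equality in (\ref{npcomm}) follows by symmetry.

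The remaining parts follow from (\ref{npcomm}):
\begin{itemize}
\item For (\ref{npconind}), apply (\ref{npcomm}) with $X := C$ and $Y := X$, and use $n_M(C) = 0$ since $C$ is independent.
\item For (\ref{npconindind}), apply (\ref{npconind}) twice. Since $X$ and $Y$ are independent, the term $|X \cap Y|$ appears on both sides, but the claimed equalities need care. The route I would take is to note that $n_M(X \cup Y) \ge |X \cap Y|$ fails in general. So instead I would apply (\ref{npcomm}) directly: $n_{M\dcon X}(Y) + 0 = n_M(X \cup Y) + |X\cap Y|$. The statement as written presumably intends the $|X \cap Y|$ term to be absorbed, or $X$ and $Y$ to be disjoint. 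I would check the intended form and prove the version with $|X\cap Y|$ added, which specializes correctly.
\item For (\ref{npge}), use monotonicity: $n_{M \dcon C}(X) \ge n_{M \dcon C}(X - C)$, and by (\ref{npcomm}) $n_{M \dcon C}(X - C) + n_M(C) = n_M(X \cup C)$. Comparing this with $n_M(X)$ via (\ref{nullitycontract}) requires cancelling $n_M(C)$, which may be infinite. I would avoid this by working with bases directly: pick a basis $I_C$ of $C$ and extend $I_C$ by $J \ss X$ to a basis of $C \cup X$. Then $J$ is independent in $M$, so it extends to a basis $J'$ of $X$ in $M$, and $X - J' \ss X - J$. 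Since $J \cap C = \es$, the set $J$ is a basis of $X$ in $M \dcon C$, giving $n_{M\dcon C}(X) = |X - J| \ge |X - J'| = n_M(X)$.
\end{itemize}

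For (\ref{nple}) and (\ref{npeq}), I expect the main obstacle to be the impossibility of subtracting infinite nullities. The plan is to reduce to independent sets together with a cardinality count. Choose a basis $K$ of $\cl_M(X) \cap C$-type data; concretely, pick a basis $I_C$ of $C$, extend it by $J_X \ss X$ to a basis of $X \cup C$, and extend $I_C \cup J_X$ by $J_Y \ss Y$ to a basis of $Y \cup C$, which is possible since $\cl_M(X) \ss \cl_M(Y)$. Then $n_{M\dcon C}(X) = |X - J_X|$ and $n_{M\dcon C}(Y) = |Y - J_X - J_Y|$. Extending $J_X$ to a basis $I_X$ of $X$ and $J_X \cup J_Y$ to a basis $I_Y$ of $Y$ gives $|X - J_X| = n_M(X) + |I_X - J_X|$ and $|Y - (J_X \cup J_Y)| = n_M(Y) + |I_Y - (J_X \cup J_Y)|$. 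The needed comparison $|I_X - J_X| \le |I_Y - J_X - J_Y|$ should follow from a basis-exchange count inside $M \con (I_C \cup J_X \cup J_Y)$. Both sides are bases of complementary parts modulo $C$, and there $\cl(X) \ss \cl(Y)$ is used; when these quantities are finite, exchange gives the inequality, and when infinite, both sides are $\infty$. If this bookkeeping becomes messy, I would fall back on proving (\ref{npeq}) by applying (\ref{nple}) in both directions, which concentrates the difficulty in the single inequality (\ref{nple}).
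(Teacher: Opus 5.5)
Your proofs of (i)--(iii) are correct; the basis computation for (ii) is a direct, slightly more elementary version of the paper's route through the contraction formula for nullity. You are also right to flag (iv). As written it fails when $X \cap Y \ne \es$: for a nonloop $e$ and $X = Y = \{e\}$ one gets $n_{M \dcon X}(Y) = 1$ but $n_M(X \cup Y) = 0$. What (ii) actually gives is $n_{M \dcon X}(Y) = n_{M \dcon Y}(X) = n_M(X \cup Y) + |X \cap Y|$. The paper's one-line derivation of (iv) from (ii) silently drops this term, so the statement needs $X \cap Y = \es$.

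The genuine gap is in (v), on which (vi) rests. First, your set-up is wrong. The set $J_X \ss X$ need not lie in $Y$, since you only know $X \ss \cl_M(Y)$. So $J_X \cup J_Y$ is not a basis of $Y$ in $M \dcon C$ and cannot be extended to a basis of $Y$. For example, take parallel nonloops $e,f$ with $X = \{e\}$, $Y = \{f\}$, $C = \es$. You get $J_X = \{e\}$, $J_Y = \es$, and $|Y - J_X - J_Y| = 1 \ne 0 = n_{M \dcon C}(Y)$. This can be repaired by choosing an $M \dcon C$-basis $K_Y \ss Y$ of $Y$ separately and extending it to an $M$-basis $I_Y$ of $Y$. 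But then the whole content is the inequality $|I_X - J_X| \le |I_Y - K_Y|$, and for this you offer only an unspecified exchange count plus the assertion that in the infinite case ``both sides are $\infty$''. That assertion is exactly what has to be proved: why should $|I_X - J_X| = \infty$ force $|I_Y - K_Y| = \infty$? Finite exchange arguments give no control over infinite set differences.

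The missing idea is that (i) and (ii), which you already have, settle (v) once you swap the roles of $C$ and $X$. Replace $C$ by one of its bases, which does not change $M \dcon C$, so that $n_M(C) = 0$. Then (ii) gives $n_{M \dcon C}(X) = n_{M \dcon C}(X) + n_M(C) = n_{M \dcon X}(C) + n_M(X)$ with nothing to cancel. Indeed, a short computation using (iii) shows that your term $|I_X - J_X|$ equals $n_{M \dcon X}(C)$. Since $X \ss \cl_M(Y)$, we have $M \dcon Y = M \dcon \cl_M(Y) = (M \dcon X) \dcon \cl_M(Y)$. So (i) applied in the matroid $M \dcon X$ gives $n_{M \dcon X}(C) \le n_{M \dcon Y}(C)$. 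Therefore
\[ n_{M \dcon C}(X) = n_{M \dcon X}(C) + n_M(X) \le n_{M \dcon Y}(C) + n_M(Y) = n_{M \dcon C}(Y), \]
which is the paper's argument. Part (vi) then follows by applying (v) in both directions, as you propose.
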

\begin{proof}
  (\ref{npge}) is clear, since each basis for $Y$ in $M$ 
  contains a basis for $Y$ in $M \dcon X$. 
  For (\ref{npcomm}), note that $X \cap Y$ is a set of loops of $M \dcon X$, 
  so Lemma~\ref{nullityprop} (\ref{nullityssclosure}) and (\ref{nullityrestr}) give
  \begin{align*}
    n_{M \dcon X}(Y) + n_M(X) &= n_{M \dcon X}(Y-X) + |X \cap Y| + n_M(X)  \\
    &= n_M(X \cup Y) + |X \cap Y|,
  \end{align*}
  and the conclusion follows by symmetry. 
  Now (\ref{npconind}) and (\ref{npconindind}) also follow directly from (\ref{npcomm}), since independent sets have nullity zero. 

  For (\ref{nple}), we may assume that $C$ is independent in $M$, so $n_M(C) = 0$. 
  By (\ref{npge}), we have $n_{M \dcon X}(C) \le n_{M \dcon \cl_M(Y)}(C) = n_{M \dcon Y}(C)$. Using (\ref{npcomm}) applied twice, this gives 
  \begin{align*}
    n_{M \dcon C}(X) &= n_{M \dcon C}(X) + n_M(C) \\ 
    &= n_{M \dcon X}(C) + n_M(X) \\
    &\le n_{M \dcon Y}(C) + n_M(Y) \\
    &= n_{M \dcon C}(Y) + n_M(C) \\
    &= n_{M \dcon C}(Y).
  \end{align*}
  Now (\ref{npeq}) is immediate from (\ref{nple}).
\end{proof}

Since any two bases for a set $X$ have the same nullity and closure, 
we have the following corollary :

\begin{corollary}\label{basisswitch}
  If $I$ and $I'$ are bases for a set $X$ in a matroid $M$, 
  then $n_{M \dcon C}(I) = n_{M \dcon C}(I')$ for all $C \ss E(M)$. 
\end{corollary}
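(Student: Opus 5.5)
The plan is to obtain the corollary directly from Lemma~\ref{nullityproject}(\ref{npeq}), applied with $I$ and $I'$ in place of $X$ and $Y$. That part of the lemma needs two hypotheses: $n_M(I) = n_M(I')$ and $\cl_M(I) = \cl_M(I')$. Once both are checked, it gives $n_{M \dcon C}(I) = n_{M \dcon C}(I')$ for every $C \ss E(M)$, which is the claim.

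Both hypotheses are immediate. First, $I$ and $I'$ are bases for $X$, so they are independent in $M$. An independent set $J$ has $M|J$ free, so $r^*(M|J) = 0$. Hence $n_M(I) = n_M(I') = 0$.

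Second, each of $I$ and $I'$ is contained in $X$ and spans it. So $X \ss \cl_M(I)$, and monotonicity and idempotence of closure give $\cl_M(I) = \cl_M(X)$. The same argument gives $\cl_M(I') = \cl_M(X)$, so $\cl_M(I) = \cl_M(I')$.

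There is no real obstacle here. The only point to check is that part (\ref{npeq}) is stated for arbitrary sets $C$, with no independence assumption. It is: the proof of (\ref{nple}) reduces to the case of independent $C$ by replacing $C$ with a basis of itself, which does not change $M \dcon C$. So the corollary applies to all $C \ss E(M)$, as stated.
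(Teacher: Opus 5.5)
Your proposal is correct and is exactly the paper's argument: the paper obtains this corollary directly from Lemma~\ref{nullityproject}(\ref{npeq}), using only that any two bases of $X$ have the same nullity (namely zero) and the same closure $\cl_M(X)$. Your remark that $C$ may be arbitrary is also right, since $M \dcon C = M \dcon J$ for any basis $J$ of $C$.
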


Recall that Definition~\ref{lcdef} defined the local connectivity of sets $X$ and $Y$ with the formula
$\sqcap_M(X,Y) = |I \cap J| + r^*(M | I \cup J)$, 
where $I$ and $J$ are arbitrary bases for $X$ and $Y$ respectively. 
As a corollary of the previous lemma, we get that this quantity is well-defined. 

\begin{corollary}\label{lcwelldef}
  Let $X$ and $Y$ be sets in a matroid $M$, and let $I,I'$ be bases for $X$ in $M$ and $J,J'$ be bases for $Y$ in $M$. 
  Then $|I \cap J| + n_M(I \cup J) = |I' \cap J'| + n_M(I' \cup J')$. 
\end{corollary}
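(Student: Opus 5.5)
The plan is to rewrite $|I \cap J| + n_M(I \cup J)$ as a nullity in a projection, and then use Corollary~\ref{basisswitch} to swap bases one at a time. The key identity is Lemma~\ref{nullityproject}(\ref{npconindind}): since $I$ and $J$ are both independent in $M$, it gives
\[n_{M \dcon I}(J) = n_M(I \cup J).\]
Now I would like to absorb the term $|I \cap J|$. Lemma~\ref{nullityproject}(\ref{npcomm}), applied with $X = I$ and $Y = J$, says
\[n_{M \dcon I}(J) + n_M(I) = n_M(I \cup J) + |I \cap J|.\]
Because $n_M(I) = 0$, this would seem to show $n_{M\dcon I}(J) = n_M(I \cup J) + |I \cap J|$. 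That conflicts with (\ref{npconindind}) unless $|I \cap J| = 0$, so I need to recheck.

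Going back to the proof of (\ref{npcomm}): the elements of $I \cap J$ are loops of $M \dcon I$, so $n_{M \dcon I}(J) = n_{M\dcon I}(J - I) + |I \cap J|$. Also $n_{M \dcon I}(J-I) = n_{M \con I}(J - I)$, which equals $n_M(I \cup J) - n_M(I)$ by Lemma~\ref{nullityprop}(\ref{nullitycontract}). So (\ref{npcomm}) is correct, and the statement of (\ref{npconindind}) should carry the extra $|X \cap Y|$ term; in any case it is not needed. I will rely only on (\ref{npcomm}), which yields
\[|I \cap J| + n_M(I \cup J) = n_{M \dcon I}(J).\]

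With this identity the argument is two swaps. First, Corollary~\ref{basisswitch} with $C = I$, applied to the bases $J$ and $J'$ of $Y$, gives $n_{M \dcon I}(J) = n_{M \dcon I}(J')$. Second, I use the symmetric form of (\ref{npcomm}), namely $n_{M\dcon I}(J') = n_{M \dcon J'}(I)$, where both $n_M(I)$ and $n_M(J')$ are zero. Applying Corollary~\ref{basisswitch} again with $C = J'$ replaces $I$ by $I'$, and reading (\ref{npcomm}) backwards returns $|I' \cap J'| + n_M(I' \cup J')$.

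The main obstacle is purely bookkeeping: the $|X\cap Y|$ term in (\ref{npcomm}) must be tracked correctly and independence used to kill the $n_M(I)$ and $n_M(J')$ terms. There is no subtraction in $\enn$ anywhere, since every step is an equality of sums in which the extra summand is zero.
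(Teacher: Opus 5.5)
Your argument is correct and is essentially the paper's: the paper also rewrites $|I\cap J| + n_M(I\cup J)$ as $n_{M\dcon I}(J)$ and swaps $J$ for $J'$ with Corollary~\ref{basisswitch}. For that first identity it cites part (\ref{npconind}), which carries the $|X\cap C|$ term; you are right that the statement of (\ref{npconindind}) wrongly omits this term. The only difference is the second swap: the paper uses $M \dcon I = M \dcon I'$ directly, since $\cl_M(I) = \cl_M(I')$, whereas you go through the symmetry of (\ref{npcomm}) and a second application of Corollary~\ref{basisswitch}.
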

\begin{proof}
  By Lemmas~\ref{nullityprop}(\ref{npconind}), 
  Corollary~\ref{basisswitch} and the fact that $\cl_M(I) = \cl_M(I')$,
  \begin{align*}
    |I \cap J| + n_M(I \cup J) &= n_{M \dcon I}(J) = n_{M \dcon I}(J') = n_{M \dcon I'}(J') = |I' \cap J'| + n_M(I' \cup J')
  \end{align*}
  as required. 
\end{proof}

The next few lemmas concern the nullity of arbitrary disjoint unions.
First, we relate nullity to skewness in a manner similar to Lemma~\ref{skewrank}.

\begin{lemma}\label{skewnullity}
  Let $\ab{X_a : a \in A}$ be a pairwise disjoint collection of sets in a matroid $M$.
  Then $n_M(\cup_{a \in A} X_a) \ge \sum_{a \in A}n_M(X_a)$.
  Moreover, if $n_M(\cup_{a \in A}X_a) < \infty$, 
  then $n_M(\cup_{a \in A} X_a) = \sum_{a \in A} n_M(X_a)$ if and only if 
      $(X_a : a \in A)$ is skew in $M$.  
\end{lemma}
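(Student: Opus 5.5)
The plan is to compute the nullity of the union using a good choice of basis. For each $a \in A$, let $I_a$ be a basis for $X_a$ in $M$, and let $I = \cup_{a} I_a$. The $I_a$ are pairwise disjoint because the $X_a$ are. Then extend to a basis $J$ for $\cup_a X_a$. For this, first take a basis $J_0$ of $I$, so $J_0 \ss I$, and note that $\cl_M(I) \supseteq \cup_a X_a$. Hence $J_0$ is already a basis for $\cup_a X_a$.

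By definition, $n_M(\cup_a X_a) = |(\cup_a X_a) - J_0|$. Since $J_0 \ss I$ and the $X_a$ are disjoint, we have
\[ (\cup_a X_a) - J_0 = \bigcup_a (X_a - I_a) \,\cup\, (I - J_0), \]
and this is a disjoint union. Therefore
\[ n_M\big(\cup_a X_a\big) = \sum_{a \in A} |X_a - I_a| + |I - J_0| = \sum_{a \in A} n_M(X_a) + n_M(I). \]
Here the sum is interpreted in $\enn$, and an infinite sum of positive terms is $\infty$; this is fine because these are cardinalities of disjoint unions. The inequality $n_M(\cup_a X_a) \ge \sum_a n_M(X_a)$ follows immediately.

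For the equality case, assume $n_M(\cup_a X_a) < \infty$. Then the sum $\sum_a n_M(X_a)$ is finite, so cancellation in $\enn$ is legitimate. Equality then holds if and only if $n_M(I) = 0$, that is, if and only if $I$ is independent. By Corollary~\ref{skewindep}, $I$ is independent exactly when the disjoint independent sets $(I_a)$ are skew. It remains to pass between skewness of $(I_a)$ and skewness of $(X_a)$.

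If $(X_a)$ is skew, then Lemma~\ref{skewgoodbase}(ii) directly says that $\cup_a I_a$ is a mutual basis, and in particular is independent. Conversely, if $I$ is independent, then $I$ is a mutual basis for $(X_a)$, since $X_a \ss \cl_M(I_a)$ and $I_a \ss I \cap X_a$. Moreover $I \cap X_a = I_a$ because the $X_a$ are disjoint, and these sets are pairwise disjoint, so Lemma~\ref{skewofdjbase} gives that $(X_a)$ is skew.

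The only delicate point is the bookkeeping with $\enn$ arithmetic. Cancellation is used only under the finiteness hypothesis. The decomposition identity itself holds unconditionally, because it is a statement about cardinalities of disjoint sets.
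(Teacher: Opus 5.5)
Your proof is correct and follows essentially the same route as the paper. Both pick bases $I_a$, take a basis of $\bigcup_a I_a$ (which is then a basis of $\bigcup_a X_a$), count its complement, and reduce the equality case to independence of $\bigcup_a I_a$, which is handled via Lemma~\ref{skewgoodbase} and the mutual-basis criterion. Writing the count as the identity $n_M(\bigcup_a X_a) = \sum_a n_M(X_a) + n_M(\bigcup_a I_a)$ makes the cancellation in $\enn$ a bit cleaner than the paper's remark about finitely many nonzero summands.
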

\begin{proof}
  For each $a \in A$, let $I_a$ be a basis for $X_a$. 
  Let $I$ be a basis for $I_A$, noting that $I$ is also a basis for $X_A$. Then 
  \[n_M(X_A) = |X - I| = \sum_{a \in A}\abs{X_a-(I \cap I_a)} 
  \ge \sum_{a \in A}|X_a - I_a| = \sum_{a \in A} n_M(X_a),\]
  giving the required inequality. 

  If $n_M(X_A) < \infty$, then because all terms are finite and each sum has 
  only finitely many nonzero summands, we see that equality holds if and only if 
  $I \cap I_a = I_a$ for each $a \in A$, or equivalently $I = I_A$. 
  If $I = I_A$, then $I$ is a mutual basis for the $X_a$, so the $X_a$ 
  are skew in $M$. Conversely, if the $\ab{X_a}$ are skew in $M$, then 
  Lemma~\ref{skewgoodbase} gives that $I = I_A$, as required. 
\end{proof}

The next lemma shows that we can essentially apply the facts from Lemma~\ref{nullityproject}(\ref{nple}) and (\ref{npeq})
infinitely many times at once. 

\begin{lemma}\label{nullityunion}
  Let $\ab{X_a : a \in A}$ and $\ab{Y_a : a \in A}$ be collections of 
  sets in a matroid $M$, with each collection pairwise disjoint. Then
  \begin{enumerate}[(i)]
    \item\label{nule} if $X_a$ and $Y_a$ satisfy $n_M(X_a) \le n_M(Y_a)$ and $\cl_M(X_a) \ss \cl_M(Y_a)$ for all $a \in A$,
    then $n_M(\cup_{a \in A} X_a) \le n_M(\cup_{a \in A} Y_a)$;
    \item\label{nueq} if $X_a$ and $Y_a$ satisfy $n_M(X_a) = n_M(Y_a)$ and $\cl_M(X_a) = \cl_M(Y_a)$ for all $a \in A$,
    then $n_M(\cup_{a \in A} X_a) = n_M(\cup_{a \in A} Y_a)$.
  \end{enumerate} 
\end{lemma}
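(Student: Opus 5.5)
The plan is to prove (\ref{nule}); then (\ref{nueq}) follows by applying (\ref{nule}) in both directions. Write $X = \cup_a X_a$ and $Y = \cup_a Y_a$. If $n_M(Y) = \infty$ there is nothing to prove, so assume $n_M(Y) < \infty$. For each $a$, choose bases $I_a$ of $X_a$ and $J_a$ of $Y_a$.

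\emph{Step 1: reduce to finitely many indices.} Let $K$ be a basis for $J_A$; it is also a basis for $Y$. Then $|Y - K| = n_M(Y) < \infty$, so the set $A_0 = \{a \in A : J_a \not\ss K\}$ is finite. Put $A_1 = A - A_0$, and let $C = J_{A_1} \ss K$, which is independent.
\begin{itemize}
\item By Corollary~\ref{skewindep}, the sets $\ab{J_a : a \in A_1}$ are skew, and by Lemma~\ref{clskew} so are their closures.
\item Since $I_a \ss \cl_M(J_a)$, Lemma~\ref{skewmono}(\ref{skewsubsets}) shows that $\ab{I_a : a \in A_1}$ is skew. Hence $D = I_{A_1}$ is independent, again by Corollary~\ref{skewindep}.
\item We also have $\cl_M(D) \ss \cl_M(C)$.
\end{itemize}

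\emph{Step 2: project away the infinite part.} Apply Lemma~\ref{nullityproject}(\ref{npcomm}) with the independent sets $C$ and $D$, using the disjointness of the families to control the intersection terms. This gives
\[n_M(Y) = n_{M \dcon C}(Y_{A_0}) + n_M(Y_{A_1}),\]
and similarly for $X$ with $D$. One can check this directly: $n_M(Y) = n_{M \con C}(Y - C) + n_M(C)$ by Lemma~\ref{nullityprop}(\ref{nullitycontract}), and the elements of $Y_{A_1} - C$ lie in $\cl_M(C)$.

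The infinite parts are then handled in two pieces. First, $n_M(X_{A_1}) = \sum_{a \in A_1} n_M(X_a)$, because $I_{A_1}$ is independent and spans $X_{A_1}$. Likewise $n_M(Y_{A_1}) = \sum_{a \in A_1} n_M(Y_a)$. So the hypothesis gives $n_M(X_{A_1}) \le n_M(Y_{A_1})$ termwise. Second, $\cl_M(D) \ss \cl_M(C)$, so projecting by $C$ kills at least as much as projecting by $D$. Therefore
\[n_{M \dcon D}(X_{A_0}) \le n_{M \dcon C}(X_{A_0}),\]
by Lemma~\ref{nullityproject}(\ref{npge}) applied to $M \dcon D$ after noting $M \dcon C = (M \dcon D) \dcon C$.

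\emph{Step 3: the finite case.} It remains to show $n_{N}(X_{A_0}) \le n_{N}(Y_{A_0})$, where $N = M \dcon C$ and $A_0$ is finite. The hypotheses on the pairs $(X_a, Y_a)$ for $a \in A_0$ survive projection to $N$: closures remain nested because closure in $N$ is $\cl_M(C \cup \cdot)$, and nullities remain ordered by Lemma~\ref{nullityproject}(\ref{nple}). For the finite case I would induct on $|A_0|$, replacing one $X_a$ by $Y_a$ at a time. For a single replacement, set $Z = \cup_{b \in A_0 - \{a\}} X_b$. Lemma~\ref{nullityproject}(\ref{npcomm}) writes $n_N(Z \cup X_a)$ as $n_{N \dcon Z}(X_a) + n_N(Z)$ (the intersection term vanishes by disjointness), and likewise with $Y_a$ in place of $X_a$. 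Then Lemma~\ref{nullityproject}(\ref{nple}) in $N \dcon Z$ compares the two.

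The main obstacle is exactly this single swap: disjointness of $Y_a$ from $Z$ is not assumed when the two families are mixed. I would first try to handle it by replacing sets with bases chosen via Corollary~\ref{basisswitch} to be disjoint. If that fails, I would carry the correction terms $|Z \cap Y_a|$ from (\ref{npcomm}) explicitly through the computation.
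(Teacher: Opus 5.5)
Your Steps 1 and 2 are sound. Because the $J_a$ are disjoint and $|Y-K|=n_M(Y)<\infty$, only finitely many $J_a$ leave $K$. The remaining $J_a$, and hence the corresponding $I_a$, are skew with independent unions $C$ and $D$. The identities $n_M(Y)=n_{M\dcon C}(Y_{A_0})+n_M(Y_{A_1})$ and $n_M(X)=n_{M\dcon D}(X_{A_0})+n_M(X_{A_1})$, together with $n_{M\dcon D}(X_{A_0})\le n_{M\dcon C}(X_{A_0})$, correctly reduce everything to the finite-index statement in $N=M\dcon C$.

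\textbf{The gap is Step 3, and it is more than a technicality.} Replacing one $X_a$ by $Y_a$ at a time passes through mixed unions, and their nullities need not lie between $n_N(X_{A_0})$ and $n_N(Y_{A_0})$. Take $N$ to consist of two parallel nonloops $e,f$, with $X_1=Y_2=\{e\}$ and $X_2=Y_1=\{f\}$. All hypotheses hold, and $n_N(X_1\cup X_2)=n_N(Y_1\cup Y_2)=1$. But $n_N(Y_1\cup X_2)=n_N(\{f\})=0$, so the first swap inequality your induction needs is false. Your first fallback fails on the same example, since the only basis of $Y_1$ is $\{f\}=X_2$. Your second fallback works only if you carry a correction that does not depend on which index is being swapped. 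That correction is the total multiplicity excess $\sum_e(|\{i : e\in W_i\}|-1)$ of the whole mixed family $\ab{W_i}$, not just $|Z\cap Y_a|$.

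\textbf{How to close the gap.} None of this is needed if you never mix the families and instead remove one index $s\in A_0$ from both families at once.
\begin{itemize}
\item By induction, $n_N(X_{A_0-\{s\}})\le n_N(Y_{A_0-\{s\}})$. These sets have nested closures, so Lemma~\ref{nullityproject}(\ref{nple}) transfers the inequality to $N\dcon X_s$.
\item Since $\cl_N(X_s)\ss\cl_N(Y_s)$, we have $N\dcon Y_s=(N\dcon X_s)\dcon Y_s$, so Lemma~\ref{nullityproject}(\ref{npge}) applies.
\item Disjointness within each family kills the intersection terms in Lemma~\ref{nullityproject}(\ref{npcomm}).
\end{itemize}
This gives
\begin{align*}
n_N(X_{A_0}) &= n_{N\dcon X_s}(X_{A_0-\{s\}})+n_N(X_s)\\
&\le n_{N\dcon X_s}(Y_{A_0-\{s\}})+n_N(Y_s)\\
&\le n_{N\dcon Y_s}(Y_{A_0-\{s\}})+n_N(Y_s) = n_N(Y_{A_0}).
\end{align*}
This is exactly the inductive step of the paper's proof.

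The paper handles infinite $A$ differently from your Steps 1 and 2. It takes a counterexample minimizing $n_M(Y_A)$. If $\ab{Y_a}$ is skew, it concludes directly via Lemma~\ref{skewnullity}. Otherwise it removes an index $s$ with $Y_s$ not skew to $Y_{A-\{s\}}$, which strictly lowers $n_M(Y_A)$. Your explicit split into a skew infinite part and a finite part is a valid alternative to that minimality argument. With Step 3 replaced as above, your proof is complete.
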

\begin{proof}
  It clearly suffices to prove (\ref{nule});
  consider a counterexample where $n_M(Y_A)$ is minimized, noting that $n_M(Y_A) < \infty$. 
  If $\ab{Y_a}$ is skew, then $\ab{\cl_M(Y_a)}$ is skew by Lemma~\ref{clskew}, 
  and so $\ab{X_a}$ is skew by Lemma~\ref{skewmono}. Thus
  $n_M(X_A) = \sum_{a \in A}n_M(X_a) \le \sum_{a \in A} n_M(Y_a) = n_M(Y_A)$ 
  by Lemma~\ref{skewnullity}, contrary to the choice of the $Y_a$ 
  as a counterexample.

  Otherwise, by Lemma~\ref{skewequiv}(\ref{sesingle}), there exists $s \in A$
  such that $Y_s$ and $Y_{A - \{s\}}$ are not skew. 
  By Lemma~\ref{skewnullity}, we therefore have $n_M(Y_A) > n_M(Y_{A-s}) + n_M(Y_s)$,
  and therefore  $\ab{Y_a : a \in A - \{s\}}$ is not 
  a counterexample by the minimality in the choice of the $Y_a$. 
  It follows that $n_M(X_{A-\{s\}}) \le n_M(Y_{A - \{s\}})$,
  and so $n_{M \dcon X_s}(X_{A-s}) \le n_{M \dcon X_s}(Y_{A-s})$ by 
  Lemma~\ref{nullityproject}(\ref{nple}). 
  Lemma~\ref{nullityproject}(\ref{npcomm}) now yields
  \begin{align*}
    n_M(Y_A) &= n_{M \dcon Y_s}(Y_{A-\{s\}}) + n_M(Y_s) \\
    &= n_{M \dcon X_s \dcon Y_s}(Y_{A - \{s\}}) + n_M(Y_s) \\
    &\ge n_{M \dcon X_s}(Y_{A - \{s\}}) + n_M(Y_s) \\ 
    &\ge n_{M \dcon X_s}(X_{A - \{s\}}) + n_M(X_s) \\ 
    &= n_M(X_A),
  \end{align*}
  as required. 
\end{proof}


Recall that Definition~\ref{gencon} defined the connectivity of a partition $\cX = \ab{X_a : a \in A}$ of $E(M)$ by 
$\lambda_M(\cX) = r^*(M | \cup_{a \in A} I_a)$, where $I_a$ is a basis in $M$ for $X_a$ for each $a$. 
An immediate corollary is that this quantity is well-defined. 

\begin{corollary}\label{lambdawelldef}
  If $\ab{X_a : a \in A}$ is a collection of pairwise disjoint sets in a matroid $M$, 
  and $\ab{I_a}$ and $\ab{J_a}$ are collections of sets such that for each $a \in A$,
  both $I_a$ and $J_a$ are bases for $X_a$ in $M$, 
  then $n_M(\cup_a I_a) = n_M(\cup_a J_a)$. 
\end{corollary}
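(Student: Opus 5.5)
This is a direct application of Lemma~\ref{nullityunion}(\ref{nueq}). The plan is to check that its hypotheses hold for the two collections $\ab{I_a}$ and $\ab{J_a}$; the conclusion then gives $n_M(\cup_a I_a) = n_M(\cup_a J_a)$ immediately.

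First, both collections are pairwise disjoint. For distinct $a,b \in A$ we have $I_a \ss X_a$ and $I_b \ss X_b$, and $X_a \cap X_b = \es$ by hypothesis, so $I_a \cap I_b = \es$; the same argument applies to the $J_a$.

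Second, for each $a \in A$, the sets $I_a$ and $J_a$ are both bases for $X_a$, so they have the same closure: $\cl_M(I_a) = \cl_M(X_a) = \cl_M(J_a)$.

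Third, $I_a$ and $J_a$ are both independent, so $n_M(I_a) = 0 = n_M(J_a)$. This follows since $n_M(I) = |I - I| = 0$ for any independent set $I$, which is its own basis.

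With these three facts, Lemma~\ref{nullityunion}(\ref{nueq}) applies and yields the corollary. There is no real obstacle, because all the difficulty, namely the minimal-counterexample argument for infinite unions, is already contained in Lemma~\ref{nullityunion}. The one point to watch is that disjointness of the $X_a$ is what makes the families of bases disjoint, and that disjointness is exactly the hypothesis that lemma requires.
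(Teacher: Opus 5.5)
Your proposal is correct and matches the paper. The paper states this as an immediate corollary of Lemma~\ref{nullityunion}(\ref{nueq}) applied to the two families of bases, exactly as you do: the $I_a$ (and likewise the $J_a$) are pairwise disjoint because the $X_a$ are, and for each $a$ the sets $I_a$ and $J_a$ have equal closure $\cl_M(X_a)$ and equal nullity $0$.
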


Recall that a set $I$ is independent in $M$ if and only if $n_M(I) = 0$. We can use this in conjunction with the previous lemma 
to show that, if $I$ is independent, then removing various subsets of $I$ and replacing them with independent sets 
with smaller closure preserves independence. 

\begin{lemma}\label{indepclosureswitch}
  Let $I$ be an independent set of a matroid $M$, let $\ab{I_a : a \in A}$ be a pairwise disjoint family of 
  subsets of $I$, and let $\ab{J_a}$ be a collection of independent 
  sets of $M$ such that $J_a \ss \cl_M(I_a)$ for each $a \in A$. 
  Then the set $(I - \bigcup_{a \in A}I_a) \cup \bigcup_{a \in A} J_a$ is independent in $M$. 
\end{lemma}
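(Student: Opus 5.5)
The plan is to use the nullity characterization of independence: a set is independent exactly when its nullity is $0$. We will apply Lemma~\ref{nullityunion}(\ref{nule}) to two pairwise disjoint families indexed by $A \cup \{\ast\}$, where $\ast$ is a new index.

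First, we may assume the $J_a$ are disjoint from $I - \bigcup_a I_a$ and from each other. Suppose not, say $e \in J_a \cap J_b$ with $a \ne b$. Then $e \in \cl_M(I_a) \cap \cl_M(I_b) = \cl_M(I_a \cap I_b) = \cl_M(\es)$ by Lemma~\ref{closureinter}, since $I_a \cup I_b \ss I$ is independent. So $e$ is a loop, contradicting the independence of $J_a$. Similarly, suppose $e \in J_a \cap (I - I_A)$. Then $e \in \cl_M(I_a) \cap \cl_M(I - I_A) = \cl_M(\es)$, which is again impossible.

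Next, set $Y_a = I_a$ and $X_a = J_a$ for $a \in A$, and set $X_\ast = Y_\ast = I - I_A$. Both families are pairwise disjoint: the $Y$'s by hypothesis, the $X$'s by the previous paragraph. Each $J_a$ and each $I_a$ is independent, so $n_M(X_a) = 0 \le n_M(Y_a)$. We also have $\cl_M(X_a) \ss \cl_M(Y_a)$ by hypothesis, and the $\ast$ coordinates agree.

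Lemma~\ref{nullityunion}(\ref{nule}) then gives
\[n_M\big((I - I_A) \cup J_A\big) \le n_M\big((I-I_A) \cup I_A\big) = n_M(I) = 0,\]
so the target set has nullity $0$ and is therefore independent.

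The main point to check is that Lemma~\ref{nullityunion} does not itself rely on the present lemma. Its proof uses only Lemmas~\ref{clskew}, \ref{skewmono}, \ref{skewnullity}, \ref{nullityproject} and Theorem~\ref{skewequiv}, so there is no circularity, though the earlier use of this lemma in Lemma~\ref{modularinsert} is a forward reference. The only other delicate step is the disjointness check above, which is what makes the lemma applicable.
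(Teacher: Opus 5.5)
Your proof is correct and is essentially the paper's argument: adjoin $I - \bigcup_a I_a$ as an extra index, check that the $J$-sets are pairwise disjoint, and apply Lemma~\ref{nullityunion} to bound the nullity of the target set by $n_M(I)=0$. The differences are cosmetic. You use the inequality form (\ref{nule}) directly, whereas the paper first extends each $J_a$ to a basis of $\cl_M(I_a)$ and then uses the equality form (\ref{nueq}). You also obtain disjointness from Lemma~\ref{closureinter}, where the paper uses circuit elimination.
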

\begin{proof}
  By adding a new set $I - \bigcup_{a \in A} I_a$ to both the collections $I_a$ and $J_a$, 
  we do not change the content of the lemma statement; we may thus assume that 
  $\bigcup_{a \in A} I_a = I$, so we wish to show that $J = \bigcup_{a \in A} J_a$ is independent. 
  By extending each $J_a$ to a basis of $\cl_M(I_a)$, we may also assume that $\cl_M(J_a) = \cl_M(I_a)$ for each $a$. 
  
  Note that the $J_a$ are pairwise disjoint; indeed, if there were some $e \in J_a \cap J_b$ for $a \ne b$,
  a circuit elimination argument would imply that the independent set $I_a \cup I_b$ contains a circuit. 
  It follows from Lemma~\ref{nullityunion} that $0 = n_M(I) = n_M(\cup_a I_a) = n_M(\cup_a J_a)$, 
  so $J$ is independent, as required. 
\end{proof}

\section{Extensions}

In this section, we prove Theorems~\ref{finmodcutdeletion} and~\ref{finmodcutextension}
for infinite matroids, showing that single-element extensions 
are parameterized by modular cuts as per Definition~\ref{infmodcut}. 
Recall that, for an extension $M'$ of a matroid $M$ by an element $e$,
the set $\cF_M^{M'}$ comprises the flats of $M$ that span $e$ in $M'$.  

\begin{theorem}\label{finmodcutdeletioninf}
  If $M'$ is a single-element extension of a matroid $M$,
  then $\cF_{M}^{M'}$ is a modular cut in $M$. 
\end{theorem}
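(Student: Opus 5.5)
We check the three conditions of Definition~\ref{infmodcut} for $\cF = \cF_M^{M'}$, where $M = M' \del e$.

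\textbf{Condition (\ref{superflat}).} This is immediate. If $F \ss F'$ are flats of $M$ and $e \in \cl_{M'}(F)$, then $e \in \cl_{M'}(F')$.

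\textbf{Reduction for (\ref{forallmodpair}) and (\ref{forallmodchain}).} We treat both conditions at once. Let $\cX$ be a modular family of flats of $M$, each spanning $e$ in $M'$; this is either a modular pair or an infinite modular chain. We must show $e \in \cl_{M'}(\bigcap \cX)$.

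If $e$ is a loop of $M'$, there is nothing to prove. So assume $e$ is not a loop. Let $B$ be a mutual basis for $\cX$ in $M$. Then $B$ is independent in $M'$, and $\cl_{M'}(X) = \cl_{M'}(B \cap X)$ for each $X \in \cX$. Hence $e \in \cl_{M'}(B \cap X)$ for every $X \in \cX$.

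\textbf{Case 1: $B \cup \{e\}$ is independent in $M'$.} Apply Lemma~\ref{closureinter} in $M'$ to the family $\{B \cap X : X \in \cX\}$, whose union lies in the independent set $B$. This gives
\[
e \in \bigcap_{X \in \cX} \cl_{M'}(B \cap X) = \cl_{M'}\Bigl(B \cap \bigcap \cX\Bigr) \ss \cl_{M'}\Bigl(\bigcap \cX\Bigr),
\]
which is what we want. In fact this case cannot occur: $e \in \cl_{M'}(B \cap X)$ already contradicts the independence of $B \cup \{e\}$, unless $\cX$ is empty.

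\textbf{Case 2: $B \cup \{e\}$ is dependent.} This is the real case. There is a unique circuit $C$ of $M'$ with $e \in C \ss B \cup \{e\}$, namely the fundamental circuit of $e$ with respect to $B$. For each $X \in \cX$, the relation $e \in \cl_{M'}(B \cap X)$ gives a circuit in $(B \cap X) \cup \{e\}$ containing $e$. By uniqueness of the fundamental circuit, that circuit is $C$. Therefore $C - \{e\} \ss B \cap X$ for every $X \in \cX$, so
\[
C - \{e\} \ss B \cap \bigcap \cX,
\]
and hence $e \in \cl_{M'}(\bigcap \cX)$. Uniqueness itself follows from circuit elimination: two distinct circuits in $B \cup \{e\}$ containing $e$ would yield a circuit inside $B$, contradicting independence. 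This is the same argument as in Lemma~\ref{skewofdjbase}.

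\textbf{Conclusion.} Finally, $\bigcap \cX$ is a flat of $M$, since it is an intersection of flats, so it lies in $\cF$. The argument uses no finiteness, so it covers pairs and infinite chains simultaneously.

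The only step needing care is transferring the mutual basis from $M$ to $M'$ and using the fundamental-circuit uniqueness there. The rest is formal, so I expect no serious obstacle.
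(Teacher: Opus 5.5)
Your proof is correct, but the case split is unnecessary. The computation in your Case~1 never uses the case hypothesis. Lemma~\ref{closureinter} only requires that the union of the family $\{B \cap X : X \in \cX\}$ be independent in $M'$, and this holds because that union lies inside $B$; whether $B \cup \{e\}$ is independent plays no role. So that single display already proves $e \in \cl_{M'}(\bigcap \cX)$ in general. This is exactly the paper's proof, which applies the lemma at once to an arbitrary nonempty modular subfamily of $\cF$, not just to pairs and infinite chains.

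Your Case~2 is a valid, more elementary alternative. Uniqueness of the fundamental circuit $C$ of $e$ in $B \cup \{e\}$ forces $C - \{e\} \ss B \cap X$ for every $X \in \cX$. In effect this is a direct proof of the instance of Lemma~\ref{closureinter} needed here: an element outside an independent set that is spanned by each member of a family of its subsets is spanned by their intersection. Your route is self-contained, while the paper's is a one-liner given the lemma. Both work verbatim for arbitrary nonempty modular families.
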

\begin{proof}
  Let $e$ be such that $M = M' \del e$, and let $\cF = \cF_{M}^{M'}$.
  Clearly $\cF$ is closed under taking superflats; it suffices to show that, 
  for any nonempty modular family $\cF_0 \ss \cF$, we have $\bigcap \cF_0 \in \cF$. 
  Let $B$ be a mutual basis for such a family $\cF_0$. 
  We have $e \in \cl_M(F)$ for all $F \in \cF_0$, 
  so Lemma~\ref{closureinter} and the independence of $B$ gives 
  \[e \in \bigcap_{F \in \cF_0} \cl_M(F) = \bigcap_{F \in \cF_0} \cl_M(B \cap F) = 
    \cl_M\br{\bigcap_{F \in \cF_0} (B \cap F)} \ss \cl_M\br{\bigcap \cF_0},\] 
  and thus $\bigcap \cF_0 \in \cF$, as required. 
\end{proof}

The above argument showed that $\cF_{M}^{M'}$ is closed under taking intersections of 
arbitrary modular families, not just pairs and chains. In fact, this is true for every modular cut. 
The proof below uses a version of Zorn's lemma for set families: specifically, 
the statement that, if $\cX$ is collection of sets such that $\cX$ 
contains a lower bound for every chain $\cC \ss \cX$, 
then $\cX$ has a minimal element. 

\begin{lemma}\label{modcutinter}
  If $\cF$ is a modular cut of a matroid $M$, and $\cF_0 \subseteq \cF$ is a nonempty 
  modular family, then $\cap \cF_0 \in \cF$. 
\end{lemma}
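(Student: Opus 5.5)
The plan is to apply the form of Zorn's lemma quoted just before the statement, using a single mutual basis for every family that appears. Fix a mutual basis $B$ for $\cF_0$, so that $\cF_0 \ss \cL_M(B)$ by Lemma~\ref{mutbasisiffsubset}. Let $F^* = \bigcap \cF_0$. This is an intersection of flats, so it is a flat, and $F^* \in \cL_M(B)$ by Lemma~\ref{blattice}. Define
\[\cX = \setof{F \in \cF}{F \in \cL_M(B) \text{ and } F^* \ss F}.\]
This collection contains $\cF_0$, so it is nonempty. It also contains $E(M)$: the set $E(M)$ is a flat containing a member of $\cF$, so it lies in $\cF$ by (\ref{superflat}), and it lies in $\cL_M(B)$ because $B$ is a basis of $M$.

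The first step is to check the chain condition. Let $\cC \ss \cX$ be a chain.
\begin{itemize}
  \item If $\cC$ is empty, then $E(M)$ is a lower bound in $\cX$.
  \item If $\cC$ is finite and nonempty, its least element is a lower bound in $\cX$.
  \item If $\cC$ is infinite, then $\cC \ss \cL_M(B)$, so $B$ is a mutual basis for $\cC$. Thus $\cC$ is an infinite modular chain of flats, and $\bigcap \cC \in \cF$ by (\ref{forallmodchain}). Moreover, $\bigcap \cC$ is a flat, lies in $\cL_M(B)$ by Lemma~\ref{blattice}, and contains $F^*$. Hence $\bigcap \cC$ is a lower bound for $\cC$ lying in $\cX$.
\end{itemize}
Zorn's lemma therefore gives a minimal element $F$ of $\cX$.

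The second step is to show that $F = F^*$. We already have $F^* \ss F$, so it suffices to show that $F \ss G$ for every $G \in \cF_0$. Suppose instead that $F \not\ss G$ for some $G \in \cF_0$. Both $F$ and $G$ lie in $\cL_M(B)$, so $B$ is a mutual basis for $\{F,G\}$. Thus $(F,G)$ is a modular pair of flats in $\cF$, and $F \cap G \in \cF$ by (\ref{forallmodpair}). Also, $F \cap G$ is a flat, lies in $\cL_M(B)$ by Lemma~\ref{blattice}, and contains $F^*$. Hence $F \cap G \in \cX$, and it is a proper subset of $F$. This contradicts the minimality of $F$. Therefore $F^* = F \in \cF$.

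The main point needing care is that every auxiliary family must be modular. This is exactly why we restrict to $\cL_M(B)$: it contains all the relevant sets and, by Lemma~\ref{blattice}, is closed under arbitrary intersections. Within $\cL_M(B)$, any chain or pair automatically has $B$ as a mutual basis, which makes axioms (\ref{forallmodpair}) and (\ref{forallmodchain}) applicable. Once this is in place, the rest of the argument is routine.
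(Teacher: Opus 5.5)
Your proof is correct and is essentially the paper's argument. The paper also fixes a mutual basis $B$ and applies Zorn's lemma to the members of $\cF$ spanned by subsets of $B$; its family $\cU \cap \cF$ coincides with your flats of $\cF$ lying in $\cL_M(B)$. It then uses (ii) to push a minimal element below every member of $\cF_0$. The only difference is that you also require $F^* \ss F$, so your minimal element is $F^*$ itself, whereas the paper obtains some minimal $F_1 \ss \bigcap \cF_0$ and finishes with (i).

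One small slip: $B$ is only a mutual basis for $\cF_0$, not necessarily a basis of $M$. So the claim $E(M) \in \cL_M(B)$ needs a fix. You can first extend $B$ to a basis of $M$, which remains a mutual basis for $\cF_0$. Alternatively, drop $E(M)$ altogether and note that the empty chain has any element of the nonempty family $\cX$ as a lower bound.
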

\begin{proof}
  Let $F_0 = \cap \cF_0$. 
  Let $B$ be a mutual basis for $\cF_0$ and let $\cU = \set{\cl_M(I) \colon I \ss \cB}$. 
  $B$ is also a mutual basis for $\cU$ and hence for $\cU \cap \cF$. 
  
  \begin{claim}\label{uclinter}
    $\cap \cX \in \cU$ for all $\cX \ss \cU$.
  \end{claim}
  \begin{subproof}
    By the definition of $\cU$, each $F \in \cU$ satisfies $F = \cl_M(F \cap B)$.
    Lemma~\ref{closureinter} now gives 
  $\cap \cX = \cap_{F \in \cX}\cl_M(F \cap B) = \cl_M\br{\cap_{F \in \cX}(F) \cap B} \in \cU$.
  \end{subproof}
  
  
  We argue that $\cU \cap \cF$ has a minimal element. By Zorn's lemma, it suffices to 
  show that every chain $\cC \ss \cU \cap \cF$ satisfies $\cap \cC \in \cU \cap \cF$. 
  We get $\cap \cC \in \cU$ by~\ref{uclinter}. 
  If $\cC$ is finite, then $\cap \cC$ is the minimal element of $\cC$, so is contained in $\cC$.
  If $\cC$ is infinite, then since $B$ is a mutual basis for $\cU \cap \cF$, we get
  $\cap \cC \in \cF$ from (\ref{forallmodchain}). 

  Let $F_1$ be a minimal element of $\cU \cap \cF$. 
  Since $F = \cl_M(F \cap B)$ for each $F \in \cF_0$, 
  we have $\cF_0 \ss \cU$, so $F_0 \in \cU$ by~\ref{uclinter}. 
  For each $F \in \cF_0$, using~\ref{uclinter} and (\ref{forallmodpair}), 
  we have $F \cap F_1 \in \cU \cap \cF$, 
  so the minimality of $F_1$ gives that $F_1 \ss F$. It follows that $F_1 \ss \cap \cF_0 = F_0$. 
  Now (\ref{superflat}) gives $F_0 \in \cF$, as required. 
\end{proof}

This last lemma tells us that we could have equivalently defined a modular cut by replacing 
(\ref{forallmodpair}) 
and (\ref{forallmodchain}) by a statement about all modular families, but the way we defined it, 
it is easier to show that some collection is a modular cut.

We now prove the infinite analogue of Theorem~\ref{finmodcutextension}: each modular cut 
gives rise to a unique single-element extension. 

\begin{theorem}\label{modcutextension}
  If $\cF$ is a modular cut in a matroid $M$, and $e \notin E(M)$,
  then there is a unique extension $M'$ of $M$ by $e$ for which $\cF_M^{M'} = \cF$. 
\end{theorem}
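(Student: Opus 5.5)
Uniqueness comes first. Suppose $M'$ is an extension of $M$ by $e$ with $\cF_M^{M'} = \cF$. If $e$ is a loop of $M'$, then the independent sets of $M'$ are exactly those of $M$. Otherwise, a set $I \cup \{e\}$ with $I \in \cI(M)$ is independent in $M'$ if and only if $e \notin \cl_{M'}(I)$. Since $\cl_{M'}(I) \cap E(M) = \cl_M(I)$ and $\cl_{M'}(I) = \cl_{M'}(\cl_M(I))$, this happens if and only if $\cl_M(I) \notin \cF$. Also, $e$ is a loop of $M'$ exactly when $\cl_M(\es) \in \cF$. Hence $\cI(M')$ is determined by $\cF$, which gives uniqueness. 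The same computation tells us what to define for existence. If $\cl_M(\es) \in \cF$ (so $\cF$ is all flats), let $M'$ have $e$ as a loop. Otherwise set
\[\cI' = \cI(M) \cup \setof{I \cup \{e\}}{I \in \cI(M),\ \cl_M(I) \notin \cF}.\]
Then $M' \del e = M$ is immediate. Moreover $e \in \cl_{M'}(F)$ for a flat $F$ exactly when no basis $I$ of $F$ has $\cl_M(I) = F \notin \cF$, that is, when $F \in \cF$. So it remains to verify axioms (I1)--(IM) for $\cI'$.

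(I1) and (I2) follow from (\ref{superflat}): if $J \ss I$, then $\cl_M(J) \ss \cl_M(I)$, so $\cl_M(I) \notin \cF$ forces $\cl_M(J) \notin \cF$. For (IM), fix $I' \ss X$ with $I' \in \cI'$. If $e \notin X$, use (IM) in $M$. If $e \in X$, let $J$ be a maximal independent set of $M$ with $I' - \{e\} \ss J \ss X - \{e\}$. If $J \cup \{e\} \in \cI'$, it is maximal. If not, then $\cl_M(J) = \cl_M(X-\{e\}) \in \cF$, and no set in $\cI'$ contained in $X$ both contains $e$ and spans $X - \{e\}$. In that case we want a maximal element among the sets $K \cup \{e\}$ with $I'-\{e\} \ss K \ss X-\{e\}$ and $\cl_M(K) \notin \cF$, or else $J$ itself when $e \notin I'$. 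This is the first place where the modular chain condition is needed. Take a chain of such $K$'s and let $K^* = \bigcup K$. If $\cl_M(K^*) \in \cF$, consider the flats $\cl_M(K^* - (\text{tail}))$. More usefully, apply Zorn's lemma to independent sets $K$ with $\cl_M(K) \notin \cF$. For a chain $\cK$ with union $K^*$, we want $\cl_M(K^*) \notin \cF$. My plan for this step is to use Lemma~\ref{modcutinter}. Write $K^* = K_0 \cup \{x_1, x_2, \dots\}$ in an appropriate way. If $\cl_M(K^*) \in \cF$, choose finite-corank subsets: for each finite $S \ss K^*$, the flat $\cl_M(K^* - S)$ lies in $\cF$ by an induction on $|S|$. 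Here we would use modular pairs $\cl_M(K^*-S)$ and $\cl_M(K^* - S')$ (with mutual basis $K^*$), starting from the fact that $\cl_M(K) \notin \cF$ for each $K$ in the chain.

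This induction fails in general, so the real tool is the following. Since $K^*$ is a mutual basis for every family $\{\cl_M(Z) : Z \ss K^*\}$, Lemma~\ref{modcutinter} shows that $\cF_{K^*} = \setof{Z \ss K^*}{\cl_M(Z) \in \cF}$ is closed under arbitrary nonempty intersections and upward closed. It therefore has a least element $Z_0$, and then $\cl_M(Z) \in \cF$ if and only if $Z_0 \ss Z$, for all $Z \ss K^*$. Now each $K \in \cK$ satisfies $Z_0 \not\ss K$. This does not contradict $Z_0 \ss K^*$ when $Z_0$ is infinite, so we must refine the Zorn argument. I would restrict the Zorn argument to sets containing $I'-\{e\}$ that avoid one fixed element $z \in Z_0$. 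This is possible only if such a $z$ lies outside $I'-\{e\}$; otherwise we adjust the choice of $J$ using the exchange property.

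The exchange axiom (I3) is the main obstacle. Take $I \in \cI'$ non-maximal and $B \in \cI'$ maximal. The maximal sets of $\cI'$ are of two kinds: bases $B_0$ of $M$ with $B_0 \cup \{e\} \notin \cI'$, that is, with $e \in \cl$ of the whole ground set; and sets $B_0 \cup \{e\}$ where $B_0$ is a basis of $M \con e$-type, namely independent in $M$, not spanning, with $\cl_M(B_0) \notin \cF$ but $\cl_M(B_0 \cup \{f\}) \in \cF$ for all $f$ not in $\cl_M(B_0)$. I would reduce to the least-element description above: fix a basis $B_1 \supseteq B_0$ of $M$. Lemma~\ref{modcutinter} applied to subsets of $B_1$ gives a least $Z_0 \ss B_1$ with $\cl_M(Z_0)\in\cF$. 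Then $\cI'$-membership of sets of the form $Y \cup \{e\}$ with $Y \ss B_1$ reads simply as $Z_0 \not\ss Y$, which locally makes $e$ behave like an element in the span of the circuit-like set $Z_0$. The exchange argument then runs case by case, according to whether $e \in I$ and whether $e \in B$. In the hardest case ($e \in B$, $e \notin I$) we transfer $I$ into the lattice $\cL_M(B_1)$ using Lemma~\ref{indepclosureswitch} and Lemma~\ref{modularinsert}, and pick the exchange element from $B - I$ via this $Z_0$ description. I expect making this transfer rigorous, keeping a single mutual basis that handles $I$, $B$ and $\cF$ at once, to be the crux.
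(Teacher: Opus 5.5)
\textbf{There is a genuine gap: the two axioms that carry the content, (IM) and (I3), are not proved.} Several parts are fine and match the paper: the uniqueness argument, the candidate family $\cI'$, and the checks of (I1)--(I2). Your observation about the filter is also correct and is the right tool: if $K$ is independent with $\cl_M(K)\in\cF$, then the sets $Z \subseteq K$ with $\cl_M(Z)\in\cF$ are exactly the supersets of a least one, $Z_0$ (by Lemma~\ref{modcutinter} with Lemma~\ref{closureinter}).

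For (IM), the Zorn argument over chains cannot work as set up. In an infinite matroid the union $K^*$ of a chain of independent sets need not be independent (take a circuit on $\bN$), so ``$K^*$ is a mutual basis'' is unjustified. Your refinements, avoiding a fixed $z$ or ``adjusting $J$ by exchange'', are not carried out and do not by themselves control chain unions. For (I3) you give only a plan, transferring $I$ into $\cL_M(B_1)$ via Lemmas~\ref{indepclosureswitch} and~\ref{modularinsert}, and you leave it as the expected crux.

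\textbf{The missing idea is to apply your filter to an $M$-basis rather than to a chain union.} This pins down the maximal members containing $e$. Suppose $\cl_M(X-\{e\})\in\cF$ and $K\cup\{e\}$ is maximal in $\cI'\cap 2^X$. Let $J\supseteq K$ be an $M$-basis of $X-\{e\}$, obtained from (IM) in $M$, and let $Z_0\subseteq J$ be its least set. Maximality gives $\cl_M(K\cup\{f\})\in\cF$, hence $Z_0\subseteq K\cup\{f\}$, for every $f\in J-K$. Since $Z_0\not\subseteq K$, the set $J-K$ is a single element, so $\cl_M(K)$ is a hyperplane of $\cl_M(X-\{e\})$. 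This is exactly what the paper's claims on maximal elements establish, there by applying Lemma~\ref{modcutinter} to the flats $\cl_M(K\cup\{f\})$.

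With this fact, (IM) needs no Zorn. Since $\cl_M(I'-\{e\})\notin\cF$, there is some $z\in Z_0-(I'-\{e\})$. Then $(J-\{z\})\cup\{e\}$ lies in $\cI'$, contains $I'$, and is maximal. Indeed, any independent proper superset of $J-\{z\}$ inside $X-\{e\}$ spans $\cl_M(X-\{e\})\in\cF$.

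(I3) then becomes a routine four-case check on whether $e$ lies in $I$ and in $B$; the case $\cF=\es$ simply adds $e$ as a coloop. Your ``hardest'' case $e\in B-I$ is in fact immediate. If no exchange exists, then $\cl_M(I)\in\cF$ because $I\cup\{e\}\notin\cI'$, and $B-\{e\}\subseteq\cl_M(I)$. But $\cl_M(B-\{e\})$ is a hyperplane not in $\cF$, so $\cl_M(I)=E(M)$, and $I$ was already maximal. Lemmas~\ref{indepclosureswitch} and~\ref{modularinsert} are not needed.
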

\begin{proof}
  Let $E = E(M)$, and let $\cX$ be the collection of subsets of $E$ whose closure is in $\cF$. 
  Note that any superset of an element of $\cX$ is in $\cX$. 
  Let $\cI_M$ be the collection of $M$-independent sets, 
  let $\cI_0 = \cI_M \cap \cX$, let $\cI_1 = \cI_M - \cX$, and let $\cI_e = \{I \cup \{e\} : I \in \cI_1\}$. 
  Finally, let $\cI = \cI_0 \cup \cI_1 \cup \cI_e$. 
  Our first claim reduces the problem to showing that $\cI$ is the collection of independent 
  sets of a matroid. 
  
  \begin{claim}\label{ismatroid}
    If $\cI$ is the collection of independent sets of a matroid $M'$ on $E \cup \{e\}$, 
    then $M'$ is the unique matroid for which $M' \del e = M$ and $\cF_M^{M'} = \cF$.
  \end{claim}
  \begin{subproof}
    Suppose that the given $M' = (E \cup \{e\}, \cI)$ is a matroid. 
    Since $\cI_M$ is the collection of sets in $\cI$ not containing $e$, we know that $M' \del e = M$. 

    Let $I$ be an independent set of $M = M' \del e$. 
    By construction, we have $\cl_M(I) \in \cF_{M}^{M'}$ if and only if $e \in \cl_{M'}(I)$, 
    which holds if and only if $I \cup \{e\} \notin \cI_e$, or in other words $I \in \cI_0$. 
    This is equivalent to the condition that $\cl_M(I) \in \cF$. Since this holds for every 
    independent set $I$, we can conclude that $\cF$ and $\cF_{M}^{M'}$ contain exactly 
    the same closures of independent sets, so are equal. 

    Suppose now that $M_1$ is any matroid with $M_1 \del e = M$ and $\cF_{M}^{M_1} = \cF$. 
    For each set $I \subseteq E$, the set $I \cup \{e\}$ is independent in $M_1$ 
    if and only if $e \notin \cl_{M_1}(I)$, which holds if and only if 
    $\cl_M(I) \notin \cF_{M}^{M_1} = \cF$. This implies that any two choices $M_1, M_1'$ for $M_1$ 
    have the same independent sets containing $e$ and satisfy $M_1 \del e = M_1' \del e$, 
    so are equal. Therefore the matroid $M'$ is unique with the stated properties. 
  \end{subproof}
  
  We show that $\cI$ is the indeed the collection of independent sets of a matroid $M'$ with ground set $E(M) \cup \{e\}$. 
  Note that $\cI \cap 2^{E} = \cI_{M} \cap 2^E$. 
  

  \begin{claim}\label{notcoloop}
    If $E \notin \cF$, then $(E \cup \{e\}, \cI)$ is a matroid. 
  \end{claim}
  \begin{subproof}
    Since $\cF$ is closed under taking superflats, we have $\cF = \es$, which implies
    that $\cI = \{I \ss 2^{E \cup \{e\}} : I - \{e\} \in \cI\}$. In other words, 
    $\cI$ is the collection of independent sets of the matroid obtained from $M$ by adding 
    $e$ as a coloop. 
  \end{subproof}

  By the last claim, we can now assume that $E \in \cF$. To verify the independence axioms for $\cI$ 
  is the collection of independent sets of a matroid, we need to characterize 
  when a set $I$ is maximal $\cI \cap 2^X$ for some $X$. The next two claims
  do this in the the case where $e \notin I$ and the case where $e \in I$.

  \begin{claim}\label{foo1}
    Let $I \ss X \ss E(M) \cup \{e\}$ with $e \notin I$.
    Then $I$ is a maximal element of $\cI \cap 2^X$ if and only if 
    $I$ is a basis for $X - \{e\}$ in $M$, and if $e \in X$, then $X - \{e\} \in \cX$. 
  \end{claim}
  \begin{subproof}
    Suppose that $I$ is maximal in $\cI \cap 2^X$. Since $I \subseteq E(M)$ we have 
    $I \in \cI_M$ and $I \ss X - \{e\}$. If $I \subseteq J \subseteq X - \{e\}$ with $J \in \cI_M$, 
    then $J \in \cI \cap 2^X$, so $J = I$ by maximality. It follows that $I$ is a basis of $X - \{e\}$ in $M$. Furthermore, if $e \in X$ and $X - \{e\} \notin \cX$, so $I \notin \cX$, 
    which implies that $I \cup \{e\} \in \cI_e \cap 2^X$, contradicting the maximality of $I$. 
    So the forwards implication holds. 
    
    Conversely, suppose that $I$ is a basis for $X - \{e\}$ in $M$, and that if $e \in X$, then $X - \{e\} \in X$. Let $J \in \cI$ with $I \ss J \ss X$; we wish to show that $I = J$. 
    Since $J - \{e\} \in \cI_M$
    and $I \ss J - \{e\} \ss X - \{e\}$ and $I$ is a basis, we have $I = J - \{e\}$, 
    so it suffices to show that $e \notin J$. 
    
    If $e \in J$, then since $e \in X$ we have $X - \{e\} \in \cX$, 
    so $\cl_M(J - \{e\}) = \cl_M(I) = \cl_M(X - \{e\}) \in \cF$, and thus $J - \{e\} \in \cX$. 
    But the fact that $e \in J \in \cI$ implies that $J \in \cI_e$, so $J - \{e\} \notin \cX$, 
    a contradiction. 
  \end{subproof}

  \begin{claim}\label{foo2}
    Let $I \ss X \ss E \cup \{e\}$ with $e \in I$. Then $I$ is a maximal element of 
    $\cI \cap 2^X$ if and only if $I \in \cI_e$, and either 
    \begin{enumerate}[(i)]
      \item\label{xnotin} $X - \{e\} \notin \cX$ and $I - \{e\}$ is a basis of $X - \{e\}$ in $M$, or
      \item\label{xin} $X - \{e\} \in \cX$, and $\cl_M(I-\{e\})$ is a maximal proper subflat of $\cl_M(X - \{e\})$. 
    \end{enumerate}
  \end{claim}
  \begin{subproof}
    Suppose that $I$ is maximal in $\cI \cap 2^X$. Since $e \in I \in \cI$, 
    we have $I \in \cI_e$. 
    
    If $X - \{e\} \notin \cX$, then let $J \in \cI_M$ with
    $I - \{e\} \ss J \ss X - \{e\}$. Since $J \ss X - \{e\} \notin \cX$, 
    we have $J \notin \cX$, so $J \cup \{e\} \in \cI_e \cap 2^X \ss \cI \cap 2^X$, 
    so $I = J \cup \{e\}$ by the maximality of $I$. This implies that $J - \{e\} = I$; 
    it follows that $I$ is a basis of $X - \{e\}$ in $M$. 

    If $X - \{e\} \in \cX$, then let $I'$ be a basis of $X - \{e\}$ containing $I - \{e\}$. 
    For each $f \in I' - I$, since $I$ is maximal, we have $e \in I \cup \{f\} \notin \cI$, 
    which implies that $(I - \{e\}) \cup \{f\} \in \cX$. It follows that the modular cut 
    $\cF$ contains $\cl_M(I - \{e\} \cup \{f\})$ for all $f \in I' - I$. The set $I'$ 
    certifies that the flats $\cl_M(I - \{e\} \cup \{f\}) : f \in I' - I$ form a modular 
    family, and so their intersection lies in $\cF$ by Lemma~\ref{modcutinter}.
    This intersection has $I - \{e\}$ as a basis, which gives $I - \{e\} \notin \cX$, contradicting $I \in \cI_e$. 

    For the converse direction, suppose that (\ref{xnotin}) or (\ref{xin}) holds for $I$ and $X$. Let $J \in \cI$ with $I \ss J \ss X$; we wish to show that $J = I$, or equivalently 
    that $J - \{e\} = I - \{e\}$. Note that since $e \in J \in \cI$ we have $J \in \cI_e$. 
    
    If (\ref{xnotin}) holds, 
    Since $J \in \cI$ we have $J - \{e\} \in \cI_M$; 
    the fact that $I - \{e\}$ is a basis of $X - \{e\}$ gives that $J - \{e\} = I - \{e\}$ 
    and so $J = I$, as required. 

    Suppose now that (\ref{xin}) holds. For each $f \in J - I$, 
    since $F = \cl_M((I - \{e\} \cup \{f\}))$ is a proper superflat of $\cl_M(I - \{e\})$ 
    that is contained in $\cl_M(X - \{e\})$, it is equal to $\cl_M(X - \{e\})$ 
    and is thus in $\cF$. But this gives 
    $\cX \ni \cl_M((I - \{e\}) \cup \{f\}) \ss \cl_M(J - \{e\})$, 
    so $J - \{e\} \in \cX$, which contradicts $J \in \cI$. 
  \end{subproof}

  We now proceed to show that $\cI$ is the collection of independent sets of a matroid. 
  It is obvious that $\cI$ is nonempty and closed under taking subsets; 
  the next two claims verify the less trivial axioms. 

  \begin{claim}
    If $I$ is a non-maximal element of $\cI \cap 2^{E \cup \{e\}}$ and $B$ is a maximal element of 
    $\cI \cap 2^{E \cup \{e\}}$, then there exists $f \in B - I$ for which $I \cup \{f\} \in \cI$. 
  \end{claim}
  \begin{subproof}
    
    Suppose not. Note that $I$ is a maximal element of $\cI \cap 2^{I \cup B}$, 
    but there exists $f \in E - I \cup B$ such that $I \cup \{f\} \in \cI$. 

    
    \textbf{Case 1 : $e \notin I \cup B$}. By~\ref{foo1}, the set $I$ is a basis for $I \cup B$ in $M$
    and $B$ is a basis for $E$ in $M$, so $I \cup \{x\}$ is dependent in $M$, and so not in $\cI$,
    for each $x \in E-B$. Since $I \cup \{f\} \in \cI$ we have $f = e$, so $I \in \cI_1$ 
    and therefore $E = \cl_M(I) \notin \cF$, a contradiction. 

    \textbf{Case 2 : $e \in B - I$}. 
    By~\ref{foo1}, 
    the set $I$ is a basis for $I \cup (B - \{e\})$ and that $B - \{e\} \in \cX$. 
    But then $B \notin \cI_e$ and so $B \notin \cI$, contrary to the choice of $B$. 

    \textbf{Case 3: $e \in I - B$.}
    By~\ref{foo1}, the set $B$ is a basis for $E$ in $M$, so $\cl_M(B) = E \in \cF$ and thus $B \in \cX$ and so $(I \cup B) - \{e\} \in \cX$. Since $I$ is maximal in $\cI \cap 2^{I \cup B}$, 
    we get from ~\ref{foo2} that $\cl_M(I - \{e\})$ is a maximal proper subflat of $\cl_M((I \cup B)- \{e\})$. 
    
    The latter set contains $\cl_M(B) = E$, so $\cl_M(I - \{e\})$ is thus a hyperplane of $M$.
    Now $f \in E - (I \cup B)$ and $I \cup \{f\} \in \cI$, so $(I - \{e\}) \cup \{f\} \in \cI_1$. 
    But $I - \{e\}$ spans a hyperplane of $M$ and $(I - \{e\}) \cup \{f\}$ is independent, 
    so is a basis of $M$. Therefore $\cl_M(I - \{e\} \cup \{f\}) = E \in \cF$, 
    which contradicts $(I - \{e\}) \cup \{f\} \in \cI_1$. 

    \textbf{Case 4 : $e \in I \cap B$}. 
    The fact that $I,B \in \cI$ give that $I - \{e\} \notin \cX$ and $B - \{e\} \notin \cX$. 
    Since $I$ is a maximal element of $\cI \cap 2^{I \cup B}$, we get from~\ref{foo2} that 
    $I - \{e\}$ is a basis of $B - \{e\}$. Since $E \in \cX$, we also get from~\ref{foo2}
    that $\cl_M(B - \{e\})$ is a maximal proper subflat of $\cl_M(E) = E$, so 
    $\cl_M(B - \{e\})$ is a hyperplane of $M$.

    Now $f \notin I \cup B$ and $I \cup \{f\} \in \cI$, so $(I - \{e\}) \cup \{f\} \in \cI_1$
    and thus $f \notin \cl_M(I  - \{e\}) = \cl_M(B - \{e\})$. But $(I - \{e\})$ spans
    a hyperplane of $M$ and so $(I - \{e\}) \cup \{f\}$ is a basis of $M$. 
    Therefore $\cl_M((I - \{e\}) \cup \{f\}) = E \in \cF$, 
    contradicting $(I - \{e\}) \cup \{f\} \in \cI$. 
  \end{subproof}
  
  \begin{claim}
    If $X \ss E \cup \{e\}$, then each $I \in \cI \cap 2^X$ is contained in a maximal set in $\cI \cap 2^X$. 
  \end{claim}
  \begin{subproof}
    As before, we split into cases. 

    \textbf{Case 1 : $e \notin I$ and either $e \notin X$ or $X - \{e\} \in \cX$}.
    Since $e \notin \cI$, we have $I \in \cI_M$; let $J$ be a basis for $X$ in $M$ with $I \ss J$. 
    By~\ref{foo1}, the set $J$ is maximal in $\cI \cap 2^X$, as required. 

    \textbf{Case 2 : $e \in X - I$ and $X - \{e\} \notin \cX$.}
    Since $e \notin \cI$, we have $I \in \cI_M$; 
    let $J$ be a basis for $X - \{e\}$ in $M$ containing $I$. By~\ref{foo2}, the set $J \cup \{e\}$ 
    is maximal in $\cI \cap 2^X$, giving the result. 

    \textbf{Case 3 : $e \in I$ and $X - \{e\} \notin \cX$.}
    Since $e \in I \in \cI$, we have $I - \{e\} \in \cI_1$. Let $J$ be a basis for $X - \{e\}$ 
    in $M$ that contains $I - \{e\}$. 
    Since $J \ss X - \{e\} \in I_1$, we have $J \in \cI_1$ and so $J \cup \{e\} \in \cI_e$;
    By~\ref{foo2}, the set $J \cup \{e\}$ is 
    is maximal in $\cI \cap 2^X$, as required. 

    \textbf{Case 4 : $e \in I$ and $X - \{e\} \in \cX$.}
    Since $e \in I \in \cI$, we have $I - \{e\} \in \cI_1$. Let $J$ be a basis for $X - \{e\}$ 
    in $M$ that contains $I - \{e\}$. If there is some $f \in J$ for which 
    $\cl_M(J - \{f\}) \notin \cF$, then $J - \{f\} \in \cI_e$, 
    and so $J \cup \{e\}  - \{f\}$ contains $I$ and  is maximal in $\cI \cap 2^X$ by~\ref{foo2}. 

    Otherwise, we have $\cl_M(J \cup \{e\} - \{f\}) \in \cF$ for all $f \in J - (I - \{e\})$; 
    the collection of all such flats is modular as certified by the independent set $J$, 
    so using Lemma~\ref{modcutinter} we have
    \[\cF \ni \bigcap_{f \in J - (I - \{e\})} \cl_M(J - \{f\}) = \cl_M\br{\bigcap_f (J - \{f\})} = \cl_M(I - \{e\}).\]
    This contradicts $I \in \cI$. 
  \end{subproof}
  By the last two claims and~\ref{ismatroid}, the theorem holds. 
\end{proof}

\section{Quotients and projections}





Recall that $N$ is a \emph{quotient} of $M$, and $N \preceq M$, 
if $E(M) = E(N)$ and $\cl_M(X) \ss \cl_M(X)$ for all $X$, and that $N$ is a \emph{projection} of $M$ 
if there is a matroid $P$ and a set $X$ for which $N = P \con X$ and $M = P \del X$. 

Being careful not to rely on proofs in the literature that exploit finiteness, 
we now establish some basic properties for quotients and projections. 
It is easy to see that projections interact nicely with duality, and that projections are quotients. 

\begin{lemma}
  If $M$ and $N$ are matroids on the same ground set, then $M^*$ is a projection of $N^*$ 
  if and only if $N$ is a projection of $M$. Moreover, every projection of $M$ is a quotient of $M$.
\end{lemma}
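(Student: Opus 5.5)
The duality statement follows from the standard identities $(P \con X)^* = P^* \del X$ and $(P \del X)^* = P^* \con X$, which hold for infinite matroids by [\ref{bdkpw}]. Suppose $N$ is a projection of $M$, witnessed by $P$ and $X$ with $P \con X = N$ and $P \del X = M$. Then $P^* \del X = N^*$ and $P^* \con X = M^*$. So $P^*$ and $X$ witness that $M^*$ is a projection of $N^*$. The converse is the same argument applied to $P^*$, using $(P^*)^* = P$ and $(M^*)^* = M$.

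For the second statement, let $N = P \con X$ and $M = P \del X$ with common ground set $E = E(P) - X$. We must show that $\cl_M(Y) \ss \cl_N(Y)$ for every $Y \ss E$.

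By the closure formulas for minors, $\cl_M(Y) = \cl_P(Y) \cap E$ and $\cl_N(Y) = \cl_P(Y \cup X) - X$. Every element of $\cl_P(Y) \cap E$ lies in $\cl_P(Y \cup X)$ by monotonicity of $\cl_P$, and it lies outside $X$ because it is in $E$. Hence $\cl_M(Y) \ss \cl_N(Y)$, and clearly $E(M) = E(N)$, so $N \preceq M$.

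The only point needing care is the contraction-closure formula $\cl_{P \con X}(Y) = \cl_P(Y \cup X) - X$ for infinite $X$. I would verify it directly from the independent sets of $P \con X$. Choose a basis $I_X$ of $X$ in $P$. For $e \in E - Y$, we have $e \notin \cl_{P \con X}(Y)$ exactly when some independent $J \ss Y$ of $P \con X$ has $J \cup \{e\}$ independent in $P \con X$. This holds exactly when $J \cup I_X \cup \{e\}$ is $P$-independent for some $J \ss Y$ with $J \cup I_X$ independent in $P$, which is exactly the condition $e \notin \cl_P(Y \cup X)$. Elements of $Y$ lie in both closures. I expect no step here to be a genuine obstacle; this is the only place where infiniteness could conceivably matter, and the basis-extension axiom \ref{IM} handles it.
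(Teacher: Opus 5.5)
Your argument is the paper's proof: you dualize using $(P \con X)^* = P^* \del X$ and $(P \del X)^* = P^* \con X$, and for the quotient statement you compare $\cl_M(Y) = \cl_P(Y) - X \ss \cl_P(Y \cup X) - X = \cl_N(Y)$.

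There is one slip, in your optional verification of the contraction-closure formula, which the paper simply takes as standard. ``Some independent $J \ss Y$ of $P \con X$ with $J \cup \{e\}$ independent'' is not equivalent to $e \notin \cl_{P \con X}(Y)$, because $J = \es$ only tells you that $e$ is a nonloop. You need $J$ to be a basis of $Y$ in $P \con X$, or else the quantifier ``every''. With that change, $J \cup I_X$ is a $P$-basis of $Y \cup X$, and the equivalence with $e \notin \cl_P(Y \cup X)$ goes through.
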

\begin{proof}
  If $N = P \con X$ is a projection of $M = P \del X$, 
  then $M^* = P^* \con X$ is a quotient of $N^* = P^* \del X$ via $P^*$. 
  Similarly, if $M^*$ is a projection of $N^*$, then $N = N^{**}$ is a projection of $M = M^{**}$. 
  If $N = P \con X$ and $M = P \del X$, then for all $Y \ss E(M)$ we have 
  $\cl_M(Y) = \cl_P(Y) - X \ss \cl_P(Y \cup X) - X = \cl_N(Y)$, so $N \preceq M$ as required. 
\end{proof}

We can also choose the set $X$ in the projection to be independent and coindependent in $P$. 

\begin{lemma}\label{indepprojection}
  If $N$ is a projection of a matroid $M$, then there exists a matroid $P$ and an independent, coindependent set $X$
  of $P$ for which $N = P \con X$ and $M = P \del X$. 
\end{lemma}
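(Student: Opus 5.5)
Start from an arbitrary witness: a matroid $P$ and a set $X$ with $N = P \con X$ and $M = P \del X$. I will shrink $X$ in two steps, first making it independent and then coindependent. Each step replaces $P$ by a minor of $P$ and $X$ by a subset of $X$, and checks that the deletion and contraction onto $E(M)$ are unchanged.

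\textbf{Step 1 (independence).} Let $I$ be a basis for $X$ in $P$, and put $P_1 = P \con (X - I)$. Since $X - I \ss \cl_P(I)$, every element of $X - I$ is a loop of $P \dcon I$. Contracting a set of loops is the same as deleting it, so
\[P \con X = (P \con (X-I)) \con I = P_1 \con I .\]
Next, $P_1 \del I = P \con (X-I) \del I = P \del I \con (X - I)$. In $P \del I$ the set $X - I$ is not spanned by $E(M)$ in general, so I need a different argument here. I will use that $I$ spans $X$: by duality, $X - I$ is a set of coloops of $P \del I$ restricted to $X$. More directly, in $P \del I$ the set $X - I$ is coindependent relative to $E(M)$ because $\cl_P(I) \supseteq X$. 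Concretely, for $J \ss E(M)$, the set $J \cup (X-I)$ is spanned in $P \del I$ by... This is wrong as written: contracting $X - I$ in $P \del I$ need not give $P \del X$.

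So I change the order of operations. \emph{First} make $X$ coindependent by deleting elements, and \emph{then} make it independent by contracting. The cleaner route is the standard one:
\begin{itemize}
\item choose a basis $I$ of $X$ in $P$;
\item choose a set $J \ss X - I$ that is maximal subject to $X - J$ being spanned by $E(M)$ in $P^*$ (that is, $J$ coindependent in $P \con I$ relative to the rest);
\item set $P' = P \con (X - I - J') \del J''$ for a suitable split of $X - I$.
\end{itemize}
The guiding facts are these:
\begin{itemize}
\item contracting $e \in X$ that lies in $\cl_P(X - e)$ and deleting it give the same result in $P \con (X-e)$;
\item dually, $P \del e$ and $P \con e$ agree when $e$ is a coloop of the relevant minor;
\item the operations commute on disjoint sets.
\end{itemize}

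Precisely, let $I$ be a basis of $X$ in $P$. For $x \in X - I$, $x$ is a loop of $P \con I$, so $P \con X = P \con I \del (X - I)$, whence $N = (P \del (X - I)) \con I$. Also $M = (P \del (X - I)) \del I$. So $P_1 = P \del (X-I)$ and $X_1 = I$ work, and $I$ is independent in $P_1$.

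\textbf{Step 2 (coindependence).} Now apply the same argument dually to $P_1^*$, $I$. Let $K$ be a basis of $I$ in $P_1^*$; elements of $I - K$ are coloops of $P_1 \del K$. Hence $P_1 \del I = P_1 \del K \con (I-K)$, so $M = (P_1 \con (I-K)) \del K$, and $N = (P_1 \con (I-K)) \con K$. Set $P_2 = P_1 \con (I-K)$. Then $K$ is coindependent in $P_2$, because $K$ is independent in $P_1^* \del (I-K) = P_2^*$.

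It remains to check that $K$ is still independent in $P_2$. This is the only delicate point. Since $K \cup (I-K) = I$ is independent in $P_1$, the set $K$ is independent in $P_1 \con (I-K)$. With that, $P = P_2$ and $X = K$ give the required witness.
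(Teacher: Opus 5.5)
Your final argument (the paragraph beginning ``Precisely'' together with Step~2) is correct and is essentially the paper's proof. You delete $X - I$ for a basis $I$ of $X$ to make the set independent, then contract $I - K$ for a basis $K$ of $I$ in $P_1^*$ to make it coindependent, using that the elements of $I - K$ are coloops of $P_1 \del K$. The abandoned first attempt and the vague bullet list before it should be deleted; that list even describes the two operations in the opposite order to the one you actually use.
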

\begin{proof}
  Let $Q$ and $Y$ be such that $N = Q \con Y$ and $M = Q \del Y$. Let $I$ be a basis for $Y$ in $Q$,
  and $Q' = Q \del (Y - I)$. We have $N = Q \con Y = Q \con I \del (Y - I) = Q' \con I$, 
  and also $M = Q \del Y = Q' \del I$. 
  Now let $X$ be a cobasis for $I$ in $Q'$ and let $P = Q' \con (I-X) = Q \con (I - X) \del (Y - I)$. We have
  $N = Q' \con I = (Q' \con (I-X)) \con X = P \con X$, 
  and (since $I-X$ comprises just coloops of $Q' \del I$), 
  we know that 
  \[M = Q' \del I = (Q' \del X) \del (I - X) = (Q' \del X) \con (I - X) = P \del X.\] 
  Now $X \cup (I - X) = I$ is independent in $Q$, so $X$ is independent in $P = Q \con (I - X) \del (Y - I)$. 
  Since $X$ is coindependent in $Q'$, it is also coindependent in $Q' \con (I-X) = P$. 
  It follows that $P$ and $X$ satisfy the lemma. 
\end{proof}

We now show that the various combinatorial characterizations of quotients that work 
for finite matroids are still equivalent for infinite matroids, and behave well under duality.
Most of these arguments essentially appear in the proof of [\ref{oxley}, Proposition 7.3.6], 
but some parts of that proof use finiteness, so we include the entire proof for general matroids here.

\begin{theorem}\label{quotequiv}
  Let $M$ and $N$ be matroids with ground set $E$. The following are equivalent: 
  \begin{enumerate}[(1)]
    \item\label{quot} $N \preceq M$,
    \item\label{quotdual} $M^* \preceq N^*$,
    \item\label{quotcl} $\cl_M(X) \subseteq \cl_N(X)$ for all $X \subseteq E$. 
    \item\label{quotclindep} $\cl_M(I) \subseteq \cl_N(I)$ for every independent set $I$ of $M$,
    \item\label{quotflat} every flat of $N$ is a flat of $M$,
    \item\label{quotcircuit} every circuit of $M$ is a union of circuits of $N$,
    \item\label{quotrank} $\relrank{N}{Y}{X} \le \relrank{M}{Y}{X}$ for all $X \ss Y \ss E$. 
  \end{enumerate}
\end{theorem}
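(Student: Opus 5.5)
Note first that (\ref{quot}) and (\ref{quotcl}) are the same statement, since $N \preceq M$ is defined by $\cl_M(X) \ss \cl_N(X)$ for all $X$. The plan is to prove the cycle (\ref{quotcl}) $\Rightarrow$ (\ref{quotclindep}) $\Rightarrow$ (\ref{quotflat}) $\Rightarrow$ (\ref{quotcircuit}) $\Rightarrow$ (\ref{quotcl}), then attach (\ref{quotrank}) and (\ref{quotdual}) separately.

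\textbf{The cycle.} (\ref{quotcl}) $\Rightarrow$ (\ref{quotclindep}) is trivial. For (\ref{quotclindep}) $\Rightarrow$ (\ref{quotflat}), take a flat $F$ of $N$ and a basis $I$ of $F$ in $M$. Then
\[\cl_M(F)=\cl_M(I)\ss \cl_N(I)\ss \cl_N(F)=F,\]
so $F$ is a flat of $M$. For (\ref{quotflat}) $\Rightarrow$ (\ref{quotcircuit}), take a circuit $C$ of $M$ and $e \in C$. The set $\cl_N(C-\{e\})$ is a flat of $N$, hence a flat of $M$, and it contains $C-\{e\}$, so it contains $e$. Thus $e \in \cl_N(C-\{e\})$, which gives a circuit of $N$ containing $e$ inside $C$. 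Taking the union over $e$ writes $C$ as a union of $N$-circuits. For (\ref{quotcircuit}) $\Rightarrow$ (\ref{quotcl}), let $e \in \cl_M(X)-X$. There is an $M$-circuit $C$ with $e\in C\ss X\cup\{e\}$, hence an $N$-circuit $C'$ with $e \in C' \ss C$, so $e\in\cl_N(X)$. These steps use only circuit and closure axioms valid in infinite matroids.

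\textbf{Relative rank.} For (\ref{quotcl}) $\Rightarrow$ (\ref{quotrank}), fix $X\ss Y$ and note that closure containment passes to contractions: $\cl_{M\con X}(Z)=\cl_M(Z\cup X)-X \ss \cl_N(Z\cup X)-X=\cl_{N\con X}(Z)$. Take a basis $J$ of $Y-X$ in $M\con X$. Then $J$ spans $Y-X$ in $N\con X$, so it contains a basis of $Y-X$ in $N\con X$, giving $\relrank{N}{Y}{X}\le |J|=\relrank{M}{Y}{X}$. Since $\enn$-ranks are well defined, no subtraction is needed. For (\ref{quotrank}) $\Rightarrow$ (\ref{quotcl}), let $e\in\cl_M(X)-X$ and apply (\ref{quotrank}) to the pair $X \ss X\cup\{e\}$. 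We get $r_{N\con X}(\{e\})\le r_{M\con X}(\{e\})=0$, so $e\in\cl_N(X)$.

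\textbf{Duality.} For (\ref{quot}) $\Rightarrow$ (\ref{quotdual}) I would use the characterizations above together with cocircuits. Since $M^*\preceq N^*$ is equivalent, by (\ref{quotcircuit}) applied to that pair, to every circuit of $N^*$ being a union of circuits of $M^*$, I would show that every cocircuit of $N$ is a union of cocircuits of $M$. A cocircuit of $N$ is the complement of a hyperplane $H$ of $N$. By (\ref{quotflat}), every flat of $N$ is a flat of $M$, so $H$ is a flat of $M$. The complement of any flat $F$ of $M$ is a union of $M$-cocircuits: for each $e\notin F$, extend $F$ to a hyperplane of $M$ avoiding $e$. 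This extension is the main technical point in the infinite setting. I would do it by taking a basis $I$ of $F$ and extending $I\cup\{e\}$ to a basis $B$ of $M$, which axiom (\ref{IM}) allows. Then $\cl_M(B-\{e\})$ is a hyperplane containing $F$ and not containing $e$. Hence $E-H$ is a union of $M$-cocircuits, and applying the equivalence (\ref{quotcircuit}) $\Leftrightarrow$ (\ref{quot}) to the pair $(M^*,N^*)$ gives $M^*\preceq N^*$. The converse follows by applying this implication to $M^*\preceq N^*$ and using $M^{**}=M$.
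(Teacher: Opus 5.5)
Your proof is correct and follows essentially the same route as the paper. The closure, flat, circuit and relative-rank equivalences use the same elementary arguments; you arrange them as a cycle rather than tying each back to (3). Your duality step mirrors the paper's: the paper uses (6) for $(M,N)$ and (5) for $(M^*,N^*)$, while you use (5) for $(M,N)$ and (6) for $(M^*,N^*)$. Both rest on the fact that every flat is an intersection of hyperplanes, equivalently that its complement is a union of cocircuits. The paper uses this without comment, and you justify it explicitly by extending $I \cup \{e\}$ to a basis.
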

\begin{proof}
  The equivalence of (\ref{quot}) and (\ref{quotcl}) holds by definition. 
  We now argue in turn that (\ref{quotcl}) is equivalent to each of
  (\ref{quotclindep}), (\ref{quotflat}), (\ref{quotcircuit}) and (\ref{quotrank}). 
  
  Clearly (\ref{quotcl}) implies (\ref{quotclindep}). 
  Conversely, suppose that (\ref{quotclindep}) holds, and let $X \ss E$. 
  Let $I$ be a basis for $X$ in $M$. Then $\cl_M(X) = \cl_M(I) \ss \cl_N(I) \ss \cl_N(X)$ 
  by monotonocity of closure, so (\ref{quotcl}) holds. 
  
  If (\ref{quotcl}) holds, and $F$ is a flat of $N$. 
  Then $F \ss \cl_M(F) \ss \cl_N(F) = F$, so $F$ is a flat of $M$. 
  Conversely, suppose (\ref{quotflat}) holds and let $X \ss E$. 
  For each flat $F$ of $N$ containing $X$, since $F$ is a flat of $M$
  containing $X$, we have $\cl_M(X) \ss F$. So $\cl_M(X)$ is contained 
  in the intersection of the flats of $N$ containing $X$, which is equal to $\cl_N(X)$. 
  So (\ref{quotcl}) and (\ref{quotflat}) are equivalent. 

  Suppose that (\ref{quotcl}) holds, let $C$ be a circuit of $M$, and let $e \in C$. 
  Then $e \in \cl_M(C - \{e\}) \ss \cl_N(C - \{e\})$, so some circuit $C_e$ of $N$ 
  satisfies $e \in C_e \ss C$. Therefore $C = \cup_{e \in C} C_e$ is a union of circuits of $N$.
  Conversely, assume that (\ref{quotcircuit}) holds, and let $X \subseteq E$ and $e \in \cl_M(X)$. 
  Then some circuit $C$ of $M$ satisfies $e \in C \ss X \cup \{e\}$, and $C$ is a union of 
  circuits of $N$, so $C$ contains a circuit $C_e$ of $N$ with $e \in C_e$. 
  Now $e \in C_e \ss X \cup \{e\}$, so $e \in \cl_N(X)$. 
  Therefore (\ref{quotcl}) and (\ref{quotcircuit}) are equivalent. 
  
  Suppose that (\ref{quotcl}) holds, and let $X \ss Y \ss E$. 
  Let $I$ be a basis for $(M \con X) | (Y - X)$. 
  Then $\cl_{N \con X}(I) = \cl_N(I \cup X) - X \supseteq \cl_M(I \cup X) - X = \cl_{M \con X}(I) = Y-X$, 
  so $I$ contains a basis $J$ for $(N \con X) | (Y -X)$ and therefore 
  $r_N(Y | X) = |J| \le |I| = r_M(Y | X)$. So (\ref{quotcl}) implies (\ref{quotrank}). 
  Suppose that (\ref{quotrank}) holds and let $X \ss E$ and $e \in \cl_M(X)$. 
  Then $0 = r_M(X \cup \{e\} | X) \ge r_N(X \cup \{e\} | X)$, so $e \in \cl_N(X)$.
  Therefore (\ref{quotcl}) and (\ref{quotrank}) are equivalent. 

  We now show that (\ref{quot}) implies (\ref{quotdual}); applying this fact to 
  $M^*$ and $N^*$ will immediately give that (\ref{quotdual}) implies (\ref{quot}). 
  Suppose that (\ref{quot}) holds, and let $F$ be a flat of $M^*$. 
  Let $\cH$ be the collection of hyperplanes of $M^*$ containing $F$, so $F = \bigcap \cH$. 
  For each $H \in \cH$, the set $E - H$ is a cocircuit of $M^*$ so is a circuit of $M$; 
  since (\ref{quotcircuit}) holds for $M$ and $N$, the set $E - H$ is a union of circuits of $N$, 
  so $E - H$ is an union of cocircuits of $N^*$, so its complement $H$ 
  is an intersection of hyperplanes of $N^*$ and is thus a flat of $N^*$. 
  This holds for each $H \in \cH$, so $F = \bigcap \cH$ is an intersection of flats of $N^*$
  and so is itself a flat of $N^*$. We have argued that each flat of $M^*$ is a flat of $N^*$; 
  the fact that (\ref{quotflat}) implies (\ref{quot}) for $M^*$ and $N^*$ now gives (\ref{quotdual}).
\end{proof}

We now state some basic properties of quotients whose proofs are straightforward using 
Lemma~\ref{quotequiv} and duality. The fact asserted by (\ref{weakmap})
is often phrased by saying that $N$ is a \emph{weak image} of $M$. 

\begin{lemma}\label{minorquot}
  If $M$ and $N$ are matroids with $N \preceq M$, then 
  \begin{enumerate}[(i)]
    \item $N \con C \del D \preceq M \con C \del D$ for all disjoint $C,D \ss E(N)$;
    \item $N \dcon C \preceq M \dcon C$ for all $C \ss E(N)$;
    \item\label{weakmap} every independent set in $N$ is independent in $M$;
    \item every basis for $N$ is contained in a basis for $M$; and
    \item every spanning set for $M$ is spanning in $N$.
  \end{enumerate}
\end{lemma}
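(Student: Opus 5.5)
I will prove the five parts in order, relying on the closure characterization of quotients in Theorem~\ref{quotequiv}(\ref{quotcl}) together with duality (Theorem~\ref{quotequiv}(\ref{quotdual})).

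For (i), I first handle contraction. Given $Y \ss E - C$, we have
\[\cl_{M \con C}(Y) = \cl_M(Y \cup C) - C \ss \cl_N(Y \cup C) - C = \cl_{N \con C}(Y),\]
so $N \con C \preceq M \con C$. For deletion I will dualize instead of computing closures directly. Since $N \preceq M$ gives $M^* \preceq N^*$, the contraction case yields $M^* \con D \preceq N^* \con D$. Dualizing this with Theorem~\ref{quotequiv} gives $N \del D \preceq M \del D$. Applying the contraction step and then the deletion step gives the general statement.

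For (ii), take $Y \ss E$. The projection lemma gives $\cl_{M \dcon C}(Y) = \cl_M(Y \cup C)$, and this is contained in $\cl_N(Y \cup C) = \cl_{N \dcon C}(Y)$. Both matroids have ground set $E$, so $N \dcon C \preceq M \dcon C$.

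For (iii), let $I$ be independent in $N$ and suppose it were dependent in $M$. Then some $e \in I$ lies in $\cl_M(I - \{e\})$. By Theorem~\ref{quotequiv}(\ref{quotcl}) this closure is contained in $\cl_N(I - \{e\})$, which contradicts the independence of $I$ in $N$. For (iv), a basis of $N$ is independent in $M$ by (iii), so axiom (\ref{IM}) applied with $X = E$ extends it to a basis of $M$. For (v), if $S$ spans $M$ then $E = \cl_M(S) \ss \cl_N(S)$, so $S$ spans $N$.

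None of these steps is a real obstacle. The only point needing care is the deletion case of (i): it depends on the equivalence of (\ref{quot}) and (\ref{quotdual}) in Theorem~\ref{quotequiv} and on the identity $(M \del D)^* = M^* \con D$ for infinite matroids, which is standard from [\ref{bdkpw}].
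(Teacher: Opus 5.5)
Your proposal is correct and matches the paper's approach: the paper gives no written proof and simply says these properties follow from the closure characterization in Theorem~\ref{quotequiv} together with duality, which is exactly what you do. One minor remark is that the deletion case of (i) does not need duality, since $\cl_{M \del D}(Y) = \cl_M(Y) - D \ss \cl_N(Y) - D = \cl_{N \del D}(Y)$ holds directly for all $Y \ss E - D$.
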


We use the following simple fact to recognize when quotients are in fact equalities.

\begin{lemma}\label{quotientext}
  If $N$ is a quotient of a matroid $M$ for which every circuit of $N$ is dependent in $M$, then $N = M$. 
\end{lemma}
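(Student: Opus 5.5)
It suffices to show that $M$ and $N$ have the same independent sets, since they share the ground set $E$. By Lemma~\ref{minorquot}(\ref{weakmap}), every independent set of $N$ is already independent in $M$. So the plan is to prove the reverse inclusion: every $M$-independent set $I$ is $N$-independent.

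Suppose, for contradiction, that $I$ is independent in $M$ but dependent in $N$. In a (possibly infinite) matroid, every dependent set contains a circuit; this follows from the independence axioms as developed in [\ref{bdkpw}]. So $I$ contains a circuit $C$ of $N$. By hypothesis $C$ is dependent in $M$. But $C \ss I$ and $I$ is independent in $M$, which is a contradiction.

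Hence $\cI(M) = \cI(N)$, and since a matroid is determined by its ground set and its independent sets, $N = M$.

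The only point needing care is the existence of a circuit inside every dependent set in the infinite setting. This is standard for the matroids of [\ref{bdkpw}], whose circuit axioms are equivalent to the independence axioms, so I do not expect a real obstacle. If a self-contained argument is wanted, take a finite-or-not dependent $D$. Choose $e \in D$ with $e \in \cl_N(D-\{e\})$; such an $e$ exists because $D$ is not independent, using axiom (\ref{IM}) to get a basis $J$ of $D$ and any $e \in D - J$. Then apply the fundamental-circuit property: $J \cup \{e\}$ contains a unique circuit.
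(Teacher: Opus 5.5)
Your proof is correct and follows the paper's: $N \preceq M$ makes every $N$-independent set $M$-independent, and an $M$-independent set cannot contain an $N$-circuit, because each such circuit is $M$-dependent. Your extra remark that every dependent set contains a circuit is the standard fact from the axiomatization of infinite matroids that the paper relies on, and the paper uses it without comment.
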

\begin{proof}
  Let $I$ be an independent set of $M$. Since $B$ contains no circuit of $M$, it contains 
  no circuit of $N$, so $I$ is $N$-independent. 
  Conversely, since $N \preceq M$, every $N$-independent set is $M$-independent; thus $N = M$. 
\end{proof}

To show where the equivalence of quotients and projections breaks down, 
we identify a property common to all projections, but not to quotients. 
In particular, the hypothesis holds if $M$ and $N$ have a basis in common. 

\begin{lemma}\label{commonbasis}
  Let $N$ be a projection of a matroid $M$. If $N$ has an independent set that is spanning in $M$, then $M = N$. 
\end{lemma}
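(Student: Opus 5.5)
We reduce to the case where the projecting set is independent and coindependent. Then the hypothesis pins down both matroids, because contracting a coindependent set adds no new loops and deleting an independent set removes no spanning power. By Lemma~\ref{indepprojection}, we fix a matroid $P$ and a set $X$, independent and coindependent in $P$, with $N = P \con X$ and $M = P \del X$. Let $I$ be an $N$-independent set that is spanning in $M$.

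First, since $I$ is independent in $N = P \con X$ and $X$ is independent in $P$, the set $I \cup X$ is independent in $P$. Second, $I$ is spanning in $M = P \del X$, so $\cl_P(I) \supseteq E(M)$. Because $X$ is coindependent in $P$, the set $E(M) = E(P) - X$ is spanning in $P$. Hence $\cl_P(I) \supseteq \cl_P(E(M)) = E(P) \supseteq X$. We now have $X \ss \cl_P(I)$ while $I \cup X$ is independent in $P$, which forces $X = \es$: any $x \in X$ would lie in $\cl_P(I)$ with $I \cup \{x\}$ independent and $x \notin I$, which is impossible. Therefore $N = P \con \es = P = P \del \es = M$.

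The only point requiring care is the step $E(P) - X$ spanning in $P$. This is exactly the statement that $X$ is coindependent, since a set is coindependent if and only if its complement is spanning; this holds for infinite matroids under the definitions of [\ref{bdkpw}]. We also use the standard fact that if $X$ is independent in $P$, then $I$ is independent in $P \con X$ if and only if $I \cup X$ is independent in $P$. Neither step uses finiteness, so the argument goes through directly. The main obstacle is therefore just the reduction, and that is already supplied by Lemma~\ref{indepprojection}.
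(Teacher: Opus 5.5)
Your proof is correct and matches the paper's argument. Both use Lemma~\ref{indepprojection} to make $X$ independent and coindependent in $P$, note that $I \cup X$ is independent in $P$ while $I$ spans $P$ (because $E(P)-X$ is spanning), and conclude $X = \es$; the paper phrases the last step as ``an independent set containing a spanning set must equal it,'' which is your closure argument.
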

\begin{proof}
  Let $P$ and $X$ be given by Lemma~\ref{indepprojection}, so $X$ is independent and coindependent in $P$, 
  while $N = P \con X$ and $M = P \del X$. 
  Suppose that $B$ is independent in $N$ and spanning in $M$.
  Then $B \cup X$ is an independent set of $P$ that contains the spanning set $B$ of $P$, 
  so $B = B \cup X$ and hence $X = \es$. It follows that $M = N$. 
\end{proof}

The next lemma shows that a pair of matroids related by a quotient may have a basis in common,
but may still not be equal. In fact, there may be a nested pair of bases for the two matroids 
with infinite set difference. 

Our construction uses infinite graphic matroids, which are discussed in~[\ref{bd11}]. 
The main idea is essentially topological; the matroids $M$ and $N$ both come from the same 
infinite graph, of which $N$ contains the finite and infinite cycles as its circuits, 
and $M$ contains only the finite cycles. 
Since we will not need topology anywhere else, 
we sidestep the issue of making the notion of an `infinite cycle' precise by 
working in the dual and phrasing the argument in terms of bonds. 

\begin{lemma}\label{badgraphic}
  There exists a pair of matroids $M,N$ with $N \preceq M$, such that $M$ is finitary and $N$ is cofinitary, 
  and there are sets $B, B_M, B_N$ so that
  \begin{itemize}
    \item $B$ is a basis of both $M$ and $N$,
    \item $B_M$ is a basis of $M$, and 
    \item $B_N$ is a basis of $N$ contained in $B_N$, and $B_M - B_N$ is infinite. 
  \end{itemize}
\end{lemma}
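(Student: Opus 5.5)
The statement as printed has a typo: the third bullet should say that $B_N$ is a basis of $N$ contained in $B_M$. I plan to take $M$ and $N$ to be the two standard matroids of one locally finite infinite graph. The example is built from ladders and then multiplied by an infinite direct sum.

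\textbf{The matroids.} Let $L$ be the one-way infinite ladder. It has two rays $u_0u_1u_2\dotsm$ and $v_0v_1v_2\dotsm$ and rungs $u_iv_i$ for all $i \in \bN$. Let $G$ be the disjoint union of countably many copies $L_1, L_2, \dotsc$ of $L$, with edge set $E$.
\begin{itemize}
\item Let $M$ be the finite-cycle matroid of $G$, whose circuits are the edge sets of finite cycles. This is a finitary matroid.
\item Let $D$ be the finite-bond matroid of $G$, whose circuits are the finite bonds, and set $N = D^*$. This is the standard way to define the topological cycle matroid while avoiding topology.
\end{itemize}
That $D$ is a finitary matroid is standard (see [\ref{bd11}]), so $N = D^*$ is cofinitary, as required.

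\textbf{The quotient relation.} To see $N \preceq M$, I will use the equivalence of (\ref{quot}) and (\ref{quotdual}) in Theorem~\ref{quotequiv}. It suffices to show $M^* \preceq N^* = D$. By (\ref{quotcircuit}) of the same theorem, this means every circuit of $D$ must be a union of circuits of $M^*$. The circuits of $M^*$ are exactly the bonds of $G$, which is a standard fact for finitary graphic matroids. Every finite bond is a bond, so this holds immediately.

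\textbf{The bases.} Everything decomposes as a direct sum over the ladders $L_j$. So it suffices to exhibit, in a single ladder, three sets with the required properties, where the set difference at the end has size exactly one; taking unions over all $j$ then makes the difference infinite. In one ladder I take the following sets.
\begin{itemize}
\item $B$ is the comb: the ray $u_0u_1\dotsm$ together with all rungs.
\item $B_M$ is the double ray: both rays together with the rung $u_0v_0$.
\item $B_N = B_M - \{u_0v_0\}$, the two rays.
\end{itemize}
Both $B$ and $B_M$ are spanning trees of $L$, so they are bases of $M$.

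For $N$, a set $X$ is a basis of $N$ exactly when $E - X$ is a basis of $D$. That is, $E - X$ contains no finite bond, and for each $e \in X$ the set $(E - X) \cup \{e\}$ does contain a finite bond. The finite bonds of a ladder can be listed explicitly: there are the three-edge vertex stars, the finite sets that cut off an initial segment of the ladder (two rail edges $u_iu_{i+1}, v_iv_{i+1}$ for some $i$), and in general any edge set separating a finite connected vertex set from the infinite remainder. With this list I will check two things.
\begin{itemize}
\item For $X = B$: the complement is the $v$-rail. It contains no finite bond, because every finite bond meets the $u$-rail or a rung. Adding any edge of $B$ completes a finite bond: adding a rung gives a vertex star at some $v_i$ together with its rail edges, and adding a $u$-rail edge $u_iu_{i+1}$ gives a two-edge cut together with $v_iv_{i+1}$.
\item For $X = B_N$: the complement is the set of rungs. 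It contains no finite bond, and adding any rail edge $u_iu_{i+1}$ or $v_iv_{i+1}$ completes a cut of the required form, using only that edge and rungs.
\end{itemize}
Finally $B_M$ is dependent in $N$, since $B_N \subsetneq B_M$ and $B_N$ is a basis of $N$; this is also consistent with Lemma~\ref{commonbasis} failing here.

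\textbf{Main obstacle.} I expect the hardest step to be the basis verification in $N$: making the description of all finite bonds of the ladder rigorous enough that the claims "contains no finite bond" and "adding this edge creates one" are airtight. The second claim for $B_N$ needs particular care. My first approach is to note that any finite bond of a connected locally finite graph is the cut $\delta(S)$ of a finite connected set $S$ whose complement is connected, and then to read off these cuts directly in the ladder.
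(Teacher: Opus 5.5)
Your proof is correct, but it takes a different route from the paper's. The matroids are the same: $M$ is the finite-cycle matroid and $N$ is the dual of the finite-bond matroid. The quotient argument is also the same: finite bonds are bonds, hence circuits of $M^*$ by [\ref{bd11}, Theorem 1].

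The difference lies in the graph and in how the infinite difference arises. The paper works in the single connected grid $\bZ \times \bZ$. Its common basis is a one-way infinite Hamilton path $P$. This path is $N$-spanning because $\cl_N \sps \cl_M$, and it is minimal because each finite component of $P - e$ yields a finite cut avoiding $E(P) - \{e\}$. The infinite gap comes from the vertical double ray $Q_0$. The horizontal double rays form an $N$-spanning set $S$ with $S \cup Q_0$ independent in $M$. The paper then simply chooses some $B_N \ss S$ and some $B_M \sps S \cup Q_0$, so no basis ever has to be pinned down exactly.

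You instead obtain a gap of exactly one in a one-way ladder: the rung $u_0v_0$ turns the two rails into a double ray, which is an infinite circuit of $N$. You then amplify to an infinite gap with a countable disjoint union. Your bond computations are all right. For the step you flagged, the needed bond is $\delta(\{u_0,\dotsc,u_i\}) = \{u_iu_{i+1}, u_0v_0, \dotsc, u_iv_i\}$, since both sides of this cut are connected; the $v$-rail case is symmetric.

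You could skip that minimality check entirely, as the paper does. It suffices to show that the two rails are $N$-spanning: a bond containing no rail edge has each ray on one side, so it is either empty or consists of all the rungs, which is infinite. Then take $B_N$ to be any $N$-basis inside the rails.

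Your route is more elementary. Everything is computed in a ladder, and a single ladder already shows $M \ne N$ with a common basis, which is all the following corollary uses. The paper's route gives a connected example. This shows the infinite gap is not just an artefact of summing infinitely many components.
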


\begin{proof}
  Let $G$ be an infinite grid: i.e. the graph with vertex set $\bZ \times \bZ$, 
  whose edges are the pairs $((i,j), (i \pm 1, j))$ and $((i,j), (i, j \pm 1))$ for all $i, j \in \bZ$. 
  Let $E = E(G)$. 

  Let $M$ be the finite cycle matroid of $G$; that is, the finitary matroid on $E(G)$ whose circuits are the finite cycles of $G$. 
  Let $N$ be the dual of the finite bond matroid of $G$; that is, the cofinitary matroid on $E(G)$ whose cocircuits are the finite bonds of $G$. 
  By [\ref{bd11}, Theorem 1], the circuits of $M^*$ are precisely the bonds of $G$, finite or infinite. 
  
  \begin{claim}
    $N \preceq M$. 
  \end{claim}
  \begin{subproof}
    By construction, the circuits of $N^*$ are the finite bonds of $G$, and the circuits of $M^*$ are all the bonds of $G$. 
    Therefore every circuit of $N^*$ is a circuit of $M^*$, so by Lemma~\ref{quotequiv}, we have $M^* \preceq N^*$ and thus $N \preceq M$. 
  \end{subproof}

  \begin{claim}
    For every one-way infinite Hamilton path $P$ of $G$, the set $E(P)$ is a basis of both $M$ and $N$. 
  \end{claim}
  \begin{subproof}
    The fact that $P$ is a Hamilton path means that $E(P)$ is maximal containing no finite cycle of $G$,
    so is a maximal subset of $E$ containing no circuit in $M$, and is thus a basis of $M$. 
    Since $\cl_N(E(P)) \supseteq \cl_M(E(P)) = E$, it follows that $E(P)$ is spanning in $N$, 
    so it remains to show that $E(P) - \{e\}$ is not spanning in $N$ for each $e \in E(P)$. 

    Let $X_1$ and $X_2$ be the two components of $P - \{e\}$, where $X_1$ is finite. Since every vertex of $G$ has finite 
    degree, the set $\delta_G(X_1)$ is a finite bond of $G$ that is disjoint from $E(P) - \{e\}$,
    so $E(P) - \{e\}$ is disjoint from a cocircuit of $N$ 
    and is thus nonspanning in $N$, as required. 
  \end{subproof}

  Clearly $G$ has a one-way infinite Hamilton path (for instance, one spiralling outwards from the origin), 
  so this gives the required set $B$. 

  For each $i \in \bZ$, let $P_i$ be the edge set of the 
  `horizontal' two-way infinite path of $G$ with vertex set $\bZ \times \{i\}$, 
  and let $Q_i$ be the edge set of of the `vertical' two-way infinite path 
  with vertex set $\{i\} \times \bZ$. 

  \begin{claim}
    The set $S = \bigcup_{i \in \bZ} P_i$ is $N$-spanning, and $S \cup Q_0$ is $M$-independent. 
  \end{claim}
  \begin{subproof}
    To show that $S$ is spanning in $N$, we need to show that it intersects every cocircuit $K$ of $N$. 
    Each such $K$ is a finite bond of $G$, so there is a finite nonempty set $A \ss V(G)$ for which $K = \delta_G(A)$.
    Let $(i,j) \in A$ with $i$ as large as possible; since $(i+1,j) \notin A$, the edge from $(i,j)$ to $(i+1,j)$ is in $S \cap K$.

    The fact that $S \cup Q_0$ is independent in $Q$ follows easily from the fact that it contains the edges of no finite cycle of $G$. 
  \end{subproof}

  By the last claim, there is a basis $B_N$ of $N$ contained in $S$, and there is a basis $B_M$ of $M$ containing $S \cup Q_0$. 
  Since $Q_0$ is infinite, so is $B_M - B_N$, so $B_M$ and $B_N$ give the required sets to finish the proof. 
\end{proof}

The last two lemmas imply that quotients and projections do not coincide for general infinite matroids, 
even when the first matroid is cofinitary and the second is finitary. 

\begin{corollary}
  There exist matroids $N$ and $M$ for which $N$ is cofinitary and $M$ is finitary, and $N$ is a quotient of $M$ but not a projection of $M$. 
\end{corollary}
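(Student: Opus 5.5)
We take the pair $M,N$ supplied by Lemma~\ref{badgraphic}. That lemma already gives $N \preceq M$, with $M$ finitary and $N$ cofinitary, so it only remains to show that $N$ is not a projection of $M$.

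We argue by contradiction using Lemma~\ref{commonbasis}. Suppose $N$ were a projection of $M$. The set $B$ from Lemma~\ref{badgraphic} is a basis of $N$, so it is independent in $N$, and it is also a basis of $M$, so it is spanning in $M$. Lemma~\ref{commonbasis} then forces $M = N$.

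To finish, we show $M \ne N$. By Lemma~\ref{badgraphic}, $B_M$ is a basis of $M$ and $B_N$ is a basis of $N$, with $B_N \subseteq B_M$ and $B_M - B_N$ infinite, hence nonempty. In particular $B_N$ is a proper subset of $B_M$. If $M = N$, then $B_N$ and $B_M$ would be two bases of the same matroid with one properly contained in the other, contradicting the maximality of bases.

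The apparent typo ``$B_N$ is a basis of $N$ contained in $B_N$'' in the statement of Lemma~\ref{badgraphic} must be read as ``contained in $B_M$''. This reading is what the proof of that lemma actually establishes: $B_N \subseteq S \subseteq S \cup Q_0 \subseteq B_M$.

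Alternatively, one can see $M \ne N$ directly in the graph, without using $B_M$ and $B_N$. The horizontal two-way infinite path $P_0$ is $M$-independent, because it contains no finite cycle. It is $N$-dependent, since $\cl_N(P_0 - e)$ contains $e$: every finite bond meeting $e$ must meet $P_0 - e$. Either argument suffices, and no step is a genuine obstacle once Lemmas~\ref{commonbasis} and~\ref{badgraphic} are available.
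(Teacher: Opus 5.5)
Your proposal is correct and is essentially the paper's proof. You take the pair from Lemma~\ref{badgraphic}, deduce $M \ne N$ from the strict nesting $B_N \subsetneq B_M$ (so $B_M$ is a basis of $M$ but not of $N$), and apply Lemma~\ref{commonbasis} to the common basis $B$. You are right to read the typo in Lemma~\ref{badgraphic} as $B_N \subseteq B_M$, and your alternative check via the double ray $P_0$ (independent in $M$ but dependent in $N$) is also valid.
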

\begin{proof}
  Let $N$ and $M$ be the matroids from Lemma~\ref{badgraphic}. 
  Since $B_M$ is a basis of $M$ but not $N$, we have $M \ne N$. 
  Since the common basis $B$ exists,
  Lemma~\ref{commonbasis} implies that $N$ is not a projection of $M$. 
\end{proof}

Theorem~\ref{badgraphic} also implies that there exists matroids $N,M$ with $N \preceq M$ that have a common basis but are distinct. 
We show later that this cannot happen if $N$ is finitary or $M$ is cofinitary; see Theorem~\ref{finitarycommonbasis}. 

\section{Discrepancy}

In this section and the next, we study the circumstances in which a quotient $N$ of an 
infinite matroid $M$ can be concluded to be a projection of $M$. 
For this purpose, it will be helpful to develop a notion of the `distance' between $N$ and $M$, 
which we define as the difference in cardinality
between an $N$-basis $I_0$ for $X$, and an $M$-basis $I$ for $X$ that contains $I_0$. 
In this section, we show that if $N$ is finitary, this difference does not depend on the choice of bases. 
We first give a definition and a pair of lemmas to facilitate coming discussion. 

\begin{definition}
  Let $N,M$ be matroids. A \emph{$(N,M)$-basis pair} for a set $X$ is a pair 
  $(B_0,B)$, where $B$ is an $M$-basis for $X$ and $B_0$ is an $N$-basis for $X$ with $B_0 \ss B$.  
\end{definition}

If $E(M) = E(N)$ and we do not specify the set $X$, then an \emph{$(N,M)$-basis pair} means an $(N,M)$-basis pair 
for the common ground set. 

\begin{lemma}
  Let $N$ be a quotient of a matroid $M$, and $X \ss E(M)$. Then every $N$-basis $I$ for $X$ 
  is contained in an $(N,M)$-basis pair for $X$, and every $M$-basis $J$ for $X$ is contained
  in an $(N,M)$-basis pair for $X$. 
\end{lemma}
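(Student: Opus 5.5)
The lemma has two halves, and each is handled by extending the given basis in one direction using a single fact from the earlier lemmas on quotients: $N$-independent sets are $M$-independent, and $M$-spanning sets are $N$-spanning (Lemma~\ref{minorquot}, applied to restrictions, or directly via Theorem~\ref{quotequiv}). The phrase ``contained in an $(N,M)$-basis pair'' is read as saying that the given set occurs as the appropriate coordinate of such a pair.

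For the first half, let $I$ be an $N$-basis for $X$. Since $N \preceq M$, every $N$-independent set is $M$-independent by Lemma~\ref{minorquot}(\ref{weakmap}), so $I$ is independent in $M$. Axiom (\ref{IM}) for $M$ then gives an $M$-basis $B$ for $X$ with $I \ss B \ss X$. Hence $(I, B)$ is an $(N,M)$-basis pair for $X$.

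For the second half, let $J$ be an $M$-basis for $X$, so that $X \ss \cl_M(J)$. By Theorem~\ref{quotequiv}(\ref{quotcl}), $\cl_M(J) \ss \cl_N(J)$, and therefore $J$ spans $X$ in $N$. To find an $N$-basis for $X$ inside $J$, apply axiom (\ref{IM}) for $N$ with $I = \es$ and ground set $J$. This gives a maximal $N$-independent subset $J_0$ of $J$, and $J_0$ is an $N$-basis of $J$. Then $J \ss \cl_N(J_0)$, so $X \ss \cl_N(J) \ss \cl_N(J_0)$. Since $J_0$ is also $N$-independent and contained in $X$, it is an $N$-basis for $X$, and $(J_0, J)$ is the required pair.

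Nothing here is a genuine obstacle. The only point that needs care is the step saying that a spanning independent subset of $X$ is a basis of $X$ in an infinite matroid. This is the standard characterization of a basis of $X$ as a maximal independent subset of $X$, which is equivalent to being independent and spanning $X$, and the paper already uses it freely, for instance in the proof of Lemma~\ref{mutbasisiffsubset}.
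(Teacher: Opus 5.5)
Your proof is correct and follows the paper's argument. You extend the $N$-basis to an $M$-basis using the weak-map property, and for the second half you use $X \subseteq \cl_M(J) \subseteq \cl_N(J)$ to extract an $N$-basis of $X$ inside $J$, spelling out the extraction via a maximal $N$-independent subset of $J$ where the paper leaves it implicit.
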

\begin{proof}
  Let $I$ be an $N$-basis for $X$. Since $I$ is $M$-independent, it is contained in an $M$-basis $J$ for $X$, 
  and $(I,J)$ is as $(N,M)$-basis pair for $X$. 
  Similarly, let $J$ be an $M$-basis for $X$. Since $X \ss \cl_M(J) \ss \cl_N(J)$, there is an $N$-basis $I$ for $X$ with $I \ss J$, 
  giving the $(N,M)$-basis pair $(I,J)$. 
\end{proof}

We now prove a technical lemma relating basis pairs to cardinalities. 

\begin{lemma}\label{basispairtech}
  Let $N$ be a quotient of a matroid $M$, and let $I \ss X \ss E(M)$.
  If $(I_0,I)$ is an $(N,M)$-basis pair for $I$ with $I_0$ finite, and $(B_0,B)$ is an $(N,M)$-basis pair for $X$, 
  then $|I - I_0| \le |B - B_0|$. 
\end{lemma}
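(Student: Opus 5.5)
We may assume $|B - B_0|$ is finite, say equal to $k$, since otherwise there is nothing to prove. It also suffices to show $|J| \le k$ for every finite $J \ss I - I_0$. If $I - I_0$ were infinite, this would force finite subsets of unbounded size, which is impossible. So we fix a finite $J \ss I - I_0$ and put $Y = I_0 \cup J$. This is a finite set, independent in $M$ (as $Y \ss I$), and $I_0$ is an $N$-basis for $Y$ (as $Y \ss I \ss \cl_N(I_0)$). Note that $Y \ss X$.

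The heuristic is the finite-rank computation $r_M(X) - r_N(X) \ge r_M(Y) - r_N(Y) = |J|$. This should follow from Theorem~\ref{quotequiv}(\ref{quotrank}) applied to $Y \ss Y \cup B_0$, namely $\relrank{N}{Y \cup B_0}{Y} \le \relrank{M}{Y \cup B_0}{Y}$. To make it rigorous without subtraction, I will work through contractions by the finite set $Y$, and by $B_0$.

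\textbf{The $N$ side.} The set $B_0$ is $N$-independent and $N$-spans $X$. Since $I_0$ is finite and independent in $N$, a finite exchange gives a set $D_0 \ss B_0$ with $|D_0| = |I_0|$ such that $I_0 \cup (B_0 - D_0)$ is an $N$-basis of $X \supseteq Y \cup B_0$. Hence $B_0 - D_0 - Y$ is an $N \con Y$-basis of $(Y \cup B_0) - Y$.

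\textbf{The $M$ side.} We have $N \con Y \preceq M \con Y$ by Lemma~\ref{minorquot}, so the set $B_0 - D_0 - Y$ is $M \con Y$-independent. Therefore $Y \cup (B_0 - D_0)$ is $M$-independent, and it lies inside $X$.

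\textbf{Counting.} Let $B'$ be an $M$-basis of $X$ containing $Y \cup (B_0 - D_0)$. Both $B$ and $B'$ are $M$-bases of $X$, and $B$ differs from $B_0$ by only $k$ elements. Compare them after contracting the independent set $B_0 - D_0$. In $M \con (B_0 - D_0)$, restricted to $X$, one basis is $D_0 \cup (B - B_0)$, which has size $|I_0| + k$. Another basis contains $Y - (B_0 - D_0)$, whose size is at least $|Y| - |Y \cap B_0|$. Since this is a finite-rank matroid, all its bases have the same finite size.

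The main obstacle is this last bookkeeping step. Elements of $Y$ may lie in $B_0$, and elements of $D_0$ may lie in $Y$, so the naive count $|Y| \le |I_0| + k$ needs adjusting. To handle this, I will first replace $B_0$ and $B$ by choosing the $N$-basis pair to extend $I_0$. This is possible because $I_0$ is $N$-independent in $X$, and the preceding lemma lets us build a basis pair containing it. The intended check is that the quantity $|B - B_0|$ is unchanged when we pass to such a pair, via a finite exchange argument in $M \con B_0$, where $X - B_0$ has rank $k$.

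With $I_0 \ss B_0$ we may take $D_0 = I_0$. The counting then reads as follows. In $M \con B_0$, the set $J - B_0 = J$ is independent, because $Y \cup (B_0 - I_0) \supseteq J \cup B_0$ is $M$-independent. Also, $X - B_0$ has rank $|B - B_0| = k$ in $M \con B_0$. Hence $|J| \le k$.
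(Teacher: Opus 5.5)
Your reduction to a finite $J \subseteq I - I_0$, the set $Y = I_0 \cup J$, the $N$-side and $M$-side steps, and your final count in the case $I_0 \subseteq B_0$ are all sound. The gap is the replacement step that you flag yourself. You assert that $|B - B_0|$ is unchanged when you pass to an $(N,M)$-basis pair whose $N$-part contains $I_0$, obtained from the preceding lemma. You never say which pair you take and you never prove the claim, and for an arbitrary such pair it is false. Independence of $|B - B_0|$ from the choice of basis pair is exactly what breaks down for infinite quotients. In Lemma~\ref{badgraphic}, $(B,B)$ and $(B_N,B_M)$ are basis pairs for the same quotient with differences $0$ and $\infty$, and both contain $I_0 = \varnothing$. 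The paper only gets such invariance in Corollary~\ref{basispairstable}, as a consequence of the very lemma you are proving and under a finitary hypothesis. Your suggested setting $M \con B_0$ is also not the right one, since $I_0 - B_0$ need not be independent there.

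The step can be repaired, and in fact you do not need the replacement if you choose $D_0$ carefully in your $N$-side step. Take $D' \subseteq B_0 - I_0$ with $|D'| = |I_0 - B_0|$ such that $I_0 \cup (B_0 - D')$ is an $N$-basis of $X$; this takes one fundamental-circuit exchange per element of $I_0 - B_0$. Put $D_0 = D' \cup (I_0 \cap B_0)$, so that $|D_0| = |I_0|$ and $I_0 \cup (B_0 - D_0) = I_0 \cup (B_0 - D')$. Now $I_0 \cap (B_0 - D_0) = \varnothing$ by construction. Also $J \cap (B_0 - D_0) = \varnothing$, because $J \subseteq \cl_N(I_0) - I_0$ while $I_0 \cup (B_0 - D_0)$ is $N$-independent. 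Hence $Y$ lies in the basis $B' - (B_0 - D_0)$ of the finite-rank matroid $(M \con (B_0 - D_0))|(X - (B_0 - D_0))$. That matroid also has the basis $D_0 \cup (B - B_0)$, of size $|I_0| + k$. So $|I_0| + |J| = |Y| \le |I_0| + k$, and cancelling the finite $|I_0|$ gives $|J| \le k$. The same comparison shows that the specific pair $B_0' = I_0 \cup (B_0 - D')$, $B' \supseteq B_0'$, satisfies $|B' - B_0'| = |D'| + k - |I_0 - B_0| = k$; this is the correct form of your \emph{intended check}.

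Once repaired, your argument is a genuine alternative to the paper's. The paper extends $I$ to an $M$-basis $B' \subseteq I \cup B$ of $X$ and chooses an $N$-basis $B''$ of $X$ with $I_0 \subseteq B'' \subseteq B'$. It then derives $|B - B_0| = |B \cap (B' - B'')| + |(I - I_0) - B|$ by direct set bookkeeping, cancelling the finite quantity $|I_0 - B|$. You instead reduce to finite $J$ and isolate all the finiteness in a single finite-rank contraction.
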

\begin{proof}
  Since $I$ is independent in $M$ and $X \ss \cl_M(I \cup B)$, there is a $M$-basis $B'$ for $X$ with $I \ss B' \ss I \cup B$. 
  By the choice of $B'$, we have 
  \begin{claim}\label{diffeq1}
    $B' - B = I - B$. 
  \end{claim}
  Since $I_0 \ss B'$ is independent in $N$, and $X \ss \cl_M(B') \ss \cl_N(B)$, there is an $N$-basis $B''$ for $X$
  satisfying $I_0 \ss B'' \ss B'$. 
  Since $B'' \cap I$ is an $N$-independent subset of $I$ containing the $N$-basis $I_0$ of $I$, we have $B'' \cap I = I_0$. 

  \begin{claim}
    $I_0 - B = B'' - B$. 
  \end{claim}
  \begin{subproof}
    The forwards inclusion follows from $I_0 \ss B''$. We get the reverse inclusion from 
    $B'' = B'' \cap B' \ss B'' \cap (I \cup B) = (B'' \cap I) \cup (B'' \cap B) \ss I_0 \cup B$. 
  \end{subproof}

  \begin{claim}
    $|B-B''| = |B-B_0| + |I_0 - B|$
  \end{claim}
  \begin{subproof}
    \begin{align*}
      |B-B''| &= |B_0 - B''| + |(B - B_0) - B''| \\
      &= |B'' - B_0| + |(B - B'') - B_0| \\ 
      &= |(B \cup B'') - B_0| \\ 
      &= |B - B_0| + |(B'' - B) - B_0| \\ 
      &= |B - B_0| + |(I_0 - B) - B_0|; 
    \end{align*}
    since $B_0 \ss B$, the result follows. 
  \end{subproof}
  \begin{claim}
    $|B \cap (B' - B'')| + |(I - I_0) - B| = |B - B_0|$. 
  \end{claim}
  \begin{subproof}
    By the last three claims, we have 
    \begin{align*}
      |B - B_0| + |I_0 - B| &= |B - B''| \\ 
      &= |B \cap (B' - B'')| + |B' - B| \\ 
      &= |B \cap (B' - B'')| + |I - B| \\ 
      &= |B \cap (B' - B'')| + |(I - I_0) - B| + |I_0 - B|, 
    \end{align*}
    and since $I_0 - B$ is finite, we can cancel the term $|I_0-B|$ additively.
  \end{subproof}
  Note that $I - I_0 = I - (B'' \cap I) = I - B'' \ss B' - B''$. 
  By the last claim, 
  \[|B - B_0| = |B \cap (B' - B'')| + |(I - I_0) - B| \ge |B \cap (I - I_0)| + |(I - I_0)-B|
    = |I - I_0|,\] as required.  
\end{proof}

\begin{lemma}\label{basispairmono}
  Let $N$ be a finitary quotient of a matroid $M$, 
  and let $X \ss Y \ss E(M)$. If $(I_0, I)$ is an $(N,M)$-basis pair for $X$, 
  and $(J_0, J)$ is an $(N,M)$ basis pair for $Y$, then $|I-I_0| \le |J - J_0|$. 
\end{lemma}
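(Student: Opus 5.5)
We reduce to Lemma~\ref{basispairtech}, which already handles the case where the smaller set is an $M$-independent set $I$ whose $N$-basis $I_0$ is finite. The target inequality $|I - I_0| \le |J - J_0|$ is an inequality in $\enn$, so it suffices to show that every finite subset of $I - I_0$ has size at most $|J - J_0|$.

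First we apply Lemma~\ref{basispairtech} directly. Since $(I_0,I)$ is a basis pair for $X$, the set $I$ is $M$-independent. We also have $I \ss X \ss Y$, and $(J_0,J)$ is an $(N,M)$-basis pair for $Y$. So whenever $I_0$ is finite, Lemma~\ref{basispairtech} applied with $Y$ in place of $X$ gives $|I - I_0| \le |J - J_0|$. Here $(I_0,I)$ is indeed a basis pair for the set $I$: $I$ is an $M$-basis for $I$, and $I_0$ is $N$-independent and $N$-spans $X \supseteq I$.

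For general $I_0$ we use finitariness of $N$. Let $D \ss I - I_0$ be finite. Since $N$ is finitary and $D \ss \cl_N(I_0)$, each $d \in D$ lies in the $N$-closure of some finite subset of $I_0$, because circuits of $N$ are finite. So there is a finite set $K \ss I_0$ with $D \ss \cl_N(K)$. Put $I' = K \cup D$. This set is $M$-independent, being a subset of $I$. Moreover $K$ is $N$-independent and $N$-spans $I'$, so $(K, I')$ is an $(N,M)$-basis pair for the set $I'$ with $K$ finite. Also $I' \ss Y$. Lemma~\ref{basispairtech}, applied to $I'$, $Y$ and $(J_0,J)$, now gives
\[|D| = |I' - K| \le |J - J_0|.\]
Since $D$ was an arbitrary finite subset of $I - I_0$, we conclude $|I - I_0| \le |J - J_0|$ in $\enn$.

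The main point requiring care is the choice of the finite set $K$. It must lie inside $I_0$, so that $K$ is $N$-independent and $K \cup D \ss I$ is $M$-independent. It must also $N$-span $D$, so that $K$ is genuinely an $N$-basis of $K \cup D$. Both properties follow from $K \ss I_0$ and from finitariness, and this is the only place where the finitary hypothesis on $N$ is used.
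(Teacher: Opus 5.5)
Your proof is correct and is essentially the paper's argument: take a finite $D \subseteq I - I_0$, use finitariness of $N$ to find a finite $K \subseteq I_0$ with $D \subseteq \cl_N(K)$, and apply Lemma~\ref{basispairtech} to the basis pair $(K, K \cup D)$ for $K \cup D$. The only differences are that the paper argues by contradiction from a finite $S$ with $|S| > |J - J_0|$, and that your preliminary separate treatment of the case where $I_0$ is finite is redundant, though harmless.
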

\begin{proof}
  Suppose not; then $I-I_0$ has a subset $S$ for which $\infty > |S| > |J-J_0|$.
  Since $N$ is finitary and $S \ss I \ss \cl_N(I_0)$,
  there is a finite subset $K_0$ of $I_0$ for which $S \ss \cl_N(K_0)$. 
  Let $K = K_0 \cup S$, noting that $K \ss I$ so $K$ is $M$-independent, 
  and that $K \ss \cl_N(K_0)$. 
  Therefore $(K_0, K)$ is an $(N,M)$-basis pair for $K_0 \cup S$. 

  Since $K \ss I \ss X \ss Y$ and $K_0$ is finite, Lemma~\ref{basispairtech} implies that 
  $|S| = |K - K_0| \le |J - J_0|$, contrary to the choice of $S$. 
\end{proof}

Applying the above lemma in both directions in the case where $X = Y$ immediately gives the following. 

\begin{corollary}\label{basispairstable}
  Let $N$ be a finitary quotient of a matroid $M$. 
  If $(I_0, I)$ and $(J_0,J)$ are $(N,M)$-basis pairs for a set $X$, 
  then $|I - I_0| = |J - J_0|$. 
\end{corollary}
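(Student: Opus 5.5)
The argument is to apply Lemma~\ref{basispairmono} twice, with $Y = X$. Both $(I_0,I)$ and $(J_0,J)$ are $(N,M)$-basis pairs for $X$, and $N$ is a finitary quotient of $M$.

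First take $(I_0,I)$ as the pair for the smaller set and $(J_0,J)$ as the pair for the larger set. Since $X \ss X$, the lemma gives $|I - I_0| \le |J - J_0|$. Next swap the roles of the two pairs, which gives $|J - J_0| \le |I - I_0|$. The order on $\enn$ is a linear order, so antisymmetry yields equality in $\enn$. This holds even when both values are $\infty$, because no subtraction is involved.

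There is no real obstacle here. The only point to check is that Lemma~\ref{basispairmono} allows $X = Y$ and places no further conditions on the pairs. It does not: its hypotheses are just a finitary quotient, nested sets, and basis pairs for each set.
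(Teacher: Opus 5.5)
Your proposal is correct and is exactly the paper's argument: apply Lemma~\ref{basispairmono} with $Y = X$ once in each direction, and conclude by antisymmetry of the order on $\enn$. You were also right to check that the lemma allows $X = Y$; it does.
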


Informally this is a version of relative rank for quotients;
it is stating that the `difference between the $M$-rank and the $N$-rank' of a set $X$ 
is a well-defined quantity whenever $N$ is a finitary quotient of a matroid $M$. 
We give this concept a name, using a minimum in the definition so that it makes sense even for non-finitary $N$.

\begin{definition}
  Let $N$ be a quotient of a matroid $M$, and let $X \ss E(M)$. The \emph{discrepancy} $\delta_{N,M}(X)$
  is the minimum value of $|I - I_0|$, taken over all $(N,M)$-basis pairs $(I_0,I)$ for $X$.
\end{definition}

We write $\delta(N,M)$ for $\delta_{N,M}(E(M))$.
Observe that $\delta_{N,M}(X) = \delta(N|X, M|X)$, since these two pairs of matroids have the same basis pairs for $X$. 



Note also that, if $N$ is a quotient of $M$, then $(B_0,B)$ is an $(N,M)$-basis pair
if and only if $(E-B, E-B_0)$ is an $(M^*,N^*)$-basis pair. Since $M^* \preceq N^*$, 
this gives:

\begin{lemma}\label{discdual}
  If $N$ is a quotient of a matroid $M$, then $\delta(N,M) = \delta(M^*,N^*)$. 
\end{lemma}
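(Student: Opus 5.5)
The plan is to show that complementation gives a bijection between $(N,M)$-basis pairs and $(M^*,N^*)$-basis pairs that preserves the quantity being minimized. Both discrepancies are minima over these sets of pairs, so they are then equal. Throughout, write $E = E(M) = E(N)$.

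First I would check the correspondence claimed just before the statement. Suppose $(B_0,B)$ is an $(N,M)$-basis pair for $E$, so $B_0$ is a basis of $N$, $B$ is a basis of $M$, and $B_0 \ss B$. Complements of bases are bases of the dual, so $E - B$ is a basis of $M^*$ and $E - B_0$ is a basis of $N^*$. Moreover $E - B \ss E - B_0$.

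By Theorem~\ref{quotequiv}, $M^* \preceq N^*$, so $\delta(M^*,N^*)$ is defined. Here the roles are reversed: the smaller basis must come from the quotient $M^*$ and the larger from $N^*$. Hence $(E-B, E-B_0)$ is an $(M^*,N^*)$-basis pair for $E$.

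The converse direction is the same argument applied to $M^*$ and $N^*$, using $M^{**} = M$ and $N^{**}=N$. Since complementation is an involution, the map $(B_0,B) \mapsto (E-B, E-B_0)$ is a bijection between the two sets of basis pairs.

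Next I would compare the quantities attached to corresponding pairs. For the dual pair, the relevant set is $(E - B_0) - (E - B)$, which equals $B - B_0$. So the quantity $|B - B_0|$ is unchanged under the bijection. The two minima are therefore taken over the same multiset of values in $\enn$, and so $\delta(N,M) = \delta(M^*,N^*)$.

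I do not expect a real obstacle in this argument. The only point needing care is the order of the pair in the definition: the first coordinate is a basis of the quotient and lies inside the second. So I would state explicitly that $M^*$ is the quotient in the dual pair.
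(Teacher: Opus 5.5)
Your proposal is correct and is exactly the paper's argument: complementation $(B_0,B)\mapsto(E-B,E-B_0)$ is a bijection from $(N,M)$-basis pairs to $(M^*,N^*)$-basis pairs (using $M^*\preceq N^*$), and it preserves $|B-B_0|$, so the two minima agree. Your remark that in the dual pair the first coordinate must be a basis of $M^*$ is the right point of care, and you handle it correctly.
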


The following result shows that, if $N$ is a quotient of $M$ with 
$N$ finitary or $M$ cofinitary, then discrepancy behaves reasonably for all sets.

\begin{theorem}\label{discgood}
  Let $N$ be a quotient of a matroid $M$ on ground set $E$, 
  such that either $N$ is finitary or $M$ is cofinitary. Then 
  \begin{enumerate}[(i)]
    \item\label{discdefined} every $(N,M)$-basis pair $(I_0,I)$ for a set $X \ss E$ satisfies $|I - I_0| = \delta_{N,M}(X)$;
    \item\label{discmono} $\delta_{N,M}(X) \le \delta_{N,M} (Y)$ for all $X \ss Y \ss E$;
    \item\label{discfinite} $r_N(X) + \delta_{N,M}(X) = r_M(X)$ for every set $X \ss E$;
    \item\label{discnullity} $n_N(X) = n_M(X) + \delta_{N,M}(X)$ for every set $X \ss E$. 
  \end{enumerate}
\end{theorem}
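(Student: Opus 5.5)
The plan is to reduce to the finitary case by duality, and then read off everything else from the identity $|I - I_0|$ being an invariant.

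\textbf{Step 1: part (\ref{discdefined}).} If $N$ is finitary, Corollary~\ref{basispairstable} says that all $(N,M)$-basis pairs for $X$ have the same value of $|I - I_0|$. That common value is then the minimum, namely $\delta_{N,M}(X)$.

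If instead $M$ is cofinitary, we pass to $M^* \preceq N^*$, where now $M^*$ is finitary. We cannot dualize $X$ directly, so we restrict first. Let $N_X = N | X$ and $M_X = M | X$. Then $N_X \preceq M_X$ by Lemma~\ref{minorquot}. Moreover $M_X$ is cofinitary, since deletions of cofinitary matroids are cofinitary; this holds because cocircuits of $M \del D$ are minimal nonempty sets of the form $K - D$ for cocircuits $K$ of $M$. The $(N,M)$-basis pairs for $X$ are exactly the $(N_X, M_X)$-basis pairs. By the complementation remark before Lemma~\ref{discdual}, $(I_0, I)$ is such a pair if and only if $(X - I, X - I_0)$ is an $(M_X^*, N_X^*)$-basis pair, and the two differences agree: $(X - I_0) - (X - I) = I - I_0$. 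Applying Corollary~\ref{basispairstable} to the finitary quotient $M_X^*$ of $N_X^*$ shows the differences are constant, which gives (\ref{discdefined}).

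\textbf{Step 2: part (\ref{discmono}).} In the finitary case this is exactly Lemma~\ref{basispairmono}. In the cofinitary case I would dualize again, now for $X \ss Y$, working inside $M | Y$ and $N | Y$.

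Take an $(N,M)$-basis pair $(I_0, I)$ for $X$. Since $Y \ss \cl_M(I \cup J)$ for any basis $J$ of $Y$, we can extend: pick an $M$-basis $J \supseteq I$ for $Y$, and then an $N$-basis $J_0$ with $I_0 \ss J_0 \ss J$. The dual pairs are $(Y - J, Y - J_0)$ for $Y$ and the contracted/complemented pairs for $X$.

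Rather than fight with contractions, it is cleaner to argue directly using (\ref{discdefined}). With $J$ extending $I$ and $J_0$ extending $I_0$ as above, we have $J_0 \cap I \supseteq I_0$, and $J_0 \cap I$ is $N$-independent inside $I \ss \cl_N(I_0)$. Hence $J_0 \cap I = I_0$. This gives $I - I_0 \ss J - J_0$, so $|I - I_0| \le |J - J_0|$. Combined with (\ref{discdefined}), this yields (\ref{discmono}) in both cases.

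The main point to check is that such a nested extension exists. Extending $I$ to $J$ is fine. Extending $I_0$ to an $N$-basis $J_0 \ss J$ of $Y$ uses $Y \ss \cl_M(J) \ss \cl_N(J)$ and the axiom (\ref{IM}) applied inside $J$. This direct argument makes the duality in this step unnecessary; only (\ref{discdefined}) needs the hypothesis.

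\textbf{Step 3: parts (\ref{discfinite}) and (\ref{discnullity}).} Fix a basis pair $(I_0, I)$ for $X$.
\begin{itemize}
  \item For (\ref{discfinite}): $r_M(X) = |I| = |I_0| + |I - I_0| = r_N(X) + \delta_{N,M}(X)$.
  \item For (\ref{discnullity}): $n_N(X) = |X - I_0| = |X - I| + |I - I_0| = n_M(X) + \delta_{N,M}(X)$.
\end{itemize}
Both are disjoint-union cardinality identities in $\enn$, combined with (\ref{discdefined}).

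I expect the main obstacle to be the cofinitary case of (\ref{discdefined}). Specifically, one must verify that restriction preserves cofinitarity and the quotient relation, and that the complementation of basis pairs holds relative to $X$.
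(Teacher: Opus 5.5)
Your proposal is correct. Parts (i), (iii) and (iv) follow the paper's proof essentially verbatim: Corollary~\ref{basispairstable} in the finitary case; in the cofinitary case, restricting to $X$ and passing to the $((M|X)^*,(N|X)^*)$-basis pairs $(X-I,X-I_0)$; then the two disjoint-union cardinality identities.

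For (ii) you take a different route.

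\textbf{The paper's route.} It cites Lemma~\ref{basispairmono} when $N$ is finitary. When $M$ is cofinitary it argues by duality via $\delta_{N,M}(X)=\delta_{M^*,N^*}(X)\le\delta_{M^*,N^*}(Y)=\delta_{N,M}(Y)$.

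\textbf{Your route.} You extend a pair $(I_0,I)$ for $X$ to a nested pair $(J_0,J)$ for $Y$ with $I_0\ss J_0\ss J$ and $I\ss J$. You then note that $J_0\cap I=I_0$, hence $I-I_0\ss J-J_0$, and conclude using (i) at $Y$.

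\textbf{What your route buys.} It treats both hypotheses uniformly, and it shows that monotonicity is a purely formal consequence of (i) at $Y$, with no finiteness used in that step. It also avoids a weak point in the paper's cofinitary argument. Lemma~\ref{discdual} only identifies the discrepancies of the full ground sets. Dualizing $M|X$ gives $M^*/(E-X)$, not $M^*|X$. So the identity $\delta_{N,M}(X)=\delta_{M^*,N^*}(X)$ is not available in general. For instance, take $M=U_{2,2}$, $N=U_{1,2}$ and $X$ a single element: then $\delta_{N,M}(X)=0$ but $\delta_{M^*,N^*}(X)=1$. So your direct extension argument is the cleaner, and actually sound, way to handle the cofinitary case of (ii).
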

\begin{proof}
  If $N$ is finitary, then (\ref{discdefined}) is immediate from Lemma~\ref{basispairstable}. 
  If $M$ is cofinitary, then let $(I_0,I)$ be an $(N,M)$-basis pair for $X$. 
  Then $(I_0,I)$ is also an $(N | X, M | X)$-basis pair for $X$, 
  so $(X-I, X-I_0)$ is an $((M|X)^*, (N|X)^*)$-basis pair for $X$. Since $(M|X)^*$ is finitary, 
  we have 
  \[\delta_{N,M}(X) = \delta(N|X,M|X) = \delta((M|X)^*, (N|X)^*) = |(X-I) - (X-I_0)| = |I-I_0|.\]

  If $N$ is finitary, then (\ref{discmono}) follows from Lemma~\ref{basispairmono} and the definition of discrepancy. 
  If $M$ is cofinitary, then $M^*$ is finitary, so $\delta_{N,M}(X) = \delta_{M^*,N^*}(X) \le \delta_{M^*,N^*}(Y) = \delta_{N,M}(Y)$. 

  Finally, let $(I_0,I)$ be an $(N,M)$-basis pair for a set $X \ss E$. 
  Then $r_M(X) = |I| = |I_0| + |I - I_0| = r_N(X) + \delta_{N,M}(X)$, 
  and $n_N(X) = |X - I_0| = |X - I| + |I-I_0| = n_M(X) + \delta_{N,M}(X)$. 
\end{proof}

The finitary/cofinitary hypotheses are not the broadest setting where this theorem holds. 
For example, if $N = P \con A$ is a projection of $M = P \del A$, then it is routine 
to show that $\delta_{N,M}(X) = r_P((X \cup A) | X)$, from which the properties of Theorem~\ref{discgood} above can be derived.
It seems like an interesting question to characterize the largest range of cases where 
discrepancy is well-behaved (i.e. satisfies (\ref{discdefined}) of Lemma~\ref{discgood}). 

Lemma~\ref{badgraphic} shows that it is \emph{not} enough to insist that $M$ is cofinitary and $N$ is finitary;
In the matroids $N$ and $M$ it gives, 
both $(B,B)$ and $(B_M,B_N)$ are $(M,N)$-basis pairs, 
so $\delta(M,N) = |B-B| = 0$, but $|B_M - B_N| = \infty$. 
Even if (\ref{discmono}) and (\ref{discfinite}) and (\ref{discnullity}) still happen to hold, 
this feels unimpressive in the context of~(\ref{discdefined}) failing.

Finally, we show that we can recover the conclusion from Lemma~\ref{commonbasis} if we know that $N$ is a finitary quotient of $M$. 

\begin{theorem}\label{finitarycommonbasis}
  Let $N$ be a quotient of a matroid $M$ such that $M$ and $N$ have a common basis. If $N$ is finitary or $M$ is cofinitary, 
  then $M = N$. 
\end{theorem}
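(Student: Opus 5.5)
Let $B$ be a common basis of $M$ and $N$. The plan is to show that the discrepancy $\delta(N,M)$ vanishes, and then use Theorem~\ref{discgood} to show that every $M$-basis is an $N$-basis. From that, the two matroids have the same bases, and so $M = N$.

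First, $(B,B)$ is an $(N,M)$-basis pair for $E$. By Theorem~\ref{discgood}(\ref{discdefined}), which applies under either hypothesis, every $(N,M)$-basis pair $(B_0,B')$ for $E$ therefore satisfies $|B' - B_0| = |B - B| = 0$, so $B_0 = B'$. Now let $B'$ be any basis of $M$. By the lemma on extending to basis pairs (using $E \ss \cl_M(B') \ss \cl_N(B')$), there is an $N$-basis $B_0 \ss B'$ for $E$ with $(B_0,B')$ an $(N,M)$-basis pair. Hence $B_0 = B'$, so $B'$ is a basis of $N$.

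Conversely, let $B_0$ be a basis of $N$. It is $M$-independent by Lemma~\ref{minorquot}(\ref{weakmap}), so it extends to an $M$-basis $B'$, and $(B_0,B')$ is a basis pair. The same argument gives $B_0 = B'$, so $B_0$ is a basis of $M$. Thus $M$ and $N$ have the same bases, and matroids are determined by their bases, so $M = N$.

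An alternative finish is through Lemma~\ref{quotientext}: it suffices that every circuit of $N$ is $M$-dependent. If some circuit $C$ of $N$ were $M$-independent, extend $C$ to an $M$-basis $B'$. Then $B'$ is an $N$-basis by the above, yet it contains an $N$-circuit, which is a contradiction. Either route works.

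There is essentially no obstacle here: all the work was done in Theorem~\ref{discgood}(\ref{discdefined}), which is exactly where the finitary or cofinitary hypothesis is used, via Corollary~\ref{basispairstable} and duality through Lemma~\ref{discdual}. The one point to check is that the common basis $B$ really does form an $(N,M)$-basis pair for $E$, namely that $B \ss B$ with $B$ an $N$-basis and an $M$-basis, which is trivial.
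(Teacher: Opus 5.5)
Your proposal is correct and is essentially the paper's proof: the common basis gives the basis pair $(B,B)$, so Theorem~\ref{discgood}(\ref{discdefined}) forces every $(N,M)$-basis pair $(B_0,B')$ to satisfy $B_0=B'$, and since every basis of $M$ or of $N$ lies in some basis pair, the two matroids have the same bases. Your alternative finish via Lemma~\ref{quotientext} is also valid, though not needed.
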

\begin{proof}
  Let $B$ be a common basis of $M$ and $N$. Since $(B,B)$ is an $(N,M)$-basis pair, 
  we have $\delta(N,M) = 0$, so every $(N,M)$ basis pair $(B_0,B_1)$ satisfies $|B_1 - B_0| = 0$, so $B_0 = B_1$. 
  Each basis for $N$ or $M$ is contained in some basis pair, so it follows that $N$ and $M$ have the same bases. 
\end{proof}

\section{Quotients with finite discrepancy}

Theorem~\ref{finitarycommonbasis} can be alternatively phrased as saying that, 
if $N$ is a quotient of a matroid $M$ with $N$ finitary or $M$ cofinitary, and $\delta(N,M) = 0$, then $M = N$. 
In this section, we generalize this statement by showing that, if $\delta(N,M)$ is finite, then $N$ is a projection of $M$. 

\begin{lemma}\label{disccontract}
  Let $N$ be a finitary quotient of a matroid $M$, and let $X, Y \ss E(M)$. 
  Then $\delta_{N \con X, M \con X}(Y-X) + \delta_{N,M}(X) = \delta_{N,M}(X \cup Y)$. 
\end{lemma}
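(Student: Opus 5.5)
We will compute all three discrepancies from a single, carefully nested choice of bases, and then read off the identity from a disjoint-union count. Since $N$ is finitary, so is $N \con X$. Also $N \con X \preceq M \con X$ by Lemma~\ref{minorquot}. Hence Theorem~\ref{discgood}(\ref{discdefined}) applies to both pairs $(N,M)$ and $(N\con X, M \con X)$: every basis pair computes the discrepancy. It therefore suffices to exhibit one $(N,M)$-basis pair $(I_0,I)$ for $X$ and one $(N,M)$-basis pair $(J_0,J)$ for $X \cup Y$ such that $(J_0 - I_0,\, J - I)$ is an $(N\con X, M\con X)$-basis pair for $Y - X$ and $I - I_0 \ss J - J_0$. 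We will then verify the disjoint decomposition $J - J_0 = (I - I_0) \cup ((J - I) - (J_0 - I_0))$.

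The construction runs as follows. First choose an $N$-basis $I_0$ for $X$. Extend it to an $M$-basis $I$ for $X$; this is possible since $I_0$ is $M$-independent. Next extend $I_0$ to an $N$-basis $J_0$ for $X \cup Y$. Then $J_0 \cap X = I_0$, by maximality of $I_0$ in $X$, and $J_0 - I_0$ is an $N\con X$-basis for $Y - X$. The delicate step is finding an $M$-basis $J$ for $X \cup Y$ with $J_0 \cup I \ss J$, and we need $J_0 \cup I$ to be $M$-independent. We will argue this using the flats characterization. Because $J_0 - I_0$ is independent in $N \con X$, it is independent in $M \con X$ by Lemma~\ref{minorquot}(i),(iii). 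Since $I$ is an $M$-basis of $X$, independence in $M\con X$ means $I \cup (J_0 - I_0)$ is $M$-independent. Extend it to an $M$-basis $J$ of $X \cup Y$. Then $J \cap X = I$, again by maximality, and $J - I$ is an $M\con X$-basis for $Y - X$ that contains $J_0 - I_0$. This gives the required basis pair for $Y-X$ in $(N\con X, M\con X)$.

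With these choices, $J_0 \ss J$, $I_0 \ss I$, $J_0 \cap X = I_0$ and $J \cap X = I$. Splitting $J - J_0$ according to membership in $X$ gives
\[ |J - J_0| = |(J - J_0)\cap X| + |(J-J_0) - X| = |I - I_0| + |(J - I) - (J_0 - I_0)|. \]
Cardinalities in $\enn$ add over disjoint unions, so no subtraction is needed. Applying Theorem~\ref{discgood}(\ref{discdefined}) to each term then yields $\delta_{N,M}(X \cup Y) = \delta_{N,M}(X) + \delta_{N\con X, M\con X}(Y - X)$.

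The main obstacle is justifying that discrepancy is computed by these particular pairs rather than only by a minimizing pair. This is exactly where finitarity enters: for $(N,M)$ through Corollary~\ref{basispairstable}, and for the contracted pair through the fact that contractions of finitary matroids are finitary. A secondary check is that the set $Y-X$ (rather than $Y$) is the correct ground set for the contracted pair. This holds because $\delta_{N\con X,M\con X}(Y - X)$ is defined via bases of $Y - X$ in the contraction, which is what we used.
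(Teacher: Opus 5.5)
Your proof is correct and follows the paper's approach. Like the paper, you build nested basis pairs $(I_0,I)$ for $X$ and $(J_0,J)$ for $X \cup Y$ with $J_0 \cap X = I_0$ and $J \cap X = I$, split $J - J_0$ into its parts inside and outside $X$, and apply Theorem~\ref{discgood}(\ref{discdefined}) to both $(N,M)$ and $(N \con X, M \con X)$. The only difference is the order of choices: the paper extends $I$ to $J$ first and then takes $J_0$ to be an $N$-basis of $J$ containing $I_0$ (using only $\cl_M(J) \ss \cl_N(J)$), which avoids your extra step of showing $I \cup J_0$ is $M$-independent via $N \con X \preceq M \con X$, although your step is also valid.
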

\begin{proof}
  Let $(I_0,I)$ be an $(N,M)$-basis pair for $X$. Let $J$ be an $M$-basis for $X \cup Y$ containing $I$, 
  and let $J_0$ be an $N$-basis for $I$ containing $I_0$. We have $\cl_N(J_0) = \cl_N(I) \supseteq \cl_M(I) \supseteq X \cup Y$, 
  so $(J_0, J)$ is an $(N,M)$ basis pair for $X \cup Y$. 

  By construction, the set $J_0 - I_0$ is $(N \con I_0)$-independent, so is $(N \con X)$-independent, 
  and $\cl_{N \con X}(J_0 - I_0) = \cl_N(J_0 \cup X) - X \supseteq Y - X$. 
  Similarly, the set $J - I$ is $(M \con I)$-independent, so is $(M \con X)$-independent, 
  and $\cl_{M \con X}(J - I) = \cl_M(J \cup X) - X \supseteq Y-X$. 
  It follows that $(J_0-I_0, J-I)$ is an $(N \con X, N \con X)$-basis pair for $Y-X$. 

  Since $N$ and $N \con X$ are finitary and basis pairs are nested, Lemma~\ref{discgood} gives
  \[\delta_{N \con X, M \con X}(Y-X) + \delta_{N,M}(X) = |(J - I) - (J_0 - I_0)| + |I - I_0| = |J - J_0| = \delta_{N,M}(X \cup Y),\]  
  as required. 
\end{proof}



Recall that a modular cut $\cF$ of $M$ gives rise to a single-element extension $M +_{\cF} e$ for each $e \notin E(M)$. 
We can contract such an $e$ to obtain a projection of $M$. Write $M \dcon \cF$ for the single-element projection $(M +_{\cF} e) \con e$. 
If $N$ is finitary, the flats $F$ of $M$ for which $N \dcon X = M \dcon X$ actually form a modular cut of $M$, 
and projecting through this cut maintains the property that $N$ is a quotient. 

\begin{lemma}\label{tightquot}
  Let $N$ be a finitary quotient of a matroid $M$, and
  $\cF$ be the family of flats $F$ of $M$ for which $N \dcon F = M \dcon F$. 
  Then $\cF$ is a modular cut of $M$, and moreover $N$ is a quotient of $M \dcon \cF$. 
\end{lemma}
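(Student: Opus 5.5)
The plan has two parts. First I show that $\cF$ is a modular cut, using a basis criterion for membership in $\cF$. Then I prove the quotient statement through the flat characterization in Theorem~\ref{quotequiv}(\ref{quotflat}).

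\emph{Membership criterion.} For each flat $F$ of $M$, the matroid $N \con F$ is a finitary quotient of $M \con F$ by Lemma~\ref{minorquot}. Also $N \dcon F = M \dcon F$ if and only if $N \con F = M \con F$. By Theorem~\ref{finitarycommonbasis}, this holds if and only if $N \con F$ and $M \con F$ have a common basis. Equivalently, by Theorem~\ref{discgood}, it holds if and only if $\delta_{N \con F, M \con F}(E - F) = 0$. In basis terms I will use the following form. Let $J$ be an $M$-basis of $E$ with $J \cap F$ an $M$-basis of $F$. Then $F \in \cF$ if and only if $J - F$ is independent in $N \con F$. For the forward direction, $J - F$ is an $(M \con F)$-basis, hence an $(N \con F)$-basis. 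For the converse, $J - F$ is $(N \con F)$-independent and $(M \con F)$-spanning, hence $(N \con F)$-spanning, so it is a common basis.

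\emph{Modular cut axioms.} Upward closure is immediate, since $F \ss F'$ gives $N \dcon F' = (N \dcon F) \dcon F' = (M \dcon F) \dcon F' = M \dcon F'$.

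For (\ref{forallmodpair}) and (\ref{forallmodchain}), I take a family $\cF_0 \ss \cF$ that is either a modular pair or an infinite modular chain. I fix a mutual basis for $\cF_0$ and extend it to an $M$-basis $J$ of $E$. By Lemma~\ref{blattice}, $J$ contains bases of every $F \in \cF_0$ and of $F_\cap = \cap \cF_0$. By the criterion, it suffices to show that $J - F_\cap = \bigcup_{F \in \cF_0}(J - F)$ is $(N \con F_\cap)$-independent.

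\emph{Pair case.} Write $K = (J - F') \cup (J \cap F' - F)$. The set $J \cap F' - F \ss J - F$ is $(N \con F)$-independent, hence $(N \con (F \cap F'))$-independent. The set $J - F'$ is $(N \con F')$-independent. Contracting a subset $S \ss F'$ only shrinks closures, since $\cl_N(S) \ss \cl_N(F')$. So $J - F'$ is independent in $N \con ((F \cap F') \cup (J \cap F' - F))$. Concatenating these two facts shows $K$ is independent in $N \con (F \cap F')$.

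\emph{Chain case.} The sets $J - F$ for $F \in \cF_0$ form a directed family. Each one is $(N \con F_\cap)$-independent, because $(N \con F)$-independence implies independence after contracting the smaller set $F_\cap$. Since $N \con F_\cap$ is finitary, the directed union is independent. I expect the main care to be needed here, in checking that finitarity is exactly what is used and that this monotonicity of contraction is applied correctly.

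\emph{Quotient part.} Let $M' = M +_\cF e$. By Theorem~\ref{quotequiv}(\ref{quotflat}), it suffices to show that every flat $G$ of $N$ is a flat of $M \dcon \cF = M' \con e$, i.e.\ that $\cl_{M'}(G \cup \{e\}) - \{e\} = G$. Note that $G$ is already a flat of $M$.

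If $G \in \cF$, then $e \in \cl_{M'}(G)$ and we are done.

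If $G \notin \cF$, I suppose some $f \notin G$ lies in $\cl_{M'}(G \cup \{e\})$. Then $H = \cl_M(G \cup \{f\}) \in \cF$, since $e$ is not a loop of $M' \con G$ and $e, f$ are parallel there.

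\emph{Main obstacle: deriving the contradiction from $H \in \cF$.} I apply Lemma~\ref{disccontract} to the finitary quotient $N \con G \preceq M \con G$:
\[
\delta_{N \con G, M \con G}(E - G) = \delta_{N \con G, M \con G}(H - G) + \delta_{N \con H, M \con H}(E - H).
\]
The last term is $0$ because $H \in \cF$. The first term on the right is also $0$, by Theorem~\ref{discgood}(\ref{discfinite}). Indeed, $H - G$ has rank $1$ in $M \con G$, and it also has rank $1$ in $N \con G$ because $f$ is a nonloop there, as $G$ is an $N$-flat.

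Hence $\delta_{N \con G, M \con G}(E - G) = 0$, so the two contractions share a basis. Theorem~\ref{finitarycommonbasis} then gives $G \in \cF$, which is a contradiction.
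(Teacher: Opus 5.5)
Your proof is correct. The modular-cut half is essentially the paper's argument. In the pair case you combine the $(N\con(F\cap F'))$-independence of $J\cap F'-F$ with the independence of $J-F'$ after contracting a subset of $F'$, exactly as the paper does. In the chain case your directed union of independent sets in the finitary matroid $N\con F_\cap$ is the same use of finitarity as the paper's argument, which takes a finite circuit $C\subseteq B-F_0$ of $N\dcon F_0$ and finds a member of the chain avoiding $C$. Extending the mutual basis to a basis $J$ of $M$ is a small improvement: the paper tacitly needs $B-(F_1\cap F_2)$ to be a basis of $M\dcon(F_1\cap F_2)$, which requires $B$ to be a basis of $M$.

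The quotient half takes a different route.

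\emph{The paper's route.} It tests independent sets $I$ of $M\dcon\cF$ via Theorem~\ref{quotequiv}(\ref{quotclindep}). From $f\in\cl_P(I\cup\{e\})-\cl_N(I)$ it gets $N\dcon(I\cup\{f\})=M\dcon(I\cup\{f\})$. It then builds by hand the common basis $B\cup\{f\}$ of $N\dcon I$ and $M\dcon I$, which contradicts $\cl_M(I)\notin\cF$.

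\emph{Your route.} You test flats $G$ of $N$ via Theorem~\ref{quotequiv}(\ref{quotflat}), which makes $f$ a nonloop of $N\con G$ for free. You replace the hand-built common basis with additivity of discrepancy (Lemma~\ref{disccontract}) together with Theorem~\ref{discgood}(\ref{discfinite}) for the rank-one step $H-G$.

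Both routes finish with Theorem~\ref{finitarycommonbasis}. The paper's version is more elementary and needs only that theorem at this point. Yours has lighter bookkeeping but relies on the discrepancy machinery, namely Lemma~\ref{disccontract} and the finitary well-definedness in Theorem~\ref{discgood}. That reliance is legitimate, since both results precede this lemma and you correctly check that $N\con G$ is a finitary quotient of $M\con G$.
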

\begin{proof}
  The collection $\cF$ is trivially closed under taking superflats,
  so to show it is a modular cut, 
  we need to show that $\cF$ is closed under taking intersections of modular pairs
  and modular chains. 
  
  Let $F_1, F_2$ be a modular pair in $M$, with mutual basis $B$. 
  By Lemma~\ref{blattice}, the set $B \cap F_1 \cap F_2$ is an $M$-basis for $F_1 \cap F_2$, 
  so $B - (F_1 \cap F_2)$ is a basis for $M \dcon (F_1 \cap F_2)$, 
  and therefore $B - (F_1 \cap F_2)$ is spanning in $N \dcon (F_1 \cap F_2)$. 
  For each $i \in \{1,2\}$, since $F_i \in \cF$ and the set $B \cap F_i$ is an $M$-basis for $F_i$, 
  the set $B - F_i$ is independent in $M \dcon F_i$. 

  Since $B-F_1$ is independent in $M \dcon F_1 = N \dcon F_1$, 
  the subset $I = B \cap (F_2 - F_1)$ is independent in $N \dcon (F_1 \cap F_2)$. 
  Now $B - F_2$ is independent in $M \dcon F_2$, and 
  \[M \dcon F_2 = N \dcon F_2 = N \dcon (F_1 \cap F_2) \dcon F_2,\]
  so $I \cup (B - F_2) = B - (F_1 \cap F_2)$ is independent in $N \dcon (F_1 \cap F_2)$. 
  Therefore $B - (F_1 \cap F_2)$ is a basis for both $M \dcon (F_1 \cap F_2)$ and
  $N \dcon (F_1 \cap F_2)$. The latter is a finitary quotient of the former, 
  so these two matroids are equal by Theorem~\ref{finitarycommonbasis}.
  This, in turn, implies that $F_1 \cap F_2 \in \cF$, as required. 

  Let $\cC \ss \cF$ be a chain having a mutual basis $B$, and let $F_0 = \bigcap \cC$. 
  We wish to prove that $N \dcon F_0 = M \dcon F_0$, and as before, it suffices to 
  show that $B-F_0$ is a basis for both. By Lemma~\ref{blattice}, 
  we know that $B \cap F_0$ is an $M$-basis for $F_0$, so $B-F_0$ is a basis for $M \dcon F_0$. 
  The set $B-F_0$ is spanning in $N \dcon F_0$ and therefore in $M \dcon F_0$, 
  so we need to show it is independent; let $C \ss B - F_0$ be a circuit of $N \dcon F_0$. 

  Since $N$ is finitary, $C$ is finite. Since $C$ is disjoint from $F_0 = \bigcap \cC$, 
  for each $e \in C$ there is some $F_e \in \cC$ with $e \notin \cC$. The family
  $\{F_e : e \in C\}$ is a finite subchain of $\cC$, so contains a minimal element $F$, 
  which is a set in $\cC$ containing $F_0$ and disjoint from $C$. 
  The set $B \cap F$ is a basis for $F$ in $M$, and $C \ss B - F$, 
  so $C$ is independent in $M \dcon F = N \dcon F$.
  On the other hand, $C$ is a circuit of $N \dcon F_0$, and $F_0 \ss F$, 
  so $C$ is dependent in $N \dcon F$, giving a contradiction.

  We now wish to show that $N \preceq M \dcon \cF$. Let $e \notin E(M)$ and $P = M +_{\cF} e$, 
  so we have $P \del e = M$ and $P \con e = M \dcon \cF$, and $\cF$ is precisely the collection
  of flats $F$ of $M$ for which $e \in \cl_P(F)$. 

  \begin{claim}\label{consame}
    Each $X \ss E$ satisfies $N \dcon X = M \dcon X$ if and only if $\cl_M(X) \in \cF$. 
  \end{claim}
  \begin{subproof}
    Since $X \ss \cl_M(X) \ss \cl_N(X)$, we have $N \dcon X = N \dcon \cl_M(X)$, 
    so $N \dcon X = M \dcon X$ if and only if $N \dcon \cl_M(X) = M \dcon \cl_M(X)$, 
    as required. 
  \end{subproof}

  By Lemma~\ref{quotequiv}, it suffices to show that $\cl_{M \dcon \cF}(I) \ss \cl_N(I)$ for 
  every independent set $I$ of $M \dcon \cF$,
  or equivalently that $\cl_P(I \cup \{e\}) - \{e\} \ss \cl_N(I)$ for all such $I$.  
  We may thus assume that there exists $f \ne e$ 
  for which $f \in \cl_P(I \cup \{e\})$ but $f \notin \cl_N(I)$. 
  
  The independence of $I$ in $M \dcon \cF$ gives that $I$ is independent in $M$, 
  and also that $\cl_M(I) \notin \cF$, so $e \notin \cl_P(I)$. 
  Therefore $I \cup \{e\}$ is independent in $P$. Since $f \notin \cl_N(I)$ 
  and $N \preceq M$, we also have $f \notin \cl_M(I)$, so $f \notin \cl_P(I)$.  
  Thus $e \in \cl_P(I \cup \{f\})$ by the closure exchange axiom,
  which implies that $\cl_M(I \cup \{f\}) \in \cF$, 
  and therefore $N \dcon (I \cup \{f\}) = M \dcon (I \cup \{f\})$ by \ref{consame}.

  Let $B$ be a basis for this matroid. 
  Since $f \notin \cl_N(I)$, we see that $f$ is a nonloop of $N \dcon I$, and therefore, also of $M \dcon I$. 
  It follows that $B \cup \{f\}$ is a basis for $N \dcon I$ and $M \dcon I$, so $M \dcon I = N \dcon I$ by Lemma~\ref{commonbasis}. 
  But we have established that $\cl_M(I) \notin \cF$, so this contradicts~\ref{consame}. 
\end{proof}

We can now prove Theorem~\ref{quotientisprojectintro}; the statement below 
is equivalent, because a basis $B$ of $N$ for which $M \con B$ has finite rank 
implies that $\delta(N,M) < \infty$. 

\begin{theorem}\label{quotientisproject}
  Let $N$ be a quotient of a matroid $M$. If $N$ is finitary or $M$ is cofinitary, and $\delta(N,M) < \infty$, 
  then $N$ is a projection of $M$. 
\end{theorem}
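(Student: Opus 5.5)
Argue by induction on $k = \delta(N,M)$, first reducing to the case where $N$ is finitary via duality. If $M$ is cofinitary, then $M^*$ is finitary. By Theorem~\ref{quotequiv} we have $M^* \preceq N^*$, and by Lemma~\ref{discdual}, $\delta(M^*,N^*) = \delta(N,M) < \infty$. Once the finitary case is proved, $M^*$ is a projection of $N^*$, so $N$ is a projection of $M$ by the duality lemma for projections. We may therefore assume $N$ is finitary. If $k = 0$, then any $(N,M)$-basis pair $(B_0,B)$ has $B_0 = B$ by Theorem~\ref{discgood}(\ref{discdefined}), so $N$ and $M$ share a basis and $N = M$ by Theorem~\ref{finitarycommonbasis}. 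Then $N$ is trivially a projection of $M$: take $P = M$ and $X = \es$.

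For $k \ge 1$, let $\cF$ be the modular cut of Lemma~\ref{tightquot}, consisting of the flats $F$ of $M$ with $N \dcon F = M \dcon F$, and set $M_1 = M \dcon \cF = P \con e$, where $P = M +_{\cF} e$. Lemma~\ref{tightquot} gives $N \preceq M_1$, and $M_1$ is a single-element projection of $M$. The key step is to show $\delta(N,M_1) = k-1$. Then induction gives that $N$ is a projection of $M_1$, and Theorem~\ref{bigprojofseq} (Theorem~\ref{projectseq}), applied to the chain $M = M_0, M_1, \dots$ of single-element projections, combines everything into one projection. Formally, I would strengthen the induction hypothesis to say that $N$ is reachable from $M$ by exactly $\delta(N,M)$ single-element projections, and then apply Theorem~\ref{projectseq} once at the end. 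Since $N$ stays finitary throughout, the discrepancy theory of Theorem~\ref{discgood} applies at every stage.

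To compute $\delta(N,M_1)$, first check that $E(M) \in \cF$: since $N \dcon E = M \dcon E$ (all elements become loops), $\cF$ is nonempty. Next check that $\cF$ does not contain $\cl_M(\es)$. Otherwise $N \dcon \cl_M(\es) = M \dcon \cl_M(\es)$, which forces $N = M$ and contradicts $k \ge 1$. Hence $e$ is a nonloop of $P$ that is spanned by $E$, so $r(M_1)$ is one less than $r(M)$ in the sense of basis pairs. Concretely, take a basis $B$ of $M$ with $B \cup \{e\}$ dependent in $P$. Choose $b \in B$ so that $(B - \{b\}) \cup \{e\}$ is a basis of $P$; this is possible since $e \in \cl_P(B)$ and $e$ is not a loop. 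Then $B - \{b\}$ is a basis of $M_1$.

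This step is the main obstacle: choosing $b$ so that $B - \{b\}$ contains an $N$-basis $B_0$. Start from an $(N,M)$-basis pair $(B_0,B)$. We want $b \in B - B_0$ with $e \in \cl_P(B - \{b\})$ failing. Since $B$ is $M$-independent, $e \notin \cl_P(B - \{b\})$ exactly when $\cl_M(B - \{b\}) \notin \cF$. Suppose instead that $\cl_M(B - \{b\}) \in \cF$ for every $b \in B - B_0$. These finitely many flats (there are $k$ of them) form a modular family certified by $B$, so Lemma~\ref{modcutinter} puts their intersection $\cl_M(B_0)$ in $\cF$. That means $N \dcon B_0 = M \dcon B_0$. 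But $B_0$ spans $N$, so $N \dcon B_0$ has rank zero, whereas $B - B_0$ is a nonempty independent set of $M \dcon B_0$. This is a contradiction. So some $b \in B - B_0$ works, and $(B_0, B - \{b\})$ is an $(N,M_1)$-basis pair, since $B - \{b\} \supseteq B_0$ and $B - \{b\}$ is an $M_1$-basis. By Theorem~\ref{discgood}, $\delta(N,M_1) = |B - \{b\} - B_0| = k - 1$, which closes the induction.
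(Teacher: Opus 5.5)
Your proof is correct and follows the paper's plan: reduce to $N$ finitary by duality, induct on $\delta(N,M)$, and project by the modular cut $\cF$ of Lemma~\ref{tightquot}. The one real difference is how you show $\delta(N, M \dcon \cF) = \delta(N,M) - 1$, and there the paper's route is shorter. You start from an $(N,M)$-basis pair and need Lemma~\ref{modcutinter} to find some $b \in B - B_0$ whose removal leaves a basis of $M \dcon \cF$. The paper instead starts from an $(N, M \dcon \cF)$-basis pair $(B_0,B)$ and adds an element $f$ with $B \cup \{f\}$ a basis of $P = M +_{\cF} e$; such an $f$ exists because $e$ is neither a loop nor a coloop, so no choice argument is needed. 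On the other hand, your explicit appeal to Theorem~\ref{projectseq} to compose the projections makes precise a step the paper leaves implicit.
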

\begin{proof}
  By duality and Lemma~\ref{discdual},
  we may assume that $N$ is finitary. We prove this by induction on the value of $d = \delta(N,M)$. 
  If $d = 0$, then $N$ and $M$ have a common basis and therefore $N = M$ by Lemma~\ref{commonbasis}, 
  so suppose that $d > 0$ and that the result holds for matroids $M'$ where $\delta(N,M') < d$. 

  Let $\cF$ be the set of flats $F$ of $M$ for which $N \dcon F = M \dcon F$. 
  We know from Lemma~\ref{tightquot} that $\cF$ is a modular cut, and that $N \preceq M \dcon \cF$. 
  Let $P$ be a matroid so that $P \del e = M$ for some $e$, and $P \con e = M \dcon \cF$. 
  
  Let $(B_0,B)$ be an $(N, M \dcon \cF)$-basis pair. 
  Since $d \ne 0$, we have $M \ne M \dcon \cF$, so $e$ is not a loop or coloop of $P$. 
  Therefore $B \cup \{e\}$ is a basis for $P$. Since $e$ is not a coloop, there is some $f \ne B \cup \{e\}$ 
  such that $B \cup \{f\}$ is a basis for $P$ and also, therefore, for $M$. 
  Therefore $(B_0, B \cup \{f\})$ is an $(N,M)$-basis pair, which implies that $\delta(N,M) = \delta(N, M \dcon \cF) + 1$. 
  By induction, we see that $N$ is a projection of $M \dcon \cF$. Since $M \dcon \cF$ is a projection of $M$, 
  it follows that $N$ is a projection of $M$. 
\end{proof}


\section{Connectivity}

\subsection*{Set Families}
Recall that Definition~\ref{lcdef} defines $\sqcap_M(X,Y) = n_M(I \cup J) + |I \cap J|$ for 
bases $I,J$ for $X$ and $Y$, 
and that, for a partition $\ab{X_a : a \in A}$ of $E(M)$ with respective bases $\ab{I_a : a \in A}$, 
Definition~\ref{gencon} defines $\lambda_M\br{\ab{X_a}} = n_M(\cup_a I_a)$. 

As discussed in the introduction, it is possible to simultaneously generalize these two definitions
to a notion of `local connectivity' for arbitrary indexed collections $\ab{X_a}$. 
In this section, we take this approach, first defining this generalization 
and proving the standard lemmas, 
then deriving consequences for the local connectivity of pairs 
and the connectivity of partitions. 

If the $X_a$ are disjoint, we can reasonably define $\sqcap(\ab{X_a})$
by choosing bases $\ab{I_a}$ and taking $\sqcap_M(\ab{X_a}) = n_M(\cup_a I_a)$. 
However, this will not work if the $X_a$ are not disjoint; 
as is evident from that $|I \cap J|$ term in Definition~\ref{lcdef}, 
we have $\sqcap_M(I,J) \ne n_M(I \cup J)$. 

Luckily, this issue is fairly superficial; 
our definition amounts to sidestepping disjointness requirements by using parallel extensions.
Since it requires extending infinitely many elements infinitely many times, 
we take care to precisely say what this means by giving an auxiliary definition. 

\begin{definition}
Given a matroid $M$ and a function $f : S \to E(M)$, 
the \emph{preimage} $f^{-1}(M)$ is the matroid with ground set $S$, 
in which a set $I \ss S$ is independent if and only if $f$ is injective on $S$, and the image $f(I)$ 
is independent in $M$. 
\end{definition}

We omit the routine proof that this is a matroid
(which is formalized in [\ref{mathlib}] as \smalltt{Matroid.comap}). 
Informally, the preimage
is obtained by replacing each $e \in E(M)$ with a set $f^{-1}(e) \ss S$ of parallel elements (or loops, if $e$ is a loop). 
If $X \ss S$, and the restriction of $f$ to $X$ is injective,
then $f$ yields an isomorphism between the two matroids $f^{-1}(M) | X$ and $M | f(X)$. 
It is also easy to see that for all $X \ss S$, 
we have $\cl_{f^{-1}(M)}(X) = f^{-1}(\cl_M(f(X))$ 
and $r_{f^{-1}(M)}(X) = r_M(f(X))$. 
We also use the fact that $f^{-1}(M \dcon f(C)) = f^{-1}(M) \dcon C$ 
for all $C \ss S$; this is most easily proved by showing that the two matroids
have the same ground set and closure function. 

We define local connectivity for arbitrary (non-disjoint) collections of sets 
by mapping bases which may intersect in $M$ to disjoint bases in an appropriate preimage of $M$, 
then taking the nullity of the union in this preimage. 

\begin{definition}\label{mlcdef}
  Let $M$ be a matroid, and let $\ab{X_a : a \in A}$ be sets in $M$.
  Let $\pi \colon E(M) \times A \to E(M)$ be the projection map defined by $\pi(e, a) = e$. 
  Then the \emph{local connectivity} of $\ab{X_a}$ 
  is the expression $\sqcap_M(\ab{X_a}) = n_{\pi^{-1}(M)}\br{\cup_{a \in A} (I_a \times \{a\})}$, 
  where $I_a$ is a basis for $X_a$ in $M$ for each $a \in A$. 
\end{definition}


We first prove that this is well-defined (independent of the choice of the $I_a$). 
\begin{lemma}
  Let $\ab{X_a : a \in A}$ be an indexed collection of sets in a matroid $M$, and for each $a \in A$, let $I_a$ and $J_a$ 
  be bases for $X_a$ in $M$. Then 
  \[n_{\pi^{-1}(M)}\br{\cup_{a \in A} (I_a \times \{a\})} = n_{\pi^{-1}(M)}\br{\cup_{a \in A} (J_a \times \{a\})}.\]
\end{lemma}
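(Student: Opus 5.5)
The plan is to apply Lemma~\ref{nullityunion}(\ref{nueq}) in the matroid $M' = \pi^{-1}(M)$ to the collections $\ab{I_a \times \{a\}}$ and $\ab{J_a \times \{a\}}$. Both collections are pairwise disjoint in $E(M) \times A$, because their second coordinates differ. So it suffices to check, for each $a \in A$, two things: that $n_{M'}(I_a \times \{a\}) = n_{M'}(J_a \times \{a\})$, and that $\cl_{M'}(I_a \times \{a\}) = \cl_{M'}(J_a \times \{a\})$.

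For the nullity condition, $\pi$ is injective on $E(M) \times \{a\}$. Hence $M' | (I_a \times \{a\})$ is isomorphic to $M | I_a$, which is free since $I_a$ is independent in $M$. So $I_a \times \{a\}$ is independent in $M'$ and has nullity zero. The same argument applies to $J_a \times \{a\}$, so both nullities are $0$.

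For the closure condition, use the stated identity $\cl_{\pi^{-1}(M)}(X) = \pi^{-1}(\cl_M(\pi(X)))$. This gives $\cl_{M'}(I_a \times \{a\}) = \pi^{-1}(\cl_M(I_a))$ and $\cl_{M'}(J_a \times \{a\}) = \pi^{-1}(\cl_M(J_a))$. Both $I_a$ and $J_a$ are bases for $X_a$, so $\cl_M(I_a) = \cl_M(X_a) = \cl_M(J_a)$, and the two closures coincide.

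Lemma~\ref{nullityunion}(\ref{nueq}) then yields the desired equality of nullities of the unions. The only point needing care is the closure identity for preimages. If one wanted to verify it directly, it follows from the definition: $(e,b)$ lies in the closure of $X$ exactly when some finite or infinite circuit witnesses dependence, which transfers along $\pi$ using injectivity on independent sets and the fact that elements with equal image are parallel, or loops when the image is a loop. I expect no genuine obstacle, since the earlier lemma does all the infinite bookkeeping.
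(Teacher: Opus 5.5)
Your proof is correct and is essentially the paper's argument. You apply Lemma~\ref{nullityunion}(\ref{nueq}) in $\pi^{-1}(M)$, use injectivity of $\pi$ on $E(M) \times \{a\}$ to show that $I_a \times \{a\}$ and $J_a \times \{a\}$ both have nullity zero, and use the preimage closure identity to show that both have closure $\pi^{-1}(\cl_M(X_a))$. You also explicitly check the pairwise disjointness hypothesis of that lemma, which the paper leaves implicit.
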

\begin{proof}
  We prove this by using Lemma~\ref{nullityunion}(\ref{nueq}), so we have to show that for each $a \in A$, 
  the sets $I_a \times \{a\}$ and $J_a \times \{a\}$ have the same nullity and closure in $\pi^{-1}(M)$.  
  For each $a \in A$, the function $\pi$ is injective on $X_a \times \{a\}$, 
  so we have 
  \[n_{\pi^{-1}}(M)(I_a \times \{a\}) = n_M(\pi(I_a \times \{a\})) = n_M(I_a) = 0,\] 
  and similarly $n_M(J_a) = 0$. Moreover, 
  \[\cl_{\pi^{-1}(M)}(I_a \times \{a\}) = \pi^{-1}\br{\cl_M(\pi(I_a \times \{a\}))}= \pi^{-1}\br{\cl_M(I_a)} = \pi^{-1}\br{\cl_M(X_a)},\] 
  and the same holds for $J_a$, so the result follows. 
\end{proof}

We now give a few ways to interpret this definition in terms of the properties of the matroid $M$ itself, 
rather than its preimage.  
In particular, property (\ref{mlcdj}) below implies that $\sqcap_M(X_a : a \in A) = \lambda_M(X_a : a \in A)$ in the 
case where $X_a$ is a partition of $E(M)$, and (\ref{mlcpair}) gives that the indexed version of $\sqcap$ agrees 
with the version for pairs in Definition~\ref{lcdef}. Property (\ref{mlcrank}) shows that familiar formula 
for rank holds in the finite-rank case. 

\begin{lemma}\label{mlcprop}
  Let $\ab{X_a : a \in A}$ be sets in a matroid $M$, 
  and for each $a \in A$, let $I_a$ be a basis for $X_a$ in $M$. Then 
  \begin{enumerate}[(i)]
    \item\label{mlcndj} $\sqcap_M(\ab{X_a}) 
    = n_M\br{\cup_{a \in A}I_a} + \sum_{e \in \cup_a I_a} \br{\abs{\setof{a \in A}{e \in I_a}}-1}$;
    \item\label{mlcdj} if the sets $I_a$ are pairwise disjoint, 
    then $\sqcap_M(\ab{X_a}) = n_M(\cup_{a \in A}I_a)$;
    \item\label{mlcpair} 
      if $A = \{1,2\}$, then $\sqcap_M(\ab{X_a}) = n_M(I_1 \cup I_2) + |I_1 \cap I_2|$;
    \item\label{mlcrank} $\sqcap_M(\ab{X_a}) + r_M\br{\cup_{a \in A} X_a} = \sum_{a \in A} r_M(X_a)$.
    \item\label{mlcpartbasis} if $\ab{J_a}$ are pairwise disjoint sets so that $J_a \ss I_a$ for each $a$, 
      and $\cup_a J_a$ is a basis for $\cup_a I_a$, 
      then $\sqcap_M(\ab{X_a}) = \sum_{a \in A}|I_a - J_a|$. 
  \end{enumerate}
\end{lemma}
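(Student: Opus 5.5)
Let $\hat M = \pi^{-1}(M)$ and $\hat I = \cup_a (I_a \times \{a\})$, so that $\sqcap_M(\ab{X_a}) = n_{\hat M}(\hat I)$ by Definition~\ref{mlcdef}. The plan is to compute this nullity directly from a well-chosen basis of $\hat I$ in $\hat M$, and then read off each part. Let $U = \cup_a I_a$. Since $\pi$ maps $\hat I$ onto $U$, and a subset of $\hat I$ is $\hat M$-independent if and only if $\pi$ is injective on it with independent image, a basis of $\hat I$ in $\hat M$ is obtained as follows. Choose a basis $K$ of $U$ in $M$, and for each $e \in K$ choose one index $a_e$ with $e \in I_{a_e}$. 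Then $\hat K = \{(e,a_e) : e \in K\}$ is independent, and it spans $\hat I$ because $\cl_{\hat M}(\hat K) = \pi^{-1}(\cl_M(K)) \supseteq \pi^{-1}(U)$. Therefore
\[ n_{\hat M}(\hat I) = |\hat I - \hat K| = \sum_{e \in U} |\{a : e \in I_a\}| - |K|. \]
Since $\enn$ lacks subtraction, I will instead write this as a sum over fibres. Each $e \in U - K$ contributes its whole fibre, of size $|\{a: e\in I_a\}| = 1 + (|\{a : e \in I_a\}|-1)$, and each $e \in K$ contributes $|\{a : e \in I_a\}| - 1$. Summing, $n_{\hat M}(\hat I) = |U - K| + \sum_{e \in U}(|\{a : e \in I_a\}|-1)$. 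Because $n_M(U) = |U - K|$, this is exactly (\ref{mlcndj}).

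Parts (\ref{mlcdj}) and (\ref{mlcpair}) are immediate specializations. In the disjoint case every correction term vanishes. When $A = \{1,2\}$, the correction sum equals $|I_1 \cap I_2|$.

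For (\ref{mlcpartbasis}), I will apply the same basis computation with $K = \cup_a J_a$ and $a_e$ the unique index with $e \in J_a$. The complement $\hat I - \hat K$ is then exactly $\cup_a ((I_a - J_a)\times\{a\})$, since $(e,a) \in \hat K$ if and only if $e \in J_a$. This gives $\sum_a |I_a - J_a|$ directly, with no subtraction needed.

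For (\ref{mlcrank}), I will choose $K$ as in (\ref{mlcpartbasis}). To do this, take a basis $K$ of $U$ and partition it into sets $J_a \ss I_a$ by assigning each $e \in K$ to some $a$ with $e \in I_a$. Then
\[ \sqcap_M(\ab{X_a}) + r_M(\cup_a X_a) = \sum_a |I_a - J_a| + \sum_a |J_a| = \sum_a |I_a| = \sum_a r_M(X_a), \]
using $r_M(\cup_a X_a) = r_M(U) = |K|$, which holds because $\cl_M(U) \supseteq \cup_a X_a$. The main obstacle is purely bookkeeping: every identity must be arranged as a sum of cardinalities in $\enn$, never as a subtraction. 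The $-1$ terms in (\ref{mlcndj}) are harmless, since each $e \in U$ lies in at least one $I_a$. Infinite sums of $\enn$-values are fine, being cardinalities of disjoint unions.
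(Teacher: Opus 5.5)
Your proof is correct and follows the paper's approach: you compute $n_{\pi^{-1}(M)}\br{\cup_a (I_a \times \{a\})}$ from an explicit lift to a basis in the preimage, and you handle (v) exactly as the paper does. The differences are cosmetic. You lift only a basis $K$ of $\cup_a I_a$, so the lift is directly a basis, whereas the paper lifts a transversal of all of $\cup_a I_a$ and invokes Lemma~\ref{nullityprop}(\ref{nullityssclosure}). You also deduce (iv) from (v), whereas the paper uses the identity $r + n = |\cdot|$ in the preimage.
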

\begin{proof}
  Let $M' = \pi^{-1}(M)$ and Let $I' = \cup_a (I_a \times \{a\})$, 
  so $\sqcap_M(\ab{X_a}) = n_{M'}(I')$ by definition. 
  Let $\phi : \cup_a I_a \to A$ be a function such that $e \in I_{\phi(e)}$ 
  for all $e \in \cup_a I_a$. 
  Let $\wh{I} = \{(e, \phi (e)) : e \in \cup_a I_a\}$. We have 
  \[\wh{I} \ss I' \ss \cl_{M'}(I') = \pi^{-1}(\cl_M(\pi(I'))) = \pi^{-1}(\cl_M(\pi(\wh{I}))
  = \cl_{M'}(\wh{I}).\]
  
  Since $\pi$ is injective on $\wh{I}$ and $\pi(\wh{I}) = \cup_{a}I_a$, 
  the matroids $M' | \wh{I}$ and $M | \cup_a I_a$ are isomorphic;
  it follows that $n_{M'}(\wh{I}) = n_M(\cup_a I_a)$. 
  Lemma~\ref{nullityprop}(\ref{nullityssclosure}) thus gives
  \[\sqcap_M(\ab{X_a}) = n_{M'}(I') = n_{M'}(\wh{I}) + |I' - \wh{I}| = n_M(\cup_a I_a) + |I' - \wh{I}|.\]
  but now the choice of $I'$ and $\wh{I}$ yields
  \[\abs{I' - \wh{I}\,} = \abs{\setof{(e,a)}{e \in I_a, a \ne \phi(e)}} = \sum_{e \in \cup_a I_a} \br{\abs{\setof{a}{e \in I_a}} -1},\]
  which implies (\ref{mlcndj}). 

  Now (\ref{mlcdj}) follows immediately from (\ref{mlcndj}), since each summand on the 
  right is zero if the sets $I_a$ are disjoint. It is also clear that (\ref{mlcdj}) implies (\ref{mlcpair}), 
  since each nonzero summand is $1$, and there is one such summand for each element of $I_1 \cap I_2$. 
  
  For (\ref{mlcrank}), 
  note that the condition $r_M(\cup_a I_a) < \infty$ implies that each $I_a$ is finite. 
  We have $r_M(\cup_a X_a) = r_M(\cup_a I_a) = r_M(\pi(I')) = r_M'(I')$, 
  which gives that 
  \[\sqcap_M(\ab{X_a}) + r_M(\cup_a X_a) = n_{M'}(I') + r_{M'}(I') = |I'| = \sum_a |I_a| = \sum_{a}r_M(X_a),\]
  so (\ref{mlcrank}) holds.  

  Finally, consider sets $\ab{J_a}$ as in (\ref{mlcpartbasis}), and let $J' = \cup_a (J_a \times \{a\})$. 
  Since the $J_a$ are disjoint, and $\pi(J') = \cup_a J_a$, it is easy to see that $J'$ is a basis for $I'$ in $M'$, 
  so $\sqcap_{M}(\ab{I_a}) = n_{M'}(I') = \abs{I' - J'} = \sum_{a}|I_a - J_a|$, as required for (\ref{mlcpartbasis}). 
\end{proof}

\begin{corollary}\label{lcclosure}
  If $\ab{X_a : a \in A}$ is a collection of sets in a matroid $M$, 
  then $\sqcap_M(\ab{X_a}) = \sqcap_M(\ab{\cl_M(X_a)})$. 
\end{corollary}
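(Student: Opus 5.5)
The plan is to compute both sides of the corollary with the same choice of bases. For each $a \in A$, let $I_a$ be a basis for $X_a$ in $M$. Since $\cl_M(I_a) = \cl_M(X_a)$ and $I_a \ss X_a \ss \cl_M(X_a)$, the set $I_a$ is a maximal independent subset of $\cl_M(X_a)$. So $I_a$ is also a basis for $\cl_M(X_a)$ in $M$.

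Definition~\ref{mlcdef} evaluates $\sqcap_M$ using arbitrary bases of the sets in the collection, and the preceding lemma shows the value does not depend on which bases are chosen. Both $\sqcap_M(\ab{X_a})$ and $\sqcap_M(\ab{\cl_M(X_a)})$ may therefore be computed with the same family $\ab{I_a}$. Each is then equal to $n_{\pi^{-1}(M)}\br{\cup_{a \in A}(I_a \times \{a\})}$. Equivalently, by Lemma~\ref{mlcprop}(\ref{mlcndj}), each is equal to the expression on the right of that identity, which involves only the $I_a$.

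The only point that needs checking is that a basis of $X_a$ is a basis of $\cl_M(X_a)$. This is the standard fact recorded above, so no step here should be an obstacle.
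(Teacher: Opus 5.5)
Your proposal is correct and is essentially the paper's proof: the paper also takes bases $I_a$ of the $X_a$, notes that each $I_a$ is also a basis of $\cl_M(X_a)$, and concludes immediately from Lemma~\ref{mlcprop}(\ref{mlcndj}), which expresses $\sqcap_M$ purely in terms of the chosen bases. Your alternative phrasing via Definition~\ref{mlcdef} together with the well-definedness lemma amounts to the same argument.
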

\begin{proof}
  Let $I_a$ be a basis for each $X_a$. Since each $I_a$ is also a basis for $\cl_M(X_a)$,
  the result is immediate by applying Lemma~\ref{mlcprop}(\ref{mlcndj}). 
\end{proof}

We now prove that local connectivity is preserved under taking appropriate preimages.

\begin{lemma}\label{lcpreimage}
  Let $\ab{X_a : a \in A}$ be a collection of sets in a matroid $M$, 
  and $f : S \to E(M)$ be a function. If $\ab{Y_a : a \in A}$ are subsets of $S$
  so that $f(Y_a) = X_a$ for each $a \in A$, then $\sqcap_{f^{-1}(M)}\ab{Y_a} = \sqcap_M(\ab{X_a})$. 
\end{lemma}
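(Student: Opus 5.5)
Here $f(Y_a)$ means the image set $\{f(y) : y \in Y_a\}$. The plan is to choose bases for the $Y_a$ in $M' = f^{-1}(M)$ that map bijectively onto bases for the $X_a$ in $M$. Then both sides become nullities of the same kind of set in two preimage matroids that are isomorphic on the relevant sets.

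First, for each $a \in A$, let $I_a$ be a basis for $X_a$ in $M$. Since $f(Y_a) = X_a$, for each $e \in I_a$ choose some $y \in Y_a$ with $f(y) = e$, and let $J_a \ss Y_a$ be the set of these choices. Then $f$ restricted to $J_a$ is a bijection onto $I_a$, so $J_a$ is independent in $M'$ by definition of the preimage. Moreover,
\[\cl_{M'}(J_a) = f^{-1}(\cl_M(I_a)) = f^{-1}(\cl_M(X_a)) \supseteq Y_a,\]
so $J_a$ is a basis for $Y_a$ in $M'$. (If one prefers to use only results already stated, one can instead use Lemma~\ref{mlcprop}(\ref{mlcndj}) on each side; but the direct comparison below is cleaner.)

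Next, let $\pi_S \colon S \times A \to S$ and $\pi \colon E(M) \times A \to E(M)$ be the coordinate projections, and let $g = f \times \mathrm{id}_A \colon S \times A \to E(M) \times A$. Then $\pi \circ g = f \circ \pi_S$, so
\[\pi_S^{-1}(M') = \pi_S^{-1}(f^{-1}(M)) = (f\circ\pi_S)^{-1}(M) = g^{-1}(\pi^{-1}(M)).\]
Here I use the routine fact that preimages compose, $(h \circ k)^{-1}(M) = k^{-1}(h^{-1}(M))$, which is immediate from the definition: both conditions say that $h \circ k$ is injective on $I$ and $h(k(I))$ is independent.

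Finally, $J' = \cup_a (J_a \times \{a\})$ satisfies $g(J') = \cup_a (I_a \times \{a\}) = I'$, and $g$ is injective on $J'$ because each $f|_{J_a}$ is injective. Hence $g$ gives an isomorphism between $g^{-1}(\pi^{-1}(M)) | J'$ and $\pi^{-1}(M) | I'$. Nullity is invariant under isomorphism and under restriction (Lemma~\ref{nullityprop}(\ref{nullityrestr})), so
\[\sqcap_{M'}(\ab{Y_a}) = n_{\pi_S^{-1}(M')}(J') = n_{\pi^{-1}(M)}(I') = \sqcap_M(\ab{X_a}),\]
which is the claim. The definition of local connectivity is independent of the choice of bases by the preceding lemma, so the specific choice of $J_a$ is allowed.

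The only mildly delicate step is the first one: checking that $J_a$ is a basis of $Y_a$ in $M'$ and not just an independent subset. This rests on the closure formula $\cl_{f^{-1}(M)}(X) = f^{-1}(\cl_M(f(X)))$ stated earlier; the rest is bookkeeping.
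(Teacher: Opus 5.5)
Your proof is correct and is essentially the paper's argument: the same bases $J_a \subseteq Y_a$ mapped bijectively onto the $I_a$, the same map $g = f \times \mathrm{id}_A$ with $\pi \circ g = f \circ \pi_S$, and an isomorphism between the restrictions of the two preimage matroids to $J' = \bigcup_a (J_a \times \{a\})$ and $I' = \bigcup_a (I_a \times \{a\})$. The only difference is packaging: you get the isomorphism from the composition rule for preimages plus the injective-restriction fact, whereas the paper checks independence directly, using that $f \circ \psi$ is injective on a set $X$ exactly when $\pi$ is injective on $\hat{f}(X)$.
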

\begin{proof}
  Let $\pi : E(M) \times A \to E(M)$ 
  be the map with $\pi(e,a) = e$, and $\psi : S \times A \to S$ be the map with $\psi(e,a) = e$.
  Let $\hat{f} : S \times A \to E \times A$ be the map with $\hat{f}(e,a) = (f(e),a)$. 
  Note that $f \circ \psi = \pi \circ \wh{f}$. 

  Let $I_a$ be a basis for $X_a$ in $M$ for each $a$, and let $J_a \ss Y_a$ be a set such that $f$ induces 
  a bijection between $I_a$ and $J_a$. It is clear that $J_a$ is a basis for $Y_a$ in $f^{-1}(M)$, 
  and also that $\psi^{-1}(J_a) = (\pi \circ \wh{f})^{-1}(I_a)$.
  Define matroids $M_1$ and $M_2$ by  
  $M_1 = (\psi^{-1} (f^{-1}(M))) | \cup_a \psi^{-1}(J_a)$ and $M_2 = \pi^{-1}(M) | \cup_a \pi^{-1}(I_a)$. 
  By the definition of nullity, we have $\sqcap_{f^{-1}(M)} \ab{Y_a} = r^*(M_1)$ and $\sqcap_M(\ab{X_a}) = r^*(M_2)$, 
  so it suffices to show that $\hat{f}$ induces an isomorphism between $M_1$ and $M_2$. 
  We omit the easy proof that $\hat{f}$ gives a bijection between their ground sets. 
  \begin{claim}
    For each set $X \ss E(M_1)$, the function $f \circ \psi$ is injective on $X$ if and only if $\pi$ is injective on $\hat{f}(X)$. 
  \end{claim}
  \begin{subproof}
    The restriction of $\wh{f}$ to $X$ is injective, so $\pi$ is injective on $\hat{f}(X)$ if and only if
    $\pi \circ \hat{f}$ is injective on $X$. But $f \circ \psi = \pi \circ \hat{f}$, and the claim follows. 
  \end{subproof}
  Each set $X \ss E(M_1)$ is independent if and only if $f \circ \psi$ is injective on $X$,
  and the set $(f \circ \psi)(X)$ is independent in $M$. Using the claim, the fact that $(f \circ \psi)(X) = \pi(f(X))$ 
  and the definition of independence in a preimage, it follows that $X$ is independent in $M_1$ if and only if $\hat{f}(X)$ 
  is independent in $M_2$, giving the required isomorphism. 
\end{proof}

Setting $f = \pi$ and $S = E(M) \times A$ gives the following corollary,
which implies that the local connectivity of an arbitrary collection 
can be realized as a local connectivity of a disjoint collection in an appropriate preimage. 
It can be used to reduce the proof of statements about $\sqcap$ to the disjoint case. 

\begin{corollary}\label{lctodj}
  If $\ab{X_a : a \in A}$ is a collection of sets in a matroid $M$, 
  and $\pi : E(M) \times A \to E(M)$ is the projection map, then 
  $\sqcap_{\pi^{-1}(M)}\br{\ab{X_a \times \{a\}}} = \sqcap_M(\ab{X_a})$. 
\end{corollary}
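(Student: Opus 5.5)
This is a direct instance of Lemma~\ref{lcpreimage}. Take $S = E(M) \times A$ and $f = \pi$, and for each $a \in A$ put $Y_a = X_a \times \{a\}$. Then $Y_a \ss S$ and $\pi(Y_a) = X_a$ for every $a$, so Lemma~\ref{lcpreimage} gives
\[\sqcap_{\pi^{-1}(M)}\br{\ab{X_a \times \{a\}}} = \sqcap_M(\ab{X_a}),\]
which is exactly the claim. The only thing to verify is that the hypotheses of Lemma~\ref{lcpreimage} hold. These are just that $f$ is a function from some set $S$ to $E(M)$ and that the $Y_a$ are subsets of $S$ with $f(Y_a) = X_a$; both are immediate from the definitions.

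It is worth recording why this is useful, as the paper indicates. The sets $X_a \times \{a\}$ are pairwise disjoint in $S$. So by Lemma~\ref{mlcprop}(\ref{mlcdj}), applied in $\pi^{-1}(M)$ with bases $I_a \times \{a\}$ (each of which is a basis for $X_a \times \{a\}$, since $\pi$ is injective on $X_a \times \{a\}$), the left-hand side is simply a nullity of a disjoint union of bases. This is the reduction to the disjoint case.

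There is no real obstacle here. The one point needing care is the notation: the outer $\pi^{-1}(M)$ lives on $E(M) \times A$, and inside the proof of Lemma~\ref{lcpreimage} a second projection $\psi$ on $(E(M) \times A) \times A$ appears. One should check that nothing in that lemma required $S$ to differ from $E(M) \times A$, and it did not, since $S$ and $f$ were arbitrary there.
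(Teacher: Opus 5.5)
Your proof is correct. It follows the derivation signposted by the sentence just before the corollary: specialize Lemma~\ref{lcpreimage} to $S = E(M)\times A$, $f = \pi$, $Y_a = X_a \times \{a\}$.

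The paper's written proof, however, does not use Lemma~\ref{lcpreimage}. It observes that the sets $I_a \times \{a\}$ are pairwise disjoint bases for the $X_a \times \{a\}$ in $\pi^{-1}(M)$. Lemma~\ref{mlcprop}(\ref{mlcdj}) then gives $\sqcap_{\pi^{-1}(M)}(\ab{X_a \times \{a\}}) = n_{\pi^{-1}(M)}(\cup_a (I_a \times \{a\}))$. This nullity is, word for word, the definition of $\sqcap_M(\ab{X_a})$ in Definition~\ref{mlcdef}.

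Your second paragraph already contains this argument. It lacks only the remark that the nullity you reach is, by definition, the right-hand side; with that sentence it is a complete proof in its own right, not just a comment on usefulness.

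The two routes trade off as follows. Yours is a one-line specialization of a more general invariance statement. The direct route is self-contained: it needs only that $\pi$ is injective on each $X_a \times \{a\}$. It therefore does not rely on the isomorphism argument in the proof of Lemma~\ref{lcpreimage}. That argument, as written, uses $\psi^{-1}(J_a)$ and $\pi^{-1}(I_a)$ where $J_a \times \{a\}$ and $I_a \times \{a\}$ are meant; the lemma's statement is still true.
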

\begin{proof}
  Let $I_a$ be a basis for $X_a$ in $M$ for each $a \in A$. It is easy to see 
  that $I_a \times \{a\}$ is a basis for $X_a \times \{a\}$ in $\pi^{-1}(M)$ 
  for each $a \in A$, and the sets $I_a \times \{a\}$ are pairwise disjoint. 
  By Lemma~\ref{mlcprop}(\ref{mlcdj}) and the definition of $\sqcap$, we have 
  \[\sqcap_{\pi^{-1}(M)}(\ab{X_a \times \{a\}}) 
  = n_{\pi^{-1}M}(\cup_a (I_1 \times \{a\})) = \sqcap_M(\ab{X_a}),\] 
  as required. 
\end{proof}

The next lemma handles the interactions between local connectivity and deletion; 
in particular, removing a few elements from all the sets in a collection reduces
the local connectivity, but only by a bounded amount. 

\begin{lemma}\label{mcdel}
  Let $\ab{X_a : a \in A}$ be sets in a matroid $M$, and $D \ss E(M)$. Then 
  \begin{enumerate}[(i)]
    \item\label{mcrestr} if $X_a \ss D$ for all $a \in A$, then $\sqcap_{M | D}(\ab{X}) = \sqcap_M(\ab{X})$;
    \item\label{mcmono}
      $\sqcap_M(\ab{Y_a}) \le \sqcap_M(\ab{X_a})$ for all sets $\ab{Y_a}$ with $Y_a \ss X_a$ for all $a$;
    \item\label{mcdelbd} $\sqcap_{M}(\ab{X_a - D}) \le \sqcap_M(\ab{X_a}) \le \sqcap_{M}(\ab{X_a-D}) + \sum_{a \in A}|X_a \cap D|$;
    \item\label{mcdeldj} if $\ab{X_a}$ are pairwise disjoint, 
    then $\sqcap_{M}(\ab{X_a}) \le \sqcap_{M}(\ab{X_a - D}) + |D|$. 
  \end{enumerate}
\end{lemma}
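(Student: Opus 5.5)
We prove the four parts in order, each time using Definition~\ref{mlcdef} together with Lemma~\ref{mlcprop} and the nullity facts of Lemma~\ref{nullityprop}.

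For (\ref{mcrestr}), choose $M$-bases $I_a$ for the $X_a$. Since $X_a \ss D$, each $I_a$ is also a basis for $X_a$ in $M|D$. Moreover, $n_{M|D}$ agrees with $n_M$ on subsets of $D$ by Lemma~\ref{nullityprop}(\ref{nullityrestr}). Hence the two sides of Lemma~\ref{mlcprop}(\ref{mlcndj}) coincide term by term.

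For (\ref{mcmono}), pass to $M' = \pi^{-1}(M)$. For each $a$, pick a basis $J_a$ of $Y_a$ and extend it to a basis $I_a \supseteq J_a$ of $X_a$. Then $\cup_a (J_a\times\{a\}) \ss \cup_a (I_a \times \{a\})$, so Lemma~\ref{nullityprop}(\ref{nullitymono}) in $M'$ gives $\sqcap_M(\ab{Y_a}) \le \sqcap_M(\ab{X_a})$ directly from the definition. The first inequality of (\ref{mcdelbd}) is the special case $Y_a = X_a - D$.

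For the second inequality of (\ref{mcdelbd}), use the same choice: take a basis $J_a$ of $X_a - D$ and extend it to a basis $I_a$ of $X_a$. Then $I_a - J_a \ss X_a \cap D$, since $I_a \cap (X_a - D)$ is independent and contains $J_a$, so equals $J_a$. Set $J' = \cup_a(J_a \times \{a\})$ and $I' = \cup_a(I_a\times\{a\})$. Lemma~\ref{nullityprop}(\ref{nullityleaddcard}) in $M'$ gives
\[n_{M'}(I') \le n_{M'}(J') + |I' - J'| \le n_{M'}(J') + \sum_a |X_a \cap D|,\]
which is the required bound.

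The main obstacle is (\ref{mcdeldj}): the sum $\sum_a|X_a\cap D|$ must be replaced by $|D|$ even though infinitely many indices may meet $D$. When the $X_a$ are pairwise disjoint, the sets $X_a \cap D$ are disjoint subsets of $D$, so $\sum_a |X_a \cap D| = |\cup_a (X_a\cap D)| \le |D|$ in $\enn$. This cardinal identity for disjoint unions holds in $\enn$ even for infinite families, and with it (\ref{mcdeldj}) follows from (\ref{mcdelbd}). Alternatively, and more robustly, one can argue in $M$ itself: with the $I_a$ disjoint, Lemma~\ref{mlcprop}(\ref{mlcdj}) gives $\sqcap_M(\ab{X_a}) = n_M(\cup_a I_a)$. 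Then Lemma~\ref{nullityprop}(\ref{nullityleaddcard}) gives $n_M(\cup_a I_a) \le n_M(\cup_a J_a) + |\cup_a I_a - \cup_a J_a|$, and the last set lies inside $D$.
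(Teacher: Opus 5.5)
Your proof is correct and rests on the same ingredients as the paper's. You extend bases of the smaller sets to bases of the larger ones, and you apply the bound $n(Y) \le n(X) + |Y - X|$ of Lemma~\ref{nullityprop}(\ref{nullityleaddcard}) to nested unions of bases.

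The only difference is the order of (iii) and (iv). The paper proves (iv) first, directly in $M$; this is exactly your ``alternative'' argument. It then obtains the upper bound in (iii) by applying (iv) to the disjoint sets $X_a \times \{a\}$ in $\pi^{-1}(M)$ via Corollary~\ref{lctodj}. You instead do the nullity computation directly in $\pi^{-1}(M)$ to get (iii), and then deduce (iv) from $\sum_a |X_a \cap D| = |\bigcup_a (X_a \cap D)| \le |D|$, which is equally valid.
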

\begin{proof}
  (\ref{mcrestr}) is immediate from the definition. 
  For (\ref{mcmono}), let $I_a$ be a basis for $X_a$ containing a basis $J_a$ for $Y_a$
  for each $a$. the monotonicity of nullity gives
  \begin{align*}
    \sqcap_M{\ab{Y_a}} = n_{\pi^{-1}M}(\cup_a (J_a \times \{a\})) 
    &\le n_{\pi^{-1}M}(\cup_a (I_a \times \{a\})) = \sqcap_M{\ab{X_a}}.
  \end{align*}
  as required. 
  
  We now show (\ref{mcdeldj}). For each $a \in A$, let $J_a$ be a basis for $X_a \del D$, 
  and let $I_a$ be a basis for $X_a$ containing $J_a$. Since $I_a - J_a \ss D$ for each $a$, we have 
  \[\sqcap_M(\ab{X_a}) = n_M(\cup_a I_a) \le n_M(\cup_a J_a) + \abs{\cup_a I_a - \cup_a J_a} \le n_M(\cup_a J_a) + |D|,\]
  and $\sqcap_{M \del D}(\ab{X_a - D}) = n_M(\cup_a J_a)$, so the bound follows. 
  
  The lower bound in (\ref{mcdelbd}) forllows from (\ref{mcmono}), 
  so it remains to show the upper bound. Let $M' = \pi^{-1}(M)$, 
  let $X_a' = X_a \times \{a\}$ for each $a \in A$, 
  and let $D' = \cup_{a \in A}((X_a \cap D) \times \{a\})$. 
  Since the $X_a'$ are disjoint, 
  Lemma~\ref{lctodj} and (\ref{mcdeldj}) gives that 
  \begin{align*}
      \sqcap_M(\ab{X_a}) &= \sqcap_{M'}(\ab{X_a'}) 
      \\ &\le \sqcap_{M'}(\ab{X_a' - D'}) + |D'| \\
      &= \sqcap_{M'}(\br{(X_a - D) \times \{a\}}) + \sum_{a \in A}|X_a \cap D|,
  \end{align*}
  and the bound follows by applying Lemma~\ref{lctodj} again. 
\end{proof}

\section{Connectivity and Projections}

We now treat the interactions between local connectivity, contractions and projections.
Our first relates nullity and local connectivity. 

\begin{lemma}\label{lcindep}
  If $X$ and $I$ are sets in a matroid $M$ with $I$ independent, then $\sqcap_M(X, I) = n_{M \dcon X}(I)$. 
\end{lemma}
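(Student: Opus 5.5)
Since $I$ is independent, it is its own basis in $M$. We therefore start from the pair formula of Definition~\ref{lcdef}, equivalently Lemma~\ref{mlcprop}(\ref{mlcpair}). Choose a basis $J$ for $X$ in $M$; then
\[\sqcap_M(X,I) = n_M(J \cup I) + |J \cap I|.\]
The aim is to rewrite the right-hand side as a nullity in $M \dcon J$, and then replace $J$ by $X$.

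For the first step, apply Lemma~\ref{nullityproject}(\ref{npconind}) with $C = J$. Since $J$ is independent in $M$, it gives
\[n_{M \dcon J}(I) = n_M(I \cup J) + |I \cap J|,\]
which is exactly the expression above. Equivalently, Lemma~\ref{nullityproject}(\ref{npconindind}) applies directly, because both $I$ and $J$ are independent. Hence $\sqcap_M(X,I) = n_{M \dcon J}(I)$.

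For the second step, note that $\cl_M(J) = \cl_M(X)$. The basic projection lemma (if $\cl_M(X)=\cl_M(Y)$ then $M \dcon X = M \dcon Y$) therefore gives $M \dcon J = M \dcon X$, and so $n_{M \dcon J}(I) = n_{M \dcon X}(I)$, completing the argument. This is the same manipulation used in the proof of Corollary~\ref{lcwelldef}.

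I do not expect a real obstacle. The only point needing care is the role of elements of $I \cap X$ that are not in $J$. These are loops of $M \dcon X$, and they are correctly counted on the left via $n_M(I \cup J)$, since such an element lies in $\cl_M(J)$ and creates dependence in $I \cup J$. Passing through the fixed basis $J$ and Lemma~\ref{nullityproject} means this bookkeeping never has to be done by hand.
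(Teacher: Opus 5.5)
Your proof is correct and is essentially the paper's argument. Like the paper, you fix a basis $J$ of $X$, use $M \dcon X = M \dcon J$, and apply Lemma~\ref{nullityproject} to obtain $n_{M \dcon J}(I) = n_M(J \cup I) + |J \cap I|$, which is $\sqcap_M(X,I)$ since $I$ is its own basis.
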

\begin{proof}
  Let $J$ be a basis for $X$. Then, using Lemma~\ref{nullityproject}, we have
  $n_{M \dcon X}(I) = n_{M \dcon J}(I) = n_M(J \cup I) + |J \cap I| = \sqcap_{M}(X, I)$, as required. 
\end{proof}

Note that $\sqcap_M(X,Z) \le \sqcap_M(Y,Z)$ for all $X \ss Y$; 
our next lemma gives an explicit term for the slack in this inequality.
\begin{lemma}\label{lcpairslack}
  Let $X, Y$ and $Z$ be sets in a matroid $M$ with $X \ss Y$. Then
  $\sqcap_M(X,Z) + \sqcap_{M \dcon X}(Y, Z) = \sqcap_M(Y,Z)$. 
\end{lemma}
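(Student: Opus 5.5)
Since $\sqcap$ is unchanged when sets are replaced by bases or closures (Corollary~\ref{lcclosure}), we may take suitable bases and reduce everything to nullities in projections. By Lemma~\ref{lcindep}, if $K$ is a basis for $Z$, then
\[
\sqcap_M(X,Z)=\sqcap_M(X,K)=n_{M \dcon X}(K) \quad\text{and}\quad \sqcap_M(Y,Z)=n_{M \dcon Y}(K).
\]
For the middle term, $K$ need not be independent in $M \dcon X$, so we first choose a basis $K'$ for $K$ in $M \dcon X$. Since $\cl_{M\dcon X}(K)=\cl_M(X\cup K)\supseteq Z$, the set $K'$ is also a basis for $Z$ in $M \dcon X$. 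Lemma~\ref{lcindep} applied in $M\dcon X$ then gives
\[
\sqcap_{M\dcon X}(Y,Z)=n_{(M\dcon X)\dcon Y}(K')=n_{M\dcon Y}(K'),
\]
using $(M \dcon X)\dcon Y=M\dcon (X\cup Y)=M\dcon Y$ because $X\ss Y$.

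It remains to show
\[
n_{M\dcon X}(K)+n_{M\dcon Y}(K')=n_{M\dcon Y}(K).
\]
The natural tool is the contraction identity Lemma~\ref{nullityprop}(\ref{nullitycontract}) applied in $N=M\dcon Y$ with the pair $K'\ss K$:
\[
n_N(K)=n_N(K')+n_{N\con K'}(K-K').
\]
So we need $n_{N\con K'}(K-K')=n_{M\dcon X}(K)$. Since $K'$ spans $K$ in $M\dcon X$, every element of $K-K'$ is a loop of $M\dcon X\dcon K'$. Hence by Lemma~\ref{nullityprop}(\ref{nullityssclosure}) applied in $M\dcon X$,
\[
n_{M\dcon X}(K)=n_{M\dcon X}(K')+|K-K'|=|K-K'|.
\]
Likewise $K-K'\ss \cl_M(X\cup K')\ss\cl_M(Y\cup K')$, so in $N\con K'$ the set $K-K'$ consists of loops, and its nullity is also $|K-K'|$. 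Combining these equalities gives the lemma.

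The main obstacle is the bookkeeping: we must check that the pieces glue without any subtraction in $\enn$, and that $K'$ is genuinely a basis for $Z$ in $M\dcon X$ so that Lemma~\ref{lcindep} and Corollary~\ref{lcclosure} apply there. Every step above is an additive identity, so infinite values cause no trouble.
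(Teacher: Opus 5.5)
Your proof is correct, but it assigns the roles of the sets differently from the paper.

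The paper fixes nested bases $I\subseteq J$ for $X\subseteq Y$ and keeps $Z$ as the set being projected. By Lemma~\ref{lcindep} and Corollary~\ref{lcclosure}, the three terms become $n_{M\dcon Z}(I)$, $n_{(M\dcon Z)\dcon I}(J-I)$ and $n_{M\dcon Z}(J)$. A single application of the commutation identity Lemma~\ref{nullityproject}(\ref{npcomm}) inside $M\dcon Z$ then finishes, since $I\cap(J-I)=\varnothing$.

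You instead fix a basis $K$ of $Z$ and project by $X$ and by $Y$. This forces you to introduce the auxiliary basis $K'$ of $K$ in $M\dcon X$ and to check that it still spans $Z$ there. Your key identity is the contraction formula Lemma~\ref{nullityprop}(\ref{nullitycontract}) in $M\dcon Y$. You combine it with the observation that $K-K'$ becomes a set of loops once $K'$ is projected, in both $M\dcon X$ and $M\dcon Y$; this is where you use $\cl_M(X\cup K')\subseteq\cl_M(Y\cup K')$.

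The paper's bookkeeping is slightly lighter. The hypothesis $X\subseteq Y$ directly supplies nested bases, $J-I$ is at once a basis for $Y$ in $M\dcon I=M\dcon X$, and the whole computation happens inside the single matroid $M\dcon Z$. Your version has the merit of making the slack concrete. It exhibits $\sqcap_M(X,Z)=|K-K'|$ as the number of elements of a fixed basis of $Z$ that already become redundant after projecting $X$. The remaining term $\sqcap_{M\dcon X}(Y,Z)=n_{M\dcon Y}(K')$ then accounts for the survivors.
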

\begin{proof}
  Let $J$ be a basis for $Y$ containing a basis $I$ for $X$. Note that $J - I$ is a basis for $Y$ in $M \dcon I = M \dcon X$. 
  By Lemma~\ref{nullityproject} and Lemma~\ref{lcclosure}, we have
  \begin{align*}
    \sqcap_M(X,Z) + \sqcap_{M \dcon X}(Y, Z) 
    &= \sqcap_M(I, Z) + \sqcap_{M \dcon I}(J - I, Z) \\ 
    &= n_{M \dcon Z}(I) + n_{M \dcon I \dcon Z}(J - I) \\
    &= n_{(M \dcon Z) \dcon I}(J - I) + n_{M \dcon Z}(I) \\
    &= n_{M \dcon Z}((J - I) \cup I) + |I \cap (J - I)| \\
    &= n_{M \dcon Z}(J),
  \end{align*}
  and the result now follows from Lemma~\ref{lcindep}.   
\end{proof}

Using the simple facts that $X \ss X \cup Y$ and $\sqcap_{M \dcon X}(X \cup Y, Z) = \sqcap_{M}(Y, Z)$,
we have the following corollary. 
\begin{corollary}\label{lcunionslack}
  Let $X,Y$ and $Z$ be sets in a matroid $M$. Then $\sqcap_M(X \cup Y, Z) = \sqcap_M(X, Z) + \sqcap_{M \dcon X}(Y,Z)$. 
\end{corollary}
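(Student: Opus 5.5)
Apply Lemma~\ref{lcpairslack} with the nested pair $X \ss X \cup Y$ in place of $X \ss Y$. This gives
\[\sqcap_M(X,Z) + \sqcap_{M \dcon X}(X \cup Y, Z) = \sqcap_M(X \cup Y, Z).\]
It then remains to identify $\sqcap_{M \dcon X}(X \cup Y, Z)$ with $\sqcap_{M \dcon X}(Y,Z)$.

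In $M \dcon X$ every element of $X$ is a loop. Indeed, $\cl_{M \dcon X}(\es) = \cl_M(X) \supseteq X$. Hence
\[\cl_{M \dcon X}(X \cup Y) = \cl_M(X \cup Y) = \cl_{M \dcon X}(Y).\]
Corollary~\ref{lcclosure}, applied in the matroid $M \dcon X$, says local connectivity depends only on the closures of the sets involved. Therefore
\[\sqcap_{M \dcon X}(X \cup Y, Z) = \sqcap_{M \dcon X}(\cl_{M\dcon X}(Y), \cl_{M \dcon X}(Z)) = \sqcap_{M \dcon X}(Y,Z).\]
Alternatively, one can see this directly from Definition~\ref{lcdef}: any basis of $Y$ in $M \dcon X$ is also a basis of $X \cup Y$ there, because the extra elements are loops.

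Substituting this identity into the first display yields $\sqcap_M(X \cup Y, Z) = \sqcap_M(X,Z) + \sqcap_{M \dcon X}(Y,Z)$, which is the statement.

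The only point requiring any care is the closure identity. The paper states $\cl_{M \dcon X}(W) = \cl_M(X \cup W)$ in its list of basic projection facts. Taking $W = X \cup Y$ and $W = Y$ gives the same set, so there is no real obstacle.
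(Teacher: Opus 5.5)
Your proof is correct and follows the paper's argument: apply Lemma~\ref{lcpairslack} to the nested pair $X \ss X \cup Y$, then use that $\sqcap_{M \dcon X}(X \cup Y, Z) = \sqcap_{M \dcon X}(Y,Z)$ because the elements of $X$ are loops of $M \dcon X$. The paper's one-line justification writes $\sqcap_M(Y,Z)$ on the right-hand side of that identity, which is evidently a typo for the equality you actually proved.
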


Our next lemma shows that the local connectivity of a pair is equal to the slack in the inequality $n_{M}(X) \le n_{M \dcon C}(X)$
of Lemma~\ref{nullityproject}(\ref{npge}). 
\begin{lemma}\label{nullitycontractadd}
  If $M$ is a matroid, then $n_{M \dcon C}(X) = n_M(X) + \sqcap_M(X,C)$ for all $C, X \ss E(M)$.
\end{lemma}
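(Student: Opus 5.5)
We reduce to the case where $X$ is independent, and then compute both sides directly using the projection identities of Lemma~\ref{nullityproject}.

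Let $I$ be a basis for $X$ in $M$. By Lemma~\ref{nullityprop}(\ref{nullityssclosure}) we have $n_M(X) = n_M(I) + |X - I| = |X-I|$. Since $X \ss \cl_M(I) \ss \cl_{M \dcon C}(I)$, the same lemma applied in $M \dcon C$ gives $n_{M \dcon C}(X) = n_{M \dcon C}(I) + |X - I|$. By Corollary~\ref{lcclosure} we also have $\sqcap_M(X,C) = \sqcap_M(I,C)$. Since $|X-I|$ appears additively on both sides, it suffices to prove
\[n_{M \dcon C}(I) = \sqcap_M(I,C).\]
We do not need to cancel anything here; we add $|X-I|$ to both sides, so no $\infty-\infty$ issue arises.

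For the reduced identity, Lemma~\ref{lcindep} applied with the independent set $I$ gives $\sqcap_M(C, I) = n_{M \dcon C}(I)$. It therefore remains to check that $\sqcap_M$ is symmetric for pairs, that is, $\sqcap_M(I,C) = \sqcap_M(C,I)$. This follows from Lemma~\ref{mlcprop}(\ref{mlcpair}): for bases $I, J$ of the two sets, the expression $n_M(I\cup J) + |I \cap J|$ is symmetric in $I$ and $J$.

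Alternatively, we can verify the identity directly without invoking Lemma~\ref{lcindep}. Take a basis $J$ of $C$. Then $M \dcon C = M \dcon J$, and Lemma~\ref{nullityproject}(\ref{npconindind}) together with (\ref{npcomm}) gives $n_{M\dcon J}(I) = n_M(I \cup J) + |I \cap J|$, which equals $\sqcap_M(I,C)$ by definition.

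The only point needing care is the reduction step. We must use the additive form of Lemma~\ref{nullityprop}(\ref{nullityssclosure}) in both $M$ and $M \dcon C$ with the same independent set $I$. This requires $X \ss \cl_{M \dcon C}(I)$, which holds because closure in $M \dcon C$ contains closure in $M$. Everything else is a direct citation of earlier results.
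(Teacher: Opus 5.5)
Your proof is correct, and it takes a somewhat different route from the paper's. The two arguments differ in where the reduction to bases happens.

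The paper keeps $X$ whole. It takes a basis $K$ of $X\cap C$ and extends it to a basis $I_C$ of $C$ and a basis $I_X$ of $X$. It then applies Lemma~\ref{nullityproject}(\ref{npconind}) to get $n_{M\dcon C}(X)=n_M(I_C\cup X)+|I_C\cap X|$. Next it uses Lemma~\ref{nullityprop}(\ref{nullityssclosure}) in $M$ itself to write $n_M(I_C\cup X)=n_M(I_C\cup I_X)+|X-I_X|$. The compatible choice of bases is what makes this bookkeeping exact: it gives $I_C\cap X=I_C\cap I_X=K$, and it makes $X-I_X$ disjoint from $I_C$.

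You instead apply (\ref{nullityssclosure}) in the projection $M\dcon C$, using $\cl_M(I)\ss\cl_{M\dcon C}(I)$. This peels the same term $|X-I|$ off both sides at once, with addition rather than cancellation. It needs no compatibility between the bases of $X$ and $C$. What remains is exactly Lemma~\ref{lcindep} together with symmetry of $\sqcap$ for pairs. So your argument is more modular and reuses the independent case already proved. The paper's is a single self-contained computation carried out entirely in $M$.

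One small citation point concerns your alternative verification. As printed, Lemma~\ref{nullityproject}(\ref{npconindind}) omits the $|X\cap Y|$ term, so it is only correct for disjoint sets. The identity $n_{M\dcon J}(I)=n_M(I\cup J)+|I\cap J|$ should therefore be credited to (\ref{npconind}), or to (\ref{npcomm}) with $n_M(J)=0$. The formula you actually wrote is the correct one.
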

\begin{proof}
  Let $K$ be a basis for $X \cap C$, and $I$ and $J$ be bases for $C$ and $X$ respectively containing $K$. 
  Note that $I \cap X = K$ and that $I \cup X = (I \cup J) \cup (X - J)$ by the maximality of $K$. 
  Note that $X - (I \cup J) \ss \cl_M(J) \ss \cl_M(I \cup J)$;
  by Lemma~\ref{nullityproject}(\ref{npconind}) and Lemma~\ref{nullityprop}(\ref{nullityssclosure}), we have
  \begin{align*}
    n_{M \dcon C}(X) &= n_{M \dcon I}(X) \\
    &= n_M(I \cup X) + |I \cap X| \\
    &= (n_M(I \cup J) + |X - J|) + |K| \\
    &= n_M(I \cup J) + n_M(X) + |K| \\
    &= n_M(X) + \sqcap_M(X,C),
  \end{align*}
  as required. 
\end{proof}

We now prove a lemma about general local connectivity in a contraction, where the sets involved are disjoint and independent. 

\begin{lemma}\label{lcconweak}
  Let $J$ be an independent set in a matroid $M$, and $\ab{I_a : a \in A}$ be pairwise disjoint independent sets in $M$, 
  each disjoint from $J$. Let $t \in A$. Then 
  \[\sqcap_{M \dcon J}(\ab{I_a}) + \sum_{i \ne t} \sqcap_M(I_i, J) = 
  \sqcap_M(\ab{I_a}) + \sqcap_{M \dcon I_t}(\cup_{i \ne t} I_i, J)\]
\end{lemma}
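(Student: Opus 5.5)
The plan is to express both sides of the identity as the nullity of one set, $K_t \cup J \cup I_{A-\{t\}}$, where $K_t \ss I_t$ is a basis for $I_t$ in $M \dcon J$. Working with a single nullity avoids any cancellation in $\enn$. This matters because the naive route computes $n_{M \dcon J}(\cup_a I_a)$ in two ways and then cancels a term $\sqcap_M(I_t,J)$, which fails when that term is infinite.

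Write $I_{A'} = \cup_{a \in A'} I_a$. Since $J$ is independent, $K_t \cup J$ is independent in $M$, and it is disjoint from $I_{A-\{t\}}$. It also satisfies $\cl_M(K_t \cup J) = \cl_M(I_t \cup J)$.

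\emph{Right-hand side.} By Lemma~\ref{nullitycontractadd} applied in $M \dcon I_t$,
\[n_{M \dcon I_t \dcon J}(I_{A-\{t\}}) = n_{M \dcon I_t}(I_{A-\{t\}}) + \sqcap_{M \dcon I_t}(I_{A-\{t\}},J).\]
By Lemma~\ref{nullityproject}(\ref{npconind}), with $I_t$ independent and disjoint from the other $I_a$, we get $n_{M \dcon I_t}(I_{A-\{t\}}) = n_M(I_A)$. By Lemma~\ref{mlcprop}(\ref{mlcdj}), $n_M(I_A) = \sqcap_M(\ab{I_a})$. So the right-hand side equals $n_{M \dcon (I_t \cup J)}(I_{A-\{t\}})$. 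Because $\cl_M(K_t \cup J) = \cl_M(I_t \cup J)$, this projection equals $M \dcon (K_t \cup J)$. Applying Lemma~\ref{nullityproject}(\ref{npconind}) once more gives that the right-hand side is $n_M(K_t \cup J \cup I_{A-\{t\}})$.

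\emph{Left-hand side.} For each $i \ne t$, pick a basis $K_i \ss I_i$ of $I_i$ in $M \dcon J$; keep $K_t$ as above. The $K_a$ are pairwise disjoint, so Lemma~\ref{mlcprop}(\ref{mlcdj}) gives $\sqcap_{M \dcon J}(\ab{I_a}) = n_{M \dcon J}(K_A)$. Next, $K_A \ss K_t \cup I_{A-\{t\}} \ss \cl_{M \dcon J}(K_A)$, so Lemma~\ref{nullityprop}(\ref{nullityssclosure}) yields
\[n_{M \dcon J}(K_t \cup I_{A-\{t\}}) = n_{M \dcon J}(K_A) + \sum_{i \ne t}|I_i - K_i|.\]
For each $i \ne t$, we have $|I_i - K_i| = n_{M \dcon J}(I_i) = n_M(I_i) + \sqcap_M(I_i,J) = \sqcap_M(I_i,J)$, by Lemma~\ref{nullitycontractadd} and the independence of $I_i$. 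So the left-hand side equals $n_{M \dcon J}(K_t \cup I_{A-\{t\}})$. By Lemma~\ref{nullityproject}(\ref{npconind}), since $J$ is independent and disjoint from this set, this is $n_M(K_t \cup J \cup I_{A-\{t\}})$.

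Both sides therefore equal the same nullity, which proves the lemma. The only delicate point, and the step to check carefully, is that the argument never subtracts: the sum over $i \ne t$ enters additively through Lemma~\ref{nullityprop}(\ref{nullityssclosure}), and the $t$-th term is absorbed by choosing $K_t$ rather than by cancelling $\sqcap_M(I_t,J)$. I also need the routine checks that every application of Lemma~\ref{nullityproject}(\ref{npconind}) involves disjoint sets, so that the $|X \cap C|$ terms vanish.
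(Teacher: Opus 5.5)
Your proposal is correct and follows essentially the paper's own argument. The paper also takes bases $K_a$ of $I_a$ in $M \dcon J$, absorbs $\sum_{i\ne t}|I_i-K_i|$ via Lemma~\ref{nullityprop}(\ref{nullityssclosure}), and shows both sides equal $n_{M \dcon J \dcon K_t}(\cup_{i\ne t} I_i)$ using $M \dcon I_t \dcon J = M \dcon J \dcon K_t$; your final rewriting of this as $n_M(K_t \cup J \cup I_{A-\{t\}})$ is just one more application of Lemma~\ref{nullityproject}(\ref{npconind}).
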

\begin{proof}
  Let $I' = \cup_{i \ne t} I_i$.
  For each $a \in A$, let $K_a$ be a basis for $I_a$ in $M \dcon J$, so using Lemma~\ref{nullityproject}(\ref{npconind}) 
  and disjointness, we have $\sqcap_M(I_a,J) = n_M(I_a \cup J) = n_{M \dcon J}(I_a) = |I_a - K_a|$. 
  It also holds that $\sum_{i \ne t} \abs{I_i - K_i} = \abs{\cup_{i \ne t} (I_i - K_i)}$ 
  and $\cup_{i \ne t}(I_i - K_i) \ss \cl_{M \dcon J}(K_i)$,
  so using Lemma~\ref{nullityprop}(\ref{nullityssclosure}) and Lemma~\ref{nullitycontractadd}, we have
  \begin{align*}
    \sqcap_{M \dcon J}(\ab{I_a}) + \sum_{i \ne t} \sqcap_M(I_i, J)
    &= n_{M \dcon J}(\cup_i K_i) + \sum_{i \ne t} \abs{I_i - K_i} \\
    &= n_{M \dcon J}(\cup_i K_i \cup \cup_{i \ne t} (I_i - K_i)) \\ 
    &= n_{M \dcon J}(I' \cup K_t) \\
    &= n_{M \dcon J \dcon K_t}(I').
  \end{align*}
  By Lemma~\ref{nullityproject}(\ref{npconind}) and Lemma~\ref{nullitycontractadd}, we also have 
  \begin{align*}
    \sqcap_M(\ab{I_a}) + \sqcap_{M \dcon I_t}(I', J)
    &= n_M(\cup_i I_i) + \sqcap_{M \dcon I_t}(I', J) \\
    &= n_M(I_t \cup I') + \sqcap_{M \dcon I_t}(I', J) \\
    &= n_{M \dcon I_t}(I') + \sqcap_{M \dcon I_t}(I', J) \\
    &= n_{M \dcon I_t \dcon J}(I').
  \end{align*} 
  But $cl_{M \dcon J}(K_t) = \cl_{M \dcon J}(I_t)$, and so $M \dcon I_t \dcon J = M \dcon J \dcon I_t = M \dcon J \dcon K_t$;
  the lemma now follows from the two calculations above. 
\end{proof}

We now prove a strengthening of the previous lemma with the independence and disjointness assumptions dropped. 
This is done by taking bases and using an appropriate preimage to reduce to the problem to the independent, disjoint case. 

\begin{lemma}\label{mcprojindex}
  Let $C$ and $\ab{X_a : a \in A}$ be sets in a matroid $M$, and let $t \in A$. Then 
  \[\sqcap_{M \dcon C}(\ab{X_a}) + \sum_{i \ne t} \sqcap_M(X_i, C) = 
  \sqcap_M(\ab{X_a}) + \sqcap_{M \dcon X_t}(\cup_{i \ne t} X_i, C)\]
\end{lemma}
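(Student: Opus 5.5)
We reduce to Lemma~\ref{lcconweak} in two stages: first replace every set by a basis, then pass to a preimage in which those bases become pairwise disjoint and disjoint from the basis of $C$.

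\textbf{Stage 1: replacing the sets by bases.} Choose a basis $J$ for $C$ and, for each $a \in A$, a basis $I_a$ for $X_a$ in $M$. We would like each term of the identity to be unchanged when $X_a$ is replaced by $I_a$ and $C$ by $J$.

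\begin{itemize}
  \item Since $\cl_M(J) = \cl_M(C)$, we have $M \dcon C = M \dcon J$ and $M \dcon X_t = M \dcon I_t$.
  \item $\sqcap_M$ depends only on closures, by Corollary~\ref{lcclosure}. So $\sqcap_M(\ab{X_a}) = \sqcap_M(\ab{I_a})$ and $\sqcap_M(X_i, C) = \sqcap_M(I_i, J)$.
  \item For the term in $M \dcon C$, closures in $M \dcon J$ are $\cl_M(J \cup \cdot)$, and $\cl_M(J \cup I_a) = \cl_M(J \cup X_a)$. Corollary~\ref{lcclosure}, applied in $M \dcon J$, then gives $\sqcap_{M \dcon C}(\ab{X_a}) = \sqcap_{M \dcon J}(\ab{I_a})$.
  \item For the last term, $\cup_{i \ne t} I_i$ and $\cup_{i\ne t} X_i$ have the same closure in $M$, hence in $M \dcon I_t$. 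So that term is also unchanged.
\end{itemize}

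It therefore suffices to prove the identity when the $X_a$ and $C$ are independent sets, although they may still overlap.

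\textbf{Stage 2: disjointifying via a preimage.} Take $S = E(M) \times (A \cup \{*\})$ with a new index $*$, let $\pi : S \to E(M)$ be the projection, and put $M' = \pi^{-1}(M)$. Set $I_a' = I_a \times \{a\}$ and $J' = J \times \{*\}$. These sets are pairwise disjoint, and each is independent in $M'$ because $\pi$ is injective on it.

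We now check that each of the four quantities in $M'$ equals its counterpart in $M$. For the terms $\sqcap_{M'}(\ab{I_a'})$ and $\sqcap_{M'}(I_i', J')$, Lemma~\ref{lcpreimage} applies directly. For the two terms taken after a projection we use the identity $\pi^{-1}(M \dcon \pi(D)) = M' \dcon D$:
\begin{itemize}
  \item $M' \dcon J' = \pi^{-1}(M \dcon J)$, so Lemma~\ref{lcpreimage} gives $\sqcap_{M' \dcon J'}(\ab{I_a'}) = \sqcap_{M \dcon J}(\ab{I_a})$.
  \item $M' \dcon I_t' = \pi^{-1}(M \dcon I_t)$, so the same lemma handles $\sqcap_{M' \dcon I_t'}(\cup_{i\ne t} I_i', J')$.
\end{itemize}

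\textbf{Stage 3: applying Lemma~\ref{lcconweak}.} In $M'$, the sets $J'$ and $I_a'$ satisfy the hypotheses of Lemma~\ref{lcconweak}. That lemma gives the identity in $M'$, and transporting each term back to $M$ via Stage 2 gives the identity in $M$.

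\textbf{Expected main obstacle.} The hardest part should be the bookkeeping of the preimage terms. We must justify that $\pi^{-1}$ commutes with projection by an arbitrary subset, via the closure formula $\cl_{\pi^{-1}(M)}(X) = \pi^{-1}(\cl_M(\pi(X)))$ noted earlier. We must also check that the images under $\pi$ are exactly the intended sets, for instance $\pi(\cup_{i\ne t} I_i') = \cup_{i \ne t} I_i$. If projecting by a preimage set causes trouble, we would instead extend Stage 1 and first verify the needed statement about $\cl_{M'}$ directly.
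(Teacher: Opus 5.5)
Your proposal is correct and is essentially the paper's proof. You replace $C$ and each $X_a$ by bases using Corollary~\ref{lcclosure}, then pass to the preimage $\pi^{-1}(M)$ over $E(M) \times (A \cup \{*\})$ so that the bases become pairwise disjoint. You transfer every term with Lemma~\ref{lcpreimage} and the identity $\pi^{-1}(M \dcon \pi(D)) = \pi^{-1}(M) \dcon D$, and finish with Lemma~\ref{lcconweak}, whereas the paper compresses your Stages 1 and 2 into single chains of equalities.
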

\begin{proof}
  Let $0$ represent an arbritary index outside $A$, and let $\pi : E(M) \times (A \cup \{0\}) \to E(M)$ be the map with $\pi(e,i) = e$.
  Let $J$ be a basis for $C$ in $M$, and let $I_a$ be a basis for $X_a$ in $M$ for each $a \in A$. Let $M' = \pi^{-1}(M)$. 
  Let $J' = J \times \{0\}$, and $I_a' = I_a \times \{a\}$ for each $a$; note that $\pi(J') = J$ and $\pi(I_a') = I_a$ for all $a$. 
  By Corollary~\ref{lcclosure}, and Lemma~\ref{lcpreimage}, we have 
  $\sqcap_{M \dcon C}(\ab{X_a}) = \sqcap_{M \dcon J}(\ab{I_a}) = \sqcap_{M' \dcon J'}(\ab{I_a'})$. 

  Using the same two results,
  we have $\sqcap_M(X_a, C) = \sqcap_{M'}(X_a', J')$ for all $a$ and $\sqcap_M(\ab{X_a}) = \sqcap_{M'}(\ab{I_a'})$, and 
  lastly $\sqcap_{M \dcon X_t}(\cup_{i \ne t}X_i, C) = \sqcap_{M' \dcon I_t'}(\cup_{i \ne t}I_i', J')$. It therefore
  suffices to show that 
  \[\sqcap_{M' \dcon J'}(\ab{I_a'}) + \sum_{i \ne t} \sqcap_{M'}(I_i', J') = 
  \sqcap_{M'}(\ab{I_a'}) + \sqcap_{M' \dcon I_t'}(\cup_{i \ne t} I_i', J'),\]
  which holds by Lemma~\ref{lcconweak}, since the sets in question are now independent and pairwise disjoint. 
\end{proof}

This shows that the relationship between $\sqcap_M(\ab{X_a})$ and $\sqcap_{M \dcon C}(\ab{X_a})$ 
is a little complicated, and is not monotone in either direction. 

The next corollary is a simplified version of the above which does not use a particular distinguished index. 

\begin{lemma}\label{mcprojsymm}
  Let $C$ and $\ab{X_a : a \in A}$ be sets in a matroid $M$. Then   
  \[\sqcap_{M \dcon C}(\ab{X_a}) + \sum_{a \in A}\sqcap_M(X_a, C) = \sqcap_M(\ab{X_a}) + \sqcap_M(\cup_a X_a, C).\]
\end{lemma}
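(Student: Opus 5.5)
We derive this from Lemma~\ref{mcprojindex} by adjoining a dummy index. Let $t$ be a new index not in $A$, put $A' = A \cup \{t\}$, and set $X_t = \es$. We apply Lemma~\ref{mcprojindex} to the collection $\ab{X_a : a \in A'}$ with distinguished index $t$ and the same set $C$. This gives
\[\sqcap_{M \dcon C}(\ab{X_a : a \in A'}) + \sum_{a \in A}\sqcap_M(X_a, C) = \sqcap_M(\ab{X_a : a \in A'}) + \sqcap_{M \dcon \es}(\cup_{a \in A} X_a, C).\]
Since $M \dcon \es = M$, the last term is exactly $\sqcap_M(\cup_a X_a, C)$. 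It therefore remains to show that adjoining an empty set to a collection does not change its local connectivity, both in $M$ and in $M \dcon C$.

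For this we use Lemma~\ref{mlcprop}(\ref{mlcndj}). The only basis of $\es$ is $I_t = \es$. Adding it changes neither $\cup_a I_a$ nor the counts $\abs{\setof{a}{e \in I_a}}$ for any $e$. Hence
\[\sqcap_N(\ab{X_a : a \in A'}) = \sqcap_N(\ab{X_a : a \in A})\]
for any matroid $N$ on $E(M)$. We apply this with $N = M$ and with $N = M \dcon C$, and substitute both identities into the display above to obtain the stated equation.

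There is no real obstacle. The only points to check are the indexing bookkeeping: the sum over $i \ne t$ in Lemma~\ref{mcprojindex} becomes exactly the sum over $A$, and $\cup_{i \ne t} X_i = \cup_{a \in A} X_a$.

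If the dummy-index trick were judged awkward, there is an alternative for nonempty $A$: pick some $t \in A$ and rewrite $\sqcap_{M \dcon X_t}(\cup_{i\ne t}X_i, C) + \sqcap_M(X_t, C)$ as $\sqcap_M(\cup_a X_a, C)$ by Corollary~\ref{lcunionslack} and the symmetry of $\sqcap$ for pairs. That symmetry is clear from Lemma~\ref{mlcprop}(\ref{mlcpair}). The empty-$A$ case is then trivial, since both sides reduce to $0 = \sqcap_M(\es, C) = 0$.
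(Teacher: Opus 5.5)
Your proof is correct, and your main route differs from the paper's.

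\textbf{The paper's route.} The paper disposes of $A=\es$ separately and picks $t \in A$. It then applies Lemma~\ref{mcprojindex} and uses Corollary~\ref{lcunionslack} to merge the leftover summand $\sqcap_M(X_t, C)$ with $\sqcap_{M \dcon X_t}(\cup_{i \ne t} X_i, C)$ into $\sqcap_M(\cup_a X_a, C)$. This is exactly your fallback argument. Incidentally, no symmetry of $\sqcap$ is needed there, since Corollary~\ref{lcunionslack} already has $C$ in the second slot. The paper also first replaces $C$ by a basis $I$ via Corollary~\ref{lcclosure}. Your version shows that step is unnecessary, because Lemma~\ref{mcprojindex} holds for arbitrary $C$.

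\textbf{Your route.} Your padding trick makes the distinguished index a dummy empty set. The projection $M \dcon X_t$ then collapses to $M$, and the sum over $i \ne t$ is already the full sum over $A$. This removes both the case split on $A$ and any appeal to Corollary~\ref{lcunionslack}. The price is the padding invariance $\sqcap_N(\ab{X_a : a \in A \cup \{t\}}) = \sqcap_N(\ab{X_a : a \in A})$. As you say, this is immediate from Lemma~\ref{mlcprop}(\ref{mlcndj}) applied in both $M$ and $M \dcon C$.

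\textbf{Comparison.} Your argument shows that the symmetric identity is a purely formal consequence of Lemma~\ref{mcprojindex}. The paper's argument keeps the index set fixed, at the cost of invoking one more additivity fact.
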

\begin{proof}
  let $I$ be a basis for $C$. If $A$ is empty, then all terms are zero. Otherwise, let $t \in A$.
  Using Lemma~\ref{mcprojindex} and Lemma~\ref{lcclosure}, we have 
  \begin{align*}
    \sqcap_{M \dcon C}(\ab{X_a}) + \sum_{a \in A} \sqcap_M(X_a, C)
    &= \sqcap_{M \dcon I}(\ab{X_a}) + \sum_{a \in A} \sqcap_M(X_a, I) \\ 
    &= \sqcap_{M \dcon I}(\ab{X_a}) + \sum_{a \ne t} \sqcap_M(X_a, I) + \sqcap_M(X_t, I) \\
    &= \sqcap_M(\ab{X_a}) + \sqcap_{M \dcon X_t}(\cup_{i \ne t} X_i, I) + \sqcap_M(X_t, I) \\
    &= \sqcap_M(\ab{X_a}) + \sqcap_{M \dcon X_t}(\cup_{i \ne t} X_i, I) + \sqcap_M(X_t, I) \\
    &= \sqcap_M(\ab{X_a}) + \sqcap_M(\cup_i X_i, I),
  \end{align*}
  where the last line uses Lemma~\ref{lcunionslack}. 
\end{proof}

(The lemma above is not necessarily a `formula' for $\sqcap_{M \dcon C}$ in terms of
$\sqcap_M$, since the nullity terms and the summation can be infinite, 
even when both connectivity terms are finite. However, 
if $C$ and $A$ are finite sets, then this is not the case, 
and each connectivity term is can be derived from the other. )

If $C$ is independent, then Lemma~\ref{lcindep} gives that $\sqcap_M(X, C) = n_{M \dcon X}(C)$ for each $C$, 
so the above can be restated with more nullity terms:

\begin{corollary}\label{mcprojauxindep}
  Let $C$ be an independent set of a matroid $M$, and let $\ab{X_a : a \in A}$
  be sets in $M$. Then  
  \[\sqcap_{M \dcon C}(\ab{X_a}) + \sum_{a \in A}n_{M \dcon X_a}(C) = \sqcap_M(\ab{X_a}) + n_{M \dcon \cup_a X_a}(C).\]
\end{corollary}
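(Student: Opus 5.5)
This follows directly from Lemma~\ref{mcprojsymm}: we apply that lemma with the given independent set $C$ and then rewrite each two-set local connectivity term involving $C$ as a nullity using Lemma~\ref{lcindep}. No new construction is required.

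Concretely, Lemma~\ref{mcprojsymm} gives
\[\sqcap_{M \dcon C}(\ab{X_a}) + \sum_{a \in A}\sqcap_M(X_a, C) = \sqcap_M(\ab{X_a}) + \sqcap_M(\cup_a X_a, C).\]
Next we check that local connectivity of pairs is symmetric, so that $\sqcap_M(X_a, C) = \sqcap_M(C, X_a)$ and similarly for $\cup_a X_a$. This is immediate from Lemma~\ref{mlcprop}(\ref{mlcpair}), since the expression $n_M(I_1 \cup I_2) + |I_1 \cap I_2|$ is symmetric in the two bases. With the arguments ordered as Lemma~\ref{lcindep} requires (an arbitrary set first, the independent set $I = C$ second), that lemma gives $\sqcap_M(X_a, C) = n_{M \dcon X_a}(C)$ for each $a \in A$, and also $\sqcap_M(\cup_a X_a, C) = n_{M \dcon \cup_a X_a}(C)$. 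Substituting these termwise into the identity above yields exactly the stated equation.

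The only point needing any care is that the substitution is valid in $\enn$. Here we replace equal terms with equal terms, possibly inside an infinite sum, and never cancel anything, so no subtraction issue arises. The argument-order and symmetry check is the one mild obstacle, and it is handled by Lemma~\ref{mlcprop}(\ref{mlcpair}).
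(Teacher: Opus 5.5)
Your proposal is correct and is exactly the paper's argument: apply Lemma~\ref{mcprojsymm} and rewrite each term $\sqcap_M(X_a, C)$ and $\sqcap_M(\cup_a X_a, C)$ as a nullity via Lemma~\ref{lcindep}, using that $C$ is independent. The symmetry check is unnecessary. Lemma~\ref{mcprojsymm} already has these terms with $C$ as the second argument, which is the order Lemma~\ref{lcindep} handles directly.
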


We can now provide bounds on how much local connectivity can change under projection 
and contraction. The bounds depend on whether the removed elements belong to sets in the collection or not. 

\begin{lemma}\label{mccon}
  Let $\ab{X_a : a \in A}$ be sets in a matroid $M$ with $A$ nonempty. Then 
  \begin{enumerate}[(i)]
    \item\label{mcproj} $\sqcap_{M \con C}(\ab{X_a - C}) = \sqcap_{M \dcon C}(\ab{X_a})$ for all $C \ss E(M)$;
    \item\label{mcconbd} 
      $\sqcap_{M \dcon C}(\ab{X_a}) \le \sqcap_M(\ab{X_a}) \le \sqcap_{M \dcon C}(\ab{X_a}) + (|A| - 1)r_M(C)$ for all $C \ss \cup_a X_a$.
    \item\label{mcconub}
      $\sqcap_{M \dcon C}(\ab{X_a}) \le \sqcap_M(\ab{X_a}) + r_M(C)$ for all $C \ss E(M)$. 
  \end{enumerate}
  Moreover, if $r_M(C)$ and $\sqcap_M(\ab{X_a})$ are finite, 
  then equality in (\ref{mcconub}) holds if and only if $C \ss \cl_M(\cup_a X_a)$, 
  and $C$ is skew to each $X_a$ in $M$. 
\end{lemma}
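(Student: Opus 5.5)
The plan is to derive everything from the symmetric identity of Lemma~\ref{mcprojsymm},
\[\sqcap_{M \dcon C}(\ab{X_a}) + \sum_{a \in A}\sqcap_M(X_a, C) = \sqcap_M(\ab{X_a}) + \sqcap_M(\cup_a X_a, C),\]
together with the bounds $\sqcap_M(Y,C) \le r_M(C)$ and $\sqcap_M(Y,C)=r_M(C)$ whenever $C \ss \cl_M(Y)$. Both bounds follow from Lemma~\ref{lcindep}: taking a basis $I$ for $C$ gives $\sqcap_M(Y,C)=n_{M\dcon Y}(I)\le |I|$, with equality when $I$ consists of loops of $M \dcon Y$.

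For (\ref{mcproj}), I would note that $M \con C = (M \dcon C)\del C$ and that the elements of $C$ are loops of $M \dcon C$. Choosing bases of $X_a$ in $M \dcon C$, these avoid $C$ and are also bases of $X_a - C$ in $M \con C$. By Lemma~\ref{mcdel}(\ref{mcrestr}) applied with $D=E(M)-C$, the two local connectivities then coincide.

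For (\ref{mcconbd}), assume $C \ss \cup_a X_a$. Then $\sqcap_M(\cup_a X_a, C) = r_M(C)$. Pick $t \in A$ and use Lemma~\ref{mcprojindex}. Since $C$ lies in $\cl_M(\cup_a X_a)$, projecting by $X_t$ gives $\sqcap_{M\dcon X_t}(\cup_{i\ne t}X_i, C) = \sqcap_M(\cup_a X_a, C) - \sqcap_M(X_t,C)$ in the sense of Corollary~\ref{lcunionslack}. Substituting, the identity reads $\sqcap_{M\dcon C} + \sum_a \sqcap_M(X_a,C) = \sqcap_M + r_M(C)$.

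\emph{Lower bound.} For the inequality $\sqcap_{M\dcon C}\le \sqcap_M$, the plan is to show $\sum_a \sqcap_M(X_a,C)\ge r_M(C)$, because $C$ is spanned by its pieces $C\cap X_a$. Writing $C = \bigcup_a (C \cap X_a)$ and using monotonicity gives $\sum_a \sqcap_M(X_a,C) \ge \sum_a r_M(C \cap X_a) \ge r_M(C)$, where the first step uses $\sqcap_M(X_a,C)\ge \sqcap_M(C\cap X_a, C\cap X_a) = r_M(C\cap X_a)$. This is only a heuristic, since cancellation in $\enn$ is not available. To avoid cancellation I would instead argue directly in the preimage, via Lemma~\ref{mcdel}(\ref{mcdelbd}) and Lemma~\ref{mccon}(\ref{mcproj}), comparing bases of $X_a$ in $M$ that extend bases of $X_a \cap C$.

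\emph{Upper bound.} For $\sqcap_M\le \sqcap_{M\dcon C}+(|A|-1)r_M(C)$, I would use Lemma~\ref{mcprojindex} with a distinguished $t$. This gives $\sqcap_M \le \sqcap_M + \sqcap_{M\dcon X_t}(\cdot,C) = \sqcap_{M\dcon C} + \sum_{i\ne t}\sqcap_M(X_i,C)\le \sqcap_{M\dcon C} + (|A|-1)r_M(C)$. The lower bound is likewise cleanest from that lemma: choose $t$ so that no cancellation is needed. Concretely, I would handle it in the disjoint preimage by taking bases $I_a \supseteq K_a$, with $K_a$ a basis of $X_a\cap C$, and counting as in Lemma~\ref{lcconweak}.

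For (\ref{mcconub}), the identity together with $\sqcap_M(\cup_a X_a, C)\le r_M(C)$ gives $\sqcap_{M\dcon C}\le \sqcap_{M\dcon C}+\sum_a\sqcap_M(X_a,C) = \sqcap_M + \sqcap_M(\cup X_a,C)\le \sqcap_M + r_M(C)$.

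For the equality clause, with $r_M(C)$ and $\sqcap_M$ finite, every term in the identity is finite, so cancellation is legal. Equality in (\ref{mcconub}) holds if and only if both $\sum_a \sqcap_M(X_a,C)=0$ and $\sqcap_M(\cup X_a, C)=r_M(C)$. By Theorem~\ref{lcmod}, or directly via Lemma~\ref{lcindep} and Corollary~\ref{skewpairrestrict}, the first condition says $C$ is skew to each $X_a$. The second, again by Lemma~\ref{lcindep}, says a basis of $C$ consists of loops in $M\dcon \cup_a X_a$, that is, $C\ss\cl_M(\cup_a X_a)$.

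The main obstacle is the lower bound in (\ref{mcconbd}), because it requires cancellation in $\enn$. I expect to have to prove it directly with explicit bases in the preimage rather than by subtracting in the identity.
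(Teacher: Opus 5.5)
Your arguments for (i), for (iii), for the equality clause, and for the upper bound in (ii) are correct. The upper bound is in fact cleaner than the paper's: reading it off Lemma~\ref{mcprojindex} as $\sqcap_M \le \sqcap_M + \sqcap_{M\dcon X_t}(\cup_{i\ne t}X_i, C) = \sqcap_{M\dcon C} + \sum_{i\ne t}\sqcap_M(X_i,C)$ needs neither finiteness of $C$ nor any cancellation. The paper, by contrast, assumes $|C|<\infty$ and cancels $|C|$ in Corollary~\ref{mcprojauxindep}.

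The genuine gap is the lower bound $\sqcap_{M\dcon C}(\ab{X_a})\le\sqcap_M(\ab{X_a})$ in (ii). You flag it as the main obstacle but never prove it. Your heuristic is actually rigorous when $r_M(C)<\infty$, since cancelling a finite summand in $\sqcap_{M\dcon C}+r_M(C)\le\sqcap_M+r_M(C)$ is legal. The real problem is the case $r_M(C)=\infty$, and there no identity of the kind you propose helps. Suppose $X_1,X_2$ are skew and contain infinite independent sets $C_1,C_2$, and $C=C_1\cup C_2$. Then both sides of Lemma~\ref{mcprojsymm} are $\infty$, and the same holds for Lemma~\ref{mcprojindex} with either choice of $t$. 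Nothing is learned about $\sqcap_{M\dcon C}(X_1,X_2)$, which is $0$. So ``choose $t$ so that no cancellation is needed'' is not available. Combining Lemma~\ref{mcdel}(\ref{mcdelbd}) with (i) does not work either, because contracting $C$ can increase the local connectivity of the sets $X_a-C$; that increase is exactly what (iii) bounds.

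What is missing is a direct monotonicity-of-nullity comparison that genuinely uses $C\ss\cup_aX_a$. This is the paper's argument:
\begin{itemize}
\item Replace $C$ by a basis of $C$, so that $C$ and each $X_a\cap C$ are independent. Take a basis $I_a$ of $X_a$ containing $X_a\cap C$, and a basis $J_a\ss I_a-C$ of $X_a$ in $M\dcon C$.
\item Let $M'=\pi^{-1}(M)$ and $C'=C\times\{a_0\}$. Then $\pi^{-1}(M\dcon C)=M'\dcon C'$, and since the $J_a$ avoid $C$, Lemma~\ref{nullityproject}(\ref{npconind}) gives $\sqcap_{M\dcon C}(\ab{X_a})=n_{M'\dcon C'}(\cup_aJ_a')=n_{M'}(\cup_aJ_a'\cup C')$.
\item Each $c\in C$ lies in some $I_{b_c}$. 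Swapping $(c,a_0)$ with its parallel copy $(c,b_c)$ for all $c\in C$ is an automorphism of $M'$ that fixes $\cup_aJ_a'$.
\item After this swap the set lies inside $\cup_aI_a'$, so monotonicity of nullity gives $n_{M'}(\cup_aJ_a'\cup C')\le n_{M'}(\cup_aI_a')=\sqcap_M(\ab{X_a})$.
\end{itemize}
This argument involves no cancellation and handles infinite $C$ uniformly.
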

\begin{proof}
  The fact that $M \con C$ and $M \dcon C$ have the same independent sets implies that so do 
  $\pi^{-1}(M \con C)$ and $\pi^{-1}(M \dcon C)$, which in turn gives (\ref{mcproj}).
 
  We now show (\ref{mcconbd}). 
  We may assume that $|A| \ge 2$, 
  since otherwise all the connectivity terms are zero 
  and the bound is trivial. 
  We can also assume that $C$ is independent, 
  since replacing $C$ with a basis for $C$
  affects none of the terms in the inequalities. 
  For each $a$, let $I_a$ be a basis for $X_a$ 
  containing $I_a \cap C$, and let $J_a$ be a basis for $I_a$ in $M \dcon C$, 
  noting that $J_a$ is disjoint from $C$. 
  Let $J_a' = J_a \times \{a\}$, let $I_a' = I_a \times \{a\}$, 
  and let $C' = C \times \{a_0\}$ for some arbitrary $a_0 \in A$. 
  Let $M' = \pi^{-1}(M)$. As before, we have $\pi^{-1}(M \dcon C) = M' \dcon C'$;
  by construction, the set $C'$ is indepedendent in $M'$ and disjoint from $\cup_a {J_a'}$. 
  Now Lemma~\ref{nullityprop}(\ref{npconind}) and the monotonicity
  of nullity give
  \begin{align*}
    \sqcap_{M \dcon C}(\ab{X_a}) &= n_{M' \dcon C'}(\cup_a J_a')
    = n_{M'}(\cup_a J_a' \cup C) \le n_{M'} (\cup_a I_a') = \sqcap_{M}(\ab{X_a}),
  \end{align*}
  so the lower bound holds. 

  Since $|A| \ge 2$, the upper bound is trivial if $C$ is infinite, 
  so we may assume that $|C| = r_M(C) < \infty$. 
  Lemma~\ref{mcprojauxindep} gives 
    $\sqcap_M(\ab{X_a}) + n_{M \dcon \cup_a X_a} (C) = \sqcap_{M \dcon C}(\ab{X_a}) + \sum_{a \in A}n_{M \dcon X_a}(C)$.
  Since $C$ contains only loops of $M \dcon \cup_a X_a$, the nullity term on 
  the left-hand side is equal to $|C|$, and the sum on the right-hand side is at most 
  $|A||C|$. We can cancel $|C|$ subtractively by finiteness, 
  and the upper bound in (\ref{mcconbd}) follows. 

  For (\ref{mcconub}), we can also assume that $C$ is independent; 
  since replacing $C$ with a basis for $C$ does not affect the bound, 
  nor the conditions in the equality characterization. 

  By Lemma~\ref{mcprojauxindep}, we have 
  \begin{align*}
    \sqcap_{M \dcon C}(\ab{X_a}) &\le \sqcap_{M \dcon C}(\ab{X_a}) + \sum_{a \in A}n_{M \dcon X_a}(C) \\ 
    &=  \sqcap_M(\ab{X_a}) + n_{M \dcon \cup_a X_a} (C) \\
    &\le \sqcap_M(\ab{X_a}) + |C|,
  \end{align*}  
  and since $|C| = r_M(C)$, the bound follows. 

  If $\sqcap_{M \dcon C}(\ab{X_a})$ and $|C|$ are finite, then 
  equality holds if and only if the summation is zero, and $C$ contains 
  only loops of $M \dcon \cup_a X_a$. 
  The latter condition is equivalent to the statement that $C \ss \cl_M(\cup_a X_a)$, 
  and the former to the condition that $n_{M \dcon X_a}(C) = 0$ for all $a$. 
  For each $a$, we have $n_{M \dcon X_a}(C) = 0$ if and only if $C$ is independent 
  in $M \dcon X_a$,
  which is equivalent to skewness of $C$ and $X_a$ by 
  Corollary~\ref{skewpairrestrict}(\ref{sprbasis}) with $I = X = C$.
\end{proof}

\begin{lemma}\label{lcskewiff}
  If $\ab{X_a : a \in A}$ are sets in a matroid $M$, then $\sqcap_M(\ab{X_a}) = 0$ 
  if and only if the $X_a$ are skew in $M$. 
\end{lemma}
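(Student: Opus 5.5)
The plan is to reduce to the disjoint independent case via the preimage $\pi^{-1}(M)$. First, the easy direction: if $\ab{X_a}$ is skew, take bases $I_a$ for each $X_a$. Lemma~\ref{skewgoodbase}(ii) says the $I_a$ are pairwise disjoint and $\cup_a I_a$ is a mutual basis, so in particular it is independent. Lemma~\ref{mlcprop}(\ref{mlcdj}) then gives $\sqcap_M(\ab{X_a}) = n_M(\cup_a I_a) = 0$.

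For the converse, suppose $\sqcap_M(\ab{X_a}) = 0$, and choose bases $I_a$ for $X_a$. By Lemma~\ref{mlcprop}(\ref{mlcndj}), both terms on the right are zero. The sum vanishing means every $e \in \cup_a I_a$ lies in exactly one $I_a$, so the $I_a$ are pairwise disjoint. The nullity term vanishing means $n_M(\cup_a I_a) = 0$, so $\cup_a I_a$ is independent. Hence $B = \cup_a I_a$ is an independent set containing the basis $I_a$ of each $X_a$, so $B$ is a mutual basis. Moreover $B \cap X_a = I_a$: indeed $B \cap X_a$ is an independent subset of $X_a$ containing the basis $I_a$, so by maximality it equals $I_a$. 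The sets $B \cap X_a$ are therefore pairwise disjoint, and Lemma~\ref{skewofdjbase} gives that $\ab{X_a}$ is skew.

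There is no serious obstacle; the only care needed is the step $B \cap X_a = I_a$, which is exactly what makes Lemma~\ref{skewofdjbase} applicable rather than merely having disjoint chosen bases. Alternatively, Corollary~\ref{skewindep} shows the $I_a$ are skew, and Lemma~\ref{clskew} together with Lemma~\ref{skewmono}(\ref{skewsubsets}) then transfer skewness from the $\cl_M(I_a)$ to the $X_a \ss \cl_M(I_a)$.
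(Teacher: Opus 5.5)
Your proof is correct and follows essentially the paper's route: both use Lemma~\ref{mlcprop}(\ref{mlcndj}) to see that $\sqcap_M(\ab{X_a}) = 0$ exactly when the chosen bases $I_a$ are pairwise disjoint with independent union. The only difference is the last step, converting this to skewness of the $X_a$. The paper goes through skewness of the $I_a$ (Corollary~\ref{skewindep}) and then transfers it via Lemma~\ref{clskew} and Lemma~\ref{skewmono}(\ref{skewsubsets}), which is the alternative you mention; your main argument instead goes directly through Lemma~\ref{skewgoodbase}(ii) and Lemma~\ref{skewofdjbase}, using the observation $B \cap X_a = I_a$.
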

\begin{proof}
  Let $I_a$ be a basis for $X_a$ for each $a \in A$.
  Then 
  \[\sqcap_M(X_a) = n_M(\cup_a I_a) + \sum_{e \in \cup_a I_a}\br{\abs{\setof{a \in A}{e \in I_a}} - 1}.\] 
  This is equal to zero precisely when the $I_a$ are disjoint with independent union;
  by Corollary~\ref{skewindep}, this is equivalent to the statement that the $I_a$ 
  are skew. By Lemma~\ref{clskew} and Lemma~\ref{skewmono}(\ref{skewsubsets}), 
  this is equivalent to skewness of the $X_a$.
\end{proof}

\subsection*{Pairs}

We now prove that the local connectivity parameter for pairs
(defined by $\sqcap_M(X,Y) = |I \cap J| + n_M(I \cup J)$ for bases $I,J$ for $X,Y$)
has the expected properties: it is monotone, unchanged by taking closures, 
and interacts with minors, $\lambda$ and the rank function as expected. 

Most of them are immediate from the previous section, 
since Lemma~\ref{mlcprop}(\ref{mlcpair}) shows that the parameter for 
pairs is a special case of the general one. 
We include the statements, which shed some of the notational baggage of indexing,
mostly for convenience in future work. 
The last property shows that $\sqcap$ can be defined by choosing only one basis, not two. 

\begin{lemma}\label{lcprop}
  Let $X$ and $Y$ be sets in a matroid $M$. Then
  \begin{enumerate}[(i)]
    \item\label{lcmono} $\sqcap_M(X_0,Y_0) \le \sqcap_M(X,Y)$ for all $X_0 \ss X$ and $Y_0 \ss Y$;
    \item\label{lccl} $\sqcap_M(\cl_M(X),\cl_M(Y)) = \sqcap_M(X,Y)$;
    \item\label{lclambda} if $(X,Y)$ is a partition of $E(M)$, then $\sqcap_M(X,Y) = \lambda_M(X)$;
    \item\label{lcdelete} If $D$ is disjoint from $X \cup Y$, then $\sqcap_{M \del D}(X,Y) = \sqcap_M(X,Y)$;
    \item\label{lcproject} If $C \ss E(M)$, then $\sqcap_{M \dcon C}(X,Y) = \sqcap_{M \con C}(X-C,Y-C)$;
    \item\label{lcrank} $r_M(X) + r_M(Y) = r_M(X \cup Y) + \sqcap_M(X,Y)$;
    \item\label{lcbase} if $J$ is a basis for $Y$ in $M$, then $\sqcap_M(X,Y) = n_{M \dcon X}(J)$. 
    \item\label{lcskew} $\sqcap_M(X,Y) = 0$ if and only if $X$ and $Y$ are skew in $M$. 
  \end{enumerate}
\end{lemma}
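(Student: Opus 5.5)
Each part will be derived from the general indexed results of the previous section, specialised to the two-index family $\ab{X_1,X_2} = (X,Y)$. Lemma~\ref{mlcprop}(\ref{mlcpair}) identifies $\sqcap_M(X,Y)$ from Definition~\ref{lcdef} with the indexed parameter $\sqcap_M(\ab{X_a})$ for $A=\{1,2\}$, so most items are direct translations.

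For (\ref{lcmono}) I apply Lemma~\ref{mcdel}(\ref{mcmono}), and for (\ref{lccl}) Corollary~\ref{lcclosure}. For (\ref{lclambda}), if $(X,Y)$ partitions $E(M)$, then bases $I,J$ of $X,Y$ are disjoint. Hence $\sqcap_M(X,Y) = n_M(I \cup J) = r^*(M|(I\cup J))$, which is exactly Definition~\ref{bwdef}. Item (\ref{lcdelete}) is Lemma~\ref{mcdel}(\ref{mcrestr}) applied with the restriction $M\del D = M|(E(M)-D)$, which contains $X\cup Y$. Item (\ref{lcproject}) is Lemma~\ref{mccon}(\ref{mcproj}), and item (\ref{lcskew}) is Lemma~\ref{lcskewiff}.

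Item (\ref{lcbase}) reduces to Lemma~\ref{lcindep} with $I := J$ independent, which gives $\sqcap_M(X,J) = n_{M\dcon X}(J)$. Since $J$ is a basis for $Y$, we have $\cl_M(J)=\cl_M(Y)$, so by (\ref{lccl}) (or directly, since $J$ is a basis for both $J$ and $Y$ and the definition uses only bases) $\sqcap_M(X,J)=\sqcap_M(X,Y)$.

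The one item needing care is (\ref{lcrank}), because Lemma~\ref{mlcprop}(\ref{mlcrank}) was really only proved under a finite-rank assumption, and we want the statement in $\enn$ with no finiteness hypothesis. I would prove it directly. Take a basis $K$ for $X\cap Y$ and extend it to bases $I\supseteq K$ for $X$ and $J \supseteq K$ for $Y$; then $I\cap J = K$. Next take a basis $B$ of $I\cup J$ with $I\subseteq B \subseteq I\cup J$, which is also a basis for $X\cup Y$. Then $n_M(I\cup J) = |J - B|$. Now
\[r_M(X)+r_M(Y) = |I| + |J| = |I| + |J\cap B - I| + |J - B| + |K|.\]
Here $|I| + |J\cap B - I| = |B| = r_M(X\cup Y)$, since $B = I \cup (J\cap B - I)$ is a disjoint union. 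Also $|J-B| + |K| = n_M(I\cup J) + |I\cap J| = \sqcap_M(X,Y)$. The only thing to check is the bookkeeping of the disjoint decomposition $J = K \sqcup (J\cap B - I)\sqcup (J - B)$, using $J\cap I = K$ and $I\subseteq B$. This uses only additivity of cardinality in $\enn$, so no subtraction is involved and infinite values are harmless. I expect this to be the only step requiring real verification.
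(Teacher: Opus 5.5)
Your proposal is correct. For items (i)--(v), (vii) and (viii) it follows the paper's proof. The paper also reduces these to the indexed results via Lemma~\ref{mlcprop}(\ref{mlcpair}), and it proves (iii) directly from Definition~\ref{bwdef} exactly as you do. For (vii), the paper's direct computation $|I\cap J| + n_M(I\cup J) = n_{M\dcon I}(J) = n_{M\dcon X}(J)$, using Lemma~\ref{nullityproject}, is precisely the content of Lemma~\ref{lcindep}, which you invoke instead.

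The genuine difference is (vi), where the paper simply cites Lemma~\ref{mlcprop}(\ref{mlcrank}). Your concern that this was only established for finite rank is unfounded. The computation there takes place in the preimage $M' = \pi^{-1}(M)$ and reads $n_{M'}(I') + r_{M'}(I') = |I'| = \sum_a |I_a|$. It holds in $\enn$ with no finiteness, and the sentence about $r_M(\cup_a I_a) < \infty$ in that proof is never actually used. So the citation would have sufficed.

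Your direct argument is nonetheless correct:
\begin{itemize}
\item Extending a common basis $K$ of $X\cap Y$ forces $I\cap J = K$.
\item Taking $I \subseteq B \subseteq I\cup J$ gives $n_M(I\cup J) = |J - B|$.
\item The disjoint decompositions $J = K \sqcup ((J\cap B)-I) \sqcup (J-B)$ and $B = I \sqcup ((J\cap B)-I)$ then yield the identity by pure addition.
\end{itemize}
What your route buys is a self-contained, elementary proof for pairs that avoids the preimage matroid and makes it transparent that no subtraction is needed. Your basis choice is also exactly the one the paper uses in Theorem~\ref{lcmod}. The paper's route buys the identity for arbitrary indexed families at once.
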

\begin{proof}
  Recall that by Lemma~\ref{mlcprop}(\ref{mlcpair}), the parameter $\sqcap_M(X,Y)$ 
  agrees with its generalization to arbitrary indexed collections of sets;
  all statements above except (\ref{lclambda}) and (\ref{lcbase}) are
  therefore easy consequences of the lemmas earlier in this section. 

  Property (\ref{lclambda}) follows from the definition of $\lambda$ in (\ref{bwdef}), 
  since $\lambda_M(X) = r_{M^*}(I \cup J) = n_M(I \cup J) = \sqcap_M(X, E-X)$
  for bases $I$ and $J$ for $X$ and $E-X$ respectively. 

  For (\ref{lcbase}), if we let $I$ be a basis for $X$ in $M$, then 
  \[\sqcap_M(X,Y)  = |I \cap J| + n_M(I \cup J) = n_{M \dcon I}(J) = n_{M \dcon X}(J), \]
  where we use Lemma~\ref{nullityproject} and the fact that $M \dcon I = M \dcon X$. 
\end{proof}

\begin{theorem}\label{lcmod}
  If $X, Y$ are sets in a matroid $M$, then $\sqcap_M(X,Y) \ge r_M(X \cap Y)$.
  If $\sqcap_M(X,Y) < \infty$, then equality holds if and only if $(X,Y)$ is a modular pair.
\end{theorem}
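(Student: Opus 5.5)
Choose bases compatibly. Let $K$ be a basis for $X \cap Y$ in $M$, and extend $K$ to a basis $I$ for $X$ and to a basis $J$ for $Y$. By maximality of $K$ in $X \cap Y$ we have $I \cap Y = K = J \cap X$, so $I \cap J = K$. Definition~\ref{lcdef} (via Corollary~\ref{lcwelldef}) then gives
\[\sqcap_M(X,Y) = |K| + n_M(I \cup J) = r_M(X \cap Y) + n_M(I \cup J).\]
Since nullity is nonnegative, the inequality follows at once.

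For the equality case, assume $\sqcap_M(X,Y) < \infty$. Then $|K|$ is finite, so we may cancel it. Hence equality holds if and only if $n_M(I \cup J) = 0$, that is, if and only if $I \cup J$ is independent.

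If $I \cup J$ is independent, it contains the bases $I$ and $J$ of $X$ and $Y$, so it is a mutual basis and $(X,Y)$ is modular.

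For the converse, suppose $(X,Y)$ is modular. By Lemma~\ref{modulartoskew}, $X - Y$ and $Y - X$ are skew in $M \con (X \cap Y)$. The set $I - K$ is independent in $M \con (X\cap Y)$, since $K$ spans $X \cap Y$, and it spans $X - Y$ there because $I$ spans $X$. So $I - K$ is a basis for $X - Y$ in $M \con (X \cap Y)$, and similarly $J - K$ is a basis for $Y - X$. By Lemma~\ref{skewgoodbase}(ii), these disjoint bases of a skew pair have independent union in $M \con (X \cap Y)$. Therefore $(I - K) \cup (J - K) \cup K = I \cup J$ is independent in $M$, which gives equality.

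As an alternative route, Lemma~\ref{lcprop}(\ref{lcproject}) together with Lemma~\ref{mccon} could be used to show $\sqcap_M(X,Y) = r_M(X\cap Y) + \sqcap_{M \con (X \cap Y)}(X-Y,Y-X)$, and then Lemma~\ref{lcprop}(\ref{lcskew}) converts zero connectivity into skewness. The direct basis argument above seems cleaner, though.

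The only delicate point is the finiteness hypothesis. Cancelling $|K|$ is legitimate only because $\sqcap_M(X,Y) < \infty$ forces $r_M(X\cap Y) < \infty$. Without it, as the introduction notes, infinite rank would make equality vacuous, so this is where the hypothesis is essential.
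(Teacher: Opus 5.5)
Your proof is correct. For the inequality and for the implication from equality to modularity, it is the paper's argument: extend a basis $K$ of $X\cap Y$ to bases $I,J$ of $X,Y$, note $I\cap J=K$, and cancel the finite term $|K|$.

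The difference is in showing that a modular pair attains equality. The paper does not work with the original $I,J$ there. It takes a mutual basis $B$ and re-chooses $(K,I,J)=(B\cap X\cap Y,\,B\cap X,\,B\cap Y)$, so that $I\cup J\subseteq B$ is independent for free. This uses two facts: $\sqcap_M$ does not depend on the choice of bases (Corollary~\ref{lcwelldef}), and $B\cap X\cap Y$ is a basis of $X\cap Y$ (Lemma~\ref{blattice}).

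You keep your fixed bases instead. You prove $I\cup J$ independent by passing to $M\con(X\cap Y)$ with Lemma~\ref{modulartoskew} and then applying Lemma~\ref{skewgoodbase}(ii). This is heavier machinery for what is a one-line step in the paper. In exchange it gives a slightly stronger conclusion: for a modular pair, \emph{every} pair of bases of $X$ and $Y$ extending a common basis of $X\cap Y$ has independent union, not merely some such pair.

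One small correction to your aside. Lemma~\ref{mccon} only gives two-sided bounds, so it does not produce the identity $\sqcap_M(X,Y)=r_M(X\cap Y)+\sqcap_{M\con(X\cap Y)}(X-Y,Y-X)$. That identity comes from Lemma~\ref{lcpairslack} applied to $X\cap Y\subseteq X$ with $Z=Y$, using $\sqcap_M(X\cap Y,Y)=r_M(X\cap Y)$, together with Lemma~\ref{lcprop}(\ref{lcproject}).
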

\begin{proof}
  Let $I_0$ be a basis for $X \cap Y$, and $I_X, I_Y$ be bases for $X$ and $Y$ respectively, 
  with $I_0 \ss I_X \cap I_Y$. The maximality of $I_0$ gives that $I_0 = I_X \cap I_Y$. 
  Now 
  \[\sqcap_M(X,Y) = |I_X \cap I_Y| + n_M(I_X \cup I_Y) \ge |I_0| + 0 = r_M(X \cap Y),\]
  giving the inequality. 
  
  If $\sqcap_M(X,Y) < \infty$ and equality holds, then $n_M(I_X \cup I_Y) = 0$, 
  so the set $I_X \cup I_Y$ is independent in $M$. 
  It follows that $I_X \cup I_Y$ is a mutual basis for $X$ and $Y$, 
  so $(X,Y)$ is a modular pair. 

  Conversely, if $X$ and $Y$ are modular, and $B$ is a mutual basis for $X$ and $Y$, 
  then with the choices $(I_0,I_X,I_Y) = (B \cap X \cap Y, B \cap X, B \cap Y)$ 
  above, we have that $I_X \cup I_Y$ is independent, so equality holds. 
\end{proof}

\subsection*{Partitions}

Finally, we prove some statements about the connectivity parameter $\lambda$, 
which is just the specialization of $\sqcap$ to partitions. 
(When we write $\lambda_M(X)$ for a set $X$, this is simply shorthand for 
$\sqcap_M(X, E-X)$, so it is immediate that $\lambda_M(X) = \lambda_M(E - X)$.)

We start with Theorem~\ref{shortconndef}, giving an expression for infinite matroid connectivity that is 
simpler to work with than Definition~\ref{bwdef}. 

\begin{theorem}\label{conneqrelrank}
  Let $M$ be a matroid, and $I$ be a basis for a set $X$ in $M$. Then $\lambda_M(X) = \relrank{M^*}{X}{X-I}$. 
\end{theorem}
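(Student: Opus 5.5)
Write $E = E(M)$ and $Y = E - X$. The aim is to identify both sides with the nullity of one explicit set. By Lemma~\ref{lcprop}(\ref{lcbase}) (with the roles of the two sets swapped), $\lambda_M(X) = \sqcap_M(Y, X) = n_{M \dcon Y}(I)$, since $I$ is a basis for $X$. Because $I \ss X$ is disjoint from $Y$, Lemma~\ref{nullityproject}(\ref{npcomm}) gives
\[ n_{M \dcon Y}(I) = n_M(Y \cup I) - n_M(Y) + \text{(no overlap term)}, \]
but subtraction is not available in general. So I would instead pick a basis $J$ for $Y$, and use Lemma~\ref{nullityproject}(\ref{npconindind}) to get $\lambda_M(X) = n_{M \dcon J}(I) = n_M(I \cup J)$. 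This value is $r^*(M | (I \cup J))$, which is just Definition~\ref{bwdef}.

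Next I would compute the right-hand side. By definition,
\[ \relrank{M^*}{X}{X-I} = r_{M^* \con (X-I)}(I) = r_{M^* \con (X - I)}(X - (X-I)). \]
Since $M^* \con (X-I) = (M \del (X-I))^*$, the right-hand side becomes $r^*_{M \del (X-I)}$ restricted to $I$. Equivalently, it is the nullity-type quantity $r((M \del (X-I))^* \con Y \del \dots)$. More concretely, the rank of $I$ in the dual of $N = M \del (X-I)$ equals $|I - B|$ for a basis $B$ of $N$ chosen to contain a basis of $E(N) - I = Y$. Dually, a basis of $N^*$ restricted to $I$ is $I$ minus the part of $I$ in a suitable $N$-basis. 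Concretely, I take $J$ a basis of $Y$ in $M$ and extend $J$ to a basis $B$ of $N$ with $J \ss B \ss J \cup I$. Then $E(N) - B$ is a basis of $N^*$ containing $Y - J$, and so $(E(N)-B) \cap I = I - B$ should be a basis of $I$ in $N^* \con (Y - J)$... I need it in $N^*$ itself, however.

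The cleanest route is to use the standard fact that for a basis $B$ of $N$, $r_{N^*}(S) = |S - B'|$ where $B'$ is a basis of $N$ chosen to contain a basis of $N | (E(N) - S)$. Here $S = I$ and $E(N) - S = Y$, so choosing $B \supseteq J$ as above gives $r_{N^*}(I) = |I - B|$. This fact follows because $E(N) - B$ is then a cobasis maximizing its intersection with $I$: by Lemma~\ref{blatticedual}, $Y \in \cL_N(B)$ forces $I = E(N) - Y \in \cL_{N^*}(E(N) - B)$. Finally, $B$ is a basis of $M | (I \cup J)$, since $I \cup J$ spans $N$ (we have $X - I \ss \cl_M(I)$ deleted and $Y \ss \cl(J)$). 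Hence
\[ n_M(I \cup J) = |(I \cup J) - B| = |I - B|, \]
and the two sides agree.

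The main obstacle is the duality step: justifying $r_{N^*}(I) = |I - B|$ with $B \supseteq J$ via Lemma~\ref{blatticedual}. That lemma gives that $(E(N)-B) \cap I = I - B$ is an $N^*$-basis of $I$ precisely because $Y \in \cL_N(B)$, and this has to be checked carefully, rather than through any rank subtraction.
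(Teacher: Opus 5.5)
Your argument is correct, but it evaluates the dual rank by a different route from the paper's. For the write-up, delete the false starts: the subtraction formula, the ``nullity-type quantity'', and the detour through $N^* \con (Y-J)$. Your first paragraph is also unnecessary, since $\lambda_M(X) = n_M(I \cup J)$ is exactly Definition~\ref{bwdef}.

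The paper dualizes the other way, writing $\relrank{M^*}{X}{X-I} = r(M^* \del (E-X) \con (X-I)) = n_{M \con (E-X)}(I)$. It then finishes inside the nullity calculus with Lemma~\ref{nullityproject}. Because $\cl_M(J) = \cl_M(E-X)$ and $I$ is disjoint from $E-X$, this gives $n_{M \con (E-X)}(I) = n_{M \dcon J}(I) = n_M(I \cup J)$.

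You instead set $N = M \del (X-I)$, so the right side becomes $r_{N^*}(I)$. You choose one basis $B$ of $N$ with $J \ss B \ss I \cup J$, which gives $B \cap Y = J$ and hence $Y \in \cL_N(B)$. Lemma~\ref{blatticedual} then gives $I \in \cL_{N^*}(E(N)-B)$, so $I - B$ is an $N^*$-basis of $I$. The same $B$ is a basis of $I \cup J$, so both sides equal $|I - B|$.

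Your version is more elementary: it uses only basis/cobasis complementation and no projection lemmas, at the cost of a little explicit basis bookkeeping. The paper's version is shorter given the nullity machinery already developed, and it shows how that machinery is meant to be used.
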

\begin{proof}
  Let $E = E(M)$, and $J$ be a basis for $E-X$ in $M$, so $\lambda_M(X) = r^*(M | (I \cup J))$ by definition. 
  Using the definitions of relative rank and nullity, we have 
  \begin{align*}
    \relrank{M^*}{X}{X-I} &= r(M^* \del (E-X) \con (X-I)) \\ 
    &= r^*(M \con (E-X) | I) \\
    &= n_{M \con (E-X)}(I).
  \end{align*}
  Since $\cl_M(J) = \cl_M(E-X)$ and $I$ is disjoint from $E-X$, 
  Lemma~\ref{nullityproject} gives $n_{M \con (E-X)}(I) = n_{M \dcon J}(I) = n_M(I \cup J) = \lambda_M(X),$ as required.
\end{proof}

This gives a shorter proof of the Lemma from [\ref{bw12}] that $\lambda$ is self-dual. 

\begin{lemma}\label{connselfdual}
  If $M$ is a matroid and $X \ss E(M)$, then $\lambda_M(X) = \lambda_M^*(X)$. 
\end{lemma}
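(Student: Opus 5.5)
We will prove this directly from Theorem~\ref{conneqrelrank}, applied once in $M$ and once in $M^*$, and then compare the two resulting relative ranks. Let $E = E(M)$, let $I$ be a basis for $X$ in $M$, and let $I^*$ be a basis for $X$ in $M^*$. Theorem~\ref{conneqrelrank} gives
\[\lambda_M(X) = \relrank{M^*}{X}{X-I} \quad\text{and}\quad \lambda_{M^*}(X) = \relrank{M}{X}{X-I^*}.\]
The natural idea is to choose $I$ and $I^*$ compatibly, so that both quantities become the rank or corank of one and the same minor.

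\textbf{Choice of bases.} Take $I$ to be a basis for $X$ in $M$. Let $I^*$ be a basis for $X - I$ in $M^*$, extended to a basis for $X$ in $M^*$; we will arrange that it contains no element of $I$. Equivalently, work in the minor $N = M \con (E-X)$. Here $M | X$ and $N$ are dual-related via $(M|X)^* = M^* \con (E-X)$ and $(M \con (E-X))^* = M^*|X$.

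\textbf{Rewriting both sides.} The computation in the proof of Theorem~\ref{conneqrelrank} rewrites $\relrank{M^*}{X}{X-I}$ as $n_{M \con (E-X)}(I) = r^*(N | I)$. Symmetrically,
\[\relrank{M}{X}{X-I^*} = r(M \con (X-I^*) \del (E-X)) = r((M|X) \con (X-I^*)),\]
which is the rank of $I^*$ in $M|X$ after contracting its complement in $X$.

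\textbf{Key step.} We want to show $r^*(N|I) = r((M|X)\con(X-I^*))$ for a suitable choice of $I^*$. The cleanest route is to pick $I^*$ so that $X - I^*$ is a basis of $N$ containing a basis of $N|I$'s complement. Concretely, take $B$ a basis of $N$ with $B \supseteq K$, where $K$ is a basis for $X-I$ in $N$. Since $X - B$ is a basis of $N^* = M^*|X$, we may set $I^* = X - B$. Then:
\begin{itemize}
\item $r((M|X)\con B)$ is the nullity-type quantity obtained by contracting $B$ in $M|X$;
\item $r^*(N|I) = |I - B'|$, where $B'$ is a basis for $I$ in $N$.
\end{itemize}
We then compare these via exchange between $B$ and the basis $I$ of $M|X$: each is the number of elements of $I$ outside a maximal $N$-independent set, counted in $\enn$.

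\textbf{Main obstacle.} Making this count invariant in the infinite case, where subtraction is unavailable. Rather than a hand count, we would try to invoke Lemma~\ref{nullityproject}(\ref{npconindind}) with the independent sets $I$ (in $M|X$) and $X-I^*$. This lemma symmetrizes nullities of unions of independent sets, and should give the needed equality without any subtraction.

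A simpler fallback, if the above becomes messy, is to use Definition~\ref{bwdef} directly. For bases $B$ of $X$ and $B'$ of $E-X$, extend $B\cup B'$ to a basis $\hat B$ of $M$. Then $E-\hat B$ is a basis of $M^*$, and both $\lambda_M(X)$ and $\lambda_{M^*}(X)$ equal the number of elements of $B\cup B'$ outside suitable bases. By Bruhn–Wollan invariance (Corollary~\ref{lambdawelldef}), this could be checked by a symmetric counting argument.
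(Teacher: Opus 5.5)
Your framework is right: apply Theorem~\ref{conneqrelrank} in $M$ and in $M^*$, taking the $M^*$-basis of $X$ to be $X-B$ for a basis $B$ of $N = M \con (E-X)$. That gives $\lambda_M(X) = n_N(I)$ and $\lambda_{M^*}(X) = r((M|X)\con B)$. But you never prove that these two quantities are equal, and that equality is the whole content of the lemma. The choice of $B$ you commit to in the key step also makes it hard.

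\textbf{Why your choice of $B$ fails.} If $B \supseteq K$ with $K$ a basis of $X-I$ in $N$, then $B$ generally meets $X-I$. So $r((M|X)\con B) = |L-B|$ for an $M$-basis $L \supseteq B$ of $X$ with $L \neq I$. Relating $|L-B|$ to $n_N(I) = |I-B'|$ by exchange only gives equalities in $\enn$ between the sizes of the exchanged pieces. Two subsets of $I$ of equal infinite size can have complements of different sizes, so nothing follows. This is exactly the missing subtraction you flag.

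\textbf{Why your remedies do not help.} Lemma~\ref{nullityproject}(\ref{npconindind}) needs sets independent in the matroid at hand, and $I$ is not independent in $N$. Applied in $M$ to $I$ and $B$, Lemma~\ref{nullityproject} only gives $n_{M \dcon B}(I) = |B|$. That is $|I| = \lambda_{M^*}(X) + |B|$, which cannot be cancelled when $|B| = \infty$. The fallback via Definition~\ref{bwdef} is not an argument. Also, $B \cup B'$ already spans $M$, so one takes a basis inside it rather than extending it.

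\textbf{The ``Choice of bases'' requirement is impossible.} Asking that $I^*$ avoid $I$ cannot be met unless $\lambda_M(X) = 0$. It forces $I \ss X - I^*$, which is a basis of $N$, so $I$ is $N$-independent and $n_N(I) = 0$.

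\textbf{The fix.} Choose the $N$-basis \emph{inside} $I$. Let $J$ be a basis of $I$ in $N$.
\begin{itemize}
\item Since $X \ss \cl_M(I)$, also $X \ss \cl_N(I)$. So $J$ is a basis of $N$, and $X-J$ is a basis of $N^* = M^*|X$.
\item Note that $X - J \sps X - I$. Extending $X-I$ to an $M^*$-basis of $X$ is what your first sentence under ``Choice of bases'' was reaching for.
\item Theorem~\ref{conneqrelrank} with this $M^*$-basis gives $\lambda_{M^*}(X) = \relrank{M}{X}{J} = |I-J|$, because $I-J$ is a basis of $X-J$ in $M \con J$.
\item Also $\lambda_M(X) = n_N(I) = |I-J|$, because $J$ is a basis of $I$ in $N$.
\end{itemize}
No cancellation is ever needed. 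This is the paper's proof, which fixes a basis $J$ of $M \con (E-X)$ first and then extends it to an $M$-basis $I$ of $X$.
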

\begin{proof}
  Let $E = E(M)$, and $J$ be a basis for $M \con (E-X)$. Then $J$ is independent in $M$; let $I$ be a basis for $X$ in $M$ 
  with $I \sps J$. Note that $X-J$ is a basis for $(M \con (E-X))^* = M^* | X$, so by Lemma~\ref{conneqrelrank} and the choice of $I$, 
  we have $\lambda_M^*(X) = \relrank{M}{X}{J} = \relrank{M}{I}{J} = |I-J|$. 

  Since $X-J$ is a basis for $M^* | X$, and $X-I \ss X- J$, 
  Lemma~\ref{conneqrelrank} also gives $\lambda_M(X) = \relrank{M^*}{X}{X-I} = \relrank{M^*}{X-J}{X-I} = |X-J| - |X-I| = |I-J|$, 
  which completes the proof. 
\end{proof}

Lemma~\ref{mlcprop}(\ref{mlcrank}) implies that
$\lambda_M(\ab{X_a})$ and $\lambda_M^*(\ab{X_a})$ 
are in general different for partitions $\ab{X_a : a \in A}$ with three or more parts, 
but they are both zero under the same circumstances: when the parts are skew. 

\begin{lemma}\label{lambdazero}
  If $M$ is a matroid and $\ab{X_a : a \in A}$ is a partition of $E(M)$, then the following are equivalent:
  \begin{enumerate}[(1)]
    \item\label{lzsk} $\ab{X_a}$ is skew in $M$;
    \item\label{lzskdual} $\ab{X_a}$ is skew in $M^*$;
    \item\label{lzsum} $M = \oplus_{a \in A} (M | X_a)$;
    \item\label{lz} $\lambda_M(\ab{X_a}) = 0$;
    \item\label{lzdual} $\lambda_M^*(\ab{X_a}) = 0$.
  \end{enumerate}
\end{lemma}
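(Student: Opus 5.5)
The plan is to organize the equivalences around (\ref{lzsk}), and to prove everything through the primal and dual versions of the direct sum condition (\ref{lzsum}).

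First, (\ref{lzsk}) is equivalent to (\ref{lzsum}) by Theorem~\ref{skewequiv}. The sets $X_a$ form a partition, so they are pairwise disjoint and the loop hypothesis holds trivially. Also $\cup_a X_a = E(M)$, so $M | \cup_a X_a = M$ and condition (\ref{sesum}) of that theorem is exactly (\ref{lzsum}).

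Next, (\ref{lzsk}) is equivalent to (\ref{lz}) by Lemma~\ref{lcskewiff}. For a partition we have $\lambda_M(\ab{X_a}) = \sqcap_M(\ab{X_a})$ by Lemma~\ref{mlcprop}(\ref{mlcdj}), since bases of disjoint sets are disjoint, and Lemma~\ref{lcskewiff} says this quantity is zero exactly when the $X_a$ are skew. Applying the same two steps to $M^*$ shows that (\ref{lzskdual}) is equivalent to (\ref{lzdual}), and also to $M^* = \oplus_a (M^* | X_a)$.

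It then remains to link the primal and dual sides, i.e.\ to show that $M = \oplus_a (M|X_a)$ implies $M^* = \oplus_a (M^*|X_a)$. Applying this implication to $M^*$ gives the converse. The key fact is that duality commutes with direct sums: $(\oplus_a M_a)^* = \oplus_a M_a^*$ for infinite families as well, as in [\ref{bw12}]. If this is not available in citable form, I would verify it directly from bases. The bases of $\oplus_a M_a$ are exactly the unions $\cup_a B_a$ with each $B_a$ a basis of $M_a$. Their complements $\cup_a (X_a - B_a)$ are therefore exactly the unions of bases of the $M_a^*$, and these are the bases of $\oplus_a M_a^*$. So if $M = \oplus_a (M|X_a)$, then $M^* = \oplus_a (M|X_a)^*$.

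The main obstacle is identifying each summand correctly. The dual of $M|X_a$ is $M^* \con (E - X_a)$, not $M^*|X_a$. To get the statement in the form needed, I would show that $(M \con (E-X_a))$ and $M|X_a$ coincide when $M$ is such a direct sum. This should follow from Lemma~\ref{skewgoodbase}(iv), or from Theorem~\ref{skewequiv}(\ref{secon}) applied with $A_1 = A - \{a\}$ and $A_2 = \{a\}$. Dualizing this equality gives $M^*|X_a = (M|X_a)^*$. Hence $M^* = \oplus_a (M^*|X_a)$, which is the dual form of (\ref{lzsum}) and is equivalent to (\ref{lzskdual}) by Theorem~\ref{skewequiv}. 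This closes the cycle of equivalences.
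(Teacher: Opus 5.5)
Your proposal is correct and follows essentially the same route as the paper: (1)$\Leftrightarrow$(3) via the skew/direct-sum characterization, then self-duality of the direct-sum decomposition (duality commutes with direct sums and $(M|X_a)^* = M^*|X_a$) to get (1)$\Leftrightarrow$(2), and Lemma~\ref{lcskewiff} for (4) and (5). Your justification of $(M|X_a)^* = M^*|X_a$ via $M \con (E-X_a) = M|X_a$ is valid. It can also be read off directly by restricting $M^* = \oplus_a (M|X_a)^*$ to $X_a$.
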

\begin{proof}
  If (\ref{lzsum}) holds, then $M^* = \oplus_X(M | X)^*$ and for each $X \in \cX$ we have $(M | X)^* = M^* \con (\cup \cX - X) = M^*|X$, 
  so in fact that (\ref{lzsum}) also holds for $M^*$.
  
  By Lemma~\ref{skewgoodbase}, we know that (\ref{lzsk}) implies (\ref{lzsum}), and the fact that (\ref{lzsum}) implies (\ref{lzsk})
  is clear. Therefore (\ref{lzsk}) is equivalent to both (\ref{lzsum}) 
  and the dual of (\ref{lzsum}); thus (\ref{lzsk}) is equivalent to (\ref{lzskdual}). 
  The result now follows easily from Lemma~\ref{lcskewiff}. 
\end{proof}

\subsection*{Modular Cuts}

Finally, we deal with projections by modular cuts. 
A modular cut $\cF$ of $M$ is \emph{proper} if it is not the collection of
all flats of $M$, or equivalently if $\cl_M(\es) \notin \cF$.  
The first lemma shows that the difference between $\sqcap_M(\ab{X_a})$ and $\sqcap_{M \dcon \cF}(\ab{X_a})$ 
is determined only by which of the flats spanned by the individiaul $X_a$ and the union of the $X_a$ belong to $\cF$. 

\begin{lemma}\label{modcutcon}
  Let $\cF$ be a proper modular cut in a matroid $M$, and $\ab{X_a : a \in A}$ be sets in $M$. 
  Then 
  \[\sqcap_M(\ab{X_a}) = \sqcap_{M \dcon \cF}(\ab{X_a}) + \abs{\setof{a \in A}{\cl_M(X_a) \in \cF}} - i,\] 
  where $i = 1$ if $\cl_M(\cup_a X_a) \in \cF$, and $i = 0$ otherwise. 
\end{lemma}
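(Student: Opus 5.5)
Write $P = M +_{\cF} e$ for the extension given by Theorem~\ref{modcutextension}, so that $P \del e = M$ and $P \con e = M \dcon \cF$. Since $\cF$ is proper, $\cl_M(\es) \notin \cF$, so $e$ is not a loop of $P$ and $\{e\}$ is independent in $P$. The plan is to apply Corollary~\ref{mcprojauxindep} in $P$, with $C = \{e\}$ and the collection $\ab{X_a}$ regarded as sets in $P$. This gives
\[\sqcap_{P \dcon e}(\ab{X_a}) + \sum_{a \in A} n_{P \dcon X_a}(\{e\}) = \sqcap_P(\ab{X_a}) + n_{P \dcon \cup_a X_a}(\{e\}).\]

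We then identify each term.
\begin{enumerate}[(a)]
  \item Since the $X_a$ avoid $e$, Lemma~\ref{mcdel}(\ref{mcrestr}) applied to $P | E(M) = M$ gives $\sqcap_P(\ab{X_a}) = \sqcap_M(\ab{X_a})$.
  \item By Lemma~\ref{mccon}(\ref{mcproj}) (with $C = \{e\}$, and $e \notin X_a$), we get $\sqcap_{P \dcon e}(\ab{X_a}) = \sqcap_{P \con e}(\ab{X_a}) = \sqcap_{M \dcon \cF}(\ab{X_a})$.
  \item For a single element, $n_{P \dcon Y}(\{e\})$ equals $1$ if $e$ is a loop of $P \dcon Y$, that is if $e \in \cl_P(Y)$, and equals $0$ otherwise. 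For $Y \ss E(M)$ we have $e \in \cl_P(Y)$ if and only if $e \in \cl_P(\cl_M(Y))$, which holds if and only if $\cl_M(Y) \in \cF$, by the defining property $\cF = \cF_M^P$. So the sum equals $\abs{\setof{a}{\cl_M(X_a) \in \cF}}$, and the last term equals $i$.
\end{enumerate}

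Substituting these gives $\sqcap_{M \dcon \cF}(\ab{X_a}) + \abs{\setof{a}{\cl_M(X_a)\in\cF}} = \sqcap_M(\ab{X_a}) + i$. Since $i \in \{0,1\}$, the desired identity follows, read in the sense of adding $i$ to both sides.

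The main obstacle is the subtraction of $i$ in $\enn$. If $i = 1$, then $\cl_M(\cup_a X_a) \in \cF$. Two cases arise.
\begin{itemize}
  \item If $\sqcap_M$ is finite, subtraction of the finite quantity $1$ is legitimate.
  \item If $\sqcap_M = \infty$, then the left side is infinite, and the statement should be read with $\infty - 1 = \infty$.
\end{itemize}
I would check that the statement's intended meaning matches this: the equation is equivalent to the one with $i$ moved to the other side, since $i \le 1$. I would also check that when $i = 1$ the left count plus $\sqcap_{M \dcon \cF}$ is at least $1$. Either some $\cl_M(X_a) \in \cF$, or else, by the count being $0$, one must verify that $\sqcap_{M\dcon\cF} \ge 1$. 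This already follows from the displayed equality, since $\sqcap_M \ge 0$.

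A minor point to verify carefully is that local connectivity in $P$ of sets avoiding $e$ agrees with that in $M$. This is exactly Lemma~\ref{mcdel}(\ref{mcrestr}).
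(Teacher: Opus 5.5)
Your proposal is correct and follows the paper's proof essentially step for step: build $P$ with $P \del e = M$ and $P \con e = M \dcon \cF$, apply Corollary~\ref{mcprojauxindep} in $P$ with $C = \{e\}$, read each nullity term $n_{P \dcon Y}(\{e\})$ as the indicator of $\cl_M(Y) \in \cF$, and cancel using $i < \infty$. Your version writes these terms correctly in $P$, whereas the paper's display has $M \dcon \cup_a X_a$ and $M \dcon X_a$, which only make sense with $P$ in place of $M$.
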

\begin{proof}
  Let $P$ be a matroid such that $P \del e = M$ and $P \con e = M \dcon \cF$ 
  for some $e \in E(P)$. Since $\cF$ is proper, the element $e$ is not a loop of $P$. 
  It follows that, for each set $Y \ss E(M)$, we have $n_{P \dcon Y}(\{e\}) = 1$ 
  if and only if $e \in \cl_P(Y)$, which in turn holds if and only if $\cl_M(Y) \in \cF$. 
  By Lemma~\ref{mcprojauxindep}, we have 
  \begin{align*}
    \sqcap_M(\ab{X_a}) + i &= \sqcap_P(\ab{X_a}) + n_{M \dcon \cup_a X_a}(\{e\}) \\ 
    &= \sqcap_{P \dcon \{e\}}(\ab{X_a}) + \sum_{a \in A}n_{M \dcon X_a}(\{e\}) \\ 
    &= \sqcap_{M \dcon \cF}(\ab{X_a}) + \abs{\setof{a \in A}{\cl_M(X_a) \in \cF}},
  \end{align*}
  and the result follows because $i < \infty$. 
\end{proof}

The dual version of this statement gives a formula for the dual connectivity in a projected modular cut. 

\begin{lemma}\label{dualmodcutcon}
  Let $\cF$ be a nonempty modular cut in a matroid $M$, and $\ab{X_a : a \in A}$ be subsets of $E = E(M)$. 
  Then 
  \[\sqcap^*_{M \dcon \cF}(\ab{X_a}) = \sqcap_{M}^*(\ab{X_a}) + \abs{\setof{a \in A}{\cl_M(E(M) - X_a) \notin \cF}} - i,\] 
  where $i = 0$ if $\cl_M(E - \cup_a X_a) \in \cF$, and $i = 1$ otherwise. 
\end{lemma}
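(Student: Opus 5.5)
The plan is to dualize the extension and apply Corollary~\ref{mcprojauxindep} in the dual, in the same way that Lemma~\ref{modcutcon} is proved in the primal. Here $\sqcap^*_N$ means $\sqcap_{N^*}$.

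First, fix $e \notin E$ and let $P = M +_{\cF} e$, so that $P \del e = M$ and $P \con e = (M \dcon \cF)|E$. In the notation of the statement, $M \dcon \cF$ is the single-element projection $(M +_{\cF} e) \con e$. Dualizing gives
\[M^* = P^* \con e \quad\text{and}\quad (M \dcon \cF)^* = P^* \del e.\]
Since $\cF$ is nonempty, $E \in \cF$ by the superflat property, so $e \in \cl_P(E)$ and $e$ is not a coloop of $P$. Hence $\{e\}$ is independent in $P^*$.

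Next, apply Corollary~\ref{mcprojauxindep} in $P^*$ with $C = \{e\}$. This gives
\[\sqcap_{P^* \dcon e}(\ab{X_a}) + \sum_{a \in A} n_{P^* \dcon X_a}(\{e\}) = \sqcap_{P^*}(\ab{X_a}) + n_{P^* \dcon \cup_a X_a}(\{e\}).\]
I will identify each term as follows.
\begin{itemize}
  \item By Lemma~\ref{mcdel}(\ref{mcrestr}), since every $X_a \ss E$, we have $\sqcap_{P^*}(\ab{X_a}) = \sqcap_{P^* \del e}(\ab{X_a}) = \sqcap^*_{M \dcon \cF}(\ab{X_a})$.
  \item By Lemma~\ref{mccon}(\ref{mcproj}), since $e \notin X_a$, we have $\sqcap_{P^* \dcon e}(\ab{X_a}) = \sqcap_{P^* \con e}(\ab{X_a}) = \sqcap^*_M(\ab{X_a})$.
  \item For $Y \ss E$, the nullity $n_{P^* \dcon Y}(\{e\})$ is $1$ if $e \in \cl_{P^*}(Y)$ and $0$ otherwise. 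I will use the standard duality fact that, for $e \notin Y$, $e \in \cl_{P^*}(Y)$ holds exactly when $e \notin \cl_P(E(P) - Y - \{e\})$. This holds because $e \in \cl_{P^*}(Y)$ means some cocircuit $D$ of $P$ satisfies $e \in D \ss Y \cup \{e\}$, and that happens exactly when $e$ is a coloop of $P \del (E - Y)$. Since $\cl_P(E - Y) \ni e$ iff $\cl_M(E-Y) \in \cF$, the term equals $1$ precisely when $\cl_M(E - Y) \notin \cF$.
\end{itemize}

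Substituting these identifications, the right-hand nullity term becomes $i$ as defined in the statement, and the sum becomes $\abs{\setof{a \in A}{\cl_M(E - X_a) \notin \cF}}$. This yields
\[\sqcap^*_M(\ab{X_a}) + \abs{\setof{a}{\cl_M(E-X_a) \notin \cF}} = \sqcap^*_{M \dcon \cF}(\ab{X_a}) + i.\]
Since $i \in \{0,1\}$ is finite, we may subtract it. When $i = 1$, the left-hand side is at least $1$ by the equation itself, so the subtraction makes sense in $\enn$, and the stated formula follows.

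I expect the only delicate points to be the dual closure criterion for $e$, and the check that the degenerate case $A = \es$ is consistent. In that case every term vanishes, and $i = 0$ because $\cl_M(E) = E \in \cF$.
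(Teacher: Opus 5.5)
Your proposal is correct and follows the paper's proof essentially step for step: realize $M \dcon \cF$ as $P \con e$, note that $\{e\}$ is independent in $P^*$, and apply Corollary~\ref{mcprojauxindep} in $P^*$ with $C=\{e\}$; then identify the terms using $P^*\con e = M^*$, $P^* \del e = (M\dcon\cF)^*$ and the criterion $n_{P^*\dcon Y}(\{e\})=1 \iff \cl_M(E-Y)\notin\cF$, and cancel the finite $i$. One small slip: a cocircuit $D$ with $e\in D\ss Y\cup\{e\}$ exists exactly when $e$ is a coloop of $P\del Y$ (that is, of $P|(E(P)-Y)$), not of $P\del(E-Y)$; the equivalence you actually rely on, $e\in\cl_{P^*}(Y)\iff e\notin\cl_P(E-Y)$, is nonetheless correct.
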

\begin{proof}
  Let $P$ be a matroid such that $P \del e = M$ and $P \con e = M \dcon \cF$ 
  for some $e \in E(P)$. 

  For each set $Y \ss E$, since $e$ is a nonloop of $P^*$ and $e \notin Y$, 
  we have $n_{P^* \dcon Y}(\{e\}) = r_{P \con (E - Y)}(\{e\})$ by the definition of nullity, 
  so $n_{P^* \dcon Y}(\{e\}) = 1$ if and only if $e \notin \cl_P(E - Y)$, 
  which by the choice of $P$ and $e$ holds if and only if $\cl_M(E - Y) \notin \cF$. 
  Using Lemma~\ref{mcprojauxindep} now gives 
  \begin{align*}
    \sqcap^*_M(\ab{X_a}) + \abs{\setof{a \in A}{\cl_M(E - X) \notin \cF}} 
      &= \sqcap_{P^* \con e}(\ab{X_a}) + \sum_{a \in A}n_{P^* \dcon X_a}(\{e\}) \\ 
      &= \sqcap_{P^*}(\ab{X_a}) + n_{P^* \dcon \cup_a X_a}(\{e\}) \\ 
      &= \sqcap_{M \dcon \cF}^*(\ab{X_a}) + i,
  \end{align*}
  where we use  Lemma~\ref{mccon}(\ref{mcproj}) and the fact that $P^* \del e = M \dcon \cF$. 
  Again, the result follows because $i < \infty$. 
\end{proof}

This last lemma is a simpler version of Lemma~\ref{modcutcon} that describes
the interaction of the single-set-valued $\lambda$ with projection by a modular cuts. 
One can also use Lemma~\ref{mcprojauxindep} to prove a similar upper bound 
of $\lambda_{M}(X) + 1 \ge \lambda_{M \dcon \cF}(X)$, but we will not need this. 

\begin{lemma}\label{modcutlambda}
  If $\cF$ is a proper modular cut of a matroid $M$, and $X \ss E(M)$, then 
  $\lambda_M(X) \le \lambda_{M \dcon \cF}(X) + 1$. 
  If $\lambda_M(X) \ne \infty$, then equality holds 
  if and only if $\cF$ contains both $\cl_M(X)$ and $\cl_M(E-X)$. 
\end{lemma}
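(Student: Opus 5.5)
We apply Lemma~\ref{modcutcon} to the two-set family $(X, E-X)$, indexed by $A=\{1,2\}$ with $X_1 = X$ and $X_2 = E - X$, where $E = E(M)$. Recall that $\lambda_M(X)$ is shorthand for $\sqcap_M(X,E-X)$, and likewise $\lambda_{M \dcon \cF}(X) = \sqcap_{M \dcon \cF}(X, E-X)$, since $M \dcon \cF$ has the same ground set $E$. The union $X_1 \cup X_2 = E$ has closure $E$. If $\cF$ is empty, then $M \dcon \cF$ is obtained by adding $e$ as a coloop and contracting it, so $M \dcon \cF = M$. In that case the inequality is trivial, and equality cannot hold unless $\lambda_M(X)=\infty$; this agrees with the characterization, since $\cF$ then contains neither flat.

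So assume $\cF$ is nonempty. By upward closure (i) we get $E = \cl_M(E) \in \cF$, and hence $i = 1$ in Lemma~\ref{modcutcon}. That lemma (valid since $\cF$ is proper) gives
\[\lambda_M(X) + 1 = \lambda_{M \dcon \cF}(X) + k,\]
where $k = |\{a : \cl_M(X_a) \in \cF\}| \in \{0,1,2\}$. Strictly, the lemma is stated as $\lambda_M(X) = \lambda_{M\dcon\cF}(X) + k - 1$; I would reread its proof, where the identity appears additively before the finite cancellation, and use it in the additive form above to avoid subtraction issues.

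Since $k \le 2$, we get $\lambda_M(X) + 1 \le \lambda_{M\dcon\cF}(X) + 2$. When $\lambda_{M\dcon\cF}(X) < \infty$, cancelling $1$ gives the desired inequality $\lambda_M(X) \le \lambda_{M \dcon \cF}(X) + 1$. When $\lambda_{M\dcon\cF}(X) = \infty$, the inequality is trivial.

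For the equality case, assume $\lambda_M(X) < \infty$. Then the additive identity forces $\lambda_{M\dcon\cF}(X)$ to be finite, and all terms can be cancelled freely, giving $\lambda_{M\dcon\cF}(X) = \lambda_M(X) + 1 - k$. Hence $\lambda_M(X) = \lambda_{M\dcon\cF}(X) + 1$ holds if and only if $k = 2$, that is, if and only if both $\cl_M(X)$ and $\cl_M(E-X)$ lie in $\cF$.

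The only delicate points are the $\enn$ arithmetic, namely cancelling only finite quantities, and the degenerate case $\cF = \es$; neither should pose real difficulty.
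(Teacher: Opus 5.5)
Your proof is correct. It reaches the same identity as the paper, but through the primal formula rather than the dual one.

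The paper first rewrites $\lambda_M(X)=\lambda^*_M(X)=\sqcap^*_M(X,E-X)$ using self-duality (Lemma~\ref{connselfdual}). It then reads the result off Lemma~\ref{dualmodcutcon}. For a bipartition the complement of the union is empty, so properness of $\cF$ gives $i=1$ there, and the count is of the sets among $\cl_M(X),\cl_M(E-X)$ that are \emph{not} in $\cF$.

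You instead apply Lemma~\ref{modcutcon} directly to $\lambda_M(X)=\sqcap_M(X,E-X)$. This avoids self-duality altogether and yields the same additive identity $\lambda_M(X)+1=\lambda_{M\dcon\cF}(X)+k$.

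Your route is also more careful about $\cF=\varnothing$. Lemma~\ref{dualmodcutcon} assumes $\cF$ is nonempty, which it needs so that $e$ is a nonloop of $P^*$. The paper's one-line proof silently skips this case, while you observe directly that $M\dcon\cF=M$ there. Since Lemma~\ref{modcutcon} holds for every proper cut, the empty case would also drop out of it with $i=k=0$.

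The paper's choice is natural because it needs Lemma~\ref{dualmodcutcon} anyway for guts projections (Lemma~\ref{gpsub}). There the partitions may have more than two parts, and $\lambda^*$ genuinely differs from $\lambda$. For the two-part statement here, your primal route is slightly more direct.

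Your $\enn$ bookkeeping is sound. You cancel only finite quantities, and you get finiteness of $\lambda_{M\dcon\cF}(X)$ from that of $\lambda_M(X)$ via the additive identity.
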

\begin{proof}
  By the definition of $\lambda$, we have $\lambda_M(X) = \lambda_M^*(X) =  \sqcap_M^*(\ab{X,E(M)-X})$, 
  and similarly $\lambda_{M \dcon \cF}(X) = \sqcap_{M \dcon \cF}^*(\ab{X,E(M)-X})$. 
  The result is now immediate from Lemma~\ref{dualmodcutcon}. 
\end{proof}

\section{Guts Extensions}

Given sets $\ab{X_a : a \in A}$ in a matroid $M$, let $\gutscut{M}{\cX}$ denote the collection of flats $F$ of $M$ 
for which $\ab{X_a}$ is skew in $M \dcon F$. 
(Since loops do not alter skewness, this is equal to the set of flats $F$
for which $\ab{X_a-F}$ is skew in $M \con F$, but we prefer the version that
does not modify the ground set.)
Note that $E(M) \in \gutscut{M}{\cX}$ for any $\cX$, so $\gutscut{M}{\cX}$ is always nonempty. 

We now prove Theorem~\ref{gutscutintro}, which states that if $\cup_a X_a = E(M)$, 
then $\gutscut{M}{\ab{X_a}}$ 
is a modular cut. An extension using this modular cut is the unique freeest possible extension 
in which all the $X_a$ span the new element. 

\begin{theorem}\label{gutscut}
  If $\ab{X_a : a \in A}$ are sets in a matroid $M$ with $\cup_a X_a = E(M)$, then $\gutscut{M}{\ab{X_a}}$ is a modular cut of $M$. 
\end{theorem}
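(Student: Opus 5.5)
We will in fact show the stronger statement that $\gutscut{M}{\ab{X_a}}$ is closed under superflats and under intersections of arbitrary nonempty modular families. Conditions (\ref{forallmodpair}) and (\ref{forallmodchain}) of Definition~\ref{infmodcut} are the special cases of pairs and infinite chains.

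\emph{Upward closure.} Let $F \in \gutscut{M}{\ab{X_a}}$ and let $F' \supseteq F$ be a flat. Since $\cup_a X_a = E(M)$, the set $F'$ is contained in the union of the $X_a$. So Lemma~\ref{skewmono}(\ref{skewproject}), applied in $M \dcon F$ with $C = F'$, shows that $\ab{X_a}$ is skew in $(M \dcon F) \dcon F' = M \dcon F'$. This is the only place the hypothesis $\cup_a X_a = E(M)$ is used.

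\emph{Reduction to pairs.} Let $\cF_0 \ss \gutscut{M}{\ab{X_a}}$ be a nonempty modular family with mutual basis $B$, and let $F_0 = \bigcap \cF_0$. By Lemma~\ref{blattice}, $B \cap F_0$ is a basis for $F_0$, and $B \cap G$ is a basis for $G$ for each $G \in \cF_0$. I will use Theorem~\ref{skewequiv}(\ref{sesingle}), after first discarding loops of $M \dcon F_0$ via Lemma~\ref{skewdj}. This reduces the claim to showing that, for each $s \in A$, the sets $X = X_s$ and $Y = \cup_{a \ne s}X_a$ are skew in $M \dcon F_0$. Note that $X \cup Y = E(M)$, and that skewness of $\ab{X_a}$ in $M \dcon G$ gives skewness of $(X,Y)$ in $M \dcon G$ by Lemma~\ref{skewmono}(\ref{skewrefine}).

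\emph{The pair case.} By Lemma~\ref{skewrank}-style reasoning, or more robustly by Lemma~\ref{lcskewiff}, it suffices to show $\sqcap_{M \dcon F_0}(X,Y) = 0$. Lemma~\ref{lcprop}(\ref{lcbase}) gives $\sqcap_{M\dcon F_0}(X,Y) = n_{M \dcon (F_0 \cup X)}(J)$ for a basis $J$ of $Y$ in $M \dcon F_0$. So we must show that $J$ is independent in $M \dcon (X \cup (B\cap F_0))$. Suppose instead that some circuit $C$ of $M$ lies in $J \cup K \cup (B \cap F_0)$, where $K$ is a basis of $X$ in $M \dcon F_0$, and that $C$ meets $J$. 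The plan is to push $C$ up to a single $G \in \cF_0$ that witnesses a failure of skewness in $M \dcon G$.
\begin{itemize}
\item When $\cF_0$ is finite, or more generally when only finitely many elements of $B$ matter, I would pick $G$ as in the chain argument of Lemma~\ref{tightquot}.
\item In general, I would work with the lattice $\cL_M(B)$. For each $G \in \cF_0$, skewness of $(X,Y)$ in $M \dcon G$ says that $\cl_M(G \cup X)$ and $\cl_M(G \cup Y)$ form a modular pair meeting in $G$ (Lemma~\ref{modulartoskew}).
\end{itemize}
The aim is then to choose $B$, extending it as in Lemma~\ref{modularinsert}, so that it is simultaneously compatible with $X$. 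Lemma~\ref{closureinter} would then give $\bigcap_G \cl_M(G \cup X) \cap \cl_M(G\cup Y) = \cl_M(\bigcap_G (B\cap G)) = F_0$. That is, the projected sets meet only in $F_0$ and remain modular, which is skewness in $M \dcon F_0$ by Lemma~\ref{modulartoskew}.

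The main obstacle is this last step: producing a single independent set that is simultaneously a mutual basis for $\cF_0$ and compatible with the bases of $X$ and $Y$ in every $M \dcon G$. Without it, Lemma~\ref{closureinter} cannot be applied to pass the intersection through the closures. If a direct construction fails, I would fall back on proving only the two required cases. For pairs, I would use a direct circuit-elimination argument with the mutual basis of $(F,F')$. For infinite modular chains, I would use a Zorn-type/compactness argument in the style of the proof of Lemma~\ref{tightquot}, exploiting that a witness of non-skewness in $M \dcon F_0$ involves a basis that can be localized to some member of the chain.
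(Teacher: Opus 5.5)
Your upward-closure argument is correct and matches the paper. The gap is the intersection step, which is the whole content of the theorem. You correctly identify what is needed: a single independent set that is a mutual basis for $\cF_0$ and also spans each $X_a$ modulo each $G \in \cF_0$. But you leave this as an unresolved ``main obstacle'', and neither fallback closes it.

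\begin{itemize}
\item Lemma~\ref{modularinsert} only inserts a set comparable to every member of the family, and the flats $\cl_M(G \cup X)$ for varying $G$ generally are not.
\item The localization trick in Lemma~\ref{tightquot} works only because circuits there are finite ($N$ is finitary). Here $M$ is arbitrary, so a circuit witnessing non-skewness in $M \dcon F_0$ may be infinite and need not avoid any single member of an infinite chain.
\item The pair case by circuit elimination is only announced, not carried out.
\end{itemize}

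Your remark that $\bigcup_a X_a = E(M)$ is used only for upward closure is also wrong. Closure under modular intersections fails without it. In the V\'amos matroid $V$ with $\ab{X_a} = (S_1,S_2)$, both $S_3$ and $S_4$ lie in the collection and form a modular pair, yet $S_1,S_2$ are not skew in $V$. So any correct argument for this step must use the covering hypothesis.

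The missing idea is that the obstacle disappears if you take $B$ to be a basis of all of $M$ extending a mutual basis of $\cF_0$; it is still a mutual basis.
\begin{enumerate}
\item Fix $G \in \cF_0$ and $a \in A$. Because the $X_b$ cover $E$, Lemma~\ref{skewmono} shows that $X_a$ and $E - X_a$ are skew in $M \dcon G$.
\item Hence, by Lemma~\ref{skewgoodbase}, $(M \dcon G)|X_a = (M \dcon G) \con (E - X_a)$.
\item Since $B$ spans $E$, the set $B \cap X_a$ spans $X_a$ in the latter matroid. Therefore $X_a \ss \cl_M((B \cap G) \cup (B \cap X_a)) = \cl_M(B \cap (X_a \cup G))$.
\item All the sets $B \cap (X_a \cup G)$ lie inside the independent set $B$. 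So Lemma~\ref{closureinter}, together with Lemma~\ref{blattice} for $F_0 = \bigcap \cF_0$, gives $X_a \ss \cl_M(B \cap (X_a \cup F_0)) = \cl_M((B \cap X_a) \cup F_0)$.
\item Thus $B - F_0$, which is a basis of $M \dcon F_0$, is a mutual basis for $\ab{X_a}$ there.
\item For $a \ne b$ we have $X_a \cap X_b \ss \bigcap \cF_0 = F_0 = \cl_{M \dcon F_0}(\es)$, so $\ab{X_a}$ is skew in $M \dcon F_0$ and $F_0$ belongs to the collection.
\end{enumerate}
This handles arbitrary nonempty modular families at once, with no reduction to pairs, no circuit chase, and no finiteness assumption.
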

\begin{proof}
  Let $E = E(M)$. 
  If $F \in \cF$ and $F'$ is a flat of $M$ containing $F$, 
  then the collection $\ab{X_a}$ is skew in $M \dcon F$ and therefore in $M \dcon F' = (M \dcon F) \dcon F'$ by
  Lemma~\ref{skewmono}, so $F' \in \cF$. 

  It is therefore enough to show that $\cF$ is closed under taking intersections of modular families. 
  Let $B$ be a basis of $M$, and let $\cF_0 \ss \cF \cap \cL_M(B)$, so we want to prove that $\bigcap \cF_0 \in \cF$. 
  Let $F_0 = \bigcap \cF_0$. 

   \begin{claim}\label{gc1}
    For each $F \in \cF_0$ and $a \in A$, we have $X_a \ss \cl_M(B \cap (X_a \cup F))$. 
  \end{claim}
  \begin{subproof}
    Since $E-X_a \ss \cup_{b \in A - \{a\}}X_b$ is skew to $X_a$ in $M \dcon F$ and is disjoint from $X_a$,
    Lemma~\ref{skewgoodbase} gives that $(M \dcon F) | X_a = (M \dcon F) \con (E-X_a)$. 
    Using the fact that $B \cap F$ is a basis for $F$ in $M$, we have 
    \begin{align*}
      X_a \cap \cl_M(B \cap (X_a \cup F)) &= \cl_{M \dcon (B \cap F) | X_a}(B \cap X_a) \\ 
      &= \cl_{M \dcon F | X_i}(B \cap X_a) \\
      &= \cl_{M \dcon F \con (E - X_a)}(B \cap X_a) \\
      &= \cl_M((B \cap X_a) \cup (E-X_a)) - (E - X_a) \\
      &\supseteq \cl_M(B) - (E-X_a) = X_a,
    \end{align*}
    which implies the claim. 
  \end{subproof}
    \begin{claim}\label{gc2}
    $X_a \ss \cl_{M \dcon F_0}((B - F_0) \cap X_a)$ for each $a \in A$.
  \end{claim}
  \begin{subproof}
  By~\ref{gc1} and Lemma~\ref{closureinter}, we have 
  \[X_a \ss \bigcap_{F \in \cF_0} \cl_M\br{B \cap (X_a \cup F)} = \cl_M\br{\bigcap_{F \in \cF_0} B \cap (X_a \cup F)} = 
  \cl_M(B \cap \br{X_a \cup F_0}).\]
  Moreover, since $F_0 \in \cL_M(B)$, we have 
  \[X_a \ss \cl_M(B \cap (X_a \cup F_0)) = \cl_M((B \cap X_a) \cup (B \cap F_0)) = \cl_M((B \cap X_a) \cup F_0).\] 
  Therefore $X_a \ss \cl_{M \dcon F_0}(B \cap X_a) = \cl_{M \dcon F_0}((B-F_0) \cap X_a)$, as required. 
  \end{subproof}
  Since $F_0 \in \cL_M(B)$, the set $B \cap F_0$ is a basis for $F_0$ in $M$, so $B - F_0$ is a basis for $M \dcon F_0$. 
  By~\ref{gc2}, we have $X_a \in \cL_{M \dcon F_0}(B - F_0)$ for each $a \in A$, so in fact the set $B - F_0$ is a mutual 
  basis for $\ab{X_a}$ in $M \dcon F_0$. Thus $F_0 \in \cF_0$, as required. 
\end{proof}

The requirement that $\cup_a X_a = E(M)$ cannot be removed, even if $|A| = 2$ and $M$ is finite. 
Such difficulties are generally well-studied (see, for example, [\ref{o19}]), 
but we give a self-contained counterexample here. 
The \emph{Vamos matroid} is the rank-four matroid $V$ with ground set 
$S_1 \cup S_2 \cup S_3 \cup S_4$,
where $S_1,S_2,S_3,S_4$ are pairwise disjoint two-element sets, and the nonspanning circuits 
are the sets of the form $S_i \cup S_j$, where $1 \le i < j \le 4$ and $(i,j) \ne (3,4)$. 
\begin{lemma}
  If $\cX = \{S_1,S_2\}$, then $\gutscut{V}{\cX}$ is not a modular cut in $V$. 
\end{lemma}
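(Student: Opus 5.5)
The plan is to exhibit a modular pair $(F,F')$ of flats of $V$ with $F, F' \in \gutscut{V}{\cX}$ and $F \cap F' \notin \gutscut{V}{\cX}$. This violates condition~(\ref{forallmodpair}) of Definition~\ref{infmodcut}. The natural candidates are the two lines $F = S_3$ and $F' = S_4$. Their union is the one non-circuit pair of the $S_i$, and this is exactly what makes the Vamos matroid pathological.

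First I will record the basic structure of $V$. It has rank four, and every set of size at most three is independent. A four-element set is dependent exactly when it is one of the circuits $S_i \cup S_j$ with $(i,j) \ne (3,4)$. Consequently each $S_i$ is a flat of rank two, the empty set is a flat (as $V$ is simple), and $S_3 \cup S_4$ is independent of rank four. It follows that $(S_3,S_4)$ is a modular pair: the independent set $S_3 \cup S_4$ is a mutual basis. Alternatively one can use Lemma~\ref{modpairiffbasis} with $2+2 = 4+0$. Moreover $S_3 \cap S_4 = \es = \cl_V(\es)$.

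Next I will show $\es \notin \gutscut{V}{\cX}$, that is, that $S_1$ and $S_2$ are not skew in $V$. This holds because $S_1 \cup S_2$ is a circuit meeting both sets, so condition~(\ref{secct}) of Theorem~\ref{skewequiv} fails. A rank count via Lemma~\ref{skewrank} also works, since $r(S_1 \cup S_2) = 3 \ne 2 + 2$.

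Finally I will show $S_3 \in \gutscut{V}{\cX}$; the argument for $S_4$ is symmetric. Since everything has finite rank, Lemma~\ref{skewrank} applied in $V \dcon S_3$ reduces skewness to the rank identity
\[r_{V \dcon S_3}(S_1 \cup S_2) = r_{V \dcon S_3}(S_1) + r_{V \dcon S_3}(S_2),\]
where $r_{V \dcon S_3}(Y) = r_V(Y \cup S_3) - 2$. Now $S_1 \cup S_3$ and $S_2 \cup S_3$ are circuits, so each has rank $3$ and each term on the right equals $1$. Also $S_1 \cup S_2 \cup S_3$ contains the independent four-set consisting of $S_3$, one element of $S_1$ and one element of $S_2$, so it has rank $4$ and the left side equals $2$. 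Hence the identity holds. For $S_4$ the same computation uses the circuits $S_1 \cup S_4$ and $S_2 \cup S_4$.

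The only delicate point is the independence of three-element sets and of the mixed four-element sets in $V$. This follows directly from the stated description of the nonspanning circuits, since $V$ has rank four and every nonspanning circuit has size four. Combining the three steps gives a modular pair in $\gutscut{V}{\cX}$ whose intersection is not in $\gutscut{V}{\cX}$, so $\gutscut{V}{\cX}$ is not a modular cut.
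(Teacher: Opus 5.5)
Your proposal is correct and follows essentially the same argument as the paper. Both show that $S_3, S_4 \in \gutscut{V}{\cX}$ via the rank computation in $V \dcon S_k$, using the circuits $S_1 \cup S_k$ and $S_2 \cup S_k$. Both observe that $(S_3,S_4)$ is a modular pair because $S_3 \cup S_4$ is independent, and conclude because $\es = S_3 \cap S_4$ is not in the cut, since $S_1$ and $S_2$ are not skew in $V$. Your additional checks on the structure of $V$ (small sets are independent, and each $S_i$ and $\es$ are flats) just fill in details the paper leaves implicit.
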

\begin{proof}
  Let $k \in \{3,4\}$. 
  Since $S_1 \cup S_k$ and $S_2 \cup S_k$ are circuits, we have $r_{M \dcon S_k}(S_1) = r_{M \dcon S_k}(S_2) = 1$ 
  and $r_{M \dcon S_k}(S_1 \cup S_2) = 4 - 2 = 2$, so $\cX$ is skew in $M \dcon S_k$ and therefore 
  $S_k = \cl_V(S_k) \in \gutscut{V}{\cX}$. 
  Since $S_3 \cup S_4$ is independent, the pair $(S_3,S_4)$ is modular in $V$, so if $\gutscut{V}{\cX}$ is a modular cut, 
  we have $\es = S_3 \cap S_4 \in \gutscut{V}{\cX}$. But $\cX$ is not skew in $V$, so this is a contradiction. 
\end{proof}

The next lemma shows that, for a partition $\cX$, applying a guts projection 
reduces the value of $\lambda^*_M$ by exactly one, provided that $\cX$ is not already skew. 

\begin{lemma}\label{gpsub}
  Let $\ab{X_a : a \in A}$ be a partition of the ground set of a matroid $M$.
  If $\ab{X_a}$ is not skew in $M$, then 
  $\lambda_{M}^*(\ab{X_a}) = \lambda_{M \dcon \gutscut{\cX}{M}}^*(\ab{X_a}) + 1$. 
\end{lemma}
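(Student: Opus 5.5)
Write $\cF = \gutscut{M}{\cX}$ and $E = E(M)$. Since $\cup_a X_a = E$, Theorem~\ref{gutscut} shows that $\cF$ is a modular cut. Moreover $E \in \cF$, so $\cF$ is nonempty. The plan is to apply Lemma~\ref{dualmodcutcon} to $\cF$ and the sets $\ab{X_a}$ and then read off each correction term. Since $\ab{X_a}$ is a partition, Lemma~\ref{mlcprop}(\ref{mlcdj}) identifies the dual local connectivity with $\lambda^*$. Lemma~\ref{dualmodcutcon} therefore reads
\[\lambda^*_{M \dcon \cF}(\ab{X_a}) = \lambda^*_M(\ab{X_a}) + \abs{\setof{a \in A}{\cl_M(E - X_a) \notin \cF}} - i,\]
where $i = 0$ if $\cl_M(E - \cup_a X_a) = \cl_M(\es) \in \cF$, and $i = 1$ otherwise.

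\emph{Step 1: $i = 1$.} Suppose instead that $\cl_M(\es) \in \cF$. Then $\ab{X_a}$ is skew in $M \dcon \cl_M(\es) = M$, which contradicts the hypothesis. Hence $\cl_M(\es) \notin \cF$ and $i = 1$.

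\emph{Step 2: the set counted is empty.} Fix $a \in A$ and put $F = \cl_M(E - X_a)$. We must show $F \in \cF$, that is, that $\ab{X_b}$ is skew in $M \dcon F$. Every $X_b$ with $b \neq a$ is contained in $E - X_a \subseteq F$, so it consists of loops of $M \dcon F$. The family therefore has at most one set that is not entirely loops. Apply Lemma~\ref{skewdj}: deleting loops leaves the sets $X_b - \cl_{M \dcon F}(\es)$, which are empty for $b \ne a$. These sets are pairwise disjoint. They are also modular, because any basis of $X_a$ in $M \dcon F$ is a mutual basis for them (the empty sets have the empty basis). So $F \in \cF$ for every $a$, and the count is $0$.

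Substituting gives $\lambda^*_{M \dcon \cF}(\ab{X_a}) = \lambda^*_M(\ab{X_a}) - 1$, read in $\enn$ as $\lambda^*_M(\ab{X_a}) = \lambda^*_{M \dcon \cF}(\ab{X_a}) + 1$. If $\lambda^*_M = \infty$, Lemma~\ref{dualmodcutcon} is stated with the $-i$ already moved across additively, so both sides are $\infty$ consistently.

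The main point requiring care is the bookkeeping in Lemma~\ref{dualmodcutcon}. Its proof is an additive identity $\lambda^*_M + |\cdot| = \lambda^*_{M\dcon\cF} + i$, so with the count equal to $0$ and $i = 1$ we get $\lambda^*_M = \lambda^*_{M \dcon \cF} + 1$ directly in $\enn$, with no subtraction needed. Beyond that, the only substantive checks are properness of $\cF$ (Step 1) and the degenerate-skewness observation of Step 2.
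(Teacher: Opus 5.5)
Your proposal is correct and follows the paper's proof essentially step for step: you apply Lemma~\ref{dualmodcutcon}, use the failure of skewness to get $\cl_M(\es)\notin\cF$ (so $i=1$), use $E\in\cF$ for nonemptiness, and show that each $\cl_M(E-X_a)\in\cF$ because every other part consists of loops in $M\dcon\cl_M(E-X_a)$. You cite Lemma~\ref{skewdj} where the paper cites Theorem~\ref{skewequiv}, which makes no difference, and your remark that the identity behind Lemma~\ref{dualmodcutcon} is additive, so no subtraction in $\enn$ is needed, is a correct piece of care.
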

\begin{proof}
  By Lemma~\ref{dualmodcutcon}, it suffices to show that the modular cut $\cF = \gutscut{\cX}{M}$ is proper,
  nonempty, and that $\cl_M(E - X_a) \in \cF$ for all $a \in A$.
  Since $\ab{X_a}$ is not skew in $M$, we have $\cl_M(\es) \notin \cF$, 
  so $\cF$ is a proper modular cut. Since $E(M) \in \cF$, 
  we know that $\cF \ne \es$.

  For each $a \in A$ and each $b \ne a$, the set $X_b$ contains only loops in 
  $M \dcon \cl_M(E - X_a)$; it is therefore easy to see by Theorem~\ref{skewequiv}
  that $\ab{X_a}$ is skew in $M \dcon \cl_M(E-X_a)$ and therefore that 
  $\cl_M(E-X_a) \in \cF$, as required. 
\end{proof}

Recall that $\gutsproj{M}{\cX}{k}$ is the matroid obtained from $M$ by iterating 
the operation $M \mapsto M \dcon \gutscut{\cX}{M}$ a total of $k$ times. 
Theorem~\ref{lambdadualeq}, which we restate here, is an immediate consequence 
of the lemma above. 

\begin{theorem}
  If $\ab{X_a : a \in A}$ is a partition of the ground set of a matroid $M$, 
  then $\lambda_M^*(\ab{X_a})$ is the minimum $k \in \bN$ 
  such that the matroid $M_0 = \gutsproj{M}{\ab{X_a}}{k}$ satisfies $\lambda_{M_0}(\ab{X_a}) = 0$, 
  or is $\infty$ if no such $k$ exists. 
\end{theorem}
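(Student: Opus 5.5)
The plan is to argue by induction on $k$ using Lemma~\ref{gpsub}, with Lemma~\ref{lambdazero} used to convert between ``$\lambda = 0$'' and skewness. Write $M_k = \gutsproj{M}{\ab{X_a}}{k}$, so that $M_0 = M$ and $M_{k+1} = M_k \dcon \gutscut{\cX}{M_k}$. Each $M_k$ has ground set $E(M)$, so $\ab{X_a}$ remains a partition of $E(M_k)$. By Lemma~\ref{lambdazero}, the conditions $\lambda_{M_k}(\ab{X_a}) = 0$, $\lambda^*_{M_k}(\ab{X_a}) = 0$, and ``$\ab{X_a}$ is skew in $M_k$'' are all equivalent. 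So the quantity to compute is the least $k$ with $\lambda^*_{M_k}(\ab{X_a}) = 0$.

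The first step is the following claim, proved by induction on $j$: for every $j$ such that $\ab{X_a}$ is not skew in any of $M_0, \dotsc, M_{j-1}$, we have
\[\lambda^*_M(\ab{X_a}) = \lambda^*_{M_j}(\ab{X_a}) + j.\]
The case $j = 0$ is trivial. For the inductive step, $\ab{X_a}$ is not skew in $M_j$, so Lemma~\ref{gpsub} applied to $M_j$ gives $\lambda^*_{M_j}(\ab{X_a}) = \lambda^*_{M_{j+1}}(\ab{X_a}) + 1$, and we add $j$ to both sides. Note that this is an identity in $\enn$ that only adds, so no cancellation issues arise.

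The second step splits into two cases. Suppose first that some $k$ has $\lambda_{M_k}(\ab{X_a}) = 0$, and take $k$ minimal. Then $\ab{X_a}$ is not skew in $M_0, \dotsc, M_{k-1}$, and the claim with $j = k$, together with $\lambda^*_{M_k}(\ab{X_a}) = 0$, gives $\lambda^*_M(\ab{X_a}) = k$. Suppose instead that no such $k$ exists. Then the claim holds for every $j$, so $\lambda^*_M(\ab{X_a}) \ge j$ for all $j \in \bN$, and therefore $\lambda^*_M(\ab{X_a}) = \infty$, as the statement requires.

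I expect the main obstacle to be the bookkeeping at the interface with Lemma~\ref{gpsub}. That lemma needs the guts cut of $M_j$ to be a genuine modular cut, so that $M_j \dcon \gutscut{\cX}{M_j}$ is well-defined; this is supplied by Theorem~\ref{gutscut}, since $\ab{X_a}$ covers $E(M_j)$. The remaining care is in ensuring that ``$\lambda = 0$'' is read consistently, primal versus dual, via Lemma~\ref{lambdazero}.
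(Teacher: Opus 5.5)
Your proof is correct and follows the paper's route. The paper simply calls the theorem an immediate consequence of Lemma~\ref{gpsub}, and your induction on $j$ is exactly the iteration it leaves implicit: Lemma~\ref{lambdazero} translates between $\lambda_{M_k}=0$, $\lambda^*_{M_k}=0$ and skewness, Theorem~\ref{gutscut} guarantees the iterates are defined, and the bound $\lambda^*_M(\ab{X_a}) \ge j$ for all $j$, using only addition in $\enn$, handles the infinite case.
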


\section{Minors and Majors}

Let $M'$ be an extension of $M$ by an element $e$ with corresponding modular cut $\wh{\cF}$. 
If $\cF$ is the modular cut of $M \con C$ corresponding to its extension $M' \con C$, 
then it is routine to argue that $F \cup C \in \wh{\cF}$ for all $F \in \cF$; 
this holds because any flat of $M \con C$ that spans $e$ in $M' \con C$ must give
a flat of $M$ that spans $e$ in $M'$. 

The following theorem can be seen as a sort of reversal of this observation.
It gives a way to pass from a modular cut $\cF$ describing an extension of $M \con C$ 
to a certain modular cut $\wh{\cF}$ describing an extension of $M$ that commutes with the contraction. 
There are potentially many other such extensions of $M$, but the one constructed here is the freeest; 
the only flats of $M$ that span $e$ in the extension are the ones that are forced to by the above argument. 

\begin{theorem}\label{contractmodularcut}
  Let $C$ be a set in a matroid $M$, and $\cF$ be a modular cut of $M \con C$. 
  Then $\wh{\cF} = \{F \cup C : F \in \cF\}$ is a modular cut in $M$. 

  Moreover, if $e \notin E(M)$, and $M'$ is the extension of $M$ by $e$ corresponding to $\wh{\cF}$, 
  then $M' \con C$ is the extension of $M \con C$ by $e$ corresponding to $\cF$. 
\end{theorem}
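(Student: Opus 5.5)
The plan is to verify the three conditions of Definition~\ref{infmodcut} for $\wh{\cF}$ directly. For the main step I will use the fact, from Lemma~\ref{modcutinter}, that $\cF$ is closed under intersections of arbitrary nonempty modular families in $M \con C$. I will use throughout that the flats of $M \con C$ are exactly the sets $G - C$ with $G$ a flat of $M$ containing $C$. Consequently $F \mapsto F \cup C$ is an inclusion-preserving bijection from the flats of $M \con C$ to the flats of $M$ containing $C$, and its inverse is $G \mapsto G - C$. In particular every member of $\wh{\cF}$ is a flat of $M$.

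For (\ref{superflat}), let $F \in \cF$ and let $F'$ be a flat of $M$ with $F \cup C \ss F'$. Then $F'$ contains $C$, so $F' - C$ is a flat of $M \con C$ containing $F$. Hence $F' - C \in \cF$, and $F' = (F'-C) \cup C \in \wh{\cF}$.

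For (\ref{forallmodpair}) and (\ref{forallmodchain}), take a nonempty family $\ab{F_i \cup C : i \in K}$ with each $F_i \in \cF$ that is modular in $M$. This covers both a pair and an infinite chain, and an infinite chain of the sets $F_i \cup C$ gives an infinite chain of the $F_i$, since the correspondence is injective. I will show that $\ab{F_i}$ is modular in $M \con C$.
\begin{itemize}
\item Every set in the family contains $C$. So by Lemma~\ref{modularinsert}, the family with $C$ adjoined is still modular in $M$.
\item Take a mutual basis $B$ for this enlarged family. Then $I = B \cap C$ is a basis for $C$.
\item Since $B$ is independent and contains this basis of $C$, the set $B - I$ is independent in $M \con C$.
\item For each $i$, the set $B \cap (F_i \cup C)$ spans $F_i \cup C$ in $M$, and it is contained in $(B \cap F_i) \cup I$. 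Therefore $F_i \ss \cl_{M \con C}((B-I) \cap F_i)$.
\end{itemize}
Thus $B - I$ is a mutual basis for $\ab{F_i}$ in $M \con C$. Lemma~\ref{modcutinter} then gives $\bigcap_i F_i \in \cF$. Since $\bigcap_i (F_i \cup C) = (\bigcap_i F_i) \cup C$, this intersection lies in $\wh{\cF}$. I expect this transfer of modularity from $M$ to $M \con C$ to be the main point needing care. In particular, Lemma~\ref{modularinsert} is what guarantees that the mutual basis meets $C$ in a basis of $C$.

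For the second part, $M'$ is the extension of $M$ by $e$ with $\cF_M^{M'} = \wh{\cF}$. First, $(M' \con C) \del e = (M' \del e) \con C = M \con C$, so $M' \con C$ is an extension of $M \con C$ by $e$. Now let $G$ be a flat of $M \con C$. Then $e \in \cl_{M' \con C}(G)$ if and only if $e \in \cl_{M'}(G \cup C)$. Because $G \cup C$ is a flat of $M$, this holds if and only if $G \cup C \in \wh{\cF}$, which by the bijection holds if and only if $G \in \cF$. Hence $\cF_{M \con C}^{M' \con C} = \cF$. The uniqueness clause of Theorem~\ref{modcutextension} then identifies $M' \con C$ as the extension corresponding to $\cF$.
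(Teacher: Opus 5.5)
Your proposal is correct and follows essentially the same route as the paper. You adjoin $C$ to the family via Lemma~\ref{modularinsert}, take a mutual basis $B$, and observe that $B - C$ is a mutual basis for the $F_i$ in $M \con C$. For the second part you compare which flats span $e$ in $M' \con C$ and invoke the uniqueness of the extension determined by a modular cut, as the paper does.
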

\begin{proof}
  It is immediate that $\wh{\cF}$ only contains flats, and is closed under taking superflats; it remains 
  to show that $\wh{\cF}$ is closed under taking intersections of modular families. 
  Let $\wh{\cF_0} \ss \wh{\cF}$ be such a modular family, and $\cF_0 = \{F - C : F \in \cF_0\}$ 
  the corresponding subfamily of $\cF$. 
  it suffices to show that $\cF_0$ is modular in $N$, 
  since this will imply that $\cF_0 \in \cF$ and therefore that $\cap \wh{\cF_0} = (\cap \cF_0) \cup C \in \wh{\cF}$. 

  Since $C \ss F$ for all $F \in \wh{\cF_0}$, the family $\{C\} \cup \wh{\cF_0}$ is modular in $M$ by Lemma~\ref{modularinsert}. 
  Let $B$ be a mutual basis in $M$ for $\{C\} \cup \wh{\cF_0}$. 
  For each flat $F \in \cF_0$, the set $B \cap (F \cup C)$ is a basis for $F \cup C$ in $M$, 
  that contains the basis $B \cap C$ for $C$ in $M$.
  Since $F$ and $C$ are disjoint, it follows that $B \cap F = (B - C) \cap F$ is a basis for $F$ in $M \con C$. 
  Since $B \cap C$ is a basis for $C$ and $B$ is independent, the set $B-C$ is also independent in $M \con C$,
  and so $B - C$ is a mutual basis for $\cF_0$ in $M \con C$, giving the required modularity. 

  For the second part, let $M_0'$ be the extension of $M \con C$ corresponding to $\cF$. 
  We have $(M' \con C) \del e = (M' \del e) \con C = M \con C = M_0 \del e$, so $M_0$ and $M' \con C$ 
  agree when $e$ is deleted. It therefore suffices to show that, for every flat $F$ of $M \con C$, 
  we have $e \in \cl_{M' \con C}(F)$ if and only if $e \in \cl_{M_0}(F)$. 

  Indeed, by the choice of $M_0$, we have $e \in \cl_{M_0}(F)$ if and only if $F \in \cF$, 
  and $e \in \cl_{M' \con C}(F)$ if and only if $e \in \cl_{M'}(F \cup C)$, which holds 
  when $F \cup C \in \wh{\cF}$. The result follows by the definition of $\wh{\cF}$. 
\end{proof}

\begin{theorem}\label{majorofminor}
  Let $M$ and $N$ be matroids, and $C$ and $D$ be disjoint sets, not both infinite. If $M \con C = N \del D$,
  then there is a matroid $P$ with $P \del D = M$ and $P \con C = N$. 
\end{theorem}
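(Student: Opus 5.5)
The plan is to reduce, by duality, to the case where $D$ is finite, and then induct on $|D|$. The base step is handled by Theorem~\ref{contractmodularcut} together with the correspondence between modular cuts and single-element extensions (Theorems~\ref{finmodcutdeletioninf} and~\ref{modcutextension}).

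\emph{Reduction to finite $D$.} The hypothesis $M \con C = N \del D$ dualizes to $M^* \del C = N^* \con D$, that is, $N^* \con D = M^* \del C$. If we find $Q$ with $Q \del C = N^*$ and $Q \con D = M^*$, then $P = Q^*$ satisfies
\[P \del D = (Q \con D)^* = M \quad\text{and}\quad P \con C = (Q \del C)^* = N.\]
So the statement for the pair $(N^*, M^*)$ with the roles of $C$ and $D$ swapped gives the statement for $(M,N)$. Since $C$ and $D$ are not both infinite, we may therefore assume that $D$ is finite, while $C$ is arbitrary. We then induct on $|D|$. If $D = \es$, take $P = M$.

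\emph{Single-element case $D = \{d\}$.} Here $N$ is an extension of $M \con C$ by $d$, since $N \del d = M \con C$ and $d \notin E(M)$. By Theorem~\ref{finmodcutdeletioninf}, the collection $\cF = \cF_{M \con C}^{N}$ is a modular cut of $M \con C$. Theorem~\ref{contractmodularcut} then gives that $\wh{\cF} = \{F \cup C : F \in \cF\}$ is a modular cut of $M$. Moreover, the extension $P$ of $M$ by $d$ corresponding to $\wh{\cF}$ (which exists by Theorem~\ref{modcutextension}) has the property that $P \con C$ is the extension of $M \con C$ by $d$ corresponding to $\cF$. By the uniqueness in Theorem~\ref{modcutextension}, this extension is exactly $N$. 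Hence $P \del d = M$ and $P \con C = N$. This step is where all the real work lies, and it is already done by the earlier theorems; crucially, it places no finiteness condition on $C$.

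\emph{Inductive step.} Let $|D| \ge 2$, pick $d \in D$, and set $D' = D - \{d\}$. Then
\[M \con C = (N \del d) \del D',\]
so induction applied to $M$, $N \del d$ and the sets $C$, $D'$ gives a matroid $P_1$ with $P_1 \del D' = M$ and $P_1 \con C = N \del d$. Next apply the single-element case to the matroids $P_1$ and $N$ with the sets $C$ and $\{d\}$. This yields $P$ with $P \del d = P_1$ and $P \con C = N$. Then
\[P \del D = (P \del d) \del D' = P_1 \del D' = M,\]
as required.

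The only bookkeeping to check is that the ground sets fit, namely that $C \cap D = \es$ and $E(N) = (E(M) - C) \cup D$, with $d \notin E(P_1)$ at each stage; all of this follows directly from the hypotheses. I expect no step beyond the single-element lift to be a genuine obstacle, and that lift is exactly the content of Theorem~\ref{contractmodularcut}.
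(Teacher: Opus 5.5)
Your proof is correct and follows the paper's argument. The paper also reduces by duality to finite $D$ and inducts on $|D|$, adding the last element of $D$ by lifting the modular cut of $N$ over $P_1 \con C$ to $P_1$ via Theorem~\ref{contractmodularcut}, with uniqueness of the extension from Theorem~\ref{modcutextension}; treating the single-element case separately only repackages that inductive step.
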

\begin{proof}
  By duality, it suffices to prove this for $D$ finite. If $D = \es$, then $P = M$ works. 
  Otherwise, let $e \in D$. Since $M \con X = (N \del \{e\}) \del (D - \{e\})$, 
  by induction there exists a matroid $P'$ with $P' \del (D - \{e\}) = M$ and $P' \con C = N \del \{e\}$. 
  
  Now $N$ is an extension of $N \del \{e\} = P' \con C$ by an element $e$.
  Let $\cF$ be the corresponding modular cut of $P' \con C$, and 
  $P$ be the associated extension of $P'$ by $e$ given by Theorem~\ref{contractmodularcut}, 
  so $P \del e = P'$ and $P \con C = N$. 
  Now 
  \[P \del D = (P - \{e\}) \del (D - \{e\}) = P' \del (D - \{e\}) = M,\] 
  so $P$ is the required matroid. 
\end{proof}

\begin{theorem}\label{projectseq}
  Let $M_0, M_1, \dotsc, M_n$ be matroids with ground set $E$. 
  If $M_{i+1}$ is a single-element projection of $M_i$ for all $0 \le i < n$, 
  then there is a matroid $P$ and an $n$-element set $X \ss E(P)$ such that $P \del X = M_0$ and $P \con X = M_n$. 
\end{theorem}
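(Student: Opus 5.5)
Induction on $n$, with Theorem~\ref{contractmodularcut} doing the work at each step, much as in the proof of Theorem~\ref{majorofminor}. For $n = 0$ take $P = M_0$ and $X = \es$. For $n \ge 1$, the induction hypothesis applied to $M_0, \dotsc, M_{n-1}$ gives a matroid $P'$ and an $(n-1)$-element set $X' \ss E(P')$ with $X' \cap E = \es$, $P' \del X' = M_0$ and $P' \con X' = M_{n-1}$. Since $M_n$ is a single-element projection of $M_{n-1}$, there is a matroid $Q$ and an element $e$ with $Q \del e = M_{n-1}$ and $Q \con e = M_n$. Rename $e$ if necessary so that $e \notin E(P') = E \cup X'$. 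Then $Q$ is an extension of $M_{n-1} = P' \con X'$ by $e$. Let $\cF = \cF_{M_{n-1}}^{Q}$, which is a modular cut of $P' \con X'$ by Theorem~\ref{finmodcutdeletioninf}. By uniqueness in Theorem~\ref{modcutextension}, $Q$ is \emph{the} extension of $P' \con X'$ corresponding to $\cF$.

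Next apply Theorem~\ref{contractmodularcut} with $C = X'$. It says $\wh{\cF} = \{F \cup X' : F \in \cF\}$ is a modular cut of $P'$, and if $P$ is the extension of $P'$ by $e$ corresponding to $\wh{\cF}$, then $P \con X'$ is the extension of $P' \con X'$ corresponding to $\cF$; that is, $P \con X' = Q$. Set $X = X' \cup \{e\}$, an $n$-element set.

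It remains to check the two required identities. First,
\[P \con X = (P \con X') \con e = Q \con e = M_n.\]
Second,
\[P \del X = (P \del e) \del X' = P' \del X' = M_0.\]
Both use only that deletion and contraction of disjoint sets commute.

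The main obstacle is bookkeeping rather than new mathematics. The new element $e$ must be chosen outside both $E$ and $X'$, so that $X$ has exactly $n$ elements and the ground sets match. One must also make sure the extension given by Theorem~\ref{contractmodularcut} agrees with $Q$ after contraction, and this is exactly where uniqueness in Theorem~\ref{modcutextension} is needed. No finiteness issue arises, since only $n$ single-element extensions are stacked.
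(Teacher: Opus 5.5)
Your proof is correct and is essentially the paper's argument. The paper also inducts on $n$, but at the inductive step it cites Theorem~\ref{majorofminor} with $(M,N,C,D) = (P', Q, X', \{e\})$ to obtain $P$ with $P \del e = P'$ and $P \con X' = Q$; the single-element case of that theorem is exactly the direct application of Theorem~\ref{contractmodularcut}, together with uniqueness from Theorem~\ref{modcutextension}, that you carry out.
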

\begin{proof}
  This is immediate for $n = 0$; suppose that $n > 0$ and it holds for smaller $n$. 
  Inductively, there is a matroid $P'$ and an $(n-1)$-element set $X'$ with $P' \del X' = M_0$ and $P' \con X' = M_n$. 
  Let $e \notin X' \cup E$; since $M_n$ is a single-element project of $M_{n-1}$, there is
  a matroid $Q$ with $Q \del e = M_{n-1}$ and $Q \con e = M_n$. 

  Since $Q \del e = M_{n-1} = P' \con X'$, Theorem~\ref{majorofminor} gives that there is a matroid $P$ 
  for which $P \del e = P'$ and $P \con X' = Q$. Now $P \con X = P \con X' \con e = Q \con e = M_n$, 
  and $P \del X = P \del e \del X' = P' \del X' = M_0$, as required. 
\end{proof} 

We can now prove Theorem~\ref{lambdaminproj}; it is enough to show just the dual version
of the statement. 

\begin{theorem}
  Let $\cX$ be a partition of the ground set of a matroid $M$. 
  Then $\lambda_M^*(\cX)$ is the minimum cardinality of a set $K$
  such that there is a matroid $P$ for which $P \del K = M$, 
  and $\cX$ is skew in $P \con K$. 
\end{theorem}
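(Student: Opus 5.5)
We prove two inequalities: a construction showing the minimum is at most $\lambda_M^*(\cX)$, and a lower bound showing every valid $K$ has $|K| \ge \lambda_M^*(\cX)$.

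\emph{Upper bound.} Write $k = \lambda_M^*(\cX)$; if $k = \infty$ there is nothing to prove, so assume $k \in \bN$. By the preceding theorem (Theorem~\ref{lambdadualeq}), the matroid $M_k = \gutsproj{M}{\cX}{k}$ satisfies $\lambda_{M_k}(\cX) = 0$. By Lemma~\ref{lambdazero}, $\cX$ is then skew in $M_k$. Each step $M_i \mapsto M_{i+1} = M_i \dcon \gutscut{M_i}{\cX}$ is a single-element projection. Theorem~\ref{projectseq} then gives a matroid $P$ and a $k$-element set $K$ with $P \del K = M$ and $P \con K = M_k$. This shows the minimum is at most $k$.

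\emph{Lower bound.} Let $P$ and $K$ satisfy $P \del K = M$ with $\cX$ skew in $P \con K$; we show $|K| \ge \lambda_M^*(\cX)$. We may assume $K$ is finite. We induct on $|K|$, peeling off one element $e \in K$ at a time. Set $P_1 = P \con e$, so that $P_1 \del (K - \{e\}) = (P \del (K-\{e\})) \con e$. Write $M' = P \del (K-\{e\})$. Then $M' \del e = M$, and $M' \con e$ is a single-element projection of $M$, namely $M \dcon \cF$ for the modular cut $\cF$ of $e$ in $M'$. If $e$ is a loop or coloop of $M'$, this projection is trivial up to the usual conventions. By induction applied to $P_1$ and $M \dcon \cF$, we get $|K| - 1 \ge \lambda^*_{M \dcon \cF}(\cX)$.

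It therefore suffices to show that a single-element projection lowers $\lambda^*$ by at most one, that is, $\lambda^*_M(\cX) \le \lambda^*_{M \dcon \cF}(\cX) + 1$. For a nonempty modular cut this follows from Lemma~\ref{dualmodcutcon}. Apply it with $\ab{X_a}$ a partition of $E$: the term $\cl_M(E - \cup_a X_a) = \cl_M(\es)$ decides $i$, and the count term is nonnegative. This gives
\[\lambda^*_{M \dcon \cF}(\cX) \ge \lambda^*_M(\cX) - i \ge \lambda^*_M(\cX) - 1.\]
No cancellation issue arises, since $i \le 1$ is finite. If $\cF$ is empty, $e$ is a coloop and $M \dcon \cF = M$, so the inequality is trivial. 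The base case $K = \es$ gives $P = M$ with $\cX$ skew, so $\lambda^*_M(\cX) = 0$ by Lemma~\ref{lambdazero}.

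\emph{Main obstacle.} The hardest part is likely the bookkeeping in the reduction step. One must check that $P \con e \del (K - \{e\})$ really equals $M \dcon \cF$, in the sense that the ground sets match; this uses the commutation of deletion and contraction. Loop, coloop and empty-cut cases also need care, since Lemma~\ref{dualmodcutcon} requires a nonempty cut.
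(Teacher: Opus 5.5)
Your proof is correct. The upper bound is exactly the paper's: iterate guts projections $k=\lambda^*_M(\cX)$ times, get skewness from Theorem~\ref{lambdadualeq} and Lemma~\ref{lambdazero}, and assemble the steps with Theorem~\ref{projectseq}.

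The lower bound takes a different route.

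\emph{The paper's argument.} The paper argues in one step, with no induction. We have $M^* = P^*\con K$, and $\cX$ is skew in $(P\con K)^* = P^*\del K$. So $\sqcap_{P^*}(\cX) = \sqcap_{P^*\del K}(\cX) = 0$ by Lemma~\ref{mcdel}(\ref{mcrestr}). The projection bound of Lemma~\ref{mccon}(\ref{mcconub}), applied to all of $K$ at once, then gives $\lambda^*_M(\cX) = \sqcap_{P^*\con K}(\cX) \le \sqcap_{P^*}(\cX) + r_{P^*}(K) \le |K|$. The paper's displayed chain attaches the $|K|$ to the wrong inequality, but this is the intended argument.

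\emph{Your argument.} You instead peel off one element of $K$ at a time. You recognize each step as a projection $M \dcon \cF$ by a modular cut. This tacitly uses Theorem~\ref{finmodcutdeletioninf}, and the uniqueness in Theorem~\ref{modcutextension}, to identify $P \del (K-\{e\})$ with $M +_{\cF} e$. You then use the exact formula of Lemma~\ref{dualmodcutcon} to see that one such projection lowers $\lambda^*$ by at most one. Your treatment of the empty cut (coloop) and of the improper cut (loop, where $i = 0$ and the count is zero) is right.

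\emph{Comparison.} Lemma~\ref{dualmodcutcon} and Lemma~\ref{mccon}(\ref{mcconub}) both come from Corollary~\ref{mcprojauxindep} applied in the dual. So your argument is essentially the single-element case of the paper's bound, iterated $|K|$ times. The paper's version is shorter and needs no extension theory in this direction. Yours costs an induction and the modular-cut correspondence. In exchange, it puts both directions on the same footing: every single-element projection lowers $\lambda^*$ by at most one, and the guts projection achieves exactly one by Lemma~\ref{gpsub}. That makes it transparent why iterating guts projections is optimal.
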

\begin{proof}
  Let $P$ be a matroid and $K$ 
  be a set such that $P \del K = M$ and $\cX$ is skew in $P \con K$. 
  Note that $\cX$ is also skew in $(P \con K)^* = P^* \del K$ by 
  Lemma~\ref{lambdazero}. 
  Lemma~\ref{mccon} now gives that 
  \[\lambda^*_M{\cX} = \lambda_{M^*}(\cX) = \lambda_{P^* \con K}(\cX) \le \lambda_{P^*}(\cX) \le \lambda_{P^* \del K}(\cX) + |K| = |K|,\]
  so $\lambda^*_M(\cX)$ does not exceed the stated minimum. 

  In the other direction, it is enough to show that if $\lambda_{M^*}(\cX) = k < \infty$, 
  then there is a matroid $P$ and a $k$-element set $K$ with $P \del K = M$ 
  and $\cX$ skew in $P \con K$. For each $n \in \bN$, let $M_n = \gutsproj{M}{\cX}{n}$.
  By construction, each $M_{i+1}$ is a single-element projection of $M_i$.
  By Theorem~\ref{projectseq}, there is a matroid $P$ and a $k$-element set $K \ss E(P)$
  such that $P \del K = M$ and $P \con K = M_k$. 
  By Theorem~\ref{lambdadualeq}, the set $\cX$ is skew in $M_k$, and the result follows. 
\end{proof}

\section{Modular Flats}

Recall that a flat $F$ is $\emph{modular}$ in a matroid $M$ if $(F,F')$ is a modular pair in $M$ for every flat $F'$ of $F$.

\begin{lemma}\label{contractonto}
  Let $F$ be a flat of a matroid $M$. Then $F$ is modular in $M$ if and only if, 
  for every $C \subseteq E(M)$ and every nonloop $e$ of $M \con C$ with $e \in \cl_{M \con C}(F-C)$, 
  there exists $f \in F$ that is parallel to $e$ in $M \con C$. 
\end{lemma}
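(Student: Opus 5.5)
We prove the two directions separately. Throughout, we use the fact that closure in $M \dcon C$ equals closure in $M$ of the set together with $C$, so we may work with $M \dcon C$ rather than $M \con C$.

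\emph{Forward direction.} Suppose $F$ is modular. Let $C \ss E(M)$, and let $e$ be a nonloop of $M \con C$ with $e \in \cl_{M \con C}(F - C)$. Equivalently, $e \notin \cl_M(C)$ but $e \in \cl_M(F \cup C)$.

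Let $G = \cl_M(C \cup \{e\})$, which is a flat. Since $(F,G)$ is a modular pair, Lemma~\ref{modulartoskew} says that $F-G$ and $G-F$ are skew in $M \con (F \cap G)$. Equivalently, working in $M\dcon(F\cap G)$, Corollary~\ref{skewpairrestrict} gives $(M \dcon (F\cap G) \dcon F)|G = (M \dcon (F\cap G))|G$, that is, $(M \dcon F)|G = (M \dcon (F \cap G))|G$.

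Now $e \in \cl_M(F \cup C)$ means that $\{e\} \cup C$ has the same closure as $C$ in $M \dcon F$. By the equality of restrictions, the same holds in $M \dcon (F\cap G)$. Hence $e \in \cl_M((F \cap G) \cup C)$.

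Choose a basis $I$ of $C$, and extend it to a basis $J$ of $(F \cap G) \cup C$. We have $J - I \ss F \cap G$. Since $e \in \cl_M(J)$ but $e \notin \cl_M(I)$, there is a circuit $C_e$ with $e \in C_e \ss J \cup \{e\}$ meeting $J - I$.

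Work in $M \con C$, where $e$ is a nonloop. Because $(F \cap G) - C$ lies in $\cl_{M\con C}(\{e\})$ (as $G=\cl_M(C\cup e)$), every nonloop of $M \con C$ in $(F \cap G)-C$ is parallel to $e$. So it suffices to find a nonloop $f \in F \cap G$ of $M\con C$. Any element of $J - I$ works: it lies in $F\cap G$, and it is not in $\cl_M(C)=\cl_M(I)$ because $J$ is independent. Also, $J-I\neq\es$, since otherwise $e\in\cl_M(I)$. Thus such an $f$ exists and is parallel to $e$.

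\emph{Reverse direction.} Suppose the point condition holds, and let $G$ be a flat. By Lemma~\ref{modulartoskew}, we must show that $F-G$ and $G-F$ are skew in $M' = M \con (F\cap G)$. The hypothesis transfers to minors of the form $M\con C'$, since $M\con C'\con C = M\con(C'\cup C)$. So after replacing $M$ by $M'$ and $F,G$ by $F-G$, $G-F$ (which are still flats, now disjoint up to loops), it suffices to show the following: if $F \cap G \ss \cl(\es)$, then $F$ and $G$ are skew.

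Take a basis $I$ of $G$. By Corollary~\ref{skewpairrestrict}(\ref{sprbasis}), it suffices to show that $I$ is independent in $M \dcon F$. Suppose not. Then some $e \in I$ lies in $\cl_M(F \cup (I - e))$. Set $C = I - \{e\}$. Then $e$ is a nonloop of $M \con C$ and lies in $\cl_{M\con C}(F - C)$.

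By hypothesis, some $f \in F$ is parallel to $e$ in $M \con C$. Then $f \in \cl_M(C \cup e) \ss G$, so $f \in F \cap G \ss \cl_M(\es)$. This contradicts the fact that $f$ is a nonloop of $M\con C$.

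\emph{Expected obstacle.} The main obstacle is in the forward direction: extracting an element of $F\cap G$ outside $\cl_M(C)$ from the modular pair. The key step there is the transfer of the restriction equality of Corollary~\ref{skewpairrestrict} between the two projections $M \dcon F$ and $M \dcon (F \cap G)$. In particular, this needs care with loops and with identifying $M \dcon (F\cap G) \dcon F$ with $M \dcon F$.
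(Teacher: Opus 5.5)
Your proof is correct. The forward direction is essentially the paper's argument. Both apply Lemma~\ref{modulartoskew} to the pair $(F,G)$ with $G=\cl_M(C\cup\{e\})$. The paper argues by contradiction: if $F\cap G\ss\cl_M(C)$, then $F-C$ and $G-C$ are skew in $M\con C$ while both span $e$. You instead transfer $e\in\cl_{M\dcon F}(C)$ through the equality $(M\dcon F)|G=(M\dcon(F\cap G))|G$, which gives $e\in\cl_M((F\cap G)\cup C)$ directly.

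There is one small slip, but it is harmless. The circuit $C_e\ss J\cup\{e\}$ need not exist, since $e$ may lie in $J$ when $e\in F$. You never use it, though. All you need is that $e\in\cl_M((F\cap G)\cup C)-\cl_M(C)$ forces $(F\cap G)-\cl_M(C)\neq\es$, and every element of that set is parallel to $e$ in $M\con C$. So the bases $I\ss J$ can be dropped entirely.

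For the converse you take a genuinely different route. The paper works in $M$ itself. It chooses a mutual basis $B$ for the chain $\{F\cap G,F,F\cup G\}$, which exists by Lemma~\ref{modularinsert}, and shows that $B\cap G$ spans $G$ by applying the point condition with $C=B\cap G$.

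Your argument instead runs as follows:
\begin{itemize}
\item The point condition for $F$ in $M$ passes to $F-G$ in $M\con(F\cap G)$, where $F-G$ and $G-F$ are disjoint flats.
\item Lemma~\ref{modulartoskew} then reduces everything to flats meeting $F$ only in loops.
\item There, a basis $I$ of $G$ stays independent in $M\dcon F$, by the point condition applied with $C=I-\{e\}$.
\item Corollary~\ref{skewpairrestrict} finishes the proof.
\end{itemize}

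The paper's version needs no skewness machinery and avoids checking that the hypothesis and flatness survive the contraction. Yours avoids the chain lemma and isolates the real content: the point condition already forces every flat $G$ with $F\cap G\ss\cl_M(\es)$ to be skew to $F$. That is condition~(\ref{micdj}) of Lemma~\ref{modulariffcompl}, so your argument obtains that implication without passing through modularity of $F$.
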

\begin{proof}
  Suppose that $F$ is a modular flat of $M$, let $C \subseteq E(M)$,
  and let $e$ be a nonloop of $M \con C$ with $e \in \cl_{M \con C}(F - C) = \cl_M(F \cup C)$. 
  Let $F' = \cl_M(C \cup \{e\})$.
  If $F \cap F' \ss \cl_M(C)$, then since $\cl_M(C) \ss C'$ and $F - F', F' - F$ 
  are skew in $M \con (F \cap F')$ by Lemma~\ref{modulartoskew}, 
  the sets $F - C$ and $F' - C$ are skew in $M \con C$.
  Since $e \in \cl_{M \con C}(F-C)$ and $e \in \cl_{M \con C}(F' - C)$ by construction, 
  it follows that $e$ is a loop of $M \con C$, a contradiction. 
  Therefore $F \cap F'$ is not a subset of $\cl_M(C)$; let $f \in F \cap F' - \cl_M(C)$. 
  Now $f \in F$, and $f \in \cl_M(C \cup \{e\}) - \cl_M(C)$ by construction, 
  so $f$ is parallel to $e$ in $M \con C$, as required. 

  Conversely, suppose that the minor condition holds, and $F'$ be an arbitrary flat of $M$. 
  Let $B$ be a mutual basis in $M$ for the chain $\{F \cap F', F, F \cup F'\}$.
  If $B \cap F'$ is a basis for $F'$, then $(F, F')$ is a modular pair in $M$. 
  Otherwise, let $C = B \cap F'$ and $e \in F' - \cl_M(C)$. 
  Now $e$ is a nonloop of $M \con C$, 
  and $e \in \cl_M(B) = \cl_{M \con C}(B \cap (F - F')) \ss \cl_{M \con C}(F - C)$, 
  so there exists $f \in F$ that is parallel to $e$ in $M \con C$. 
  Therefore $f \in F \cap \cl_M(C \cup \{e\}) \ss F \cap F' = \cl_M(B \cap F \cap F') \ss \cl_M(C)$, 
  which contradicts the fact that $f$ is a nonloop of $M \con C$. 
\end{proof}

This gives that the property of modularity of a flat is essentially preserved by contraction. 

\begin{corollary}\label{modularcontract}
  If $F$ is a modular flat of a matroid $M$, 
  then $\cl_{M \con C}(F - C)$ is a modular flat of $M \con C$ for all $C \subseteq E(M)$. 
\end{corollary}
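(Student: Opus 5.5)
The plan is to verify the criterion of Lemma~\ref{contractonto} for the flat $F' = \cl_{M \con C}(F - C)$ of the matroid $N = M \con C$. Since Lemma~\ref{contractonto} characterizes modularity purely in terms of contractions, and contractions compose, the corollary should follow almost formally. First observe that $\cl_{M \con C}(F-C) = \cl_M(F \cup C) - C$; this is the flat we must show is modular in $N$.

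Fix $C' \ss E(N) = E(M) - C$ and a nonloop $e$ of $N \con C' = M \con (C \cup C')$ with $e \in \cl_{N \con C'}(F' - C')$. We need $f \in F'$ parallel to $e$ in $N \con C'$. First compute
\[\cl_{N \con C'}(F' - C') = \cl_M(F' \cup C \cup C') - (C \cup C') = \cl_M(F \cup C \cup C') - (C \cup C'),\]
using $F' \cup C \ss \cl_M(F \cup C)$. This equals $\cl_{M \con (C \cup C')}(F - (C \cup C'))$. Therefore $e$ is a nonloop of $M \con (C\cup C')$ lying in $\cl_{M \con (C \cup C')}(F - (C\cup C'))$.

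Apply the forward direction of Lemma~\ref{contractonto} to the modular flat $F$ of $M$ with contraction set $C \cup C'$. This gives $f \in F$ parallel to $e$ in $M \con (C \cup C')$. In particular $f$ is a nonloop there, so $f \notin C \cup C'$. Hence $f \in F - C \ss F'$ and $f \in E(N \con C')$. Since $M \con (C \cup C') = N \con C'$, the element $f$ is parallel to $e$ in $N \con C'$. The converse direction of Lemma~\ref{contractonto} applied in $N$ then shows that $F'$ is modular in $N$.

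The only step needing care is the closure bookkeeping: identifying $\cl_{N\con C'}(F'-C')$ with $\cl_{M\con(C\cup C')}(F-(C\cup C'))$, and checking that $f$ avoids $C \cup C'$ so that it lies in $F'$. Both follow from $\cl_{M \con X}(Y) = \cl_M(X \cup Y) - X$, so there is no real obstacle.
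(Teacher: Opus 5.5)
Your proof is correct and follows the same route as the paper. Like the paper, you verify the criterion of Lemma~\ref{contractonto} for $\cl_{M \con C}(F-C)$ in $M \con C$ by rewriting the condition in $(M \con C) \con C'$ as one in $M \con (C \cup C')$ and applying the forward direction to $F$. You also supply the closure bookkeeping, and the check that $f \notin C \cup C'$ so that $f$ lies in the new flat, which the paper leaves as ``easily follows.''
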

\begin{proof}
  Let $C' \ss E(M \con C)$ and let $e$ be a nonloop of $(M \con C) \con C'$
  for which $e \in \cl_{(M \con C) \con C'}(\cl_{M \con C}(F-C) - C')$. 
  Then $e \in \cl_{M \con (C \cup C')}(F - (C \cup C'))$, 
  and now the result easily follows from the modularity of $F$ and Lemma~\ref{contractonto}.
\end{proof}

\begin{corollary}\label{linemodularcontract}
  Let $M$ be a matroid, and let $C \ss E(M)$. If every line of $M$ is modular, 
  then every line of $M \con C$ is modular. 
\end{corollary}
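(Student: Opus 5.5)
Let $L$ be a line of $M \con C$. The plan is to realise $L$ as the image of a modular line of $M$ contracted through a suitable set, and then invoke Corollary~\ref{modularcontract}.

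Fix a basis $I$ for $C$ in $M$. Since $L$ has rank two in $M \con C$, it has a basis $\{x,y\}$ there. Then $I \cup \{x,y\}$ is independent in $M$ and $L = \cl_M(I \cup \{x,y\}) - C$.

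The set $\{x,y\}$ spans a line $L_0 = \cl_M(\{x,y\})$ of $M$, which is modular by hypothesis. By Corollary~\ref{modularcontract}, the flat $\cl_{M \con C}(L_0 - C)$ is modular in $M \con C$. Moreover, since $x,y \in L_0 - C$,
\[\cl_{M \con C}(L_0 - C) = \cl_M(L_0 \cup C) - C = \cl_M(\{x,y\} \cup C) - C = L.\]
So $L$ is modular in $M \con C$, which is what we want.

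The main points to check are these:
\begin{enumerate}[(i)]
  \item $x,y \notin C$, which holds because they lie in $E(M \con C)$.
  \item $\{x,y\}$ is independent in $M$, so $L_0$ really is a line. This follows from the independence of $I \cup \{x,y\}$.
  \item $\cl_M(L_0 \cup C) = \cl_M(\{x,y\} \cup C)$, which is immediate.
\end{enumerate}

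No step looks like a real obstacle. The only care needed is the identity $\cl_{M\con C}(Y) = \cl_M(Y\cup C) - C$ for $Y$ disjoint from $C$.
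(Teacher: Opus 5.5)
Your proof is correct and matches the paper's argument: take a basis of $L$ in $M \con C$, note that it is independent in $M$ and so spans a modular line of $M$, then apply Corollary~\ref{modularcontract} with the identity $\cl_{M \con C}(Y) = \cl_M(Y \cup C) - C$ to recover $L$. The auxiliary basis $I$ of $C$ is only used to justify that $\{x,y\}$ is independent in $M$, which the paper takes as immediate.
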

\begin{proof}
  Let $L$ be a line of $M \con C$, and $I$ be a basis for $L$ in $M \con C$. 
  Then $I$ is independent in $M$, so the line $\cl_M(I)$ is modular in $M$, 
  and therefore $L = \cl_{M \con C}(I)$ is modular in $M \con C$ by Corollary~\ref{modularcontract}. 
\end{proof}

We now prove some characterizations of modularity that are more lattice-theoretic in 
flavour. The results were all proved by Sachs [\ref{sachs}]
in the setting of the lattice of flats of a finitary matroid. 
(Specifically, the lattice assumption ([\ref{sachs}], Definition 2 (7)) is tantamount 
to the assertion that the associated matroid is finitary.)

We say that two flats $F,F'$ are \emph{complementary} in $M$ if $F \cup F'$ is spanning in $M$, and $F \cap F' = \cl_M(\es)$. 

\begin{lemma}\label{modulariffcompl}
  Let $F$ be a flat of a matroid $M$. The following are equivalent: 
  \begin{enumerate}[(i)]
    \item\label{micm} $M$ is modular;
    \item\label{micc} every complementary flat of $F$ in $M$ is skew to $F$;
    \item\label{micdj} every flat $G$ of $M$ with $F \cap G \ss \cl_M(\es)$ is skew to $F$. 
  \end{enumerate}
\end{lemma}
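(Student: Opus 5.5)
The condition (\ref{micm}) presumably means ``$F$ is modular in $M$''; I prove the equivalences in that form. The plan is the cycle (\ref{micm})$\Rightarrow$(\ref{micdj})$\Rightarrow$(\ref{micc})$\Rightarrow$(\ref{micm}).

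\emph{(\ref{micm})$\Rightarrow$(\ref{micdj}).} Let $G$ be a flat with $F \cap G \ss \cl_M(\es)$. Since $F$ is modular, $(F,G)$ is a modular pair. By Lemma~\ref{modulartoskew}, $F$ and $G$ are skew in $M \dcon (F \cap G)$. Because $F \cap G$ consists of loops, $M \dcon (F\cap G) = M$, so $F$ and $G$ are skew in $M$.

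\emph{(\ref{micdj})$\Rightarrow$(\ref{micc}).} This is trivial, because a complementary flat $G$ satisfies $F \cap G = \cl_M(\es)$.

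\emph{(\ref{micc})$\Rightarrow$(\ref{micm}).} This is the main step. Fix a flat $G$. Let $I_0$ be a basis for $F \cap G$, and let $J \supseteq I_0$ and $I_G \supseteq I_0$ be bases for $F$ and $G$. Put $K = I_G - I_0$. It suffices to show that $J \cup K = J \cup I_G$ is independent, since it is then a mutual basis for $\{F,G\}$. The idea is to build a flat $H$ complementary to $F$ with $K \ss H$, and then use (\ref{micc}).

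\begin{enumerate}[(a)]
\item First, $\cl_M(K) \cap F = \cl_M(\es)$. Indeed, $\cl_M(K) \cap F \ss G \cap F = \cl_M(I_0)$. Since $K \cup I_0$ is independent and $K \cap I_0 = \es$, Lemma~\ref{closureinter} gives $\cl_M(K) \cap \cl_M(I_0) = \cl_M(\es)$.
\item Next, let $L$ be a basis of $M \con \cl_M(F \cup K)$ and set $H = \cl_M(K \cup L)$. Clearly $F \cup H$ is spanning.
\item To see that $H \cap F = \cl_M(\es)$, take a basis $B_1$ of $F \cup K$ containing $K$. Then $B_1 \cup L$ is independent and $L \cap B_1 = \es$. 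Lemma~\ref{closureinter} gives $\cl_M(K \cup L) \cap \cl_M(B_1) = \cl_M(K)$. Hence $H \cap F \ss \cl_M(K) \cap F = \cl_M(\es)$ by (a).
\end{enumerate}

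So $H$ is complementary to $F$, and by (\ref{micc}) it is skew to $F$. By Lemma~\ref{skewmono}(\ref{skewsubsets}), the independent sets $J \ss F$ and $K \ss H$ are skew. Then Corollary~\ref{skewindep} shows that they are disjoint with independent union, as required.

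I expect the main obstacle to be the construction of the complement $H$ containing $K$ without using any finitary assumption. A Zorn-type ``maximal flat avoiding $F$'' argument fails because closures of unions of chains need not behave well in infinite matroids. The device of contracting $\cl_M(F \cup K)$ and taking a basis $L$ avoids this, and both intersection computations reduce to Lemma~\ref{closureinter}.
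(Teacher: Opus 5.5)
Your proof is correct, and you are right that (i) should read ``$F$ is modular in $M$''. Your easy implications match the paper, but your hard direction takes a genuinely different route.

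The paper splits the hard direction in two:
\begin{enumerate}
\item For (ii)$\Rightarrow$(iii), it applies Lemma~\ref{modularcompl} to the chain $G \subseteq \cl_M(F\cup G) \subseteq E(M)$, which places $G$ inside a complement of $F$.
\item For (iii)$\Rightarrow$(i), it argues by contradiction through the parallel-element criterion of Lemma~\ref{contractonto}. Given $C$ and a nonloop $e\in\cl_{M/C}(F-C)$ with no parallel partner in $F$, it shows that $\cl_M(I\cup\{e\})$ meets $F$ only in loops. It then combines (iii) with Lemma~\ref{maxskew} to force $e\in\cl_M(C)$.
\end{enumerate}

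You instead go from (ii) straight to a mutual basis. You first shrink $G$ to the independent set $K = I_G - I_0$, which satisfies $\cl_M(K)\cap F=\cl_M(\emptyset)$. Your complement $H=\cl_M(K\cup L)$ is exactly the flat that Lemma~\ref{modularcompl} would build for the chain $\cl_M(K)\subseteq\cl_M(F\cup K)\subseteq E(M)$, with $B_1\cup L$ as the chain's mutual basis. Independence of $J\cup I_G$ then follows directly from the skewness of $F$ and $H$.

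Your route is shorter and more self-contained. It needs neither Lemma~\ref{contractonto} nor Lemma~\ref{maxskew}. It also shows that (iii)$\Rightarrow$(i) is immediate once (iii) is applied to the single flat $\cl_M(K)$. The paper's route reuses machinery it has already built. It also ties the complement characterization to the contraction characterization on which the later equivalences in Theorem~\ref{modequiv} rest.
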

\begin{proof}
  Suppose that $F$ is modular, and let $G$ be a flat of $M$ with $F \cap G \ss \cl_M(\es)$.
  Let $B$ be a mutual basis for $F$ and $G$ in $M$. 
  Since $B$ is independent, the sets $B \cap F$ and $B \cap G$ are disjoint, 
  and hence $F = \cl_M(B \cap F)$ and $G = \cl_M(B \cap G)$ are skew. 
  Hence (\ref{micm}) implies (\ref{micdj}). It is also immediate that (\ref{micdj}) implies (\ref{micc}). 
  

  \begin{claim}
    (\ref{micc}) implies (\ref{micdj}).
  \end{claim}
  \begin{subproof}
    Let $G$ be a flat of $M$ with $F \cap G \ss \cl_M(\es)$. 
    By Lemma~\ref{modularcompl}, there is a flat $H$ of $M$ such that $(\cl_M(F \cup G),H)$ is a modular pair, 
    while $H \cap \cl_M(F \cup G) = G$ and $\cl_M(H \cup \cl_M(F \cup G)) = E(M)$. 
    We have $H \cap F = H \cap \cl_M (F \cup G) \cap F = G \cap F = \cl_M(\es)$, 
    and since $G \ss H$ we have $\cl_M(H \cup F) \sps \cl_M(H \cup G \cup F) = E(M)$,
    so $F$ and $H$ are complementary and are therefore skew. Since $G \ss H$, the flats $F$ and $G$ are skew. 
  \end{subproof}

  It now remains to show the (\ref{micdj}) implies (\ref{micm}); suppose that (\ref{micdj}) holds. 
  We use Lemma~\ref{contractonto} to argue that $F$ is modular; let $C \ss E(M)$
  and let $e$ be a nonloop of $M \con C$ for which $e \in \cl_{M \con C}(F-C)$;
  therefore $e \in \cl_M(F \cup C) - \cl_M(C)$.  
  Suppose that no $f \in F$ is parallel to $e$ in $M \con C$. 
  Let $B$ be a mutual basis in $M$ for the chain $\{F, F \cup C\}$, 
  and let $I = (B \cap (C - F))$. By construction, we have $\cl_M(I) \cap F = \cl_M(\es)$. 
  \begin{claim}\label{techss}
    $\cl_M(I \cup \{e\}) \cap F \ss \cl_M(\es)$.
  \end{claim}
  \begin{subproof}
    Suppose that $f \in \cl_M(I \cup \{e\}) \cap F$ with $f \notin \cl_M(\es)$. 
    If $f \in \cl_M(I)$, then $f \in F \cap \cl_M(I) = \cl_M(\es)$, a contradiction. 
    Therefore $f \in \cl_M(I \cup \{e\}) - \cl_M(I)$, so $e \in \cl_M(I \cup \{f\})$. 
    If $f \in \cl_M(C)$, it follows that $e \in \cl_M(C)$, a contradiction. 
    Therefore $f \notin \cl_M(C)$, so $f \in \cl_M(I \cup \{e\}) - \cl_M(C) \ss \cl_M(C \cup \{e\}) - \cl_M(C)$. 
    It follows that $f$ and $e$ are parallel in $M \con C$, again a contradiction.
  \end{subproof}
  By the two claims, the flats $\cl_M(I \cup \{e\})$ and $F$ are skew in $M$. 
  Applying Lemma~\ref{maxskew} with the sets $(I_1, I_2) = (I, B-I)$ and $(X_1, X_2) = (\cl_M(I \cup \{e\}), F)$, 
  we conclude that $\cl_M(I \cup \{e\}) \ss \cl_M(I)$ and so $e \in \cl_M(C)$, 
  a contradiction. 
\end{proof}

\begin{lemma}\label{modularfinite}
  Let $F$ be a flat of a loopless matroid $M$.
  \begin{itemize} 
    \item If $r_M(F) < \infty$, then $F$ is modular if and only if $F$ intersects every flat $G$ with $r(M \con G) = r_M(F) - 1$ , and
    \item If $r(M \con F) < \infty$, then $F$ is modular if and only if $F$ intersects every flat $G$ with 
    $r_M(G) = r(M \con F) + 1$.
  \end{itemize}
\end{lemma}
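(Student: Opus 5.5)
The plan is to reduce both parts to the characterization in Lemma~\ref{modulariffcompl}. Since $M$ is loopless, $\cl_M(\es)=\es$, so that lemma says $F$ is modular if and only if every flat $G$ with $F\cap G=\es$ is skew to $F$. Throughout, I will use Corollary~\ref{skewpairrestrict}: $F$ and $G$ are skew if and only if a basis $I$ of $F$ stays independent in $M\dcon G$, that is, $r_{M\dcon G}(F)=r_M(F)$. Both parts use finiteness to compare ranks.

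\emph{First part, with $r_M(F)=k<\infty$.} If $k=0$ then $F=\es$; this flat is modular, and the condition is vacuous, so assume $k\ge 1$.

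For the forward direction, let $G$ be a flat with $r(M\con G)=k-1$. If $F\cap G=\es$, then $F$ and $G$ are skew, so a basis of $F$ is independent in $M\con G$. This gives $r(M\con G)\ge k$, a contradiction.

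For the converse, suppose $G$ is a flat with $F\cap G=\es$ that is not skew to $F$. Then $m:=r_{M\con G}(F-G)<k$, and I build a flat of corank exactly $k-1$ disjoint from $F$.
\begin{enumerate}[(a)]
  \item Take a mutual basis $B$ for the chain $G\ss\cl_M(G\cup F)\ss E$, and put $B'=B-\cl_M(G\cup F)$ and $H=\cl_M((B\cap G)\cup B')$.
  \item Lemma~\ref{closureinter}, applied to the independent set $B$, gives $H\cap\cl_M(G\cup F)=\cl_M(B\cap G)=G$. Hence $H\cap F=\es$.
  \item The set $B\cap\cl_M(G\cup F)$ is a basis of $\cl_M(G\cup F)$ containing the basis $B\cap G$ of $G$, and $B\cap(\cl_M(G\cup F)-G)$ has size $m$. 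So $r(M\con H)=m\le k-1$.
  \item To raise the corank to exactly $k-1$, delete $k-1-m$ elements from the basis $(B\cap G)\cup B'$ of $H$ and take the closure. The result is still disjoint from $F$ and has corank exactly $k-1$.
  \item Enough basis elements are available because $r_M(H)+m=r(M)\ge r_M(F)=k$.
\end{enumerate}
This contradicts the hypothesis, so $F$ is modular. The main obstacle in this part is producing a flat disjoint from $F$ with controlled corank; the mutual-basis construction in (a)–(c) is the step to check carefully.

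\emph{Second part, with $c=r(M\con F)<\infty$.}

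For the forward direction, let $G$ be a flat with $r_M(G)=c+1$ and $F\cap G=\es$. Skewness would give $c=r(M\con F)\ge r_{M\con F}(G)=r_M(G)=c+1$, which is impossible.

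For the converse, suppose $G$ is a flat with $G\cap F=\es$ that is not skew to $F$. Take a basis $J$ of $G$; it is dependent in $M\con F$, so it contains a circuit of $M\con F$. Any independent set in $M\con F$ has size at most $c$, so this circuit is a finite set $J_0$ with $|J_0|\le c+1$.
\begin{enumerate}[(a)]
  \item The flat $\cl_M(J_0)\ss G$ is disjoint from $F$, and $J_0$ is dependent in $M\con F$.
  \item While $r_M(J_0)<c+1$, add an element $e\notin\cl_M(F\cup J_0)$ to $J_0$. Such an $e$ exists because $r_{M\con F}(J_0)<|J_0|=r_M(J_0)\le c$.
  \item Disjointness is preserved. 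Suppose $f\in F\cap\cl_M(J_0\cup e)$. We have $f\notin\cl_M(J_0)$, since that flat is disjoint from $F$. So exchange gives $e\in\cl_M(J_0\cup f)\ss\cl_M(F\cup J_0)$, contrary to the choice of $e$.
  \item Dependence in $M\con F$ is preserved, since $J_0$ only grows.
\end{enumerate}
After finitely many steps we reach a flat of rank exactly $c+1$ disjoint from $F$, contradicting the hypothesis. Hence $F$ is modular.
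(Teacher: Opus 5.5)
Your proof is correct, but it takes a different route from the paper's.

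The paper works with condition (ii) of Lemma~\ref{modulariffcompl}, that every complementary flat of $F$ is skew to $F$, rather than condition (iii). The spanning condition on a complementary flat $G'$ then gives the needed rank bound for free.
\begin{itemize}
  \item In the first part, take a basis $B$ of $F\cup G'$ extending a basis of $G'$. This gives $r(M\con G')\le|B\cap F|$. So either $B\cap F$ is already a basis of $F$, and $B$ certifies skewness, or $r(M\con G')<r_M(F)$. In the latter case one only has to shrink $G'$ to a subflat of corank exactly $r_M(F)-1$.
  \item In the second part, $G'$ spans $M\con F$, so $r_M(G')\ge r_{M\con F}(G')=r(M\con F)$. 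The hypothesis forces $r_M(G')\le r(M\con F)$. Equality of these finite ranks then gives skewness directly.
\end{itemize}

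Because you test skewness against an arbitrary flat disjoint from $F$, you have no such bound, and you must manufacture a witness of the exact rank or corank.
\begin{itemize}
  \item In the first part, your steps (a)--(b) effectively inline the paper's own reduction from (iii) to (ii). Your $H$ is exactly the flat that Lemma~\ref{modularcompl} produces for the chain $G\ss\cl_M(G\cup F)\ss E$, as used in the proof of Lemma~\ref{modulariffcompl}. You then count corank explicitly.
  \item In the second part, you grow a circuit of $M\con F$ inside $G$ by an augmentation argument.
\end{itemize}

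The paper's version is shorter because the complement does the rank bookkeeping. Yours is more self-contained, and your second part in particular is an elementary argument that avoids complementary flats altogether.
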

\begin{proof}
  In all cases, we will use the characterization of modularity from Lemma~\ref{modulariffcompl}, 
  and the fact that a modular flat $F$ of $M$ is skew to every flat $G$ for which $F \cap G = \es$. 
  The lemma is trivial if $r_M(F) = 0$; suppose first, therefore, that $r_M(F) = k +1 < \infty$. 
  
  Assume that $F$ is modular. 
  If $G$ is a flat with $r(M \con G) = k$ that is disjoint from $F$, 
  then $G$ and $F$ are skew, so $k + 1 = r_M(F) = r((M \con G) | F) \le r(M \con G) = k$, a contradiction. 
  So $F$ intersects every such $G$, as required. 
  
  Conversely, assume that $F$ intersects every flat $G$ with $r(M \con G) = k$; and let $G'$ be a complementary flat to $F$. 
  Let $B$ be a basis for $F \cup G'$ such that $B \cap G'$ is a basis for $G'$. Since $F \cup G'$ is spanning, 
  the set $B$ is a basis for $M$. If $|B \cap F| \ge k+1$, then $B \cap F$ is a basis for $F$ in $M$;
  since $F$ and $G'$ are disjoint, the basis $B$ certifies that $F$ and $G'$ are skew in $M$. 
  Otherwise, since $B \cap F$ spans $M \con (B \cap G')$, we have $r(M \con G') = r(M \con (B \cap G')) \le |B \cap F| \le k$, 
  so $G'$ contains a flat $G_0$ with $r(M \con G_0) = k$. 
  Now $F$ intersects $G_0$, and therefore $G'$, by assumption. 

  Now suppose that $r(M \con F) = k < \infty$. 
  If $M$ is modular, and $G$ is a flat with $r_M(G) = k+1$ that is disjoint from $F$,
  then $F$ and $G$ are skew, so $k + 1 = r_M(G) = r((M \con F) | G) \le r(M \con F) = k$, a contradiction. 
  So $F$ intersects every such $G$, as required. 

  Now assume that $F$ intersects every flat $G$ with $r_M(G) = k+1$, and let $G'$ be a complementary flat to $F$. 
  Let $B$ be a basis for $F \cup G'$ such that $B \cap F$ is a basis for $F$. 
  Since $F \cup G'$ is spanning, the set $G'$ spans $M \con F$ so satisfies $r_M(G') \ge r_{M \con F}(G') = k$. 
  Since $F$ does not intersect $G'$, we have $r_M(G') < k+1$, so equality holds. 
  It follows that $M|G' = (M \con F)|G'$, so $F$ and $G'$ are skew. Therefore $F$ is modular in $M$. 
\end{proof}

\begin{lemma}\label{modulardistrib1}
  Let $F$ be a flat of a matroid $M$. Then $F$ is modular in $M$ if and only if, for every pair $G_1,G_2$
  of flats of $M$ with $G_1 \ss G_2$, we have $\cl_M(G_1 \cup F) \cap G_2 = \cl_M(G_1 \cup (F \cap G_2))$. 
\end{lemma}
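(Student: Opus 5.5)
Note first that the inclusion $\cl_M(G_1 \cup (F \cap G_2)) \ss \cl_M(G_1 \cup F) \cap G_2$ holds for all flats $G_1 \ss G_2$ and any flat $F$. Indeed, $G_1 \cup (F \cap G_2)$ is contained in both of the flats $\cl_M(G_1 \cup F)$ and $G_2$. So the content of the lemma is the reverse inclusion, and I will prove the two directions separately.

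\textbf{Forward direction.} Assume $F$ is modular, and let $G_1 \ss G_2$ be flats. I will use modularity of the pair $(F, G_2)$ together with Lemma~\ref{modularinsert}.
\begin{enumerate}[(1)]
  \item Start from a mutual basis for $\{F, G_2\}$. By Lemma~\ref{blattice}, it is also a mutual basis for $\{F \cap G_2, F, G_2\}$.
  \item The flat $G_1$ is comparable with $G_2$, but not necessarily with $F$, so Lemma~\ref{modularinsert} does not apply directly. Instead I work with the contraction $M \con G_1$. The aim is to show that $F' = \cl_{M \con G_1}(F - G_1)$ and $G_2 - G_1$ form a modular pair in $M \con G_1$.
  \item Corollary~\ref{modularcontract} says $F'$ is a modular flat of $M \con G_1$. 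Hence $(F', G_2 - G_1)$ is modular there, and by Lemma~\ref{closureinter} it gives the identity $F' \cap (G_2 - G_1) = \cl_{M \con G_1}(B' \cap F' \cap G_2)$ for a mutual basis $B'$.
  \item Let $e \in \cl_M(G_1 \cup F) \cap G_2$ with $e \notin G_1$. Then $e \in F' \cap (G_2 - G_1)$, so $e$ is spanned in $M \con G_1$ by elements of $B' \cap F' \cap G_2$.
  \item It remains to push those spanning elements into $F \cap G_2$. This is the hard step, discussed next.
\end{enumerate}

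\textbf{Main obstacle and planned fix.} A basis element of $F'$ need not lie in $F$, so step (5) does not follow formally. I would instead use Lemma~\ref{contractonto} element by element. Take $e \in \cl_M(G_1 \cup F) \cap G_2 - G_1$ and contract $C = G_1 \cup J$, where $J$ is a basis of $F \cap G_2$.
\begin{enumerate}[(1)]
  \item Suppose $e$ is a nonloop of $M \con C$, so we are assuming the conclusion fails. Since $e \in \cl_{M \con C}(F - C)$, Lemma~\ref{contractonto} gives $f \in F$ parallel to $e$ in $M \con C$.
  \item Then $f \in \cl_M(C \cup \{e\})$. Because $C \cup \{e\} \ss G_2$, this puts $f$ in $F \cap G_2 \ss \cl_M(J) \ss \cl_M(C)$.
  \item That contradicts $f$ being a nonloop of $M \con C$. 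So $e \in \cl_M(C) = \cl_M(G_1 \cup (F \cap G_2))$, which is the reverse inclusion.
\end{enumerate}
This argument looks clean, and it is the route I would actually take.

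\textbf{Converse.} Assume the identity holds for all flats $G_1 \ss G_2$. I will verify condition (\ref{micdj}) of Lemma~\ref{modulariffcompl}.
\begin{enumerate}[(1)]
  \item Let $G$ be a flat with $F \cap G \ss \cl_M(\es)$. Take a basis $I$ of $G$ and a basis $J$ of $F$, and let $K \ss J$ be such that $I \cup K$ is a basis of $\cl_M(F \cup G)$.
  \item If $K \ne J$, pick $f \in J - K$ and choose the flats to make $f$ detectable. Set $G_1 = \cl_M(K)$ and let $G_2$ be a flat containing $G_1$ that captures $f$ together with $I$.
  \item The identity then forces a nonloop of $G$ into $F$, or produces a spanning contradiction.
\end{enumerate}
The choice of $G_2$ here is the delicate part. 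An alternative I would also try is to go through Lemma~\ref{contractonto} with $G_1 = \cl_M(C)$ and $G_2 = \cl_M(C \cup \{e\})$. The identity gives $e \in \cl_M(C \cup (F \cap G_2))$. If $e$ is a nonloop of $M \con C$, some element of $F \cap G_2 = F \cap \cl_M(C \cup \{e\})$ must lie outside $\cl_M(C)$. Such an element is parallel to $e$ in $M \con C$, which is exactly the condition of Lemma~\ref{contractonto}, so $F$ is modular. This second route is cleaner and is the one I would use.
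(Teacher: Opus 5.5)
Your chosen routes are correct and match the paper's proof: both directions go through Lemma~\ref{contractonto}. Your converse, with $G_1 = \cl_M(C)$ and $G_2 = \cl_M(C \cup \{e\})$, is the paper's argument verbatim. Your forward argument differs only in contracting $G_1 \cup J$ instead of $G_1$, which is an inessential variation. The abandoned mutual-basis sketch and the incomplete complementary-flat sketch are not needed and can simply be dropped.
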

\begin{proof}
  For both directions, we use the equivalence in Lemma~\ref{contractonto}. 

  First suppose that $F$ is modular in $M$. The fact that $\cl_M(G_1 \cup (F \cap G_2)) \ss \cl_M(G_1 \cup F) \cap G_2$ 
  is immediate from the monotonicity of the closure function.  
  Let $x \in \cl_M(G_1 \cup F) \cap G_2$, and suppose for a contradiction that $x \notin \cl_M(G_1 \cup (F \cap G_2))$. 
  Then $x \in \cl_M(G_1 \cup F) - \cl_M(G_1)$, so there exists $y \in F$
  that is parallel to $x$ in $M \con G_1$. Now $y \in F \cap \cl_M(G_1 \cup \{x\}) \ss F \cap G_2$, 
  and therefore $x \in \cl_{M \con G_1}(\{y\}) = \cl_M(G_1 \cup \{y\}) \ss \cl_M(G_1 \cup (F \cap G_2))$, 
  a contradiction. 

  Conversely, suppose that $\cl_M(G_1 \cup F) \cap G_2 = \cl_M(G_1 \cup (F \cap G_2))$ for all pairs of flats $G_1 \ss G_2$. 
  Let $C \ss E(M)$ and $e \in \cl_M(F \cup C) - \cl_M(C)$. 
  Let $G_1 = \cl_M(C)$ and $G_2 = \cl_M(C \cup \{e\})$. 
  If $F \cap G_2 \ss G_1$, then by the choice of $G_1$ and $G_2$ and assumption, we have
  \[e \in \cl_M(G_1 \cup F) \cap G_2 = \cl_M(G_1 \cup (F \cap G_2)) = \cl_M(G_1) = \cl_M(C),\]
  a contradiction. Therefore there exists $f \in (F \cap G_2) - G_1 = F \cap \cl_M(C \cup \{e\}) - \cl_M(C)$. 
  Now $f$ and $e$ are parallel in $M \con e$, so we can conclude that $F$ is modular. 
\end{proof}

\begin{lemma}\label{modulardistribpair}
  Let $F$ be a flat of a matroid $M$ and let $X \ss E(M)$. Then $(F,X)$ is a modular pair in $M$ 
  if and only if every flat $G$ of $M$ with $G \ss F$ satisfies $F \cap \cl_M(X \cup G) = \cl_M((F \cap X) \cup G)$. 
\end{lemma}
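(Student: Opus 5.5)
We prove the two directions separately, working throughout with the mutual basis characterization of modular pairs and with Lemma~\ref{closureinter}.

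\textbf{Forward direction.} Suppose $(F,X)$ is modular, and let $G \ss F$ be a flat. Since $G \ss F$, the family $\{F, X, G\}$ should still be modular. Lemma~\ref{modularinsert} does not apply directly, because $G$ need not be comparable with $X$. Instead we argue as in that lemma's proof. Take a mutual basis $B$ for $\{F,X\}$. By Lemma~\ref{blattice}, $B \cap F \cap X$ is a basis for $F \cap X$. Extend $B \cap F \cap X$ to a basis $J_0$ of $\cl_M((F\cap X)\cup G)$, which lies inside $F$. Then extend $J_0$ to a basis $J$ of $F$ and set $B' = (B - F) \cup J$. Since $\cl_M(J) = F = \cl_M(B \cap F)$, Lemma~\ref{indepclosureswitch} shows that $B'$ is independent. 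Also $B' \cap X \supseteq (B \cap X - F) \cup (B \cap F \cap X)$, which spans $X$, so $B'$ is a mutual basis for $F$, $X$ and $H := \cl_M((F\cap X)\cup G)$.

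The inclusion $\cl_M((F\cap X)\cup G) \ss F \cap \cl_M(X\cup G)$ is trivial. For the reverse inclusion, we use that $\cl_M(X \cup G) = \cl_M(B' \cap (X \cup H))$. Applying Lemma~\ref{closureinter} to the sets $B' \cap F$ and $B' \cap (X \cup H)$ gives
\[
F \cap \cl_M(X\cup G) = \cl_M\bigl(B' \cap F \cap (X \cup H)\bigr) \ss \cl_M\bigl((B'\cap F\cap X) \cup (B' \cap H)\bigr) \ss H.
\]

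\textbf{Reverse direction.} Suppose the identity holds for every flat $G \ss F$. Let $I$ be a basis of $F \cap X$, let $I_X \supseteq I$ be a basis of $X$, and let $K = I_X - I$. The aim is to find a basis of $F$ containing $I$ whose union with $K$ is independent. Choose $J \supseteq I$ maximal among independent subsets of $F$ with $J \cup K$ independent; this is possible by axiom (IM) applied inside $F \cup K$, starting from $I_X$. Set $G = \cl_M(J)$, which is a flat contained in $F$. If $G = F$, then $J \cup K$ is a mutual basis and we are done.

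Otherwise, pick $f \in F - G$. By the maximality of $J$, we have $f \in \cl_M(J \cup K) \ss \cl_M(X \cup G)$, so $f \in F \cap \cl_M(X \cup G) = \cl_M((F\cap X)\cup G)$. But $F \cap X \ss \cl_M(I) \ss G$, so this closure equals $G$, contradicting $f \notin G$. Hence $G = F$, and $J$ is a basis of $F$.

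\textbf{Main obstacle.} The hard step is the forward direction. The point needing care is extending the mutual basis to also span $H$, and then making sure Lemma~\ref{closureinter} is applied to independent subsets of $B'$. I expect the independence of $B'$, obtained via Lemma~\ref{indepclosureswitch}, to be routine.
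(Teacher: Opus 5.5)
Your proof is correct. The reverse direction is essentially the paper's argument. Your $J \cup K$ is exactly a mutual basis for the chain $\{F\cap X, X, F\cup X\}$, built by hand with (IM) rather than taken from the corollary that finite chains are modular. You then apply the hypothesis to $G = \cl_M(J)$ just as the paper applies it to $G = \cl_M(B\cap F)$; you phrase it as a contradiction, where the paper uses a chain of inclusions.

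The forward direction takes a genuinely different route. The paper argues by contradiction through skewness. An element $e \in F \cap \cl_M(X\cup G)$ outside $\cl_M((F\cap X)\cup G)$ would be a nonloop of $M \con C$, with $C = (F\cap X)\cup G$, spanned by both $X - F$ and $F - (X\cup G)$. These two sets are skew in $M \con C$ by Lemma~\ref{modulartoskew} and Lemma~\ref{skewmono}(\ref{skewcontract}), which is the contradiction.

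You stay at the level of independent sets. You swap $B\cap F$ for a basis of $F$ that extends a basis of $H = \cl_M((F\cap X)\cup G)$, which gives a mutual basis $B'$ for $\{F, X, H\}$. A single application of Lemma~\ref{closureinter} then gives the inclusion. In effect, the identity becomes an instance of distributivity in the lattice $\cL_M(B')$ of Lemma~\ref{blattice}. Your route avoids contractions and the skewness section altogether; the paper's route is shorter, given the skewness characterization of modular pairs it has already proved.

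Your appeal to Lemma~\ref{indepclosureswitch} is fine, and in this single-swap case it is elementary. Since $\cl_M(J) = \cl_M(B\cap F)$, we have $M \dcon J = M \dcon (B\cap F)$, and $B - F$ is independent in that matroid.

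Two cosmetic points:
\begin{itemize}
\item Your opening remark that $\{F, X, G\}$ ``should still be modular'' is false in general. Take $M = U_{3,6}$, $F = E(M)$, and $X, G$ disjoint two-element sets. You never use it, so delete it.
\item Lemma~\ref{blattice} is not needed to justify the choice of $J_0$. You only use that $B\cap F\cap X$ is an independent subset of $H$.
\end{itemize}
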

\begin{proof}
  Suppose first that $(F,X)$ is a modular pair. 
  The inclusion $\cl_M((F \cap X) \cup G) \ss F \cap \cl_M(X \cup G)$ is immediate. 
  Let $e \in F \cap \cl_M(X \cup G)$, 
  and suppose for a contradiction that $e \notin \cl_M(F \cap (X \cup G))$. 
  Let $C = (F \cap X) \cup G$, so $e$ is a nonloop of $M \con C$. 

  Using $G \ss F$, the definition of $C$, and the fact that $e \notin C$, we have 
  \begin{align*}
      e \in \cl_M(X \cup G) - C
      = \cl_M((X-F) \cup (F \cap (X \cup G))) - C
      = \cl_{M \con C}(X-F).
  \end{align*} 
  Using the fact that $F$ is a flat, we also have 
  \begin{align*}
    e \in F - C = \cl_M\br{(F - (X \cup G)) \cup (F \cap (X \cup G))} - C = \cl_{M \con C}(F - (X \cup G)).
  \end{align*}
  Note that $(C_1, C_2) = (F \cap X, G - (F \cap X))$ is a partition of $C$. 
  By Lemma~\ref{modulartoskew}, the sets $F - X$ and $X - F$ are skew in $M \con C_1$.
  We have $C_2 \ss F - (F \cap X) = F- X$,
  so by Lemma~\ref{skewmono}(\ref{skewcontract}), the sets $X-F$ and $F-X-C_2$ are skew in the matroid
  $M \con C_1 \cup C_2 = M \con C$. But it is routine to check that $F - X - C_2 = F - (X \cup G)$, 
  so we have concluded that $e$ is a nonloop of $M \con C$ that is spanned by two sets 
  in the skew pair $(X-F, F-X-C_2)$. This is a contradiction.  

  Conversely, suppose that every flat $G$ of $M$ with $G \ss F$ satisfies $F \cap \cl_M(X \cup G) = \cl_M((F \cap X) \cup G)$. 
  Let $B$ be a mutual basis for the chain $\{F \cap X, X, F \cup X\}$. 
  Using the fact that $B \cap (F \cup X)$ spans $F \cup X$ and $B \cap F \cap X$ spans $F \cap X$,
  and our assumption with $G = \cl_M(B \cap F)$, we have 
  \begin{align*}
    F &= F \cap (F \cup X)\\ 
    &\ss F \cap \cl_M(X \cup \cl_M(B \cap F)) \\
    &= \cl_M((F \cap X) \cup \cl_M(B \cap F)) \\
    & = \cl_M((B \cap F \cap X) \cup (B \cap F)) = \cl_M(B \cap F)
  \end{align*}
  It follows that $B$ is a mutual basis for $X$ and $F$, as required.   
\end{proof}

This gives us another lattice-like characterization of modular flats. 

\begin{corollary}\label{modulardistrib2}
  Let $F$ be a flat of a matroid $M$. Then $F$ is a modular flat of $M$ if and only if, 
  for every pair of flats $F_0, G$ of $M$ with $F_0 \ss F$, we have $F \cap \cl_M(G \cup F_0) = \cl_M((F \cap G) \cup F_0)$. 
\end{corollary}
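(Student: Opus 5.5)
This follows from Lemma~\ref{modulardistribpair}, applied pairwise. By definition, $F$ is a modular flat exactly when $(F,G)$ is a modular pair for every flat $G$ of $M$. Fix a flat $G$ and apply Lemma~\ref{modulardistribpair} with $X = G$. Then $(F,G)$ is a modular pair if and only if every flat $F_0 \ss F$ satisfies
\[F \cap \cl_M(G \cup F_0) = \cl_M((F \cap G) \cup F_0).\]
This is exactly the stated condition, restricted to that particular $G$.

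For the forward direction, suppose $F$ is modular and let $F_0, G$ be flats with $F_0 \ss F$. Then $(F,G)$ is a modular pair, and the forward implication of Lemma~\ref{modulardistribpair} (with $X = G$ and the flat $F_0$ in the role of the lemma's $G$) gives the identity.

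For the reverse direction, suppose the identity holds for all such pairs, and fix a flat $G$. The identity then holds for every flat $F_0 \ss F$ with this fixed $G$. The reverse implication of Lemma~\ref{modulardistribpair} therefore shows that $(F,G)$ is a modular pair. Since $G$ was an arbitrary flat, $F$ is modular.

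The only point needing care is the change of notation. The lemma's set $X$ plays the role of the corollary's $G$, and the lemma's flat $G$ plays the role of $F_0$. The lemma's hypothesis that $G \ss F$ is a flat matches $F_0 \ss F$ being a flat. The quantifiers also line up: the lemma quantifies over all subflats of $F$ for a fixed $X$, which is why we fix $G$ first and then quantify over $F_0$. There is no real obstacle.
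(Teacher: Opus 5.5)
Your proof is correct and is the same as the paper's. The paper gives no separate argument and presents the corollary as an immediate consequence of Lemma~\ref{modulardistribpair}, applied with $X = G$ for each flat $G$ and with the lemma's subflat of $F$ playing the role of $F_0$, exactly as you do, including fixing $G$ before quantifying over $F_0$.
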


\begin{corollary}\label{modularfiniteinter}
  If $\cF$ is a finite collection of modular flats of a matroid $M$, then $\cap \cF$ 
  is a modular flat of $M$. 
\end{corollary}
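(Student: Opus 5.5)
By induction on $|\cF|$ it suffices to show that if $F_1$ and $F_2$ are modular flats of $M$, then $F = F_1 \cap F_2$ is modular. The empty collection gives $E(M)$, which is trivially modular. I would verify modularity of $F$ through the lattice characterization of Corollary~\ref{modulardistrib2}. So let $F_0 \ss F$ be a flat and $G$ be any flat; the goal is
\[ F \cap \cl_M(G \cup F_0) = \cl_M((F \cap G) \cup F_0). \]
The inclusion $\supseteq$ is immediate from $F_0 \ss F$, $F \cap G \ss F$ and the fact that $F$ is a flat. The work is in $\subseteq$.

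The key computation applies Corollary~\ref{modulardistrib2} twice. First apply it to the modular flat $F_1$ with the pair $(F_0, G)$; this is legal since $F_0 \ss F \ss F_1$. It gives
\[ F_1 \cap \cl_M(G \cup F_0) = \cl_M((F_1 \cap G) \cup F_0). \]
Intersecting both sides with $F_2$ yields
\[ F \cap \cl_M(G \cup F_0) = F_2 \cap \cl_M((F_1 \cap G) \cup F_0). \]
Now apply Corollary~\ref{modulardistrib2} to the modular flat $F_2$. Use the same $F_0$, which lies in $F_2$, and replace $G$ by the flat $G' = F_1 \cap G$, which is a flat as an intersection of flats. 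This gives
\[ F_2 \cap \cl_M(G' \cup F_0) = \cl_M((F_2 \cap G') \cup F_0) = \cl_M((F \cap G) \cup F_0). \]
Chaining the displays gives the desired equality, with equality throughout rather than merely the needed inclusion.

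I expect almost no obstacle here. The only point needing care is checking the hypotheses of Corollary~\ref{modulardistrib2} at each application: $F_0$ must be contained in the modular flat being used, and the second argument must be a flat. One alternative would be to pass through Lemma~\ref{contractonto} (parallel elements in contractions), but that looks messier. I would also note why the statement is restricted to finite collections: for infinite $\cF$ the induction breaks down, and nothing above handles arbitrary intersections.
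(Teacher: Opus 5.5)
Your proof is correct and is essentially the paper's argument. You reduce to two flats by induction, apply Corollary~\ref{modulardistrib2} to one modular flat with the pair $(F_0,G)$, intersect with the other, and apply it again with $G$ replaced by the flat $F_1\cap G$; your explicit checks of the hypotheses ($F_0$ lies in each flat, and $F_1\cap G$ is a flat) are exactly what the paper leaves implicit.
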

\begin{proof}
It suffices by induction 
    to show that the intersection of two modular flats $F,F'$ of $M$ is a modular flat.
    Let $G$ be a flat of $M$, and $F_0$ be a flat of $M | F \cap F'$. 
    Then by applying Corollary~\ref{modulardistrib2} twice, we get
    \begin{align*}
        F \cap F' \cap \cl_M(G \cup F_0) &= F \cap \cl_M((F' \cap G) \cup F_0) \\ 
        &= \cl_M\br{(F \cap (F' \cap G)) \cup F_0},
    \end{align*}
    so, by Corollary~\ref{modulardistrib2}, the flat $F \cap F'$ is modular in $M$, as required. 
\end{proof}

Combining Corollary~\ref{modulardistrib2} with the statements of 
Lemmas~\ref{modulartoskew}, ~\ref{contractonto}, ~\ref{modulariffcompl} and ~\ref{modulardistrib1} 
gives a number of characterizations of modularity for flats. 

\begin{theorem}\label{modequiv}
  Let $F$ be a flat of a matroid $M$. The following are equivalent: 
  \begin{enumerate}[(1)]
    \item $F$ is a modular flat of $M$;
    \item $F-G$ and $G-F$ are skew in $M \con (F \cap G)$ for every flat $G$ of $M$;
    \item $F$ and $G$ are skew in $M \dcon (F \cap G)$ for every flat $G$ of $M$;
    \item every flat $G$ with $F \cap G \ss \cl_M(\es)$ is skew to $F$;
    \item every complementary flat of $F$ in $M$ is skew to $F$;
    \item
    for every $C \subseteq E(M)$ and every nonloop $e$ of $M \con C$ with $e \in \cl_{M \con C}(F-C)$, 
    there exists $f \in F$ that is parallel to $e$ in $M \con C$;
    \item 
    for every pair of flats $F_0, G$ of $M$ with $F_0 \ss F$, we have $F \cap \cl_M(G \cup F_0) = \cl_M((F \cap G) \cup F_0)$;
    \item 
    for every pair $G_1,G_2$
    of flats of $M$ with $G_1 \ss G_2$, we have $\cl_M(G_1 \cup F) \cap G_2 = \cl_M(G_1 \cup (F \cap G_2))$;
  \end{enumerate}
\end{theorem}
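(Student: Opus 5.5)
The plan is to assemble Theorem~\ref{modequiv} from results already proved, treating statement (1) as the hub that every other condition is linked to.

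\textbf{(1) with (2) and (3).} Fix a flat $G$ of $M$.
\begin{itemize}
\item Lemma~\ref{modulartoskew} shows that $(F,G)$ is a modular pair if and only if $F-G$ and $G-F$ are skew in $M \con (F\cap G)$.
\item The same lemma shows this holds if and only if $F$ and $G$ are skew in $M \dcon (F \cap G)$.
\end{itemize}
Quantifying over all flats $G$ gives (1)$\Leftrightarrow$(2)$\Leftrightarrow$(3). This uses only the definition of a modular flat.

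\textbf{(1) with (4) and (5).} Lemma~\ref{modulariffcompl} shows that (1), (4) and (5) are equivalent. That lemma's first item is misprinted as ``$M$ is modular''. Its proof, however, clearly establishes the statement with ``$F$ is modular''. In that proof, (4)$\Rightarrow$(1) goes through Lemma~\ref{contractonto}, and the other directions go through mutual bases. So I would cite the lemma with this reading and note the correction.

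\textbf{(1) with (6), (7) and (8).}
\begin{itemize}
\item Condition (6) is exactly the characterization in Lemma~\ref{contractonto}.
\item Condition (7) is Corollary~\ref{modulardistrib2}. It follows from Lemma~\ref{modulardistribpair} applied with $X=G$, renaming the flat $G \ss F$ there as $F_0$, and quantifying over all flats $G$. Here one should check that modularity of $F$ only requires modular pairs $(F,G)$ with $G$ a flat, which matches the definition.
\item Condition (8) is Lemma~\ref{modulardistrib1}.
\end{itemize}

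\textbf{Main obstacle.} Since every condition is shown equivalent to (1), the theorem follows. The only step I expect to need real care is the reconciliation of Lemma~\ref{modulariffcompl}'s statement with its proof, together with checking that Corollary~\ref{modulardistrib2} is precisely the specialization of Lemma~\ref{modulardistribpair}. No new argument is required.
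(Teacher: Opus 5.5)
Your proposal is correct and matches the paper's proof, which consists only of combining Lemma~\ref{modulartoskew}, Lemma~\ref{contractonto}, Lemma~\ref{modulariffcompl}, Lemma~\ref{modulardistrib1} and Corollary~\ref{modulardistrib2}, exactly as you do. You are also right that item~(i) of Lemma~\ref{modulariffcompl} should read ``$F$ is modular''; its proof opens with ``Suppose that $F$ is modular'' and establishes that version.
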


\section{Intersections of Modular Flats}

This section proves that an infinite intersection of modular flats in a finitary matroid is a modular flat; 
this will be needed to prove Theorem~\ref{modularfinitary}.
These proofs are reproductions of ones from Sachs [\ref{sachs}] 
that are modified to use our language, but unlike those in the last section
they add no generality, since they only deal with finitary matroids.
We include them here only for completeness.  

A collection $\cX$ of sets is \emph{directed} if for all $X,X' \in \cX$, 
there exists $Y \in \cX$ for which $X \cup X' \ss Y$. 
(And, thus, for all finite $\cX_0 \ss \cX$, there exists $X \in \cX$ with 
$\cup \cX_0 \ss X$.)

The first lemma shows that the lattice of flats of a finitary matroid is `meet-continuous'. 
The finitary hypothesis is required; otherwise, a counterexample would be the
circuit matroid on ground set $[0,1]$, where $F = \{1\}$, and $\cG$ is the family
of flats of the form $[0,a) : a < 1$. 

\begin{lemma}\label{interclosuredirected}
  Let $F$ be a flat of a finitary matroid $M$, and let $\cG$ be a directed family 
  of flats of $M$. Then $F \cap \cl_M(\cup \cG) = \cl_M(\cup_{G \in \cG} (F \cap G))$. 
\end{lemma}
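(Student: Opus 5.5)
The inclusion $\cl_M(\cup_{G \in \cG}(F \cap G)) \ss F \cap \cl_M(\cup \cG)$ is immediate. Each $F \cap G$ lies in the flat $F$ and in $\cup \cG$, so monotonicity of closure places $\cl_M(\cup_G (F\cap G))$ inside both $F = \cl_M(F)$ and $\cl_M(\cup \cG)$. The real work is the reverse inclusion.

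Let $e \in F \cap \cl_M(\cup \cG)$. Since $M$ is finitary, there is a finite set $X \ss \cup \cG$ with $e \in \cl_M(X)$; we may take $X$ independent, or $X = \{e\}$ if $e$ lies in $\cup \cG$. Each element of $X$ lies in some member of $\cG$. Because $\cG$ is directed and $X$ is finite, there is a single $G \in \cG$ with $X \ss G$, and hence $e \in \cl_M(G) = G$. Together with $e \in F$ this gives $e \in F \cap G \ss \cl_M(\cup_{G'}(F \cap G'))$.

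If $\cG$ is empty, both sides reduce to $F \cap \cl_M(\es)$ and $\cl_M(\es)$. These agree because $\cl_M(\es) \ss F$ for any flat $F$, so that case is trivial.

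I do not expect any step to be a serious obstacle. The key point is that finitary closure is witnessed by a finite subset, and directedness then absorbs that finite subset into one member of $\cG$. It is exactly this step that fails in the circuit example on $[0,1]$.
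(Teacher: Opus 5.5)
Your proposal is correct and follows essentially the same route as the paper: you prove the easy inclusion by monotonicity, then use finitarity to get a finite witnessing subset of $\cup \cG$, and use directedness to absorb it into a single flat $G \in \cG$. Your separate treatment of the case $\cG = \es$ is a small but genuine improvement. The paper's parenthetical remark that every finite subfamily has an upper bound in $\cG$ tacitly assumes $\cG$ is nonempty.
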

\begin{proof}
  It is easy to see that $\cl_M(\cup_{G \in \cG}(F \cap G)) \ss F \cap \cl_M(\cup \cG)$. 
  Suppose, therefore, that $e \in F \cap \cl_M(\cup \cG)$. 
  Since $M$ is finitary, there is therefore a finite set $I \ss \cup \cG$ 
  for which $e \in \cl_M(I)$; 
  let $\cG_0$ be a finite subset of $\cG$ for which $I \ss \cup \cG_0$. 
  Since $\cG$ is directed and $\cG_0$ is finite, there exists $G \in \cG$ with 
  $\cup \cG_0 \ss G$. Thus $e \in \cl_M(I) \ss \cl_M(G) = G$. 

  Since $e \in F$, it follows that $e \in F \cap G \ss \cl_M(\cup_{G \in \cG}(F \cap G))$, 
  as required. 
\end{proof}

\begin{lemma}\label{modpairdirected}
  Let $F$ be a flat of a finitary matroid $M$, and let $\cX$ be an directed family 
  of subsets of $E(M)$. If $(F,X)$ is a modular pair for all $X \in \cX$,
  then $(F, \cl_M(\cup \cX))$ is a modular pair. 
\end{lemma}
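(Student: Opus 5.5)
We use the lattice-theoretic characterization of modular pairs in Lemma~\ref{modulardistribpair}. Let $X^* = \cl_M(\cup \cX)$. By that lemma it suffices to show that every flat $G \ss F$ satisfies
\[F \cap \cl_M(X^* \cup G) = \cl_M((F \cap X^*) \cup G).\]
The inclusion $\supseteq$ is immediate from monotonicity of closure, since $G \ss F$ and $F$ is a flat. The work is in the inclusion $\ss$.

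For each $X \in \cX$, the pair $(F,X)$ is modular. Lemma~\ref{modulardistribpair} therefore gives $F \cap \cl_M(X \cup G) = \cl_M((F \cap X) \cup G)$. Now consider the family $\cG = \{\cl_M(X \cup G) : X \in \cX\}$. It is a directed family of flats, because $\cX$ is directed and closure is monotone. Its union has closure $\cl_M(X^* \cup G)$, since $\cup \cG \sps (\cup \cX) \cup G$ when $\cX$ is nonempty, and $\cup\cG \ss \cl_M(X^* \cup G)$. If $\cX$ is empty we may treat it separately: then $X^* = \cl_M(\es)$, and every flat forms a modular pair with $\cl_M(\es)$, by extending a basis of $F$.

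Applying Lemma~\ref{interclosuredirected} (meet-continuity, which uses finitarity) to $F$ and $\cG$ gives
\[F \cap \cl_M(X^* \cup G) = \cl_M\Big(\bigcup_{X \in \cX} \big(F \cap \cl_M(X \cup G)\big)\Big) = \cl_M\Big(\bigcup_{X \in \cX} \cl_M((F \cap X) \cup G)\Big).\]
For each $X \in \cX$ we have $F \cap X \ss F \cap X^*$, so each term $\cl_M((F \cap X) \cup G)$ lies in $\cl_M((F \cap X^*) \cup G)$. Hence the whole right-hand side is contained in $\cl_M((F \cap X^*) \cup G)$, which proves $\ss$. Lemma~\ref{modulardistribpair} then gives that $(F, X^*)$ is a modular pair.

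The main step to check is that $\cG$ really is directed and that the closure of its union equals $\cl_M(X^*\cup G)$. This is where the directedness of $\cX$ is used, and finitarity enters only through Lemma~\ref{interclosuredirected}. Everything else is routine closure monotonicity.
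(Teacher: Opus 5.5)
Your proof is correct and follows the paper's argument essentially step for step. You reduce via Lemma~\ref{modulardistribpair} to the identity for flats $G \subseteq F$, apply meet-continuity (Lemma~\ref{interclosuredirected}) to the directed family $\{\cl_M(X \cup G) : X \in \cX\}$, and use the modularity of each $(F,X)$ term by term. Your separate treatment of $\cX = \varnothing$ is a small improvement: the paper's identity $\cl_M(Z \cup G) = \cl_M(\cup \cG)$ silently requires $\cX$ to be nonempty.
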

\begin{proof}
  Let $Z = \cl_M(\cup \cX)$. By Lemma~\ref{modulardistribpair}, it suffices to show 
  that $F \cap \cl_M(Z \cup G) = \cl_M((F \cap Z) \cup G)$ for every flat $G$ of $M | F$. 

  Let $G$ be a flat of $M | F$, and let $\cG = \{\cl_M(X \cup G) : X \in \cX\}$. 
  It is easy to check that, since $\cX$ is upwards-directed, so is $\cG$. 
  We also have $\cl_M(Z \cup G) = \cl_M(\cup_{X \in \cX} (X \cup G)) = \cl_M(\cup \cG)$, 
  so Lemmas~\ref{modulardistribpair} and~\ref{interclosuredirected} give 
  \begin{align*}
      F \cap \cl_M(Z \cup G) &= F \cap \cl_M(\cup \cG) \\
      &= \cl_M\br{\cup_{G \in \cG}(F \cap G)} \\ 
      &= \cl_M\br{\cup_{X \in \cX} (F \cap \cl_M(X \cup G))} \\ 
      &= \cl_M\br{\cup_{X \in \cX}\cl_M((F \cap X) \cup G)}\\
      &= \cl_M\br{\cup_{X \in \cX}((F \cap X) \cup G)}\\
      &= \cl_M\br{(F \cap \cup \cX) \cup G}
  \end{align*}
  This implies that $F \cap \cl_M(Z \cup G) \ss \cl_M((F \cap Z) \cup G)$, 
  and the reverse inclusion is immediate from the fact that $G \ss F$. 
\end{proof}

\begin{corollary}
  If $\cF$ is a directed family of modular flats of a finitary matroid $M$. 
  then $\cl_M(\cup \cF)$ is modular in $M$. 
\end{corollary}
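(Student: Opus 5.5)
The natural route is to apply Lemma~\ref{modpairdirected} once for each flat $G$ of $M$. Since a modular flat is one that forms a modular pair with every flat, it is enough to fix an arbitrary flat $G$ of $M$ and show that $(G, Z)$ is a modular pair, where $Z = \cl_M(\cup \cF)$.

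First, $\cF$ itself is a directed family of subsets of $E(M)$, and every $F \in \cF$ is a modular flat. So $(F,G)$, and by symmetry of the definition $(G,F)$, is a modular pair for every $F \in \cF$. This symmetry is immediate, since a mutual basis for $\{F,G\}$ does not depend on the order of the pair. Lemma~\ref{modpairdirected}, applied with the flat $G$ and the directed family $\cX = \cF$, then gives that $(G, \cl_M(\cup \cF)) = (G, Z)$ is a modular pair. As $G$ was arbitrary, $Z$ is a modular flat.

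Two small points need checking. First, $Z$ is a flat, because it is a closure. Second, if $\cF$ is empty then $\cup \cF = \es$ and $Z = \cl_M(\es)$. This flat is modular, since any basis of a flat $G$ is a mutual basis for $\{\cl_M(\es), G\}$. So the empty case is harmless, and in any case it is covered by the lemma once we interpret the directed family appropriately.

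There is essentially no obstacle here beyond confirming that the hypotheses of Lemma~\ref{modpairdirected} are met. Specifically, $M$ is finitary, $G$ is a flat, and $\cF$ is directed. The only subtlety is the order of the pair, which the symmetric definition of modular pairs handles.
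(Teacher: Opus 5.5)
Your proof is correct and is exactly the intended derivation. The paper states this corollary without proof as an immediate consequence of Lemma~\ref{modpairdirected}, applied with an arbitrary flat $G$ in the role of the fixed flat and $\cX = \cF$, together with the symmetry of modular pairs. Your separate treatment of $\cF = \es$ is worthwhile, since the proof of that lemma tacitly assumes $\cX \neq \es$ when it writes $\cl_M(Z \cup G) = \cl_M(\cup \cG)$.
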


The collection of finite-rank flats of a matroid $M$ is directed, 
and every flat is the closure of a union of rank-one flats; 
this implies the following. 

\begin{corollary}\label{modularoffinite}
  If $F$ is a flat of a finitary matroid $M$, and $(F,G)$ is a modular pair for 
  every finite-rank flat $G$ of $M$, then $F$ is modular in $M$. 
\end{corollary}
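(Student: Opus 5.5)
The plan is to apply the preceding corollary (a directed family of modular-pair partners yields a modular pair with the closure of the union) to a single flat $G$ and a carefully chosen directed family.

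Fix an arbitrary flat $G$ of $M$. We must show that $(F,G)$ is a modular pair. Let $I$ be a basis for $G$, and let $\cX$ be the family of sets $\cl_M(J)$ where $J$ ranges over finite subsets of $I$.

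First, we check that $\cX$ is directed. Given $\cl_M(J_1)$ and $\cl_M(J_2)$, the set $\cl_M(J_1 \cup J_2)$ belongs to $\cX$ and contains both. Second, each member of $\cX$ is a finite-rank flat, since it is the closure of a finite set. By hypothesis, $(F,X)$ is therefore a modular pair for every $X \in \cX$.

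Lemma~\ref{modpairdirected} now gives that $(F, \cl_M(\cup \cX))$ is a modular pair. Finally, $\cup \cX \supseteq I$ and $\cup\cX \ss \cl_M(I) = G$, so $\cl_M(\cup\cX) = \cl_M(I) = G$. Hence $(F,G)$ is modular. Since $G$ was arbitrary, $F$ is a modular flat.

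Nothing here is a real obstacle; the only point to watch is that Lemma~\ref{modpairdirected} needs $M$ to be finitary, which is assumed. If $\cX$ is taken to include the closure of the empty set (from $J=\es$), then the family is nonempty even when $I=\es$, so the degenerate case $G=\cl_M(\es)$ is covered directly: there $(F,G)$ is modular because $G=\cl_M(\es)$ has finite rank and the hypothesis applies.
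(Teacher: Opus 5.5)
Your proof is correct and is essentially the paper's argument: the paper takes the directed family of finite-rank subflats of $G$, whose union spans $G$, and applies Lemma~\ref{modpairdirected}, and your family of closures of finite subsets of a basis of $G$ is just a cofinal version of that family. Including $\cl_M(\es)$ to keep the family nonempty is a sensible precaution, since the step $\cl_M(Z \cup G) = \cl_M(\cup \cG)$ in the proof of Lemma~\ref{modpairdirected} tacitly needs $\cX \neq \es$.
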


This will be enough to show that modular flats in finitary matroids are closed under arbitrary intersections.
We do not know a reason that the finitary hypothesis is necessary. 

\begin{lemma}\label{modularinter}
  If $\cF$ is a family of modular flats of a finitary matroid $M$, then $\cap \cF$ is modular in $M$. 
\end{lemma}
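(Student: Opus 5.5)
By Corollary~\ref{modularoffinite}, it suffices to show that $(F_0, G)$ is a modular pair for every finite-rank flat $G$ of $M$, where $F_0 = \bigcap \cF$. So fix such a $G$, and write $r = r_M(G) < \infty$.

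The first step is to reduce to a finite subfamily. For each finite subfamily $\cF_1 \ss \cF$, the flat $\bigcap \cF_1$ is modular by Corollary~\ref{modularfiniteinter}. Consider the flats $G \cap \bigcap \cF_1$ as $\cF_1$ ranges over finite subfamilies. These are subflats of the finite-rank flat $G$, so their ranks are bounded by $r$. Since the family of such intersections is closed under finite intersection, we can choose $\cF_1$ so that $r_M(G \cap \bigcap \cF_1)$ is minimal. Minimality, together with the fact that a flat of finite rank contained in another flat of the same rank equals it, gives
\[ G \cap \bigcap \cF_1 = G \cap \bigcap \cF_2 \quad \text{for every finite } \cF_2 \supseteq \cF_1 . \]
Hence $G \cap F_0 = G \cap F_1$, where $F_1 = \bigcap \cF_1$.

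The second step is to transfer modularity from $F_1$ to $F_0$. Since $F_1$ is modular, Lemma~\ref{modulartoskew} shows that $F_1 - G$ and $G - F_1$ are skew in $M \con (F_1 \cap G)$, and $F_1 \cap G = F_0 \cap G$. Because $F_0 \ss F_1$, we have $F_0 - G \ss F_1 - G$. Also $G - F_1 \ss G - F_0$, and these two sets differ only in $G \cap F_1 - F_0$, which is empty since $G \cap F_1 = G \cap F_0$. So $G - F_1 = G - F_0$. Lemma~\ref{skewmono}(\ref{skewsubsets}) then shows that $F_0 - G$ and $G - F_0$ are skew in $M \con (F_0 \cap G)$. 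By Lemma~\ref{modulartoskew} again, $(F_0, G)$ is a modular pair.

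The main obstacle is the stabilization in the first step. It relies on finite rank, since the descending chain of subflats of $G$ must stop. It also relies on the fact that $G \cap \bigcap \cF = \bigcap_{\cF_2} (G \cap \bigcap \cF_2)$ over finite $\cF_2$, which holds simply because intersection commutes with unions of index sets. Finitariness enters only through Corollary~\ref{modularoffinite}.
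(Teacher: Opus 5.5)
Your proof is correct. The reduction via Corollary~\ref{modularoffinite} and the stabilization step match the paper's proof. In the stabilization step you find a finite subfamily whose intersection has the same trace on the finite-rank flat $G$ as $\bigcap\cF$; the paper phrases this as the family $\cF_K$ containing its own intersection, and you phrase it as choosing a subfamily of minimal rank. These are the same argument.

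Where you differ is in transferring modularity from the finite intersection to the full intersection. The paper works through the lattice-type characterization of Lemma~\ref{modulardistribpair}. For each flat $G'$ with $G' \ss \bigcap\cF$, it verifies the identity $\bigcap\cF \cap \cl_M(K \cup G') = \cl_M((\bigcap\cF \cap K) \cup G')$. It does so by bounding the left side by the corresponding expression for the modular finite intersection.

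You instead use the skewness characterization of Lemma~\ref{modulartoskew} together with monotonicity of skewness under subsets (Lemma~\ref{skewmono}). In effect, this proves a clean general fact: if $(X,Y)$ is a modular pair and $X' \ss X$ satisfies $X' \cap Y = X \cap Y$, then $(X',Y)$ is also a modular pair.

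Your route is slightly more elementary and isolates a reusable statement. Note that Lemma~\ref{modulardistribpair} is itself proved using Lemmas~\ref{modulartoskew} and~\ref{skewmono}, so you bypass one layer. The paper's route keeps that section in the lattice-theoretic language of the Sachs arguments it is reproducing. You are also right that finitariness enters only through Corollary~\ref{modularoffinite}.
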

\begin{proof}
  By Corollary~\ref{modularoffinite}, it is enough to show that $(\cap \cF, K)$ is a modular pair for each 
  finite-rank flat $K$. 
  By Lemma~\ref{modulardistribpair}, it suffices to prove that every flat $G$ of $M | \cap \cF$ 
  satisfies $\cap \cF \cap \cl_M(K \cup G) = \cl_M((\cap \cF \cap K) \cup G)$. 
  
  Let $\cF_K$ be the collection of sets of the form $\cap\cF_0 \cap K$, where $\cF_0$ is a nonempty finite subset of $\cF$. 
  Clearly $\cap \cF_K = \cap \cF \cap K$; but also we see that the intersection of any two flats in $\cF_K$ is in $\cF_K$. 
  Since $\cF_K$ is a family of subflats of the finite-rank flat $K$, it follows that $\cF_K$ contains its intersection; 
  that is, there is some finite set $\cF_0 \ss \cF$ for which $\cap \cF_0 \cap K = \cap \cF_K = \cap \cF \cap K$. 
  
  Let $F_0 = \cap \cF_0$; by Lemma~\ref{modularfiniteinter}, the flat $F_0$ is modular in $M$.
  Lemma~\ref{modulardistribpair} gives
  \[\cap\cF \cap \cl_M(K \cup G) \ss F_0 \cap \cl_M(K \cup G) = \cl_M((F_0 \cap K) \cup G) = \cl_M((\cap \cF \cap K) \cup G).\]
  The inclusion $\cl_M((\cap \cF \cap K) \cup G) \ss \cap\cF \cap \cl_M(K \cup G)$ is easy; the lemma follows. 
\end{proof}

\section{Modular Matroids}

In this section, we consider matroids in which every flat is modular.
We need a few helper lemmas.

\begin{lemma}\label{modcircuit}
  Let $C$ be a circuit of a matroid $M$, and $(I,J)$ be a partition of $C$ with $|I| \ge 2$ and $|J| \ge 2$. 
  If $(\cl_M(I), \cl_M(J))$ is a modular pair in $M$, then there exists $e \in E(M) - C$ such that $I \cup \{e\}$ and $J \cup \{e\}$
  are both circuits. 
\end{lemma}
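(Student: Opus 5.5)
We use the mutual basis supplied by modularity to locate a nonloop in $\cl_M(I)\cap\cl_M(J)$, and then show that this element closes both $I$ and $J$ into circuits.

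\textbf{Step 1: a mutual basis.} Let $F_I = \cl_M(I)$ and $F_J = \cl_M(J)$. Since $C$ is a circuit, $I$ and $J$ are independent, and they are disjoint. Let $B$ be a mutual basis for $(F_I,F_J)$. By Lemma~\ref{blattice}, $B \cap F_I \cap F_J$ is a basis for $F_I \cap F_J$.

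\textbf{Step 2: $F_I \cap F_J$ has a nonloop.} Suppose instead that $F_I \cap F_J \ss \cl_M(\es)$. Then $B \cap F_I$ and $B \cap F_J$ are disjoint, since $B$ contains no loops. Lemma~\ref{skewofdjbase} then makes $F_I$ and $F_J$ skew. By Lemma~\ref{skewmono}, $I$ and $J$ are skew, so Corollary~\ref{skewindep} makes $I \cup J = C$ independent, a contradiction. Hence there is a nonloop $e \in F_I \cap F_J$.

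Such an $e$ cannot lie in $C$. If $e \in I$, then $e \in \cl_M(J)$, so $J \cup \{e\}$ is a dependent proper subset of $C$ (proper because $|I| \ge 2$), which is impossible. The case $e \in J$ is symmetric.

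\textbf{Step 3: $I \cup \{e\}$ and $J \cup \{e\}$ are circuits.} Since $e \in \cl_M(I) - I$ and $e$ is a nonloop, $I \cup \{e\}$ contains a unique circuit $C_I$. This circuit contains $e$ and some nonempty $I' \ss I$. It remains to show $I' = I$, and likewise for $J$.

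Suppose $I' \subsetneq I$. Then $e \in \cl_M(I')$. Take the circuit $C_J \ss J \cup \{e\}$ through $e$. Circuit elimination of $e$ from $C_I$ and $C_J$ gives a circuit inside $I' \cup J$, which is a proper subset of $C$. This contradicts the fact that $C$ is a circuit. So $C_I = I \cup \{e\}$, and symmetrically $C_J = J \cup \{e\}$.

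\textbf{Where the difficulty lies.} Step 2 is the crux: modularity has to produce a nonloop in the intersection. Step 3 then only needs $e \in \cl_M(I) \cap \cl_M(J)$ with $e$ a nonloop, together with circuit elimination. In Step 2 one must also check that $e \ne$ any element of $C$, which is the short argument given there using $|I|,|J| \ge 2$.
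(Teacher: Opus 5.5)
Your proof is correct, but it reaches the key step by a more elementary route than the paper. The paper goes through local connectivity. Since $J$ is a circuit of $M \dcon I$, Lemma~\ref{lcprop} gives $\sqcap_M(\cl_M(I),\cl_M(J)) = n_{M \dcon I}(J) = 1$. Theorem~\ref{lcmod} (equality $\sqcap_M(X,Y) = r_M(X \cap Y)$ for modular pairs with finite $\sqcap$) then yields $r_M(\cl_M(I) \cap \cl_M(J)) = 1$. For minimality, the paper notes that each proper $K \subsetneq I$ has $K \cup J$ independent, so $\sqcap_M(K,J) = 0$; hence $K$ and $J$ are skew and $e \notin \cl_M(K)$.

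You instead get a nonloop in the intersection by contradiction, using only skewness. Your detour through Lemma~\ref{skewofdjbase} is unnecessary: a modular pair whose intersection consists of loops is already skew by Definition~\ref{skewdef}. You also replace the $\sqcap_M(K,J) = 0$ step with an explicit circuit elimination, which is essentially what underlies that step anyway. (Circuit elimination needs $C_I \neq C_J$, which holds because $C_I$ meets $I$ while $C_J \subseteq J \cup \{e\}$ does not.)

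The paper's route gives slightly more: the intersection has rank exactly one, so $e$ is unique up to parallel elements, and the argument showcases the $\sqcap$ machinery. Your route produces only some nonloop $e$, which is all the lemma needs. It is self-contained and avoids the nullity and local-connectivity theory entirely.
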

\begin{proof}
  Note that $I$ and $J$ are both proper subsets of $C$, so each is independent. It follows from Theorem~\ref{lcprop}
  and the fact that $J$ is a circuit of $M \dcon I$ that 
  $\sqcap_M(\cl_M(I), \cl_M(J)) = \sqcap_M(I,J) = n_{M \dcon I}(J) = 1$. 
  Since $(\cl_M(I), \cl_M(J))$ is a modular pair, it follows from Theorem~\ref{lcmod} that $r_M(\cl_M(I) \cap \cl_M(J)) = 1$. 
  Let $e$ be a nonloop in $\cl_M(I) \cap \cl_M(J)$. 

  For each proper subset $K$ of $I$, since $K \cup J$ is independent, we have $\sqcap_M(K,J) = 0$ 
  and so $\cl_M(K) \cap \cl_M(J) \ss \cl_M(\es)$, which implies that $e \notin \cl_M(K)$. 
  Therefore $e$ is spanned by $I$ but by no proper subset of $I$; since $|I| \ge 2$, it follows that $e \notin I$ and that $I \cup \{e\}$ is a circuit. 
  By the same argument, $J \cup \{e\}$ is also a circuit. 
\end{proof}

\begin{lemma}\label{modularaux}
  Let $M$ be a matroid in which every line is modular. Let $a_i : i \in \bN$ be distinct elements of $M$,
  and $A = \{a_i : i \in \bN\}$. Let $x,y$ be distinct elements of $M \del A$ such that $A \cup \{x,y\}$ is a circuit. 
  Then there exist elements $b_i : i \in \bN$ of $M \del (A \cup \{x\})$ such that
  \begin{enumerate}[(1)]
    \item\label{mab0} $b_0 = y$,
    \item\label{mabt} for each $k > 1$, the triple $\{b_{i-1}, a_{i-1}, b_i\}$ is a triangle of $M$,
    \item\label{mabc} for each $k \in \bN$, the set $\{x, b_k\} \cup \{a_i : i \ge k\}$ is a circuit. 
  \end{enumerate}
\end{lemma}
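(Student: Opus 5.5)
We build the $b_i$ recursively, using Lemma~\ref{modcircuit} at each step. The invariant is (\ref{mabc}): the set $C_k = \{x, b_k\} \cup \{a_i : i \ge k\}$ is a circuit. For $k=0$ this is the hypothesis with $b_0 = y$.

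Given $C_k$, partition it as
\[I = \{b_k, a_k\}, \qquad J = \{x\} \cup \{a_i : i > k\}.\]
Both parts have at least two elements, since $A$ is infinite. The flat $\cl_M(I)$ is a line, because $I$ is a proper subset of a circuit and hence independent of size two. By hypothesis this line is modular, so $(\cl_M(I), \cl_M(J))$ is a modular pair. Lemma~\ref{modcircuit} then gives an element $e \notin C_k$ such that both $I \cup \{e\}$ and $J \cup \{e\}$ are circuits. Set $b_{k+1} = e$.

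With this choice the triple $\{b_k, a_k, b_{k+1}\}$ is a triangle, which is (\ref{mabt}) with index shift $i = k+1$. The set $J \cup \{b_{k+1}\} = \{x, b_{k+1}\} \cup \{a_i : i \ge k+1\}$ is a circuit, which is exactly $C_{k+1}$. This recovers the invariant at the next step.

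We still need each $b_{k+1}$ to lie in $E(M) - (A \cup \{x\})$. Since $e \notin C_k$, we get $e \ne x$, and $e$ is not any $a_i$ with $i \ge k$. It remains to exclude $e = a_j$ for $j < k$. For this, use that $J \cup \{e\}$ is a circuit. If $e = a_j$ with $j<k$, then $\{x, a_j\} \cup \{a_i : i>k\}$ would be a circuit. This is where care is needed, and I expect this bookkeeping to be the main obstacle.

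My first attempt is to compare with $C_0 = A \cup \{x,y\}$. The set $\{x,a_j\} \cup \{a_i : i > k\}$ is a proper subset of $C_0$, because it omits $y$ and $a_k$. A proper subset of a circuit is independent, so it cannot itself be a circuit. That contradiction gives $e \notin A$.

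It is also worth checking $b_{k+1} \ne y$, although the statement does not require it. Distinctness among the $b_i$ is likewise not asserted, so the recursion as given should suffice. Finally, the recursion uses dependent choice to make the infinitely many choices of $b_{k+1}$.
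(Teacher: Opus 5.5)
Your proof is correct and is essentially the paper's argument: the same recursion applies Lemma~\ref{modcircuit} to the partition $\bigl(\{b_k,a_k\},\ \{x\}\cup\{a_i : i>k\}\bigr)$ of the circuit $C_k$, and excludes $b_{k+1}\in A\cup\{x\}$ by the same observation that $\{x,b_{k+1}\}\cup\{a_i : i\ge k+1\}$ would then be a proper subset of the circuit $A\cup\{x,y\}$. The paper also checks that $b_{k+1}\notin\{b_0,\dotsc,b_k\}$, since otherwise $C_{k+1}$ would be properly contained in some $C_\ell$. This is not required by the statement, but it is used later in Theorem~\ref{modularlinefinitary}, and it follows from property (3) by the same argument in any case.
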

\begin{proof}
  Let $A_k = \{a_k : k \ge i\}$ for each $k \in \bN$. 
  It suffices to show that if $t \ge 0$ and elements $y = b_0, \dotsc, b_t$ have been defined to satisfy the above conditions 
  for all $k \le t$, 
  then there is some $b = b_{t+1}$ such that 
  $b \notin A \cup \{x\} \cup \{b_0, \dotsc, b_t\}$ and
  the sets $\{b_t, a_t, b\}$ and $\{x, b\} \cup A_{t+1}$ are both circuits; 
  this will allow us to define all terms in the sequence by recursion. 

  Since $\{b_t, a_t\} \cup \{x\} \cup A_{t+1}$ is a circuit by hypothesis, 
  and $\cl_M(\{b_t,a_t\})$ is modular in $M$, Lemma~\ref{modcircuit}
  gives that there exists $b \in E(M)$ for which $\{b,b_t,a_t\}$ and
  $\{x, b\} \cup A_{t+1}$ are both circuits; it remains to show that
  $b \notin A \cup \{x\} \cup \{b_0, \dotsc, b_t\}$. 
  
  If $b \in A \cup \{x\}$, then $\{x,b\} \cup A_{t+1}$ is a circuit properly contained in 
  $A \cup \{x,y\}$, a contradiction. If $b = b_\ell$ for some $\ell \le t$, then 
  $\{x, b\} \cup A_{t+1}$ is a circuit properly contained in the circuit $\{x,b\} \cup A_\ell$, 
  also a contradiction. 
\end{proof}

We now prove a stronger version of Theorem~\ref{modularfinitary}.

\begin{theorem}\label{modularlinefinitary}
  Every matroid in which every line is modular is finitary. 
\end{theorem}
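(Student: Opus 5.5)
The plan is to argue by contradiction. Suppose $M$ is a matroid in which every line is modular and $M$ has an infinite circuit $C$. I will first reduce to the configuration of Lemma~\ref{modularaux}. Choose distinct elements $x,y \in C$ and a countably infinite set $A = \{a_i : i \in \bN\} \ss C - \{x,y\}$, and let $C' = C - (A \cup \{x,y\})$. Since $C'$ is a subset of the circuit $C$ not containing all of it, $C'$ is independent, so $A \cup \{x,y\}$ is a circuit of $M \con C'$. By Corollary~\ref{linemodularcontract}, every line of $M \con C'$ is modular. Replacing $M$ by $M \con C'$, I may therefore assume that $A \cup \{x,y\}$ is itself a circuit.

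Next I apply Lemma~\ref{modularaux}, which gives elements $b_0 = y, b_1, b_2, \dotsc$ outside $A \cup \{x\}$ with two properties. First, each $\{b_{i-1}, a_{i-1}, b_i\}$ is a triangle. Second, each $Z_k = \{x, b_k\} \cup A_k$ is a circuit, where $A_k = \{a_i : i \ge k\}$. From the triangles, an induction gives $b_k \in \cl_M(\{y, a_0, \dotsc, a_{k-1}\})$ and $a_{k-1} \in \cl_M(\{b_{k-1}, b_k\})$. It also follows that the $b_i$ are distinct, which is the non-repetition clause in the lemma's construction.

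The contradiction is to come from the independent set $I = A \cup \{x\}$, which is a proper subset of the original circuit. For each $k$, since $Z_k$ is a circuit containing $b_k$, we have $b_k \in \cl_M(A_k \cup \{x\})$. Write $I_k = A_k \cup \{x\}$; these form a nested family of subsets of $I$ whose intersection is $\{x\}$. Fix $j$. For every $k \ge j$, the element $b_j$ lies in $\cl_M(\{b_k, a_j, \dotsc, a_{k-1}\})$ by the triangle chain, and this closure is contained in $\cl_M(A_j \cup \{x\}) = \cl_M(I_j)$. The aim is to upgrade this to $b_j \in \cl_M(I_k)$ for every $k$. Lemma~\ref{closureinter} would then give $b_j \in \cl_M(\bigcap_k I_k) = \cl_M(\{x\})$, so $b_j$ is parallel to $x$ or a loop. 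For $j = 0$ this says $y$ is parallel to $x$ or a loop, contradicting that $\{x,y\} \cup A$ is a circuit of size greater than $2$.

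The main obstacle is exactly this upgrade: showing that each fixed $b_j$ lies in $\cl_M(I_k)$ for every $k$, not only for $k \le j$. The triangles express $b_j$ in terms of $b_k$ and the elements $a_j,\dotsc,a_{k-1}$, and those $a_i$ leave $I_k$ once $k > j$. My first attempt would be circuit elimination between $Z_k$ and the triangles, tracking which finite subsets of $I$ span $b_j$. If that fails, I would instead use the modular-line hypothesis once more, this time on the lines $\cl_M(\{x, b_k\})$, to produce an element in the common closure of all the $I_k$.
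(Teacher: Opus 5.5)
\textbf{There is a genuine gap, at exactly the step you flag.} Your reduction to the configuration of Lemma~\ref{modularaux} is correct and matches the paper. The problem is the ``upgrade'' $b_j \in \cl_M(I_k)$: it is not a technical lemma on the way to the contradiction; it \emph{is} the contradiction. The set $I_j = A_j \cup \{x\}$ is independent, and $Z_j = I_j \cup \{b_j\}$ is a circuit, so $Z_j$ is the unique circuit of $I_j \cup \{b_j\}$. Hence $b_j$ lies in the closure of no proper subset of $I_j$. So $b_j \in \cl_M(I_k)$ for even a single $k > j$ already contradicts $Z_j$ being a circuit. The appeal to Lemma~\ref{closureinter} and $\bigcap_k I_k = \{x\}$ therefore adds nothing: the plan has moved the whole theorem into the unproven step. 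Fixing the index $j$ in advance (e.g.\ $j=0$) is also the wrong framing. The natural argument does not give this for a prescribed $j$; the index has to be chosen by the argument itself.

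\textbf{The missing idea} is to use the infinite set $B = \{b_i : i \in \bN\}$ and let a circuit choose the index.
\begin{enumerate}
  \item Since $y = b_0 \in B$ and $a_i \in \cl_M(\{b_i, b_{i+1}\})$ for all $i$, we have $A \cup \{y\} \ss \cl_M(B)$. The circuit $A \cup \{x,y\}$ then gives $x \in \cl_M(B)$.
  \item Take a circuit $C$ with $x \in C \ss B \cup \{x\}$. As $x$ is not a loop, $C \neq \{x\}$, so let $b_s$ be the element of $C - \{x\}$ of least index.
  \item Every element of $C - \{x, b_s\}$ is some $b_i$ with $i > s$. Because $Z_i$ is a circuit, such $b_i$ lies in $\cl_M(\{x\} \cup A_i) \ss \cl_M(I_{s+1})$.
  \item Hence $b_s \in \cl_M(C - \{b_s\}) \ss \cl_M(I_{s+1})$. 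This is precisely your upgrade for $j = s$, $k = s+1$, and it yields a circuit contained in $Z_s - \{a_s\}$, a contradiction.
\end{enumerate}
The paper phrases the last step with the infinite circuit elimination axiom, eliminating the $b_i$ with $i > s$ from $C$ using the circuits $Z_i$. No further use of the modular-line hypothesis is needed beyond Lemma~\ref{modularaux}, so your fallback via the lines $\cl_M(\{x, b_k\})$ is unnecessary.
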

\begin{proof}
  Let $M'$ be a matroid in which every line is modular, and let $C'$ be an infinite circuit of $M'$. Let $x,y \in C'$, 
  and $(a_i : i \in \bN)$ be distinct elements of $C' - \{x,y\}$. Let $A = \{a_i : i \in \bN\}$, 
  so $A \cup \{x,y\}$ is a circuit of $M = M \con (C' - \{x,y\} \cup A)$. 
  
  By Corollary~\ref{linemodularcontract}, every line of $M$ is modular. 
  Let $(b_i : i \in \bN)$ be elements given by Lemma~\ref{modularaux} for the $a_i$ and $x,y$. Let $B = \{b_i : i \in \bN\}$ and let $A_k = \{a_i : i \ge k\}$ for each $k \in \bN$.
  For each $i \in \bN$, let $C_i = \{x, b_i\} \cup A_i$, noting that $C_i$ is a circuit. 
  Since $y = b_0 \in \cl_M(B)$, and $a_i \in \cl_M\{b_i, b_{i+1}\} \in \cl_M(B)$
  for each $i$, we have $x \in \cl_M(\{y\} \cup A) \ss \cl_M(B)$. 
  There is therefore a circuit $C$ with $x \in C \ss B \cup \{x\}$. 
  
  Since $x$ is not a loop, we have $C \ne \{x\}$; 
  let $s \in \bN$ and $I \ss \{s+1, s+2, \dotsc, \}$ be such that $C = \{x, b_s\} \cup \{b_i : i \in I\}$. 
  Since all $a_i$ and $b_j$ are distinct and not equal to $x$,
  we have $b_i \in C_j$ if and only if $i = j$. 

  By the infinite circuit elimination axiom (see \ref{bdkpw}, Section 1.4) 
  applied to the circuit $C$, 
  the element $b_s$, and the elements $(b_i : i \in I)$ 
  of the circuits $(C_i : i \in I)$, 
  we see that $M$ has a circuit $C'$ of $M$ for which
  \[C' \ss C \cup \br{\cup_{i \in I} C_i} - \{b_i : i \in I\} = \br{\cup_{i \in I} (C_i- \{b_i\})} \cup \{x, b_s\} .\] 
  
  Since $C_i - \{b_i\} \ss \{x\} \cup A_i \ss \{x\} \cup A_{s+1}$ for each $i$,
  this gives 
  \[C' \ss A_{s+1} \cup \{x, b_s\} = C_s - \{a_s\}.\] 
  Therefore $C'$ is a proper subset of the circuit $C_s$, a contradiction. 
\end{proof}

\begin{theorem}
  Let $M$ be a loopless matroid. Then $M$ is modular if and only if every line of $M$
  intersects every hyperplane of $M$. 
\end{theorem}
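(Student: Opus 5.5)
The forward direction is quick. Suppose $M$ is modular, and let $L$ be a line and $H$ a hyperplane. Since $r(M \con H) = 1$, the second part of Lemma~\ref{modularfinite} applied to the modular flat $H$ says that $H$ meets every flat of rank $2$. In particular $H$ meets $L$. (Looplessness is exactly the hypothesis of Lemma~\ref{modularfinite}.)

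For the converse, assume every line meets every hyperplane. The plan is to show first that every line is modular, then that $M$ is finitary, and finally that every flat is modular.

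\textbf{Lines are modular.} Let $L$ be a line. We check condition (\ref{micdj}) of Lemma~\ref{modulariffcompl}: every flat $G$ with $L \cap G = \es$ is skew to $L$. There are two ways to proceed.
\begin{enumerate}[(a)]
  \item Use the first part of Lemma~\ref{modularfinite} with $r_M(L) = 2$. Modularity of $L$ then reduces to showing that $L$ meets every flat $G$ with $r(M \con G) = 1$, that is, every hyperplane. This is exactly the hypothesis.
  \item Argue directly. If $G$ is disjoint from $L$ and not skew to it, pick $x \in L$ with $x \notin \cl_M(G)$. Extend $G \cup \{x\}$ to a hyperplane $H$ avoiding some $y \in L - \cl_M(G \cup \{x\})$. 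A short nullity count shows $\cl_M(G\cup\{x\}) \supseteq L$, so this case is handled as well.
\end{enumerate}
Option (a) is cleaner, so I will use it.

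\textbf{$M$ is finitary.} Since every line is modular, Theorem~\ref{modularlinefinitary} gives that $M$ is finitary.

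\textbf{Every flat is modular.} Let $F$ be a flat. By Corollary~\ref{modularoffinite} it suffices to show that $(F,G)$ is a modular pair for every finite-rank flat $G$. I would prove this by induction on $r_M(G)$, using the characterization in Lemma~\ref{contractonto}. Equivalently, by Corollary~\ref{modularcontract} and Corollary~\ref{linemodularcontract}, lines stay modular after contraction, and we can work in $M \con (F \cap G)$.

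The target is the following. If $F$ and $G$ are flats with $F \cap G \ss \cl_M(\es)$ and $G$ of finite rank, then $F$ and $G$ are skew. Suppose they are not. Choose a finite independent $J \ss G$ and an independent $I \ss F$ such that $I \cup J$ contains a circuit $C$ meeting both $I$ and $J$, with $|C|$ minimal over such choices.
\begin{itemize}
  \item If $|C \cap J| = 1$, then the element of $C \cap J$ lies in $\cl_M(C \cap I) \ss F$. This contradicts $F \cap G \ss \cl_M(\es)$.
  \item If $|C \cap I| = 1$, the symmetric argument gives the same contradiction.
  \item Otherwise both parts have size at least $2$. If $|C \cap J| = 2$, the set $C \cap J$ spans a line, which is modular. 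Lemma~\ref{modcircuit} then produces $e \in \cl_M(C \cap J) \cap \cl_M(C \cap I)$ such that $(C \cap I) \cup \{e\}$ is a circuit. Since $e \in G$, replacing $C \cap J$ by $\{e\}$ reduces to the first case.
  \item For $|C \cap J| > 2$, pick two elements $p,q$ of $C \cap J$. Contracting $(C \cap J) - \{p,q\}$ preserves modularity of lines by Corollary~\ref{linemodularcontract}, and Lemma~\ref{modcircuit} produces an element $e \in \cl_M(G)$. The new circuit has strictly smaller intersection with $J$, contradicting minimality.
\end{itemize}

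\textbf{Main obstacle.} The hard part is this last induction: making sure that the contractions keep $F$ and $G$ flats and that the element produced stays in $G$. If the induction becomes messy, the fallback is the classical route. Lemma~\ref{modularfiniteinter} and Lemma~\ref{modpairdirected} reduce the problem to finite-rank restrictions. There one shows that every finite-rank flat of $M$, restricted to $\cl_M(F_0 \cup G)$ for finite-rank $F_0 \ss F$, gives a finite-rank matroid in which lines meet hyperplanes, and the finite rank submodularity argument then yields $r(F_0)+r(G)=r(F_0\cup G)+r(F_0\cap G)$.
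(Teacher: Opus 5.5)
Your forward direction, and the first two steps of the converse, are exactly the paper's: lines are modular by the first part of Lemma~\ref{modularfinite}, hence $M$ is finitary by Theorem~\ref{modularlinefinitary}. The last step is where you take a different route.

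\textbf{The paper's last step.} The paper uses the hypothesis a second time, read the other way round. By the second part of Lemma~\ref{modularfinite} every hyperplane is modular. Every flat is an intersection of hyperplanes, so Lemma~\ref{modularinter} (Sachs' theorem that arbitrary intersections of modular flats in a finitary matroid are modular) finishes.

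\textbf{Your last step.} You never return to hyperplanes. Instead you prove directly that two flats meeting only in loops are skew, by shrinking a crossing circuit using Lemma~\ref{modcircuit} in a contraction. This is the same engine as the finitarity proof, and it works. Take $K=(C\cap J)-\{p,q\}$. Then:
\begin{itemize}
\item the element $e$ lies in $\cl_M(C\cap J)\ss G$;
\item $e$ is a nonloop, so it is not in $I$;
\item $K\cup\{e\}$ is independent;
\item lifting the circuit $(C\cap I)\cup\{e\}$ of $M\con K$ gives a circuit of $M$ inside $I\cup K\cup\{e\}$ that meets both sides and has at most $|C|-1$ elements.
\end{itemize}
You should spell this lifting out. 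You should also minimise one quantity consistently: you switch between $|C|$ and $|C\cap J|$, and either choice works.

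\textbf{Comparison.} Your route avoids the lattice-theoretic machinery of the section on intersections of modular flats. It also shows intrinsically that modularity of all lines forces modularity, without needing looplessness after the first step. The paper's route is shorter on the page because those intersection results are already in place.

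\textbf{A simplification.} Since $M$ is finitary, $C$ is finite, so your argument needs no rank bound on $G$. Applying it to all flats and invoking Lemma~\ref{modulariffcompl}(\ref{micdj}) removes the need for Corollary~\ref{modularoffinite}, the contraction by $F\cap G$, and your fallback paragraph.

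\textbf{A minor point.} Your option (b) for lines is garbled: once $L\ss\cl_M(G\cup\{x\})$, no such $y$ exists. You rightly use (a) instead.
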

\begin{proof}
  By Lemma~\ref{modularfinite},
  a line of $M$ is modular if and only if it intersects 
  every hyperplane, and a hyperplane is modular if and only if it intersects every line. 
  The forwards direction is immediate. 

  Conversely, suppose that every line of $M$ intersects every hyperplane,
  so every line and every hyperplane is modular. 
  By Theorem~\ref{modularlinefinitary}, we know that $M$ is finitary.
  Since every flat $F$ of $M$ is the intersection of the hyperplanes containing $F$, 
  Lemma~\ref{modularinter} implies that every flat of $M$ is modular, so $M$ is modular. 
\end{proof}

\section*{Acknowledgements}
We thank Nathan Bowler for being a frequent and patient source of information on infinite matroids, and 
for his comments on a draft version of this paper. We also thank 
Johannes Carmesin, Jim Geelen, Zach Walsh and Geoff Whittle for helpful conversations. 

\section*{References}

\newcounter{refs}

\begin{list}{[\arabic{refs}]}
{\usecounter{refs}\setlength{\leftmargin}{10mm}\setlength{\itemsep}{0mm}}

\item\label{bs09}
J. E. Bonin, W. R. Schmitt,
Splicing matroids,
Eur. J. Combin. 32 (2011), 722--744. 

\item\label{bc18}
N. Bowler, J. Carmesin,
An excluded minors method for infinite matroids,
J. Combin. Theory Ser. B 128 (2018), 104--113. 

\item\label{bg}
N. Bowler and S. Geschke, 
Self-dual uniform matroids on infinite sets, 
Proc. Amer. Math. Soc. 144 (2016), 459 -- 471

\item\label{bd11}
H. Bruhn, R. Diestel, 
Infinite matroids in graphs, 
Discrete Math. 311 (2011), 1461--1471. 

\item\label{bdkpw}
H. Bruhn, R. Diestel, M. Kriesell, R. Pendavingh, P. Wollan, 
Axioms for infinite matroids,
Adv. Math. 239 (2013), 18--46. 

\item\label{bw12}
H. Bruhn, P. Wollan, 
Finite connectivity in infinite matroids, 
Eur. J. Combin. 33 (2012), 1900--1912.  

\item\label{crapo65}
H. H. Crapo,
Single-element extensions of matroids, 
J. Res. Nat. Bur. Standards Sect. B 69B (1965), 55--65. 

\item\label{edmonds}
J. Edmonds, 
Matroid intersection,
Ann. Discr. Math 4 (1979), 39--49.

\item\label{hk96}
W. Hochst\"attler, S. Kromberg,
Adjoints and duals of matroids linearly representable over a skewfield,
Mathematica Scandanavica 78 (1996), 5--12. 

\item\label{ggw06}
J. Geelen, B. Gerards, G.Whittle,
Matroid $T$-connectivity, 
SIAM J. Discrete Math. 20 (2006), 588--596.

\item\label{gj25}
J. Gollin, A. Jo\'o,
Wild generalised truncation of infinite matroids,
arXiv:2504.05064 [math.CO]

\item\label{gk13}
J. Geelen, R. Kapadia, 
Representation of matroids with a modular plane, 
arXiv:1304.6448 [math.CO]

\item\label{gn}
J. Geelen, P. Nelson,
The structure of matroids with a spanning clique or projective geometry,
J. Combin. Theory Ser. B 127 (2017), 65-81.

\item\label{higgs69}
D.A. Higgs, 
Equicardinality of bases in $B$-matroids, 
Can. Math. Bull 12 (1969), 861--862. 

\item\label{higgs}
D.A. Higgs,
Matroids and duality,
Colloq. Math. 20 (1969), 215 -- 220. 

\item\label{k14}
R. Kapadia,
Matroids with a modular $4$-point line, 
SIAM J. Discrete Math. 28 (2014), 862--877. 

\item\label{lean4}
L.M. de Moura, S. Ullrich, 
The Lean 4 theorem prover and programming language. 
In: CADE (2021).

\item\label{o19}
J. Oxley, 
A matroid extension result,
SIAM J. Discrete Math. 33 (2019), 138--152. 

\item\label{mathlibpaper}
The mathlib Community,
The Lean mathematical library. In: Proceedings of the
9th ACM SIGPLAN International Conference on Certified Programs and Proofs,
CPP 2020, pp. 367–381. Association for Computing Machinery, New York (2020)

\item\label{mathlib}
The mathlib Community,
\smalltt{mathlib4},
https://github.com/leanprover-community/mathlib4,
(GitHub repository)

\item\label{matroidrepo}
P. Nelson,
\smalltt{Matroid},
https://github.com/apnelson1/Matroid,
(GitHub repository)

\item\label{oxley}
J. Oxley, 
Matroid Theory (second edition), 
Oxford University Press, 2011. 

\item\label{sachs}
D. Sachs,
Partition and modulated lattices,
Pacific J. Math. 11 (1961), 325--345.

\item\label{whitney}
H. Whitney, 
On the abstract properties of linear dependence,
Amer. J. Math. 57 (1935), 509--533. 

\end{list}

\newpage

\input{appendices.tex}

\end{document}

%% file: matroidpreamble.tex
\usepackage{amsmath,amssymb,enumerate,mathtools,xcolor}
\usepackage[pdftex,pagebackref=false, hidelinks]{hyperref} 
\theoremstyle{plain}
\newtheorem{theorem}{Theorem}[section]
\newtheorem{claim}{}[theorem]
\newtheorem{lemma}[theorem]{Lemma}
\newtheorem{problem}[theorem]{Problem}

\newtheorem{corollary}[theorem]{Corollary}

\newcounter{defn}
\theoremstyle{definition}
\newtheorem{definition}[defn]{Definition}

\newcommand{\bK}{\mathbb K}
\newcommand{\bZ}{\mathbb Z}
\newcommand{\bN}{\mathbb N}

\newcommand{\cB}{\mathcal{B}}
\newcommand{\cC}{\mathcal{C}}

\newcommand{\cF}{\mathcal{F}}

\newcommand{\cG}{\mathcal{G}}
\newcommand{\cL}{\mathcal{L}}

\newcommand{\cI}{\mathcal{I}}
\newcommand{\cH}{\mathcal{H}}
\newcommand{\cM}{\mathcal{M}}

\newcommand{\cU}{\mathcal{U}}

\newcommand{\cX}{\mathcal{X}}
\newcommand{\cY}{\mathcal{Y}}
\newcommand{\cZ}{\mathcal{Z}}

\newcommand{\dcon}{/ \!\! /}

\newcommand{\enn}{\mathbb{N}^{\infty}}

\DeclareMathOperator{\cl}{cl}

\DeclareMathOperator{\PG}{PG}

\DeclareMathOperator{\GF}{GF}

\newcommand{\del}{\!\setminus\!}
\newcommand{\con}{/}
\newcommand{\wh}{\widehat}

\newcommand{\abs}[1]{\left| #1 \right|}

\newcommand{\set}[1]{\left\{#1\right\}}
\newcommand{\br}[1]{\left(#1\right)}
\newcommand{\ab}[1]{\left\langle#1\right\rangle}

\newcommand{\es}{\varnothing}
\renewcommand{\le}{\leqslant}
\renewcommand{\ge}{\geqslant}
\renewcommand{\ss}{\subseteq}
\newcommand{\sps}{\supseteq}

\newcommand{\setof}[2]{\set{{#1} : {#2}}}

\newenvironment{subproof}[1][\proofname]{%
  \begin{proof}[Subproof:]%
}{%
  \end{proof}%
}

%% file: leanpreamble.tex
\usepackage[T1]{fontenc}
\usepackage[utf8]{inputenc}
\usepackage{lmodern}
\usepackage{amssymb}
\usepackage[frozencache]{minted}
\usepackage{newunicodechar}
\setminted{
  encoding=utf8,
  texcomments=true,
}
\newcommand{\smalltt}[1]{{\small\texttt{#1}}}
\setminted{fontsize=\footnotesize}
\newminted[leancode]{lean}{fontsize=\footnotesize}
\newunicodechar{✝}{\freeserif{✝}}
\newunicodechar{𝓞}{\ensuremath{\mathcal{O}}}
\newunicodechar{∀}{\ensuremath{\forall}}
\newunicodechar{∈}{\ensuremath{\in}}
\newunicodechar{∃}{\ensuremath{\exists}}
\newunicodechar{ℕ}{\ensuremath{\mathbb{N}}}
\newunicodechar{↔}{\ensuremath{\leftrightarrow}}
\newunicodechar{𝓕}{\ensuremath{\mathcal{F}}}
\newunicodechar{𝓒}{\ensuremath{\mathcal{C}}}
\newunicodechar{＼}{\ensuremath{\backslash}}
\newunicodechar{／}{\ensuremath{/}}
\newunicodechar{⊓}{\ensuremath{\sqcap}}
\newunicodechar{✶}{*}
\newunicodechar{∧}{\ensuremath{\wedge}}
\newunicodechar{∨}{\ensuremath{\vee}}
\newunicodechar{ᶜ}{\ensuremath{^c}}
\newunicodechar{₁}{\ensuremath{_\texttt{1}}}
\newunicodechar{₂}{\ensuremath{_\texttt{2}}}
\newunicodechar{α}{\ensuremath{\alpha}}
\newunicodechar{↦}{\ensuremath{\mapsto}}
\newunicodechar{∉}{\ensuremath{\notin}}
\newunicodechar{⊆}{\ensuremath{\subseteq}}
\newunicodechar{⊂}{\ensuremath{\subset}}
\newunicodechar{⊇}{\ensuremath{\supseteq}}
\newunicodechar{⊃}{\ensuremath{\supset}}
\newunicodechar{∩}{\ensuremath{\cap}}
\newunicodechar{∪}{\ensuremath{\cup}}
\newunicodechar{≤}{\ensuremath{\leq}}
\newunicodechar{≥}{\ensuremath{\geq}}
\newunicodechar{≠}{\ensuremath{\neq}}
\newunicodechar{ℤ}{\ensuremath{\mathbb{Z}}}
\newunicodechar{ℚ}{\ensuremath{\mathbb{Q}}}
\newunicodechar{ℝ}{\ensuremath{\mathbb{R}}}
\newunicodechar{ℂ}{\ensuremath{\mathbb{C}}}
\newunicodechar{λ}{\ensuremath{\lambda}}
\newunicodechar{β}{\ensuremath{\beta}}
\newunicodechar{γ}{\ensuremath{\gamma}}
\newunicodechar{ι}{\ensuremath{\gamma}}
\newunicodechar{⋃}{\ensuremath{\bigcup}}
\newunicodechar{⋂}{\ensuremath{\bigcap}}
\newunicodechar{₀}{\ensuremath{_\texttt{0}}}
\newunicodechar{∅}{\ensuremath{\varnothing}}
\newunicodechar{↾}{\ensuremath{|}}
\newunicodechar{⊤}{\ensuremath{\top}}
\newunicodechar{⊣}{\ensuremath{\dashv}}
\newunicodechar{≃}{\ensuremath{\simeq}}
\newunicodechar{≈}{\ensuremath{\approx}}
\newunicodechar{≡}{\ensuremath{\equiv}}
\newtheorem*{fdef}{Formal Definition}
\newtheorem*{fthm}{Formal Theorem}

%% file: appendices.tex
\section*{Appendix A : Formalization}

\subsection*{Formalized Proofs}
A proof assistant (such as \smalltt{lean4}) is a piece of software 
that can check an appropriately written proof for correctness. 
Preparing a proof in a form that can be checked is nontrivial, 
even when the proof is fully available in a human-readable form,
but the benefits are clear. To be convinced of the truth of 
a mathematical statement whose proof has been formalized, 
one needs only to verify that the \emph{statement}
of the theorem is formalized correctly, and to trust that the proof 
assistant is genuinely able to check proofs for correctness. 
The actual proof need not be read, either in code
form or in normal mathematical prose. 
The process of preparing and checking proofs is quickly gaining
interest, is becoming more and more ergonomic,
and can be performed with only modestly powerful consumer hardware. 

Although there are subtleties in the task of correctly formalizing
statements, and there are questions in principle about the trust
placed in a proof assistant itself, one only needs to trust \emph{one}
proof assistant; the authors believe that one can be orders of magnitude 
more confident in the correctness of a formalized proof than one
that has simply been peer-reviewed, especially if the proof is long 
and technical. 

Aside from issues of correctness, formalized statements in principle
allow for semantically searchable, centralized databases of mathematics, 
which is a boon to research for many reasons. The community-maintained 
\smalltt{mathlib4} [\ref{mathlib}]
library of formalized mathematics, which contains millions of lines of 
code and hundreds of thousands of formalized lemmas and theorems
in fields all across the discipline, is a prominent example. 

It is our opinion that, for these reasons and more,
formalization will become an essential component of the health of 
an area of mathematics going forward. 

\subsection*{Matroids}

This paper arose from a systematic effort to place matroid theory 
on a formally verifiable foundation using \smalltt{lean4}; 
this has resulted in several tens of thousands of lines of code.

At the time of writing, approximately 20\% of
this code appears in the \smalltt{Data/Matroid} subfolder of
\smalltt{mathlib4}, 
and the remainder in the second author's \smalltt{matroid} repository [\ref{matroidrepo}]. 
The total amount of formalized material in [\ref{matroidrepo}] is much larger
than what is covered in this article; for instance, it 
contains formalized versions of the matroid intersection theorem 
and Tutte's excluded minor theorem for finitary infinite matroids. 

It was decided early in the process to allow our formalized matroids to be infinite, 
and to maintain the infinite setting whenever possible. 
The original motivation for this was simply future-proofing and increased generality.
We quickly realized, however, that this called for mathematical novelty
that went beyond the matter of just producing formalized versions of existing results. 
There is no standard text on infinite matroids, and texts on finite matroids
tend to rely on finite reasoning methods.
Formalizing does not allow one to skip `obvious' statements, 
so to make infinite matroids work from the ground up,
our process thus grew to include developing new mathematics. 

Modularity, skewness and local connectivity, which are all useful concepts
for finite matroids, were parts of matroid theory where new ideas were especially required;
these ideas, and the results that came out of them,
are what were described in this article. 

We now aim to show the benefits of formalizing by  
providing the means for an interested reader to be convinced 
of the correctness of all our main results, even without checking the proofs. 

We will not go into detail on the actual process of formalizing a large 
body of mathematics. This is a difficult task; as well as mathematical
knowledge, it requires its own brand of craft and creativity, 
as well as `design' skills in that are rather analogous software engineering.
Formalizing matroid theory seems to have given rise to some insights 
and mathematical design patterns that are useful in other areas,
and we plan to discuss this all in future work. 

\subsection*{Using a proof assistant}
Since our code is written in \texttt{lean4}, we briefly describe the formalization process
using this language; other proof assistants are broadly similar. 
Our explanation is intended for a reader simply wanting to check that existing
results have correct formal proofs. For anything more, 
a website containing introductory information for those with a mathematics 
background can be found at {\small\url{https://leanprover-community.github.io/learn.html}}. This page also contains instructions for how to download the relevant
software, and `clone' a repository like [\ref{matroidrepo}], which
is the way to check our results on one's own device. 

Proofs in \smalltt{lean4} are intended to be read and written only in the context
of an integrated desktop environment (IDE) such as \textit{VS Code}. 
While the user creates or reads a proof, 
the software will provide an `infoview' that displays 
the mathematical assumptions and variables that are in play,
in human-readable form. 
The code that comprises the proof itself is a sequence of `tactics'
that can appear quite mathematically opaque to the uninitiated.

Mathematical \emph{statements} in \texttt{lean4}, on the other hand, are usually quite 
readable. They can be written to use mostly standard mathematical notation, 
with a familiar mix of logical connectives, quantifiers, subscripts 
and varying typefaces, all displaced in unicode. In all, they 
are typically much easier to parse than the raw \LaTeX \ code in which 
most of us are quite conversant. For instance, the following is the statement 
of the hard direction of Edmonds' matroid intersection theorem [\ref{edmonds}]:
for any matroids $M_1, M_2$ with $M_1$ finite, there is a common independent 
set $I$ of both the matroids for which $|I| = r_{M_1}(X) + r_{M_2}(E(M_2)-X)$. 
\footnote{The statement omits the standard assumptions that 
$E(M_1) = E(M_2)$ and that $M_2$ is finite; as it turns out, they aren't needed.}

\begin{leancode}
  theorem exists_common_ind (M₁ M₂ : Matroid α) [M₁.Finite] :
    ∃ I X, M₁.Indep I ∧ M₂.Indep I ∧ I.ncard = M₁.rk X + M₂.rk (M₂.E \ X)
\end{leancode}

And \emph{statements} being readable is what is important. A lean file containing errors
in the proofs will not be accepted by a proof assistant 
-- warnings and compilation errors will display in a desktop environment, 
and building the file via a command line will fail. So to be convinced of
correctness, it is enough to carefully read statements like the above appearing in 
a file, and to check that the file is accepted by the proof assistant. 

We collect the formalized definitions and statements relevant to this paper in two files in [\ref{matroidrepo}]: 
\smalltt{InfiniteModular/Definitions} and \smalltt{InfiniteModular/Theorems}. 
They are separated deliberately for ease of inspection and are both relatively short files, 
intended to be readable to a novice. 
However, they are by no means self-contained, 
and implicitly depend on the wider contents of the repository via references.
To run the software required to check these files, 
one must \href{https://docs.lean-lang.org/lean4/doc/quickstart.html}{download the lean4 software} along with the VS Code 
IDE, 
\href{https://docs.github.com/en/repositories/creating-and-managing-repositories/cloning-a-repository}{clone} the repository in [\ref{matroidrepo}], and open the relevant files in VS Code; the Lean infoview
will display when their correctness has been verified. 

\section*{Appendix B : Formal Definitions}
Our formalized theorem statements are contingent on many definitions, 
and we start by collecting them in one place. 
The \smalltt{InfiniteModular/Definitions} 
file~\footnote{\scriptsize \url{https://github.com/apnelson1/Matroid/blob/main/InfiniteModular/Definitions.lean}}
in [\ref{matroidrepo}] is specifically tailored to a reader of this paper, 
containing precisely the relevant definitions and no more, 
phrased in elementary terms with appropriate annotations. 
Since they are automatically checked for correctness when viewed in an IDE,
this is likely the best way to view these definitions,
but we also present them here with brief annotations. 

The normal way to define an object or predicate in Lean 4 is with the \smalltt{def}
keyword, and all the concepts we discuss are ultimately defined this way somewhere
in [\ref{matroidrepo}]. 
However, it is not technically convenient to move these definitions 
all to one file, or to duplicate them all using \smalltt{def}, so we go with 
a different option to convincingly argue that they are formulated correctly. 
The \smalltt{example} keyword allows one to formally 
state and prove an unnamed assertion, as below : 
\begin{leancode}
  example (a : ℕ) (a_less_than_two : a < 2) : 
    a + 2 < 4 := by 
      linarith
\end{leancode}
The first line contains the relevant variables and assumptions 
(in this case, a natural number $a$, and an assumption that $a < 2$,
which is given an arbitrary name), 
followed by a colon. The second line contains the statement being asserted 
(i.e. $a + 2 < 4$), followed by a \smalltt{:=} symbol, 
and then the line that follows is the proof.
(Here, the proof is a single invocation of a `linear arithmetic' tactic,
but this is not important for us.) 

This setup is enough to produce a formalized statement that a definition is
capturing what is intended. 
For instance, if a matroid $M$ were defined in terms of its bases, 
the following statement shows that the \smalltt{M.Indep} predicate correctly
corresponds to the notion of independence.  
\begin{leancode}
  example (M : Matroid α) (I : Set α) : 
    M.Indep I ↔ ∃ B : I ⊆ B ∧ M.IsBase B := 
    -- Proof here
\end{leancode}
Note that this is just an \textit{assertion} about the \smalltt{Indep} predicate, 
rather than its actual definition. 
Despite this, one can be sure,
solely by knowing that a proof of the above statement 
is accepted by Lean,
that independence is defined correctly,
without observing the actual definition of \smalltt{Indep}. 
(In this particular instance, the proof likely \emph{is}
simply the statement that this holds by the definition of independence, 
but even if independence were originally defined another way, 
the above reasoning would still be valid.)

The above example also shows some of the standard idioms for
discussing matroid theory in Lean. 
The variable \smalltt{(M : Matroid α)} introduces $M$
as a matroid whose elements belong to the `type' $\alpha$, 
which has itself been previously introduced as a global variable, 
and can be thought of as the set of all possible matroid elements. 
The variable \smalltt{(I : Set α)} introduces $I$ as a set of 
elements of the type $\alpha$. (The set $I$ could possibly contain 
elements outside the ground set of $M$, in which case 
the equivalence being asserted will still hold, with both sides being false). 

The statements \smalltt{M.Indep I} and \smalltt{M.IsBase B} 
state that $I$ and $B$ respectively satisfy the predicates of being 
independent in $M$, and being a base (basis) for $M$.
Predicates are often more convenient to work with set membership, 
so we prefer `has a property' statements like \smalltt{M.Indep I} over 
formalized analogues of set membership statements like $I \in \cI_M$. 
Finally, we note that the variable \smalltt{B} is not introduced 
with a type, though, like \smalltt{I}, it should have type \smalltt{Set α}. 
This is because Lean can infer the intended type of \smalltt{B}
via the subsequent statement \smalltt{I ⊆ B}.

This mechanism is enough to present formalized versions of (nearly) all 
new definitions in this paper. Some (such as the $\sqcap_M$ parameter 
for families of non-disjoint sets) are omitted, as we do need them in 
our main theorem statements. 

We also do not explicitly provide the formal
definitions for more basic matroidal notions already appearing in \smalltt{mathlib4}
such as \smalltt{Matroid}, \smalltt{Indep}, and \smalltt{Finitary}; 
these have already undergone the community review process for 
contributions, and we will thus assume that they are correct.  

In all the formalized statements below, we omit the proofs completely. 
The proofs all appear in the \smalltt{Definitions} file.
As might be expected, these proofs are mostly very short, essentially amounting to 
a formalized version of the statement that `this is the same definition from elsewhere'.

\begin{fdef}[Mutual Basis]
  A mutual basis for a set family $\ab{X_i : i \in \iota}$ 
  is an independent set $B$ intersecting each $X_i$ in a basis for $X_i$. 
  Here, the term \smalltt{X} is formally a function from the type \smalltt{ι}
  to the type of sets of \smalltt{α}, 
  and the function application $X_i$ is written as \smalltt{X i}.
  \begin{leancode}
example (M : Matroid α) (B : Set α) (X : ι → Set α) :
  M.IsMutualBasis B X ↔ M.Indep B ∧ ∀ i, M.IsBasis (X i ∩ B) (X i)
  \end{leancode}
\end{fdef}

\begin{fdef}[Modular Family]
  A modular family is a set family in $M$ that has a mutual basis. 
  \begin{leancode}
example (M : Matroid α) (B : Set α) (X : ι → Set α) :
  M.IsMutualBasis B X ↔ M.Indep B ∧ ∀ i, M.IsBasis (X i ∩ B) (X i)
  \end{leancode}
\end{fdef}

\begin{fdef}[Modular Pair]
  A modular pair is a pair with a mutual basis. 
  \begin{leancode}
example (M : Matroid α) (X Y : Set α) :
  M.IsModularPair X Y ↔ 
  ∃ B, M.Indep B ∧ M.IsBasis (X ∩ B) X ∧ M.IsBasis (Y ∩ B) Y
  \end{leancode}
\end{fdef}

\begin{fdef}[Modular Flat]
  A flat is modular if it forms a modular pair with every flat. 
  \begin{leancode}
example (M : Matroid α) (F : Set α) :
  M.IsModularFlat F ↔
  M.IsFlat F ∧ ∀ G, M.IsFlat G → M.IsModularPair F G
  \end{leancode}
\end{fdef}

\begin{fdef}[Modular Matroid]
  A matroid is modular if all its flats are modular. 
  \begin{leancode}
example (M : Matroid α) :
  M.Modular ↔ ∀ F, M.IsFlat F → M.IsModularFlat F
  \end{leancode}
\end{fdef}

\begin{fdef}[Modular Cut]
  This definition is different from the previous ones, 
  since \smalltt{ModularCut} is defined as a `packaged' object
  in its own right
  (in this case, a set of sets of $\alpha$, 
  together with a guarantee that they satisfy the properties
  of a modular cut) 
  rather than being a property of a set of sets. Because of this, 
  we need two separate statements to informally certify that the definition is correct. 
  
  Our first formalized
  statement says that given a collection $\cF$ of sets in $M$ satisfying
  the definition of a modular cut gives rise to a \smalltt{ModularCut} called $\cF'$.
  The object $\cF'$ can be thought of as $\cF$, together with a proof that $\cF$ is a modular cut. 
  \begin{leancode}
example (M : Matroid α) (𝓕 : Set (Set α))
  (forall_isFlat : ∀ F ∈ 𝓕, M.IsFlat F)
  (forall_superset : 
    ∀ F F', F ∈ 𝓕 → M.IsFlat F' → F ⊆ F'→ F' ∈ 𝓕)
  (forall_inter : 
    ∀ F F', F ∈ 𝓕 → F' ∈ 𝓕 → M.IsModularPair F F' → F ∩ F' ∈ 𝓕)
  (forall_inter_chain : 
    ∀ 𝓒 ⊆ 𝓕, 𝓒.Infinite → M.IsModularFamily (fun C : 𝓒 ↦ C.1)
      → IsChain (· ⊆ ·) 𝓒 → ⋂₀ 𝓒 ∈ 𝓕) :
  ∃ 𝓕' : M.ModularCut, ∀ F, F ∈ 𝓕 ↔ F ∈ 𝓕' 
  \end{leancode}
  We could be `cheating' here, since we could have just defined 
  \smalltt{ModularCut} to be an arbitrary unrestricted collection of sets.
  The second statement 
  shows that we are not; it states that a \smalltt{ModularCut} 
  always satisfies the rules it is meant to (in fact, it shows
  the stronger statement of closure under modular families.)
  \begin{leancode}
example (M : Matroid α) (𝓕 : M.ModularCut) :
  (∀ F ∈ 𝓕, M.IsFlat F)
  ∧ (∀ F F', F ∈ 𝓕 → M.IsFlat F' → F ⊆ F' → F' ∈ 𝓕)
  ∧ (∀ (F : ι → Set α), (∀ i, F i ∈ 𝓕) → M.IsModularFamily F → ⋂ i, F i ∈ 𝓕)
  \end{leancode}
\end{fdef}

\begin{fdef}[Extension by a Modular Cut]
  The statement shows that the extension of $M$ by $e$ 
  using the modular cut $\cF$, written
  \smalltt{M.extendBy e 𝓕}, has \smalltt{M} as its deletion, 
  and that its closure function relates correctly to $\cF$. 
  \begin{leancode}
example (M : Matroid α) (e : α) (e_not_mem : e ∉ M.E) (𝓕 : M.ModularCut) :
  (M.extendBy e 𝓕) ＼ {e} = M ∧ 
  ∀ F, M.IsFlat F → (e ∈ (M.extendBy e 𝓕).closure F ↔ F ∈ 𝓕)
  \end{leancode}
  Despite appearing to be only a definition, 
  this statement contains nontrivial mathematical content, 
  since it is treating \smalltt{M.extendBy e 𝓕} as a matroid, 
  which is only valid if this object has actually been proven to be a matroid. 
  This is one direction of the content of Theorem~\ref{finmodcutextension}.
\end{fdef}

\begin{fdef}[Projection by a Modular Cut]
  The projection of $M$ by a modular cut $\cF$ is the contraction of $e$
  from the extension of $M$. 
  \begin{leancode}
example (M : Matroid α) (e : α) (e_not_mem : e ∉ M.E) (𝓕 : M.ModularCut) :
  (M.extendBy e 𝓕) ／ {e} = M.projectBy 𝓕
  \end{leancode}
\end{fdef}

\begin{fdef}[Quotient] 
  If $E(M) = E(N)$, then $N$ is a quotient of $M$ if 
  $\cl_M(X) \ss \cl_N(X)$ for all $X \ss E(M)$.
  \begin{leancode}
example (M N : Matroid α) (ground_eq : M.E = N.E) :
  N ≤q M ↔ ∀ X ⊆ M.E, M.closure X ⊆ N.closure X :=f
  \end{leancode}
\end{fdef}

\begin{fdef}[Skew Family]
  An indexed family of sets is skew if it is modular,
  and the intersection of any pair is spanned by the empty set. 
  \begin{leancode}
example (M : Matroid α) (X : ι → Set α) :
  M.IsSkewFamily X ↔ 
    M.IsModularFamily X ∧ ∀ i j, i ≠ j → X i ∩ X j ⊆ M.closure ∅
  \end{leancode}
\end{fdef}

\begin{fdef}[Nullity]
  The nullity of a set is equal to $r^(M| X)$, 
  where the rank is viewed as a term in $\enn$. 
  \begin{leancode}
example (M : Matroid α) (X : Set α) :
  M.nullity X = (M ↾ X)✶.eRank
  \end{leancode}
\end{fdef}

\begin{fdef}[Connectivity for Set Families]
  The connectivity of a disjoint set family is the nullity of 
  a union of its bases. Note that this is an assertion about \emph{any} collection of 
  bases, so independence on the choice
  of bases is implicit in the truth of this statement.
  \begin{leancode}
example (M : Matroid α) (X I : ι → Set α)
  (disjoint : Pairwise (Disjoint on X))
  (basis : ∀ i, M.IsBasis (I i) (X i)) :
  M.multiConn X = M.nullity (⋃ i, I i)
  \end{leancode}
\end{fdef}

\begin{fdef}[Local Connectivity for Pairs]
  The local connectivity of a pair is the nullity of a union of bases
  plus the cardinality of an intersection of bases.
  \begin{leancode}
example (M : Matroid α) (I J X Y : Set α) (I_basis_X : M.IsBasis I X) 
  (J_basis_Y : M.IsBasis J Y) :
  M.eLocalConn X Y = (I ∩ J).encard + M.nullity (I ∪ J)
  \end{leancode}
\end{fdef}

\begin{fdef}[Projection by a set]
  The projection of a matroid $M$ by a set $X$ has the same ground set
  as $M$, and the same independent sets as $M \con X$. 
  \begin{leancode}
example (M : Matroid α) (X : Set α) :
  (M.project X).E = M.E ∧ ∀ I, (M.project X).Indep I ↔ (M ／ X).Indep I
  \end{leancode}
\end{fdef}

\begin{fdef}[Discrepancy]
  If $N$ is a finitary quotient of $M$, then 
  the discrepancy of $X$ is the cardinality of the set difference of a 
  basis pair for $X$. The independence of the choice of basis pair is implicit 
  in the truth of the statement. 
  \begin{leancode}
example (M N : Matroid α) (finitary : N.Finitary) (quotient : N ≤q M) 
  (X I J : Set α) (I_basis : N.IsBasis I X) (J_basis : M.IsBasis J X) 
  (I_subset_J : I ⊆ J) :
  quotient.discrepancy X = (J \ I).encard
  \end{leancode}
\end{fdef}

\section*{Appendix C : Formalized Theorems}

Our main theorems all appear and can be checked for correctness 
in the \smalltt{InfiniteModular/Theorems}
file
\footnote{\scriptsize \url{https://github.com/apnelson1/Matroid/blob/main/InfiniteModular/Theorems.lean}} 
of [\ref{matroidrepo}], 
which is again specifically designed to be inspected by a reader of this paper, 
with explicitness and simplicity in mind. The statements below omit 
the proofs entirely, and the file is designed to make the proofs appear very short. 
(In fact, they are simply short references to longer proofs appearing 
elsewhere in the repository.)

We do not include all results with short proofs and long statements
(such as the equivalences in Theorem~\ref{quotprojectequiv})
or those are stated in this paper only for expository reasons,
though nearly all our results appear in [\ref{matroidrepo}] in some form.

Like with our definitions, we also use \smalltt{example} to state theorems. 

\begin{fthm}[Theorem~\ref{finmodcutextension}]
  An extension is uniquely determined by its modular cut. 
  The assumptions state that $P$ is some extension of $M$ by $e$, 
  for which the flats of $P \del \{e\}$ spanning $e$ are those 
  \begin{leancode}
example (M P : Matroid α) (e : α) (e_mem : e ∈ P.E) (𝓕 : M.ModularCut)
  (P_del_eq : P ＼ {e} = M) 
  (P_flat_iff : ∀ F, M.IsFlat F → (e ∈ P.closure F ↔ F ∈ 𝓕)) :
  P = M.extendBy e 𝓕
  \end{leancode}
  As mentioned earlier, the remainder of this theorem's content 
  is implicitly encoded in the fact that \smalltt{M.extendBy e 𝓕}
  is a matroid. 
\end{fthm}

\begin{fthm}[Theorem~\ref{quotientisprojectintro}]
  If $N$ or $M^*$ is finitary, and $N$ is a quotient of $M$ with finite discrepancy,
  then $N$ is a projection of $M$. 
  \begin{leancode}
example (N M : Matroid α) (finitary : Finitary N ∨ Finitary M✶) 
  (quot : N ≤q M) (hfin : quot.discrepancy M.E < ⊤) 
  (h_inf : M.Eᶜ.Infinite) :
  ∃ (P : Matroid α) (X : Set α), X ⊆ P.E ∧ P ／ X = N ∧ P ＼ X = M
  \end{leancode}
  The term $\top$ denotes the infinity element in $\enn$, and
  the assumption \smalltt{h\_inf} simply states that there are infinitely many 
  non-elements of the ground set; we need these elements to be available 
  for the set $X$ to exist. This is required by Lean's type theory, 
  but is of course mathematically benign. 
\end{fthm}

\begin{fthm}[Theorem~\ref{bigprojofseq}]
  If $M_0, \dotsc, M_n$ are matroids where each is a projection of previous one, 
  then $M_n$ is an $n$-element projection of $M_0$. 
  \begin{leancode}
example {n : ℕ} (M : Fin (n+1) → Matroid α) 
  (𝓕 : (i : Fin n) → (M i.castSucc).ModularCut)
  (projection : ∀ i, M i.succ = (M i.castSucc).projectBy (𝓕 i))
  {X : Finset α} (hX : X.card = n) (hdj : Disjoint (M 0).E X) :
  ∃ (P : Matroid α), 
    (X : Set α) ⊆ P.E ∧ P ＼ X = M 0 ∧ P ／ X = M (Fin.last n) 
  \end{leancode}
  Statements involving finite indexing can be a little technical, 
  due to the rigidity of Lean's type theory. 
  Here, \smalltt{Fin n} denotes the set $\{0, \dotsc, n-1\}$, 
  and \smalltt{castSucc} and \smalltt{succ} are respectively the 
  `inclusion' and `successor'
  functions from \smalltt{Fin n} to \smalltt{Fin (n+1)},
  so \smalltt{castSucc 0 = 0} and \smalltt{succ 0 = 1}. 
  Thus, $\mathcal{\cF}$ is a sequence $\cF_0, \dotsc, \cF_{n-1}$ 
  where each $\cF_i$ is a modular cut of $M_i$, 
  and \smalltt{projection} is the assertion that each $M_{i+1}$ is the projection
  of $M_i$ by $\cF_i$. The last few assumptions just state that there 
  is a finite set of the appropriate size outside the ground set 
  with which the projection can be performed. 
\end{fthm}

\begin{fthm}[Theorem~\ref{splice}]
If $C$ and $C$ are disjoint sets, not both infinite,
and $M$ and $N$ are matroids with $M ／ C = N ＼ D$,
then there is a matroid $P$ with $P ＼ D = M$ and $P ／ C = N$.
\begin{leancode}
example (M N : Matroid α) (C D : Set α) (hfin : C.Finite ∨ D.Finite) 
  (disjoint : Disjoint C D)
  (con_eq_del : M ／ C = N ＼ D) : ∃ P, P ＼ D = M ∧ P ／ C = N :=
  exists_splice_of_contract_eq_delete' hfin disjoint con_eq_del
\end{leancode}
\end{fthm}

\begin{fthm}[Theorem~\ref{skewequivintro}]
  The equivalence of the first and last conditions in the theorem: a disjoint 
  family is skew if and only if it is crossed by no circuit. 
  \begin{leancode}
example (M : Matroid α) (X : ι → Set α) (subset_ground : ∀ i, X i ⊆ M.E)
  (disjoint : Pairwise (Disjoint on X)) :
  M.IsSkewFamily X ↔ ∀ C, M.IsCircuit C → C ⊆ ⋃ i, X i → ∃ i, C ⊆ X i
  \end{leancode}
\end{fthm}

\begin{fthm}[Theorem~\ref{gutscutintro}]
  Given a collection $X_i$ of sets with union $E(M)$, 
  a flat $F$ belongs to the guts modular cut of $\ab{X_i}$ 
  if and only if $\ab{X_i}$ is skew in $M \dcon F$. 
  \begin{leancode}
example (M : Matroid α) (X : ι → Set α) (union : ⋃ i, X i = M.E) 
  (F : Set α) (flat : M.IsFlat F) :
  F ∈ M.gutsModularCut X union ↔ (M.project F).IsSkewFamily X
  \end{leancode}
  The real content of Theorem~\ref{gutscutintro} is not in this equivalence per se, 
  but instead encoded in the fact that \smalltt{M.gutsModularCut X union} is actually a valid
  \smalltt{ModularCut} (i.e. its members satisfy the modular cut axioms.)
  It we had not verified that this is the case, the statement above 
  would be rejected by Lean as ill-formed, rather than false. 
\end{fthm}

\begin{fthm}[Theorem~\ref{lambdaminproj}(ii)]
The dual connectivity of a partition $\ab{X_i}$ of $E(M)$
is equal to the minimum size of a set $A$ such that there exists a matroid $P$
for which $P \del A = M$, and $\ab{X_i}$ is skew in $P \con A$. 
(Since $\lambda_{M^*}(\ab{X_i}) = \lambda_{M}(\ab{X_i})$ for every partition 
$\ab{X_i}$, the dual statement in (i) is trivially equivalent.)

As with previous statements, making statements about non-elements of the ground set
together with infima is little complicated; to avoid this, 
we give formal statements of the lower and upper bounds separately. 

The first states that if $A$ is a finite set, disjoint from the ground set of $A$,
with size equal to the dual connectivity of a partition $\ab{X_i}$ of $E(M)$,
then $M$ has a projection by $A$ in which $X$ is skew. -/
\begin{leancode}
example (M : Matroid α) (X : ι → Set α)
  (union_eq_ground : ⋃ i, X i = M.E) (disjoint : Pairwise (Disjoint on X))
  (A : Set α) (hAfin : A.Finite) (hA : A.encard = M✶.multiConn X)
  (disjoint_ground : Disjoint A M.E) :
  ∃ (P : Matroid α), A ⊆ P.E ∧ P ＼ A = M ∧ (P ／ A).IsSkewFamily X
\end{leancode}

The second states that, if $A$ is any set in a matroid $P$ for which $P \del A = M$
and $\ab{X_i}$ is skew in $P \con A$, 
then $A$ has cardinality at least $\lambda_{M^*}(\ab{X_i})$.
\begin{leancode}
example (P M : Matroid α) (X : ι → Set α) (union_eq_ground : ⋃ i, X i = M.E)
  (disjoint : Pairwise (Disjoint on X)) (A : Set α) (delete_eq : P ＼ A = M)
  (contract_skew : (P ／ A).IsSkewFamily X) :
  A.encard ≥ M✶.multiConn X
\end{leancode}
\end{fthm}

\begin{fthm}[Theorem~\ref{modularfinitary}]
  Every modular matroid is finitary. 
  \begin{leancode}
example (M : Matroid α) (h : M.Modular) : M.Finitary
  \end{leancode}
\end{fthm}

\begin{fthm}[Theorem~\ref{modularlinehyperplane}]
  A loopless matroid is modular if and only if every line intersects every hyperplane.
  \begin{leancode}
example (M : Matroid α) (hM : M.Loopless) :
  M.Modular ↔ ∀ L H, M.IsLine L → M.IsHyperplane H → (L ∩ H).Nonempty
  \end{leancode}
\end{fthm}